\documentclass[12pt]{amsart}

\usepackage{tgpagella}
\usepackage{euler}
\usepackage[T1]{fontenc}
\usepackage{amsmath, amssymb}
\usepackage[hidelinks]{hyperref}
\usepackage[english]{babel}
\usepackage{mathrsfs}
\usepackage{eucal}
\usepackage[all]{xy}
\usepackage{tikz}
\usepackage{todonotes}
\newtheorem{thm}{Theorem}[section]
\newtheorem*{thm*}{Theorem}
\newtheorem{lem}[thm]{Lemma}
\newtheorem{fact}[thm]{Fact}

\newtheorem{prop}[thm]{Proposition}
\newtheorem*{prop*}{Proposition}

\newtheorem{cor}[thm]{Corollary}
\newtheorem*{cor*}{Corollary}

\theoremstyle{definition}
\newtheorem{defn}[thm]{Definition}
\newtheorem*{defn*}{Definition}

\newtheorem{remark}[thm]{Remark}

\newtheorem{question}[thm]{Question}

\newtheorem*{question*}{Question}
\newtheorem*{Pquestion*}{Popa's question}

\newtheorem*{conv*}{Convention}

\newcommand{\R}{\mathbb{R}}
\newcommand{\C}{\mathbb{C}}
\newcommand{\N}{\mathbb{N}}

\newcommand{\ep}{\varepsilon}
\newcommand{\E}{\mathrm{E}}

\newcommand{\dminus}{ 
\buildrel\textstyle\ .\over{\hbox{ 
\vrule height3pt depth0pt width0pt}{\smash-} 
}}

\def\bb{\mathbb}

\def\bb{\mathbb}

\def\cal{\mathcal}

\def\u{\mathsf 1}

\newcommand{\cstar}{$\mathrm{C}^*$}

\def \tp{\operatorname{tp}}
\newcommand{\SA}{\operatorname{SA}}

\makeatletter

\def\dotminussym#1#2{%
  \setbox0=\hbox{$\m@th#1-$}%
  \kern.5\wd0%
  \hbox to 0pt{\hss\hbox{$\m@th#1-$}\hss}%
  \raise.6\ht0\hbox to 0pt{\hss$\m@th#1.$\hss}%
  \kern.5\wd0}
\newcommand{\dotminus}{\mathbin{\mathpalette\dotminussym{}}}

\DeclareMathOperator{\Sp}{Sp}

\newcommand\cU{{\cal U}}

\def \R{\mathcal R}
\def \u{\mathcal U}

\def\BC{\operatorname{BC}}

\allowdisplaybreaks
\makeatletter
\def\l@subsection{\@tocline{2}{0pt}{2.5pc}{5pc}{}}
\def\l@subsubsection{\@tocline{2}{0pt}{5pc}{7.5pc}{}}
\makeatother

\begin{document}

%%%%%%%%%%%%%%%%%%%%%%%%%%%%%%%%%%%%%%%%%%%%%%

\title{Model theory and Connes' bicentralizer problem}

\author{Hiroshi Ando}
\address{Department of Mathematics and Informatics, Chiba University, 1-33 Yayoi-cho, Inage, Chiba
263-8522, Japan}
\email{hiroando@math.s.chiba-u.ac.jp}
\thanks{Ando was partially supported by 
Japan Society for the Promotion of Sciences (JSPS) KAKENHI 	25K07024}
\author{Isaac Goldbring}
\address{Department of Mathematics\\University of California, Irvine, 340 Rowland Hall (Bldg.\# 400),
Irvine, CA 92697-3875}
\email{isaac@math.uci.edu}
\urladdr{http://www.math.uci.edu/~isaac}
\thanks{Goldbring was partially supported by NSF grant DMS-2054477.}

\begin{abstract}
We make a series of model-theoretic contributions to Connes' bicentralizer problem, one of the most prominent open problems in the theory of von Neumann algebras. Our work builds on the recent result of Houdayer and Marrakchi who show that, for separable diffuse W$^*$-probability spaces, having trivial bicentralizer is equivalent to being selfless, that is, having the first factor inclusion into the free product be an existential embedding. We first show that the class of selfless $W^*$-probability spaces is $\forall\exists$-axiomatizable.  We then extend the Houdayer-Marrakchi equivalence to all diffuse W$^*$-probability spaces, removing the separability hypothesis. Combining these results, we show that for any axiomatizable class of diffuse $W^*$-probability spaces, those with trivial bicentralizer form an $\forall\exists$-axiomatizable class; in particular, the class of type $\mathrm{III}_1$ factors with trivial bicentralizer is $\forall\exists$-axiomatizable. We give concrete axioms for this class using totally bounded variants of Haagerup's characterization of the bicentralizer, which we develop here and believe to be of independent interest. We also introduce the notion of pseudoperiodic $\mathrm{III}_1$ factors and show that any such factor has trivial bicentralizer.  In the final section, we prove that the bicentralizer problem has a positive solution if and only if the bicentralizer functor is a zeroset relative to the theory of $\mathrm{III}_1$ factors. We use this result to give an equivalent formulation of the bicentralizer problem in terms of a uniformity condition on Haagerup's Dixmier-type characterization of the bicentralizer.
\end{abstract}

\maketitle

\tableofcontents

\section{Introduction}

The work presented here is a model theoretic contribution to \textbf{Connes' bicentralizer problem}, one of the most prominent open problems in von Neumann algebra theory.  To explain the problem, suppose that $(M,\varphi)$ is a \textbf{W$^*$-probability space}, that is, $M$ is a $\sigma$-finite von Neumann algebra and $\varphi$ is a faithful normal state on $M$.  The \textbf{bicentralizer} of $(M,\varphi)$ is the set $\BC(M,\varphi)$ consisting of those $a\in M$ satisfying the following:  given $\epsilon>0$, there is $\delta>0$ so that, whenever $u\in M$ is a unitary for which $\|u\varphi-\varphi u\|:=\sup\{|\varphi(ub)-\varphi(bu)| \ : \ \|b\|\leq 1\}<\delta$, then one has $\|u^*au-a\|_\varphi<\epsilon$.  (This is not Connes' original definition, but rather a reformulation due to Haagerup \cite[Section 1]{haagerup87}.)  The bicentralizer $\BC(M,\varphi)$ is always a von Neumann subalgebra of $M$.  In case $M$ is a III$_1$ factor, $\BC(M,\varphi)$ is either trivial, that is, equals $\bb C$, or else is a type III$_1$ factor again.  

The relevance of the bicentralizer stems from Connes' classification of injective von Neumann algebras.  After having classified all injective factors with separable predual besides those of type III$_1$, Connes proved that an injective type III$_1$ factor $M$ with separable predual is isomorphic to the Araki-Woods factor $\mathcal R_\infty$ provided for some (or any) faithful normal state $\varphi$ on $M$, the modular flow $\sigma^{\varphi}$ is approximately inner, and he showed that this is the case if $\BC(M,\varphi)=\C$, which was confirmed later by Haagerup \cite{haagerup87}, completing the classification problem for injective factors. 
Moreover, Connes \cite{Connes85} (also mentioned in \cite[Section 3]{haagerup87}) suggested that every type III$_1$ factor with separable predual should have trivial bicentralizer; this problem is now known as Connes' bicentralizer problem. Although there are various classes of III$_1$ factors where the bicentralizers have been shown to be trivial \cite{AHHM20,Bikram2024MixedQBicentralizer,haagerup87,Houdayer09freeArakiWoods,HoudayerIsono20bicentralizerqArakiWoods,HoudayerUeda16asymptotic,Marrakchi20,Marrakchi25Kadison}, the general case is still open.

The relevance of the bicentralizer problem beyond the injective case can be best understood by Haagerup's characterization \cite[Theorem 3.1]{haagerup87} that if $M$ is a factor of type III$_1$ with separable predual, then it has trivial bicentralizer if and only if there exists a faithful normal state $\varphi$ with \textbf{large centralizer}, that is, if $M_{\varphi}'\cap M=\C$. The existence of such a state has proven to be of great importance in applications, for example, in Houdayer and Isono's works on the generalization of Popa's free independence theorem \cite{HoudayerIsono15freeindependence} and on the unique prime factorization theorem for III$_1$ factors \cite{houdayerisono17prime}. In a more recent work, the first author with Haagerup, Houdayer and Marrakchi \cite{AHHM20} constructed a flow $\beta^{\varphi}$, called the \textbf{bicentralizer flow}, on $\BC(M,\varphi)$ (or more generally, on the relative bicentralizer $\BC(N\subset M,\varphi)$) and its ergodicity is shown to be equivalent to the existence of an irreducible hyperfinite subfactor with expectation. Since then, there has been remarkable progress made by Marrakchi, where the flow is indeed shown to be ergodic \cite{Marrakchi20}, and this fact was used to show the weak relative Dixmier property for inclusion of von Neumann algebras with expectation \cite{Marrakchi21Dixmier} and applications to Kadison's problem and further generalization of the bicentralizer conjecture \cite{Marrakchi25Kadison}. Incidentally, Isono \cite{Isono24} confirmed the Haagerup--St\o rmer conjecture \cite{HaagerupStormer90,HaagerupStormer90pointwise,HaagerupStormer94pointwise} about pointwise inner automorphisms of factors of type III$_1$ under the trivial bicentralizer condition, where Marrakchi's works played a key role. All of these recent developments provide the motivation to study the bicentralizer problem beyond the injective case.    

The starting point of our work is the recent article of Houdayer and Marrakchi \cite{cyrilamine}, who proved that, for \emph{separable, diffuse} W$^*$-probability spaces, having a trivial bicentralizer is equivalent to being \textbf{selfless}. Here, a W$^*$-probability space $(M,\varphi)$ is said to be selfless if and only if there is an embedding from the free product $(M,\varphi)*(M,\varphi)$ into the (Ocneanu) ultraproduct $(M,\varphi)^\u$ such that, when restricted to the first factor, yields the usual diagonal embedding of $(M,\varphi)$ into its ultrapower. 
Selflessness was first introduced and investigated by Robert \cite{robert2025selfless}, motivated by Popa's article \cite{popa1995free}, where it was shown (using the above terminology) that any II$_1$ factor (equipped with its canonical trace) is selfless. Houdayer--Isono's free independence result mentioned above in this context is the statement that a W$^*$-probability space $(M,\varphi)$ is selfless if $M$ is a factor with $\varphi$ having large centralizer. This result played a key role in one direction of the proof of the above equivalence. Selflessness has been a very active area of study in \cstar-algebra theory over the past couple of years, stemming from the recent breakthrough result by Amrutam et. al. \cite{amrutam2025strict}, where it is shown that the reduced group \cstar-algebra C$^*_r(\bb F_2)$ has strict comparison by establishing that it is selfless (when equipped with its canonical trace), settling a longstanding open problem.

Selflessness is really a model-theoretic concept:  $(M,\varphi)$ is selfless if and only if the embedding $(M,\varphi)\subseteq (M,\varphi)*(M,\varphi)$ is existential.  (Here, we view W$^*$-probability spaces as model-theoretic structures as in \cite{AGHS25}; see also Subsection \ref{modeltheorypresentation} below.)  Existential embeddings of W$^*$-probability spaces were first studied in detail by the second author and Houdayer in \cite{goldbringhoudayer}, where it was observed that any \textbf{existentially closed (e.c.)} W$^*$-probability space (that is, any W$^*$-probability space for which any embedding into a larger W$^*$-probability space is existential) is a III$_1$ factor that tensorially absorbs $\mathcal R_\infty$, which implies that it must have trivial bicentralizer by \cite[Theorem D]{Marrakchi20}.  That e.c. W$^*$-probability spaces have trivial bicentralizer also follows from the aforementioned result of Houdayer and Marrakchi.

Our first main result is the following:
\begin{thm*}[Corollary \ref{selflessAEaxiomatizable}]
The class of selfless W$^*$-probability spaces is an $\forall\exists$-axiomatizable class.   
\end{thm*}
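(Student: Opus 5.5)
The strategy is to recast selflessness as a statement about realizing finitely many existential conditions over $M$ inside $M$ itself, and then to write each such condition as a single $\forall\exists$-sentence. By the discussion above, $(M,\varphi)$ is selfless if and only if the embedding $(M,\varphi)\subseteq (M,\varphi)*(M,\varphi)$ is existential. Write $M_1,M_2$ for the two copies of $M$ in the free product.

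Fix a quantifier-free formula $\psi(\vec x,\vec y)$ in the language of $W^*$-probability spaces from \cite{AGHS25}, with tuples of variables from the appropriate sorts (operator-norm balls). The key observation is that, for $\vec a\in M_1$, the value $\psi(\vec a,\vec b)^{M*M}$ with $\vec b\in M_2$ depends only on the joint $\varphi$-moments, including modular data, of $\vec a$ and $\vec b$ separately. This is because freeness determines all mixed moments, and hence the Ocneanu/$\varphi$-norm quantities, from the individual distributions. So we may write
\[ \psi(\vec a,\vec b)^{M*M}=\Theta_\psi(\vec a,\vec b), \]
where $\Theta_\psi$ is a function of the quantifier-free types of $\vec a$ and $\vec b$ computed in $M$.

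First I would establish that this dependence is \emph{uniformly continuous and definable}. I would approximate $\Theta_\psi$ uniformly on the relevant balls by a formula $\theta_\psi(\vec a,\vec b)$ evaluated in $M$, built from quantifier-free formulas in $\vec a$ and $\vec b$ separately. One route is to expand $\psi$ into polynomial moments of the free product, which are polynomial in the individual moments. The issue is that the language involves $\|\cdot\|_\varphi^\#$ and the modular/sorted structure, which are not literally polynomial. Here I would use that the free product of the GNS data is the free product of the pointed Hilbert spaces, so every atomic quantity is a continuous limit of such polynomial expressions with moduli depending only on the sorts.

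With this in hand, existentiality of $M\subseteq M*M$ reduces as follows: for every $\psi$, every $\vec a\in M$, and every $\epsilon>0$, there is $\vec c\in M$ with
\[ |\psi(\vec a,\vec c)^M-\theta_\psi(\vec a,\vec b)|<\epsilon \]
for the given $\vec b$. It suffices to test $\vec b$ ranging over $M$ itself, and to take $\epsilon$ a fixed rational with an appropriate moduli tolerance. This yields the sentences
\[ \sup_{\vec x}\sup_{\vec y}\inf_{\vec z}\,|\psi(\vec x,\vec z)-\theta_\psi(\vec x,\vec y)|=0, \]
which are $\forall\exists$ because $\theta_\psi$ is quantifier-free in the variables $\vec x,\vec y$ after approximation. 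We also need $\theta_\psi$ to sit in a quantifier-free position; if only an approximation is available, the axioms become $\le\epsilon$ conditions. Conversely, satisfying all these sentences gives approximate realizations, and countable saturation of $M^\u$ then produces the required embedding of $M*M$ into $M^\u$ extending the diagonal embedding.

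The main obstacle is the first step: the uniform, formula-level expression of free-product quantities in terms of the individual types. This is delicate because the Ocneanu norms on $M*M$ involve $\varphi$ on products $x^*x+xx^*$, and these must be computed through the free product state in a way that is uniformly continuous in the sort-bounded variables. I would handle it via moment expansion with uniform approximation by finitely many moments on each operator-norm ball.
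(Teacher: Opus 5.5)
Your argument is correct in outline, but it takes a genuinely different route from the paper. The paper never writes down axioms. It shows the class is closed under ultraproducts (Lemma \ref{inducedembedding} with Facts \ref{ecembeddingfact1} and \ref{ecembeddingfact2}), under ultraroots (Lemma \ref{selflessexistential}, via Lemma \ref{freeexistential}), and under unions of chains (Proposition \ref{limit}). It then invokes the preservation theorem: an axiomatizable class closed under unions of chains is $\forall\exists$-axiomatizable. You instead write explicit $\forall\exists$ sentences. They rest on the fact that freeness makes the quantifier-free type of $(\vec a_1,\vec b_2)$ in $M*M$ a definable function of the separate types of $\vec a$ and $\vec b$; this is the syntactic counterpart of the moment computation in the proof of Lemma \ref{inducedembedding}.

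Three things should be tightened.

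First, your ``main obstacle'' is not one. The sorts in \cite{AGHS25} are the sets $S_K$ of totally $K$-bounded elements, not operator-norm balls, precisely so that multiplication is a function symbol. So terms are $*$-polynomials, and every atomic formula is a continuous function of finitely many values of $\varphi$ on terms (note $(\|x\|_\varphi^{\#})^2=\frac12\varphi(x^*x+xx^*)$). The free moment formulas express $\varphi(p(\vec a_1,\vec b_2))$ as a universal polynomial in the moments of $\vec a$ and of $\vec b$. Hence $\theta_\psi$ is an honest quantifier-free formula, and no approximation, tolerance, or modular data is needed.

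Second, the claim that it suffices to test witnesses from the second copy is really the converse direction, and it needs the universal property of free products. Saturation yields an embedding $\iota\colon(M,\varphi)\hookrightarrow(M,\varphi)^{\u}$ whose image is free from the diagonal copy; freeness of the $\sigma$-weakly dense $*$-subalgebras of totally bounded elements suffices. Both copies are $\sigma^{\varphi^{\u}}$-invariant, so they generate a copy of $(M,\varphi)*(M,\varphi)$ with expectation, restricting to the diagonal embedding on the first factor.

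Third, for nonseparable $M$, countable saturation does not realize the type of all of $M_2$ over $M_1$. Either use a suitably good ultrafilter, or note that each instance of existentiality only involves a separable $\sigma^\varphi$-invariant $N\subseteq M$ containing the parameters, with the witnesses in $N*N$.

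As for what each route buys: the paper's is purely semantic, and its lemmas are reused for Theorem \ref{generalizing}. Yours gives concrete $\forall\exists$ axioms, which the paper explicitly does not provide. Combined with Theorem \ref{generalizing}, it would give explicit $\forall\exists$ axioms (relative to $T_{\cal K}$) for the members with trivial bicentralizer of a diffuse $\forall\exists$-axiomatizable class $\cal K$. That would improve on the $\forall\exists\forall$ axioms of Theorem \ref{thm trivialBCaxiomatizable}.
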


We do not prove this theorem by giving concrete axioms for this class, but rather prove that this class is closed under ultraproducts, ultraroots, and direct limits.  In contrast, we note that the class of e.c. W$^*$-probability spaces is not closed under ultraproducts (see \cite[Corollary 3.11]{goldbringhoudayer}).

Our next main result is to remove the separability assumption from the result of Houdayer and Marrakchi:

\begin{thm*}[Theorem \ref{generalizing}]
For any (not necessarily separable) diffuse W$^*$-probability space $(M,\varphi)$, we have that $(M,\varphi)$ is selfless if and only if $\BC(M,\varphi)=\bb C$.
\end{thm*}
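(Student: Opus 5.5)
The strategy is to reduce to the separable case of Houdayer and Marrakchi via a downward Löwenheim--Skolem argument. Both properties should be witnessed on a club of separable elementary substructures, so that each transfers between $(M,\varphi)$ and suitable separable $(N,\varphi|_N)\preceq(M,\varphi)$.

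Selflessness first. Since the class of selfless W$^*$-probability spaces is $\forall\exists$-axiomatizable (Corollary \ref{selflessAEaxiomatizable}), it is closed under elementary equivalence. Hence $(M,\varphi)$ is selfless if and only if some (equivalently, every) separable elementary substructure $(N,\varphi|_N)\preceq(M,\varphi)$ is selfless. Diffuseness also passes to elementary substructures. One should check that it is expressible, or at least preserved under $\preceq$: $N$ contains projections of $\varphi$-measure close to $1/2$ below any given projection, by elementarity, and such a $\varphi$-approximate-halving property characterizes diffuseness.

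For the bicentralizer, I would show that for a separable $N\preceq M$ (or $N$ from a suitable club of separable substructures), $\BC(N,\varphi|_N)=\BC(M,\varphi)\cap N$. The cleanest route is Haagerup's characterization: $a\in\BC(M,\varphi)$ iff $\lim_{\delta\to0}\sup\{\|u^*au-a\|_\varphi : u\in U(M),\ \|u\varphi-\varphi u\|<\delta\}=0$. The main obstacle is that $\|u\varphi-\varphi u\|$ is a supremum over the unit ball, so this is a $\sup$ of a quantity involving an inner $\sup$. It is therefore not obviously a definable predicate in the language of \cite{AGHS25}. I expect this is exactly why the paper develops totally bounded variants of Haagerup's characterization. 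I would try to approximate the condition $\|u\varphi-\varphi u\|<\delta$ by formulas such as $\|u\xi_\varphi-\xi_\varphi u\|$-type quantities, for instance $\|u\varphi^{1/2}-\varphi^{1/2}u\|$ in the standard form, which are controlled by $\|u\varphi-\varphi u\|$ via Powers--Størmer type inequalities and conversely. That would make ``$a\in\BC$'' equivalent to a countable conjunction of conditions $\psi_{n}(a)\le 1/n$ with $\psi_n$ formulas. Given that, elementarity immediately yields $\BC(N)=\BC(M)\cap N$.

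Alternatively, I would avoid definability entirely and use a closing-off argument. For each $a\in N\setminus\BC(M)$, pick witnesses $u_k\in M$ showing failure, and add them; conversely, ensure the relevant sup over $u\in N$ approximates the sup over $u\in M$ by elementarity of the formula $\sup_u(\ldots)$ with the inner norm computed via a dense countable set. Iterating $\omega$ times gives separable $N$ with the desired equality, and any countable subset of $M$ can be absorbed into such an $N$.

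With this, the conclusion is quick. If $\BC(M,\varphi)=\C$, then every such separable $N$ has $\BC(N)=\C$, so $N$ is selfless by \cite{cyrilamine}, and therefore $M\equiv N$ is selfless. Conversely, if $M$ is selfless and $a\in\BC(M,\varphi)$, choose such a separable $N\ni a$. Then $N$ is selfless, hence $\BC(N)=\C$, hence $a\in\C$.
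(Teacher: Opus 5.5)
Your overall reduction matches the paper's, and the selflessness half is fine: Corollary \ref{selflessAEaxiomatizable} makes selflessness invariant under $\equiv$, which is a legitimate shortcut for Lemma \ref{separableenoughselfless}. Diffuseness should be transferred semantically, via an embedding $M\hookrightarrow N^{\u}$ over $N$ as in Lemma \ref{diffuselemma}, because projections are not in general totally bounded.

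The gap is the step you yourself flag as the heart of the matter. It is the inclusion $\BC(N,\psi)\subseteq\BC(M,\varphi)$, with $\psi:=\varphi|_N$, and it is exactly what the direction ``$\BC(M,\varphi)=\C\Rightarrow$ selfless'' needs. The reverse inclusion is automatic for any inclusion with expectation: for $u\in\mathbf{U}(N)$ the $\varphi$-preserving expectation gives $\|u\psi-\psi u\|=\|u\varphi-\varphi u\|$, so $\epsilon_N(a,\delta)\le\epsilon_M(a,\delta)$.

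Neither of your routes delivers the hard inclusion. Route (a), in the form you state it, cannot exist. A uniform description of $\BC$ by conditions $\psi_n(a)\le 1/n$ would, by \L o\'s, make $\BC$ closed under ultraproducts. That already fails for $(M_2(\C),\varphi_\lambda)$ with $\varphi_\lambda$ non-tracial and $\lambda\to1$: each bicentralizer is the diagonal algebra, while the ultraproduct is tracial with trivial bicentralizer. Tensoring with $(R,\tau)$, $R$ the hyperfinite II$_1$ factor, gives a diffuse example. For III$_1$ factors, such closure is equivalent to a positive solution of the bicentralizer problem, by Theorem \ref{prop zero iff trivialBC}.

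Route (b), as written, has two problems. It adds witnesses for every $a\in N\setminus\BC(M,\varphi)$; there are uncountably many, so separability is lost. It then invokes ``elementarity of the formula $\sup_u(\ldots)$'', which is not a formula: general unitaries are not even in a sort of the language of \cite{AGHS25}.

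The missing idea is the paper's Proposition \ref{upanddown}. The quantity $\|u\varphi-\varphi u\|$ is not a formula, but it commutes with Ocneanu ultraproducts (Fact \ref{ultralimitbimodule}). So you should use the semantic form of existentiality rather than formulas. Take $u\in\mathbf{U}(M)$ with $\|u\varphi-\varphi u\|\le\delta/2$, embed $M\hookrightarrow N^{\u}$ over $N$, and write $u=(u_i)_{\u}$ with $u_i\in\mathbf{U}(N)$. The expectation computation above together with that Fact gives $\lim_{\u}\|u_i\psi-\psi u_i\|\le\delta/2$. Hence $\epsilon_N(a,\delta)<\epsilon$ forces $\|u^*au-a\|_\varphi\le\epsilon$, i.e.\ $\epsilon_M(a,\delta/2)\le\epsilon$. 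This yields $\BC(N,\psi)=\BC(M,\varphi)\cap N$ for \emph{every} existential substructure (Corollary \ref{BCpreceq}), not just on a club, and that stronger form is reused later in the paper.

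If you prefer to keep a closing-off argument, it can be repaired by a continuity estimate. From $|\varphi(u^*xu)-\varphi(x)|\le\|u\varphi-\varphi u\|\,\|x\|$ one gets, for $a,b$ in the unit ball,
\[\epsilon_M(a,\delta)\le\epsilon_M(b,\delta)+\|a-b\|_\varphi+\bigl(\|a-b\|_\varphi^2+4\delta\bigr)^{1/2},\]
and likewise in $N$. So at each stage it suffices to add near-optimal witnesses for $\epsilon_M(d,1/m)$, $m\geq 1$, with $d$ ranging over a countable $\|\cdot\|_\varphi$-dense subset of the unit ball. Interleave this with passing to separable elementary substructures, and Kaplansky density carries the density over to the union of the chain.
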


The previous theorem is proven by showing that membership in either of the above classes can be detected by separable elementary subalgebras; for bicentralizers, this involves establishing that $\BC(M,\varphi)\cap N=\BC(N,\psi)$ whenever $(N,\psi)$ is an elementary (even existential) substructure of $(M,\varphi)$.  Consequently, we see that if the bicentralizer problem has a positive solution, then $\BC(M,\varphi)=\bb C$ for all III$_1$ factors $M$, regardless of the density character of $M$.

Combining the previous theorem with our first theorem yields the following:

\begin{thm*}[Corollary \ref{bicentralizeraxiomatizable}]
For any axiomatizable class $\cal K$ consisting only of diffuse W$^*$-probability spaces, the elements of $\cal K$ with trivial bicentralizer form an axiomatizable class, which is $\forall\exists$-axiomatizable if $\cal K$ is $\forall\exists$-axiomatizable.  In particular, the class of III$_1$ factors with trivial bicentralizer is axiomatizable.
\end{thm*}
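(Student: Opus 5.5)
By Theorem \ref{generalizing}, every element of $\cal K$ is diffuse, so for $(M,\varphi)\in\cal K$ we have $\BC(M,\varphi)=\bb C$ if and only if $(M,\varphi)$ is selfless. The class in question is therefore exactly $\cal K\cap\mathcal S$, where $\mathcal S$ denotes the class of selfless W$^*$-probability spaces. The plan is to read off axiomatizability from this intersection. If $T_{\cal K}$ axiomatizes $\cal K$ and $T_{\mathcal S}$ is an $\forall\exists$-axiomatization of $\mathcal S$, which exists by Corollary \ref{selflessAEaxiomatizable}, then $T_{\cal K}\cup T_{\mathcal S}$ axiomatizes $\cal K\cap\mathcal S$. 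If moreover $T_{\cal K}$ can be taken to consist of $\forall\exists$-sentences, then so does the union. This already gives both assertions of the main statement.

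For the particular case, I would take $\cal K$ to be the class of III$_1$ factors, equipped with their faithful normal states, as W$^*$-probability spaces. This class is axiomatizable in the language of \cite{AGHS25}: being a factor and being of type III$_1$ are expressible, the latter via the Connes spectrum and the Connes--St\o rmer characterization (this is established in \cite{AGHS25}). Every III$_1$ factor is diffuse, so the hypothesis of the corollary holds. Since being in $\cal K$ depends only on $M$ and not on the state, and the bicentralizer of a III$_1$ factor is trivial for one faithful normal state if and only if it is trivial for all of them, the resulting class is exactly the III$_1$ factors with trivial bicentralizer.

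The only substantive step is confirming that diffuseness is genuinely available wherever Theorem \ref{generalizing} is applied. This holds because the hypothesis is imposed on every member of $\cal K$, rather than only on some model of its theory. Axiomatizing the III$_1$ factors is imported from prior work and is not reproved here. If one also wants the $\forall\exists$ conclusion in the III$_1$ case, one has to check separately that the III$_1$ axioms of \cite{AGHS25} are $\forall\exists$. I would expect this to be the main point needing care, and I would not claim it without verification.
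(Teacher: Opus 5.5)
Your proposal is correct and follows the paper's own route. The paper derives this corollary immediately from Theorem \ref{selflessaxiomatizable}, Corollary \ref{selflessAEaxiomatizable} and Theorem \ref{generalizing}, identifying the class with $\cal K\cap\mathcal S$ and taking the union of the two axiom sets. Your caution about the $\forall\exists$ form in the III$_1$ case is not needed for the statement as quoted, which only asserts axiomatizability there; when the paper states the stronger $\forall\exists$ version, it simply takes the class of III$_1$ factors to be $\forall\exists$-axiomatizable.
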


In the case that $\cal K$ is the class of III$_1$ factors, we offer an alternative proof of the previous theorem that avoids the use of selflessness and instead relies on a result of Haagerup, namely that a separable III$_1$ factor $M$ with trivial bicentralizer admits a state $\varphi$ with ``large centralizer'' in the sense that $M_\varphi'\cap M=\bb C$, together with a generalization of an observation of Houdayer regarding ultrapowers of states with large centralizers. 

After having shown that the class of III$_1$ factors with trivial bicentralizer is axiomatizable, we proceed to write down concrete axioms for this class.  To do so, we introduce a variant of Haagerup's description of W$^*$-probability spaces with trivial bicentralizer (which is the description given above) that is better suited to dealing with ultraproducts and which makes use of the technology around \textbf{totally bounded} elements introduced in \cite{AGHS25}.  We believe these results should be of independent interest.

Ostensibly, the previous theorem should yield many new examples of III$_1$ factors with trivial bicentralizer.  Indeed, in analogy with the case of II$_1$ factors (see \cite[Theorem 4.3]{farah2014model}), it is conjectured that, for any separable W$^*$-probability space $(M,\varphi)$ with $M$ a III$_1$ factor, there are continuum many W$^*$-probability spaces $(M_\alpha,\varphi_\alpha)_{\alpha<2^\omega}$ elementarily equivalent to $(M,\varphi)$ but with $M_\alpha\not\cong M_\beta$ for $\alpha<\beta<2^{\omega}$ (see also \cite[Question 4.13]{goldbringhoudayer}); if $(M,\varphi)$ is one of the III$_1$ factors known to have trivial bicentralizer, then each $M_\alpha$ would also have trivial bicentralizer.

In the final section, we give a ``quantitative'' reformulation of the bicentralizer problem.  To explain this, we follow Haagerup and define, for a W$^*$-probability space $(M,\varphi)$ and $a\in M$, the quantity $$\epsilon_M(a,\delta):=\sup\{\|uau^*-a\|_\varphi \ : \ u\in \mathbf{U}(M), \|u\varphi-\varphi u\|<\delta\}.$$  Here, and throughout this paper, $\mathbf{U}(M)$ denotes the unitary group of a von Neumann algebra $M$.  It follows that $a\in \BC(M,\varphi)$ if and only if $\inf_\delta \epsilon_M(a,\delta)=0$.  We then show that the bicentralizer problem is equivalent to this description of the bicentralizer being ``uniform'' over all III$_1$ factors:  

\begin{thm*}[Theorem \ref{bicentzeroset}]
The following statements are equivalent:
\begin{enumerate}
    \item The bicentralizer problem has a positive solution.
    \item For each $\epsilon>0$, there is $\delta>0$ such that, for all W$^*$-probability spaces $(M,\varphi)$ with $M$ a III$_1$ factor, and all $a\in S_1(\BC(M,\varphi))$, we have that $\epsilon_M(a,\delta)<\epsilon$.
\end{enumerate}
\end{thm*}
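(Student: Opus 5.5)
The plan is to prove both directions, with (2)$\Rightarrow$(1) being the substantive one.

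For (1)$\Rightarrow$(2), assume a positive solution to the bicentralizer problem for separable III$_1$ factors. By Theorem \ref{generalizing} and the remark after it, every III$_1$ factor then has trivial bicentralizer, whatever its density character. Consequently $S_1(\BC(M,\varphi))=S_1(\bb C)$. For a scalar $a$ we have $uau^*-a=0$, so $\epsilon_M(a,\delta)=0$ for every $\delta$, and (2) holds trivially.

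For (2)$\Rightarrow$(1), I would use an ultraproduct argument. Fix a separable III$_1$ factor $(M,\varphi)$ and suppose, for contradiction, that $\BC(M,\varphi)\neq\bb C$. Then $\BC(M,\varphi)$ is itself a III$_1$ factor. The first step is to produce, in some III$_1$ factor, elements of the unit ball of the bicentralizer that are far from commuting with almost-$\varphi$-central unitaries, uniformly in $\delta$. Here is how I would get them.

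\begin{enumerate}
\item Since $\BC(M,\varphi)$ is nontrivial and $\varphi$-invariant (it lies inside $(M_\varphi)'\cap M$), choose $a\in S_1(\BC(M,\varphi))$ not a scalar, with $\|a-\varphi(a)\|_\varphi=\eta>0$.
\item Since $M$ is III$_1$, the state $\varphi$ is not ``rigid'' at any finite scale. Concretely, I would use the fact that in a III$_1$ factor, for every $\delta>0$ there are unitaries $u$ with $\|u\varphi-\varphi u\|<\delta$ such that $uau^*$ is nearly independent of $a$. One source is Connes--St\o rmer transitivity: $\varphi$ and $\varphi\circ\mathrm{Ad}(v)$ can be approximated by unitary conjugates. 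Another source is Haagerup's Dixmier-type theorem that $\overline{\mathrm{conv}}\{uau^*\}$ over $\delta$-central unitaries meets $\BC$.
\item Applying (2) to $M$ itself then gives: for each $\epsilon$ there is $\delta$ with $\epsilon_M(a,\delta)<\epsilon$. This alone is no contradiction, since $a$ already lies in the bicentralizer. So the contradiction has to come from the failure of uniformity across factors.
\end{enumerate}

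So I would instead construct a sequence of III$_1$ factors $(M_n,\varphi_n)$ and elements $a_n\in S_1(\BC(M_n,\varphi_n))$ with $\epsilon_{M_n}(a_n,1/n)\geq c>0$. The natural choice is $M_n=M\bar\otimes N_n$, or $M$ with a rescaled state, where $N_n$ is a III$_\lambda$-like piece. The idea is that the modulus of continuity $\delta\mapsto\epsilon(a,\delta)$ of a bicentralizer element can be made arbitrarily slow. Equivalently, and this is the model-theoretic route suggested by the section title, I would show that (2) forces the bicentralizer functor to be a zeroset relative to the theory of III$_1$ factors. The zeroset is $\{a:\sup_\delta$-formula$=0\}$, which is then closed under ultraproducts. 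Applying this to an ultrapower $(M,\varphi)^\u$ and the diagonal, together with the Ocneanu ultrapower formula $\BC(M^\u,\varphi^\u)\supseteq\BC(M,\varphi)^\u$ obtained from uniform moduli, yields $(\BC(M,\varphi)^\u)\subseteq \BC(M^\u,\varphi^\u)$. Combining this with $(M^\u)_{\varphi^\u}$, which is large (Ando--Haagerup: the centralizer of the ultrapower of a III$_1$ factor is a II$_1$ factor with trivial relative commutant in $M^\u$ in the appropriate sense), gives $\BC(M^\u,\varphi^\u)=\bb C$. Hence $\BC(M,\varphi)=\bb C$.

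The main obstacle is this last step. Triviality of $\BC(M^\u,\varphi^\u)$ comes from Popa/Ando--Haagerup, since $\varphi^\u$ has large centralizer $(M^\u)_{\varphi^\u}'\cap M^\u=\bb C$. The uniformity in (2) is exactly what allows an element of $\BC(M,\varphi)$, viewed diagonally, to remain in $\BC(M^\u,\varphi^\u)$. Without (2) this inclusion can fail, because the $\delta$ may depend on representatives. I expect verifying that inclusion carefully, in particular handling unitaries in $M^\u$ that are approximately $\varphi^\u$-central but not liftable to uniformly approximately central sequences, to be the delicate point. I would address it with the totally bounded lifting technology of \cite{AGHS25}.
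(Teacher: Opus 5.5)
Your (1)$\Rightarrow$(2) is fine. For (2)$\Rightarrow$(1), reducing to the ultraproduct inclusion $\prod_\u\BC(M_n,\varphi_n)\subseteq\BC(\prod_\u(M_n,\varphi_n))$ is the right move, and the lifting issue you flag there is routine. The paper uses $\aleph_1$-saturation (Lemma \ref{saturatedlemma}) to replace approximately central unitaries by an honest unitary $u\in\mathbf{U}(M_\varphi)$ with $\|u^*au-a\|_\varphi>\epsilon$. Any unitary lift $(u_n)$ of such a $u$ satisfies $\lim_\u\|u_n\varphi_n-\varphi_nu_n\|=0$ by Fact \ref{ultralimitbimodule}, which contradicts the uniform $\delta$.

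The genuine gap is your final step. You assert that $\varphi^\u$ has large centralizer, $(M^\u_{\varphi^\u})'\cap M^\u=\bb C$, for an arbitrary III$_1$ factor $M$, and attribute this to Ando--Haagerup/Popa. This is not known; it is equivalent to the conjecture itself. By Fact \ref{BCultrapower}, $\BC(M,\varphi)=(M^\u_{\varphi^\u})'\cap M\subseteq (M^\u_{\varphi^\u})'\cap M^\u$. Conversely, Fact \ref{p3.3} gives $(M^\u_{\varphi^\u})'\cap M^\u\subseteq \BC(M,\varphi)^\u$. The circularity also shows in the fact that hypothesis (2) plays no role in your conclusion. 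Indeed, $\BC(M,\varphi)=\BC(M^\u,\varphi^\u)\cap M$ holds unconditionally by Corollary \ref{BCpreceq}, so $\BC(M^\u,\varphi^\u)=\bb C$ would by itself give $\BC(M,\varphi)=\bb C$. What Ando--Haagerup actually provide is only that $M^\u_{\varphi^\u}$ is a II$_1$ factor, so in particular $\varphi^\u$ is not ergodic.

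The missing idea is to exploit this weaker fact after reducing to a self-bicentralizing counterexample. You noted that $\tilde M:=\BC(M,\varphi)$ is a III$_1$ factor but never used it. By Houdayer--Isono, $\tilde\varphi:=\varphi|_{\tilde M}$ satisfies $\BC(\tilde M,\tilde\varphi)=\tilde M$. Applying the ultraproduct inclusion to $(\tilde M,\tilde\varphi)$ (this is where (2) enters) gives $\tilde M^\u=\BC(\tilde M,\tilde\varphi)^\u\subseteq\BC(\tilde M^\u,\tilde\varphi^\u)$, so $\tilde\varphi^\u$ is self-bicentralizing. Since $\BC(N,\psi)\subseteq (N_\psi)'\cap N$, this forces $\tilde M^\u_{\tilde\varphi^\u}\subseteq\mathcal{Z}(\tilde M^\u)=\bb C$. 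That contradicts $\tilde M^\u_{\tilde\varphi^\u}$ being a II$_1$ factor. This is the paper's route via Theorem \ref{prop zero iff trivialBC}. Your other ideas (slow moduli on $M\bar\otimes N_n$, Connes--St\o rmer transitivity, Haagerup's Dixmier property) are not needed.
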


The key to proving this fact is the following model-theoretic reformulation of the bicentralizer problem:

\begin{thm*}[Theorem \ref{prop zero iff trivialBC}]
The following statements are equivalent:
\begin{enumerate}
    \item The bicentralizer problem has a positive solution.
    \item The bicentralizer is a \textbf{zeroset} with respect to the theory of III$_1$ factors.  In other words:  for every family $(M_i,\varphi_i)_{i\in I}$ of W$^*$-probability spaces with each $M_i$ a III$_1$ factor and each ultrafilter $\u$ on $I$, we have that 
    $$\prod_\u \BC(M_i,\varphi_i)\subseteq \BC\left(\prod_\u (M_i,\varphi_i)\right).$$
\end{enumerate}
\end{thm*}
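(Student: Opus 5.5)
The plan is to prove the two implications separately, using the earlier results of the paper (the extension of Houdayer--Marrakchi to nonseparable spaces, Theorem \ref{generalizing}, and the axiomatizability results), together with the standard fact that the class of III$_1$ factors is axiomatizable and closed under ultraproducts.

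For (1)$\Rightarrow$(2), assume the bicentralizer problem has a positive solution, i.e.\ every III$_1$ factor with separable predual has trivial bicentralizer. As explained after Theorem \ref{generalizing}, this gives $\BC(M,\varphi)=\C$ for \emph{every} III$_1$ factor, regardless of density character. Indeed, take a separable elementary substructure $(N,\psi)$ of $(M,\varphi)$. Then $N$ is again a III$_1$ factor, because that class is axiomatizable. We also have $\BC(M,\varphi)\cap N=\BC(N,\psi)=\C$, and the inclusion $\BC(M,\varphi)\subseteq\bigcup N$ follows by choosing $N$ to contain any given element. Now take a family $(M_i,\varphi_i)$ of III$_1$ factors. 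Each $\BC(M_i,\varphi_i)=\C$, so $\prod_\u \BC(M_i,\varphi_i)=\C$. This is trivially contained in $\BC(\prod_\u(M_i,\varphi_i))$, since scalars always lie in the bicentralizer.

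For (2)$\Rightarrow$(1), I would argue by contradiction. Suppose some separable III$_1$ factor $M$ with state $\varphi$ has $\BC(M,\varphi)\neq\C$. Then $\BC(M,\varphi)$ is itself a III$_1$ factor, so it contains elements far from scalars. The aim is to produce, using (2), an element of the bicentralizer of an ultraproduct that visibly fails the bicentralizer condition. Write $B=\BC(M,\varphi)$ and $\psi=\varphi|_B$. My first attempt is to iterate: form $M_1=M\bar\otimes M$ or a suitable amplification. A cleaner route is the ultrapower $M^\u$ with $\u$ a nonprincipal ultrafilter on $\N$. By (2), $B^\u\subseteq\BC(M^\u,\varphi^\u)$.

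The key point is that $M^\u$ contains the centralizer $(M^\u)_{\varphi^\u}$, which is large: by Ando--Haagerup, it is a II$_1$ factor whose relative commutant in $M^\u$ is small. A unitary $u$ in this centralizer has $\|u\varphi^\u-\varphi^\u u\|=0$. Hence every element of $\BC(M^\u,\varphi^\u)$ commutes with $(M^\u)_{\varphi^\u}$, giving $B^\u\subseteq ((M^\u)_{\varphi^\u})'\cap M^\u$. On the other hand, Haagerup's theorem (in the form used by AHHM) identifies $\BC(M,\varphi)$ with $((M^\u)_{\varphi^\u})'\cap M$, so constant sequences from $B$ do commute with the ultrapower centralizer. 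That is consistent, so the contradiction must instead come from non-constant sequences in $B^\u$. Since $B$ is a diffuse III$_1$ factor, $B^\u$ contains elements of the centralizer $(B^\u)_{\psi^\u}$, for example approximately central, non-scalar sequences in $B_\psi$-asymptotics. I would show such an element lies in $(M^\u)_{\varphi^\u}$ and is noncentral there, because $(M^\u)_{\varphi^\u}$ is a factor. That contradicts its commuting with all of $(M^\u)_{\varphi^\u}$.

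I expect the main obstacle to be the step showing that $(B^\u)_{\psi^\u}\subseteq (M^\u)_{\varphi^\u}$ contains a nonscalar element, that is, controlling $\|x\varphi-\varphi x\|$ in $M$ for $x\in B$ using only $\psi$ on $B$. The natural tool is that $\varphi$ commutes with $B$ in the appropriate sense: AHHM show there is a $\varphi$-preserving conditional expectation onto $\BC$, so $\sigma^\varphi$ restricts to $\sigma^\psi$. With that, centralizing sequences for $\psi$ in $B$ are centralizing for $\varphi$. Alternatively, if this fails, I would instead use the family $(M\bar\otimes \mathcal R_\infty^{\otimes n})$ and axiomatizability (Corollary \ref{bicentralizeraxiomatizable}) to transfer a uniform failure to an ultraproduct.
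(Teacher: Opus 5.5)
Your proposal is correct. For (1)$\Rightarrow$(2) it matches the paper's argument via Remark \ref{rem separableunnecessary}. For (2)$\Rightarrow$(1) you take a somewhat different route.

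The paper first replaces the counterexample by $(B,\psi)=(\BC(M,\varphi),\varphi|_B)$, which is self-bicentralizing by Houdayer--Isono. It then applies the hypothesis to ultrapowers of $(B,\psi)$ itself. This gives $B^\u=\BC(B^\u,\psi^\u)\subseteq ((B^\u)_{\psi^\u})'\cap B^\u$, so $\psi^\u$ is ergodic, contradicting the fact that $(B^\u)_{\psi^\u}$ is a II$_1$ factor.

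You instead apply (2) to $(M,\varphi)$ and get $B^\u\subseteq ((M^\u)_{\varphi^\u})'\cap M^\u$. You then need the extra inclusion $(B^\u)_{\psi^\u}\subseteq (M^\u)_{\varphi^\u}$, and it does hold:
\begin{itemize}
\item $B$ is $\sigma^\varphi$-invariant, so there is a $\varphi$-preserving expectation $E:M\to B$.
\item Hence $\|x\varphi-\varphi x\|\le\|x\psi-\psi x\|$ for $x\in B$; this is exactly the computation in Proposition \ref{upanddown}(1).
\item Fact \ref{ultralimitbimodule} transfers this inequality to the ultrapower.
\end{itemize}
Any nonscalar element of the II$_1$ factor $(B^\u)_{\psi^\u}$ is then central in the factor $(M^\u)_{\varphi^\u}$, which is the desired contradiction.

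Your route never uses that $\varphi|_B$ is self-bicentralizing, which is the nontrivial content of the Houdayer--Isono theorem the paper invokes. The price is the inclusion-of-centralizers step and the factoriality of $(M^\u)_{\varphi^\u}$. Given that theorem, the paper's route is shorter.

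Three small corrections:
\begin{itemize}
\item Your aside that $((M^\u)_{\varphi^\u})'\cap M^\u$ is ``small'' is false in your hypothetical situation: it contains $B$ by Fact \ref{BCultrapower}. You never use it, so it should simply be dropped.
\item The nonscalar elements you need are not ``approximately central sequences in $B_\psi$''; in fact $B_\psi=\C$ here. They are simply elements of $(B^\u)_{\psi^\u}$, which is a II$_1$ factor by Ando--Haagerup because $B$ is a III$_1$ factor with separable predual.
\item The fallback via $M\bar\otimes\mathcal R_\infty^{\otimes n}$ is unnecessary.
\end{itemize}
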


To prevent this paper from ballooning in size, we assume that the reader is familiar with the requisite von Neumann algebra theory and model theory; readers looking for more background on these topics that are especially relevant to the current work can consult \cite{andoultra,AH14,AGHS25,goldbringhoudayer}.  That being said, the paper contains a lengthy preliminary section, where various specific facts from both von Neumann algebra theory and model theory will be needed.  Subsection \ref{subsec Wstar prob} contains a few reminders about W$^*$-probability spaces, while Subsection \ref{subsec ultraproducts} contains the definition of the Ocneanu ultraproduct of W$^*$-probability spaces and some specific results about this ultraproduct construction we will need in the sequel.  Subsection \ref{BCsection} contains all of the information about the bicentralizer needed throughout the paper.  Subsection \ref{subsec spectralsubspace} provides some background on spectral subspaces used throughout the paper and Subsection \ref{subsec tb elements} is a technical section on totally bounded elements that is needed for our totally bounded analogues of Haagerup's results proven in Section \ref{sec tb variant of Haagerup}.  Subsection \ref{modeltheorypresentation} gives a brief description of the model-theoretic treatment of W$^*$-probability spaces while Subsection \ref{good} contains a few facts about countably incomplete and good ultrafilters used in the paper.

As mentioned above, Section \ref{sec tb variant of Haagerup} contains our totally bounded analogues of Haagerup's results while Section \ref{sec axiomatize TBC} contains the aforementioned axiomatizability results.  Section \ref{sec axiomatize TBC} also contains a subsection on what we call \textbf{pseudoperiodic III$_1$ factors}, including the fact that any pseudoperiodic III$_1$ factor has trivial bicentralizer.  Section \ref{sec BCP and zeroset} contains the aforementioned results equating the bicentralizer problem with the fact that the bicentralizer is a zeroset and with the uniform, quantitative version of Haagerup's description of the bicentralizer. 

There are also two appendices at the end of the paper.  The first contains the proof of a result from Subsection \ref{subsec spectralsubspace} relating spectral subspaces and totally bounded elements; since the proof  of this result is quite long and technical (and perhaps is even known to some experts), we postponed it to the end of the paper so as not to distract the reader from the main results.  The second appendix is a discussion about definable sets in continuous logic.  The discussion 
includes some unpublished observations of Bradd Hart, Ward Henson, and the second author regarding zerosets in continuous logic needed in connection with the results in the last section of the paper.  We include this material here and thank Hart and Henson for their permission in allowing us to do so.

This article was the result of the second author's three week visit to Chiba University as funded by a pilot Supplemental Research Collaboration Opportunity in Japan offered by The Division of Mathematical Sciences and the Office of International Science and Engineering of the National Science Foundation and the Japan Society for the Promotion of Science.  The second author would like to thank both organizations for the opportunity and to the first author and Chiba University for their hospitality during his visit.  

The authors would like to thank Jananan Arulseelan, Cyril Houdayer, and Amine Marrakchi for useful comments regarding an earlier draft of the paper.

\section{Preliminaries}

\subsection{W$^*$-probability spaces}\label{subsec Wstar prob}

In this subsection, we include a few reminders about W$^*$-probability spaces; the reader can find much more background in \cite[Section 2]{AGHS25} and the introduction to \cite{goldbringhoudayer}.

A W$^*$-\textbf{probability space} is a pair $(M, \varphi)$ that consists of a $\sigma$-finite von Neumann algebra $M$ endowed with a faithful normal state $\varphi$.  For every $x \in M$, set $$\|x\|_\varphi = \varphi(x^*x)^{1/2} \text{ and }\|x\|_\varphi^\sharp = \sqrt{\frac{\varphi(x^*x) + \varphi(xx^*)}{2}}.$$ On uniformly bounded sets, the topology induced by the norm $\|\cdot\|_\varphi$ (resp.\ $\|\cdot\|_\varphi^\#$) coincides with the strong (resp.\ $\ast$-strong) operator topology.

Given a W$^*$-probability space $(M,\varphi)$, the \textbf{centralizer} of $\varphi$, denoted $M_\varphi$, is defined by $M_\varphi:=\{x\in M \ : \ x\varphi=\varphi x\}$.  One has that $M_\varphi$ is a von Neumann subalgebra of $M$ and that $\varphi|_{M_{\varphi}}$ is a trace on $M_\varphi$.  Note that $\varphi$ is a trace on $M$ itself if and only if $M_\varphi=M$.  The state $\varphi$ is called \textbf{ergodic} if $M_\varphi=\mathbb C$.  (The terminology stems from the fact that this condition is equivalent to the modular flow $\sigma^\varphi$ being ergodic.)  

For W$^*$-probability spaces $(M, \varphi)$ and $(N, \psi)$, we say that $(M, \varphi)$ \textbf{embeds into} $(N, \psi)$, denoted $(M, \varphi) \hookrightarrow (N, \psi)$, if there exist a unital normal $\ast$-embedding $\iota : M \to N$ such that $\psi \circ \iota = \varphi$ and a faithful normal conditional expectation $E : N \to \iota(M)$ such that $\varphi \circ \iota^{-1} \circ E = \psi$.  For other characterizations of embeddings between W$^*$-probability spaces, see \cite[Lemma 2.8]{AGHS25}.

\subsection{Ultraproducts of W$^*$-probability spaces}\label{subsec ultraproducts}
Let $(M_i, \varphi_i)_{i \in I}$ be a family of $W^*$-probability spaces and let $\cU$ be an ultrafilter on $I$. 

Let $\prod_{i\in I}^{\ell^{\infty}}M_i$ denote the C$^*$-algebra of all bounded $I$-indexed sequences in $\prod_{i\in I}M_i$. 

Set 
\[\mathcal{L}_{\cU}=\left \{(x_i)_{i\in I}\in \prod_{i\in I}^{\ell^{\infty}}M_i \ : \ \lim_{i\to \cU}\|x_i\|_{\varphi_i}=0\right \} \text{ and }\mathcal{I}_{\cU}=\mathcal{L}_{\cU}\cap \mathcal{L}_{\cU}^*,\]
where $\mathcal{L}_{\cU}^*=\{x^* : x\in \mathcal{L}_{\cU}\}$. 
In other words, we have $$\mathcal{I}_{\cU}=\{(x_i)_{i\in I}\in \prod_{i\in I}^{\ell^\infty}M_i \ : \ \lim_{i\to \u}\|x_i\|_{\varphi_i}^\#=0\}.$$  Then $\mathcal{L}_{\cU}$ is a closed left ideal of $\prod_{i\in I}^{\ell^{\infty}}M_i$.  The normalizer $\mathcal{M}^{\cU}$ of $\mathcal{I}_{\cU}$ is defined as the largest $C^*$-subalgebra of $\prod_{i\in I}^{\ell^{\infty}}M_i$ in which $\mathcal{I}_{\cU}$ forms a two-sided ideal:
$$\mathcal{M}^{\cU} = \left\{ x \in {\prod_{i\in I}^{\ell^{\infty}}M_i}  \ : \  x \mathcal{I}_{\cU} \subseteq \mathcal{I}_{\cU} \text{ and } \mathcal{I}_{\cU} x \subseteq \mathcal{I}_{\cU} \right\}.$$

Finally, the \textbf{(Ocneanu) ultraproduct} of the family $(M_i, \varphi_i)_{i \in I}$ is defined as the quotient $C^*$-algebra:
$$\prod_{\cU}(M_i, \varphi_i) := \mathcal{M}^{\cU} / \mathcal{I}_{\cU}.$$

For $(x_i)_{i\in I} \in \mathcal{M}^{\cU}$, we let $(x_i)_\u$ denote its image in $\prod_\u (M_i,\varphi_i)$.  We always view $\prod_\u (M_i,\varphi_i)$ as a W$^*$-probability space by equipping it with the state $\varphi$ given by $\varphi((x_i)_\u):=\lim_\u \varphi_i(x_i)$.

If $(M,\varphi)$ is a given W$^*$-probability space and $\u$ is an ultrafilter on $I$, we write $(M,\varphi)^\u$ or $(M^\u,\varphi^\u)$ for the ultraproduct $\prod_\u (M_i,\varphi_i)$, where each $(M_i,\varphi_i)=(M,\varphi)$ and refer to this as the \textbf{(Ocneanu) ultrapower} of $(M,\varphi)$ with respect to $\u$.  There is an obvious diagonal embedding $(M,\varphi)\hookrightarrow (M^\u,\varphi^\u)$ of $(M,\varphi)$ into its ultrapower given by considering equivalence classes of constant sequences; moving forward, we always consider $M$ as a subalgebra of $M^\u$ via this embedding.  We note that the isomorphism type of $M^\u$ is independent of the choice of state $\varphi$ (all choices of state lead to the same  ideal $\mathcal{I}_\u$ and normalizer $\mathcal{M}^\u$) and so we might on occasion simply write $M^\u$ if we are only considering the underlying von Neumann algebra of the ultrapower.

The following lemma is well-known. We include the proof for completeness. 
\begin{lem}\label{lem asymptotic centralizer}  
Let $(M_i,\varphi_i)_{i\in I}$ be an $I$-indexed family of W$^*$-probability spaces and $\cU$ an  ultrafilter on $I$.  Set $(M,\varphi)=\prod_{\cU}(M_i,\varphi_i)$. If $x=(x_i)_{i\in I}\in \prod_{i\in I}^{\ell^{\infty}}M_i$ satisfies $\displaystyle \lim_{i\to \cU}\|x_i\varphi_i-\varphi_ix_i\|=0$, then $(x_i)_{i\in I}\in \mathcal{M}^{\cU}$ and $(x_i)_{\cU}\in M_{\varphi}$. 
\end{lem}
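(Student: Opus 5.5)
The plan is to prove the two assertions in turn: first that $x=(x_i)_i$ normalizes $\mathcal I_\u$, and then that its class commutes with the ultraproduct state. Both rest on one elementary estimate: for $a,b$ in a W$^*$-probability space $(N,\psi)$,
$$\|ba\|_\psi^2=\psi(a^*b^*ba)\le \|b\|^2\psi(a^*a),\qquad \|ab\|_\psi^2=\psi(b^*a^*ab).$$
The second expression is the one that needs control. We write $\psi(b^*a^*ab)=(b\psi)(b^*a^*a)$ and compare it with $(\psi b)(b^*a^*a)=\psi(bb^*a^*a)$, giving
$$|\psi(b^*a^*ab)-\psi(bb^*a^*a)|\le \|b\psi-\psi b\|\,\|b\|\,\|a\|^2 .$$

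\textbf{Step 1 (normalizer).} Let $y=(y_i)\in\mathcal I_\u$, so $\sup_i\|y_i\|\le C$ and $\|y_i\|^\sharp_{\varphi_i}\to 0$. The terms $\|x_iy_i\|_{\varphi_i}$ and $\|(y_ix_i)^*\|_{\varphi_i}=\|x_i^*y_i^*\|_{\varphi_i}$ are handled by the first inequality: they are at most $\|x\|_\infty\|y_i\|_{\varphi_i}$ and $\|x\|_\infty\|y_i^*\|_{\varphi_i}$ respectively. The remaining terms are $\|y_ix_i\|_{\varphi_i}$ and $\|(x_iy_i)^*\|_{\varphi_i}=\|y_i^*x_i^*\|_{\varphi_i}$, and these use the commutator estimate. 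With $a=y_i$ and $b=x_i$ we get
$$\|y_ix_i\|_{\varphi_i}^2\le |\varphi_i(x_ix_i^*y_i^*y_i)|+\|x_i\varphi_i-\varphi_ix_i\|\,\|x\|_\infty C^2 .$$
By Cauchy--Schwarz the first term is at most $\|x_i x_i^*\|\,\|y_i^*y_i\|_{\varphi_i}^{\ }\le \|x\|_\infty^2\, C\,\|y_i\|_{\varphi_i}$, after writing $\varphi_i(cd)\le\varphi_i(cc^*)^{1/2}\varphi_i(d^*d)^{1/2}$ with $d=y_i^*y_i$ and using $\|y_i^*y_i\|_{\varphi_i}\le C\|y_i\|_{\varphi_i}$. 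Both terms tend to $0$ along $\u$. Since $\|x_i^*\varphi_i-\varphi_ix_i^*\|=\|x_i\varphi_i-\varphi_ix_i\|$ (take adjoints of functionals), the same argument with $x_i^*,y_i^*$ in place of $x_i,y_i$ handles $\|y_i^*x_i^*\|_{\varphi_i}$. Hence $x\mathcal I_\u\subseteq\mathcal I_\u$ and $\mathcal I_\u x\subseteq \mathcal I_\u$, so $x\in\mathcal M^\u$.

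\textbf{Step 2 (centralizer).} Let $X=(x_i)_\u$ and let $Z\in M$ with $\|Z\|\le1$. The issue is that an arbitrary $Z$ need not be represented by a uniformly norm-bounded sequence with $\|z_i\|\le 1$. It is, however, a standard fact about Ocneanu ultraproducts that the unit ball of $M$ is the image of norm-$1$ bounded sequences in $\mathcal M^\u$, and I would cite this (e.g.\ from \cite{AH14}). Even without it, it suffices to know that $\varphi(XZ)=\varphi(ZX)$ for all $Z$, and any representative $(z_i)$ is bounded by some $C'$. Then
$$|\varphi(XZ)-\varphi(ZX)|=\lim_\u|\varphi_i(x_iz_i)-\varphi_i(z_ix_i)|\le \lim_\u C'\|x_i\varphi_i-\varphi_ix_i\|=0 ,$$
using $\varphi_i(x_iz_i)=(\varphi_ix_i)(z_i)$ and $\varphi_i(z_ix_i)=(x_i\varphi_i)(z_i)$. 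Thus $X\varphi=\varphi X$, i.e.\ $X\in M_\varphi$.

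The main obstacle is the Step 1 estimate of $\|y_ix_i\|_{\varphi_i}$, where right multiplication is not controlled by $\|\cdot\|_{\varphi_i}$. The commutator trick of moving $x_i$ across $\varphi_i$ at cost $\|x_i\varphi_i-\varphi_ix_i\|$ is exactly what resolves it.
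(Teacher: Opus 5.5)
Your proof is correct, but it reaches both conclusions by a more hands-on route than the paper.

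For the normalizer, the paper passes to the standard form. It cites Haagerup's \cite[Lemma 2.8(b)]{haagerup87} to convert $\|x_i\varphi_i-\varphi_ix_i\|\to 0$ into $\|x_i\xi_{\varphi_i}-\xi_{\varphi_i}x_i\|\to 0$ (and the same for $x_i^*$). Right multiplication by $x_i$ on $L^2(M_i,\varphi_i)$ has norm at most $\|x_i\|$ and commutes with $y_i$. For contractions this gives the linear bound $\|y_ix_i\|_{\varphi_i}\le \|x_i\xi_{\varphi_i}-\xi_{\varphi_i}x_i\|+\|y_i\|_{\varphi_i}$, and similarly for $y_i^*x_i^*$. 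The paper also uses the one-sided ideal properties of $\mathcal{L}_{\mathcal{U}}$ and $\mathcal{L}_{\mathcal{U}}^*$ to reduce to exactly the two ``hard'' terms you isolate.

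You instead stay at the level of functionals. You pay $\|x_i\varphi_i-\varphi_ix_i\|\,\|x\|_\infty C^2$ to move $x_i$ across $\varphi_i$ and then apply Cauchy--Schwarz. This gives a square-root-type estimate, but it needs nothing beyond the definitions and the fact that $\mathcal{I}_{\mathcal{U}}$ is self-adjoint.

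For the centralizer, the paper invokes \cite[Lemma 4.36]{AH14}, i.e.\ the identity $\|X\varphi-\varphi X\|=\lim_{\mathcal{U}}\|x_i\varphi_i-\varphi_ix_i\|$. You instead check $\varphi(XZ)=\varphi(ZX)$ for each $Z\in M$ using an arbitrary bounded representative of $Z$. Membership in $M_\varphi$ requires only this equality, so your aside about lifting the unit ball to norm-one representatives is not needed.

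In short, your argument is self-contained and elementary. The paper's is shorter given the cited lemmas and gives sharper control: a linear estimate in Step 1 and an exact norm identity in Step 2.
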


\begin{proof}
By \cite[Lemma 2.8 (b)]{haagerup87}, we have $\|x_i\xi_{\varphi_i}-\xi_{\varphi_i}x_i\|=\|x_i^*\xi_{\varphi_i}-\xi_{\varphi_i}x_i^*\|\to 0$ along $\cU$ (the result was stated for $I=\mathbb{N}$ but the same proof works in general). 
Let $y=(y_i)_{i\in I}\in \mathcal{I}_{\cU}$. To show that $xy$ and $yx$ belong to $\mathcal{I}_{\cU}$, it suffices to show that $xy\in \mathcal{L}_{\cU}^*$ and $yx\in \mathcal{L}_{\cU}$, as $\mathcal{L}_{\cU}$ (resp. $\mathcal{L}_{\cU}^*$) is a left  (resp. right) ideal of $\prod_{i\in I}^{\ell^{\infty}}M_i$. Without loss of generality, we may assume that $\|x_i\|,\,\|y_i\|\le 1$ for all $i\in I$.\\

To see that $xy\in \mathcal{L}_{\cU}^*$, we compute:
\begin{align*}
    \|y_i^*x_i^*\|_{\varphi_i}&=\|y_i^*x_i^*\xi_{\varphi_i}\|\\
    &\le \|y_i^*(x_i^*\xi_{\varphi_i}-\xi_{\varphi_i}x_i^*)\|+\|y_i^*\xi_{\varphi_i}x_i^*\|\\
    &\le \|y_i^*\|\|x_i^*\xi_{\varphi_i}-\xi_{\varphi_i}x_i^*\|+\|x_i^*\|\|y_i^*\xi_{\varphi_i}\|\\
    &\le \|x_i\xi_{\varphi_i}-\xi_{\varphi_i}x_i\|+\|y_i^*\|_{\varphi_i}\\
    &\xrightarrow{i\to \cU}0.
\end{align*}
Similarly, to see that $yx\in \mathcal{L}_{\cU}$, we compute: 
\begin{align*}
    \|y_ix_i\|_{\varphi_i}&\le \|y_i(x_i\xi_{\varphi_i}-\xi_{\varphi_i}x_i)\|+\|y_i\xi_{\varphi_i}x_i\|\\
    &\le \|x_i\xi_{\varphi_i}-\xi_{\varphi_i}x_i\|+\|y_i\xi_{\varphi_i}\|\\
    &\xrightarrow{i\to \cU}0.
\end{align*}
This shows that $x\in \mathcal{M}^{\cU}$ and thus $(x_i)_{\cU}\in M$ is defined.  We conclude that $(x_i)_{\cU}\in M_{\varphi}$ by \cite[Lemma 4.36]{AH14}. 
\end{proof}

We will also need the following fact:

\begin{prop}\label{typeofultraproduct}
Suppose that $(M_i,\varphi_i)_{i\in I}$ is a family of W$^*$-probability spaces, where $M_i$ is a type III$_{\lambda_i}$-factor.  Set $(M,\varphi):=\prod_\u (M_i,\varphi_i)$.  Suppose further that $\lambda:=\lim_\u \lambda_i>0$.  Then:  
\begin{enumerate}
    \item $M$ is a type III$_\lambda$-factor.
   \item If $\psi_i$ is another faithful normal state on $M_i$, then $\prod_\u (M_i,\varphi_i)$ is $*$-isomorphic to $\prod_\u (M_i,\psi_i)$. Moreover, if $\lambda=1$ then the isomorphism maps $(\varphi_i)_{\u}$ to $(\psi_i)_{\u}$ and thus they are isomorphic as W$^*$-probability spaces. 
\end{enumerate}  
\end{prop}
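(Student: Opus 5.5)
We need a plan for Proposition \ref{typeofultraproduct}. Part (1) is a known result (Ando–Haagerup, \cite{AH14}), so the main work is to reduce to it and check the hypotheses.

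For (1), the plan is to compute the Connes invariant via the modular theory of the ultraproduct. The first ingredient is Ando–Haagerup's theorem that the modular automorphism group of $\varphi=(\varphi_i)_\u$ is the ultraproduct of the modular groups, $\sigma_t^{\varphi}((x_i)_\u)=(\sigma_t^{\varphi_i}(x_i))_\u$. Factoriality of $M$ should follow from the fact that an ultraproduct of factors is a factor. I would prove this via the characterization of a factor through Dixmier-type approximation, or via \cite[Theorem 6.11]{AH14}, which treats ultraproducts of factors of type III$_\lambda$, $0<\lambda\le 1$.

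Next I determine $S(M)$ using the characterization $S(M)\setminus\{0\}=\bigcap_\psi \Sp(\Delta_\psi)$ together with the $T$-set. I expect this to be the main obstacle: one has to control states on $M$ that are not ultraproduct states. The cleanest route is to use the known reduction that every faithful normal state on the ultraproduct can be approximated by ultraproduct states, up to unitary conjugation in the centralizer. Concretely:
\begin{enumerate}
\item For $\lambda_i\to\lambda$ with $\lambda\in(0,1)$, choose periodic states $\psi_i$ with $\sigma^{\psi_i}$ having period $2\pi/|\log\lambda_i|$. The ultraproduct state $\psi=(\psi_i)_\u$ is then $2\pi/|\log\lambda|$-periodic.
\item Each $(M_i)_{\psi_i}$ is a II$_1$ factor, and I will show that $M_\psi$ is again a factor. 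This identifies $M$ as III$_\lambda$ via Connes' criterion that a factor admitting a periodic state with factorial centralizer is III$_\lambda$.
\item For $\lambda=1$, I would use Connes–Størmer transitivity in each $M_i$ and pass it to the limit, in order to show $\Sp(\Delta_\psi)=[0,\infty)$ for every faithful normal $\psi$. Alternatively, I would just cite \cite[Theorem 6.11]{AH14}.
\end{enumerate}

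For (2), the approach is to use Connes–Størmer transitivity. Since $M_i$ is III$_{\lambda_i}$, there are unitaries $u_i\in M_i$ with $\|u_i\varphi_iu_i^*-\psi_i\|\le \epsilon_i$, where $\epsilon_i$ can be made smaller than something like $2(1-\lambda_i^{1/2})/(1+\lambda_i^{1/2})$ plus a slack term (Connes–Haagerup–Størmer diameter of the state space). In the case $\lambda=1$ this bound tends to $0$ along $\u$.

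The map $(x_i)\mapsto (u_i x_i u_i^*)$ always preserves $\ell^\infty$. I then need to check that it carries $\mathcal I_\u$ for the $\varphi_i$ to $\mathcal I_\u$ for the $\psi_i$, or equivalently that it intertwines the normalizers. Here the key point, noted earlier in the paper, is that $\mathcal{I}_\u$ and $\mathcal M^\u$ are independent of the choice of states when the ideals are computed correctly. Concretely, the ideal $\mathcal I_\u$ defined using $\varphi_i$ coincides with that defined using any other faithful normal states: a bounded sequence with $\|x_i\|^\#_{\varphi_i}\to0$ converges $*$-strongly to $0$ in the sense relevant to the ultraproduct. This independence gives a $*$-isomorphism, namely the identity map on $\mathcal M^\u/\mathcal I_\u$ twisted by $(u_i)_\u$; note that $(u_i)\in\mathcal M^\u$ since unitaries normalize $\mathcal I_\u$.

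For the moreover part, the isomorphism $x\mapsto (u_i)_\u x (u_i)_\u^*$ transforms $(\varphi_i)_\u$ into $(u_i\varphi_iu_i^*)_\u$. This equals $(\psi_i)_\u$ because the norm difference tends to $0$ along $\u$.

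The remaining delicate point is justifying the state-independence of $\mathcal I_\u$ for arbitrary, non-equivalent states. For this I would invoke the characterization of $\mathcal M^\u$ through $\u$-equicontinuous families from \cite{AH14}, and apply it to each $M_i$ separately.
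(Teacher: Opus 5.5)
There is a genuine gap in the first assertion of (2) when $0<\lambda<1$. Your periodic-state route to (1) inherits it, because that route needs this isomorphism to pass from $(\psi_i)$ back to $(\varphi_i)$.

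The step that fails is the claimed state-independence of $\mathcal I_{\mathcal U}$. It holds for the ultrapower of a single $(M,\varphi)$, which is all the paper's remark asserts, but not for families of states. For example, take $M_i=M$ diffuse, $\varphi_i=\varphi$, projections $e_i$ with $\varphi(e_i)\to 0$, and $\psi_i=\frac12\varphi+\frac12\varphi(e_i\,\cdot\,e_i)/\varphi(e_i)$. Then $(e_i)_i$ lies in $\mathcal I_{\mathcal U}$ for $(\varphi_i)$ but not for $(\psi_i)$, since $\psi_i(e_i)\ge \frac12$. The $\mathcal U$-equicontinuity description of $\mathcal M^{\mathcal U}$ depends on the family in the same way, so applying it ``to each $M_i$ separately'' does not help. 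After conjugating, you only know $\|u_i\varphi_iu_i^*-\psi_i\|\le d(M_i)+\epsilon_i$, and this stays near $2(1-\lambda^{1/2})/(1+\lambda^{1/2})>0$. A norm bound that does not go to $0$ gives no control over which bounded sequences are $\|\cdot\|^\#$-null, so $\operatorname{Ad}(u_i)$ need not descend to the quotients.

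Separately, your claim that $(u_i)_i\in\mathcal M^{\mathcal U}$ is false in general. In a type III factor you can pick $u_i$ with $u_ie_iu_i^*=f$ and $\varphi(f)\ge\frac12$; then $\|(u_ie_i)^*\|_\varphi^2=\varphi(f)$, so $(u_ie_i)_i\notin\mathcal I_{\mathcal U}$. You don't need this claim anyway, since $\operatorname{Ad}(u_i)$ acts directly on $\prod_{i\in I}^{\ell^\infty}M_i$.

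The missing idea is the Groh--Raynaud ultraproduct $\widetilde M=\prod^{\mathcal U}M_i$. Each Ocneanu ultraproduct $\prod_{\mathcal U}(M_i,\varphi_i)$ is the $\sigma$-finite corner $p\widetilde Mp$, where $p$ is the support of $(\varphi_i)_{\mathcal U}$. The diameter formula $d(\widetilde M)=\lim_{\mathcal U}d(M_i)=2(1-\lambda^{1/2})/(1+\lambda^{1/2})$, together with Connes--Haagerup--St\o rmer, makes $\widetilde M$ a III$_\lambda$ factor. This gives (1). It also gives the isomorphism in (2) for every $\lambda>0$, because any two $\sigma$-finite projections in a type III factor are equivalent.

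For $\lambda=1$ your argument does work, provided you justify it by norm-closeness rather than state-independence:
\begin{itemize}
\item $\operatorname{Ad}(u_i)$ carries $\mathcal I_{\mathcal U}$ and $\mathcal M^{\mathcal U}$ for $(\varphi_i)$ onto those for $(u_i\varphi_iu_i^*)$.
\item The estimate $\bigl|\|x_i\|_{u_i\varphi_iu_i^*}^2-\|x_i\|_{\psi_i}^2\bigr|\le\|x_i\|^2\|u_i\varphi_iu_i^*-\psi_i\|$, and likewise for $x_i^*$, shows these coincide with the ideal and normalizer for $(\psi_i)$ once $\lim_{\mathcal U}\|u_i\varphi_iu_i^*-\psi_i\|=0$.
\item So $\operatorname{Ad}(u_i)$ descends to a state-preserving isomorphism.
\end{itemize}
This is more elementary than the paper's route. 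The paper conjugates by a partial isometry between the two supports in $\widetilde M$, then invokes an arbitrary-index version of Ando--Haagerup's unitary conjugacy of faithful normal states on III$_1$ ultraproducts.

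For (1), citing \cite[Theorem 6.11]{AH14} is essentially the paper's argument, but you must note that its proof extends to arbitrary index sets and ultrafilters. Your other suggestions for (1) do not stand on their own. In particular, an Ocneanu ultraproduct of factors need not be a factor (this fails for III$_0$ factors, cf.\ \cite{AH14}), so the hypothesis $\lambda>0$ has to enter, as it does through $d(\widetilde M)<2$.
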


\begin{proof}
(1) Let $\widetilde{M}=\prod^{\cU}M_i$ be the Groh--Raynaud ultraproduct. Since the diameter formula for the Groh--Raynaud ultraproduct \cite[Lemma 6.10]{AH14} is valid for arbitrary ultrafilters, we get that the state space diameter $d(\widetilde{M})$ of $\widetilde{M}$ satisfies \[\displaystyle d(\widetilde{M})=\lim_{i\to \cU}d(M_i)=2\frac{1-\lambda^{\frac{1}{2}}}{1+\lambda^{\frac{1}{2}}}.\] Therefore, $\widetilde{M}$ is a type III\(_{\lambda}\) factor by \cite{CHS83diameter, HaagerupStormer90}, whence so is $M$, being a $\sigma$-finite corner of $\widetilde{M}$ by \cite[Proposition 3.15]{AH14}. The first part of Item (2) then follows from the fact that any two $\sigma$-finite projections in a type III factor are equivalent (see, for example, \cite[Proposition 1.39]{Takesaki1}). 

If $\lambda=1$, then $\widetilde{M}$ is a type III$_1$ factor. Let $p_1=\operatorname{supp}((\varphi_i)_{\u}), p_2=\operatorname{supp}((\psi_i)_{\u})\in \widetilde{M}$ be the support projections, where we regard $(\varphi_i)_{\u},(\psi_i)_{\u}\in \widetilde{M}_*$ as in \cite[Theorem 3.24]{AH14}. Choose a partial isometry $v\in \widetilde{M}$ such that $v^*v=p_1$ and $vv^*=p_2$. Then $\pi:\prod_\u (M_i,\varphi_i)\to \prod_\u (M_i,\psi_i)$ given by $\pi(x):=vxv^*$ is a $*$-isomorphism and $(\psi_i)_{\u}\circ \pi$ is a faithful normal state on $\prod_{\u}(M_i,\varphi_i)$. 
By \cite[Theorem 4.20]{AH14} (which is stated for the Groh--Raynaud ultrapower of type III$_1$ factors over a nonprincipal ultrafilter on $\N$, but the same argument, using $\lim_{i\to \u}d(M_i)=0$, works for the Groh--Raynaud ultra\emph{product} on a more general index set), there exists $w\in \mathbf{U}(\prod_{\u}(M_i,\varphi_i))$ such that $w^*(\varphi_i)_{\u}w=(\psi_i)_{\u}\circ \pi$. Then $\Theta=\pi\circ \operatorname{Ad}(w)\colon \prod_{\u}(M_i,\varphi_i)\to \prod_{\u}(M_i,\psi_i)$ is a $*$-isomorphism such that $(\psi_i)_{\u}\circ \Theta=(\varphi_i)_{\u}$. 
\end{proof}

Finally, we will often need \cite[Lemma 4.36]{AH14}, which we state here:

\begin{fact}\label{ultralimitbimodule}
Fix a family $(M_i,\varphi_i)$ of W$^*$-probability spaces and an ultrafilter $\u$ on $I$.  Set $(M,\varphi):=\prod_\u (M_i,\varphi_i)$.  Then for any $x=(x_i)_\u,y=(y_i)_\u\in M$, we have
$$\|x\varphi -\varphi y\|=\lim_\u \|x_i\varphi_i -\varphi_i y_i\|.$$
\end{fact}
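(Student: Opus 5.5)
The plan is to prove the two inequalities separately, working with the functionals $\omega_i:=x_i\varphi_i-\varphi_iy_i\in (M_i)_*$, so that $\omega_i(z)=\varphi_i(zx_i)-\varphi_i(y_iz)$, and $\omega:=x\varphi-\varphi y$ on $M$. Throughout I fix representatives $(x_i)_{i\in I},(y_i)_{i\in I}\in\mathcal M^\u$ with $\sup_i\|x_i\|,\sup_i\|y_i\|<\infty$.

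\emph{The inequality $\|\omega\|\le\lim_\u\|\omega_i\|$.} Let $z\in M$ with $\|z\|\le 1$. I will use the standard lifting fact that $z$ has a representative $(z_i)_{i\in I}\in\mathcal M^\u$ with $\|z_i\|\le 1$ for all $i$. Since multiplication in $\prod_\u(M_i,\varphi_i)$ is computed coordinatewise on representatives, and $\varphi((w_i)_\u)=\lim_\u\varphi_i(w_i)$, we get
$$\omega(z)=\lim_\u\big(\varphi_i(z_ix_i)-\varphi_i(y_iz_i)\big)=\lim_\u\omega_i(z_i).$$
Hence $|\omega(z)|\le\lim_\u\|\omega_i\|$. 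Taking the supremum over $z$ gives this direction.

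\emph{The inequality $\|\omega\|\ge\lim_\u\|\omega_i\|$.} This direction is the real content. For each $i$, pick $z_i\in M_i$ with $\|z_i\|\le1$ and $|\omega_i(z_i)|\ge\|\omega_i\|-\epsilon$. The family $(z_i)$ need not lie in $\mathcal M^\u$, so I will regularize it using the modular groups. Take a Gaussian (or any Schwartz) probability density $f_\delta$ concentrated on $[-\delta,\delta]$ and set
$$z_i':=\int_{\mathbb R}f_\delta(t)\,\sigma_t^{\varphi_i}(z_i)\,dt .$$
Then $\|z_i'\|\le1$. Moreover $z_i'$ is entire for $\sigma^{\varphi_i}$, with $\|\sigma^{\varphi_i}_{\pm i/2}(z_i')\|\le C_\delta$ uniformly in $i$. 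Since $\xi_{\varphi_i}z_i'=J\sigma^{\varphi_i}_{-i/2}(z_i')^*J\,\xi_{\varphi_i}$ (and similarly for $z_i'^*$), we get $\|wz_i'\|_{\varphi_i}\le C_\delta\|w\|_{\varphi_i}$, and likewise with adjoints. Hence $(z_i')$ normalizes $\mathcal I_\u$, i.e.\ $(z_i')\in\mathcal M^\u$. By the first step, $|\omega((z_i')_\u)|=\lim_\u|\omega_i(z_i')|$.

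It remains to compare $\omega_i(z_i')$ with $\omega_i(z_i)$. We have $\omega_i(\sigma_t^{\varphi_i}(z_i))=\varphi_i\big(z_i\,\sigma_{-t}^{\varphi_i}(x_i)\big)-\varphi_i\big(\sigma^{\varphi_i}_{-t}(y_i)\,z_i\big)$. By Cauchy--Schwarz and $\|z_i\|\le1$,
$$|\omega_i(\sigma_t^{\varphi_i}(z_i))-\omega_i(z_i)|\le\|\sigma^{\varphi_i}_{-t}(x_i)-x_i\|_{\varphi_i}+\|\sigma^{\varphi_i}_{-t}(y_i)-y_i\|^\sharp_{\varphi_i}\cdot\sqrt2 .$$
So $|\omega_i(z_i')-\omega_i(z_i)|\le\sup_{|t|\le\delta}(\cdots)$ plus a tail term, uniformly small in $i$ along $\u$. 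This uses the Ando--Haagerup characterization that a bounded family lies in $\mathcal M^\u$ iff it is $\u$-equicontinuous for the modular flows, i.e.\ $\lim_{t\to0}\lim_\u\|\sigma^{\varphi_i}_t(x_i)-x_i\|^\sharp_{\varphi_i}=0$. Choosing $\delta$ small then gives $\|\omega\|\ge\lim_\u\|\omega_i\|-2\epsilon$.

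The main obstacle is this second direction. Specifically, it requires the modular equicontinuity of elements of $\mathcal M^\u$ (from \cite{AH14}) and the check that the smoothed elements $z_i'$ uniformly normalize $\mathcal I_\u$. I would quote the former and verify the latter via the analytic-extension estimate above.
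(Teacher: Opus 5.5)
Your argument is correct, but it takes a different route from the one behind the statement. The paper gives no proof of its own and simply quotes \cite[Lemma 4.36]{AH14}, whose argument runs through the Groh--Raynaud ultraproduct $\widetilde M=\prod^\u M_i$. There, the Ocneanu ultraproduct is the corner $q\widetilde Mq$, where $q$ is the support of $(\varphi_i)_\u$. The Groh--Raynaud images of representatives of elements of $\mathcal M^\u$ commute with $q$. Hence $x\varphi-\varphi y$ is the restriction to that corner of $(x_i\varphi_i-\varphi_iy_i)_\u\in\prod_\u (M_i)_*\cong\widetilde M_*$, a functional already supported on $q$. The identity is then just the definition of the norm in a Banach-space ultraproduct, together with Groh's isometric identification of the predual.

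You instead stay inside the Ocneanu picture. The easy inequality comes from lifting the unit ball through the C$^*$-quotient $\mathcal M^\u\to\mathcal M^\u/\mathcal I_\u$. The hard one comes from smoothing near-norming $z_i$ along the modular flows, so that they become uniformly totally bounded and hence lie in $\mathcal M^\u$, exactly as in Fact \ref{TBelementsinultraproducts}(1). You pay for the smoothing with modular equicontinuity of the representatives of $x$ and $y$. The cited route is two lines once the (nontrivial) corner and predual theorems are available, and there the hard inequality is hidden inside Groh's theorem. Yours avoids Groh--Raynaud entirely and shows concretely why no norm is lost. The price is the equicontinuity of elements of $\mathcal M^\u$, itself a nontrivial result of \cite{AH14} proved there only for free ultrafilters on $\mathbb N$.

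Two small repairs are needed. First, the condition you write after ``i.e.'', namely $\lim_{t\to0}\lim_\u\|\sigma^{\varphi_i}_t(x_i)-x_i\|^\sharp_{\varphi_i}=0$, is only pointwise. It holds automatically because $\sigma^{\varphi}_t=(\sigma^{\varphi_i}_t)_\u$, and it does not control $\lim_\u\int f_\delta(t)(\cdots)\,dt$. What you actually use, and need, is $\lim_{\eta\to0}\lim_\u\sup_{|t|\le\eta}\|\sigma^{\varphi_i}_t(x_i)-x_i\|^\sharp_{\varphi_i}=0$. For an arbitrary ultrafilter this follows from tools already in the paper. Set $w_i:=\sigma^{\varphi_i}_{F_a}(x_i)$. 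Then \cite[Lemma 4.14]{AH14} gives $\lim_\u\|x_i-w_i\|^\sharp_{\varphi_i}=\|x-\sigma^{\varphi}_{F_a}(x)\|^\sharp_\varphi$, which tends to $0$ as $a\to\infty$. Meanwhile Proposition \ref{prop compactspec}(ii) and Cauchy's estimate give $\|\sigma^{\varphi_i}_t(w_i)-w_i\|\le ea|t|\,\|x_i\|$ uniformly in $i$. Since $\sigma^{\varphi_i}_t$ preserves $\|\cdot\|^\sharp_{\varphi_i}$, these combine to give the uniform bound.

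Second, the relation you need is $z_i'\xi_{\varphi_i}=J\sigma^{\varphi_i}_{i/2}(z_i')^*J\xi_{\varphi_i}$ for $\sigma_t=\operatorname{Ad}\Delta^{it}$; the left-hand side should not be $\xi_{\varphi_i}z_i'$. Since you bound both $\sigma^{\varphi_i}_{\pm i/2}(z_i')$, the conclusion is unaffected.
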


\subsection{The bicentralizer}\label{BCsection}

Fix a W$^*$-probability space $(M,\varphi)$.  The \textbf{asymptotic centralizer} of $\varphi$ is the set $\operatorname{AC}(M,\varphi)$ of all bounded sequences $(x_n)$ from $M$ with the property that $\lim_{n\to \infty}\|x_n\varphi-\varphi x_n\|=0$.  Of course $M_\varphi\subset \operatorname{AC}(M,\varphi)$ (after identifying elements of $M$ with constant sequences).  The \textbf{bicentralizer of $\varphi$} is the set $\BC(M,\varphi)$ of all those $a\in M$ with the property that $\lim_n \|ax_n-x_na\|_\varphi=0$ whenever $(x_n)\in \operatorname{AC}(M,\varphi)$.  The bicentralizer $\BC(M,\varphi)$ is always a von Neumann subalgebra of $M$ with $\BC(M,\varphi)\subseteq (M_\varphi)'\cap M$ (see \cite[Proposition 1.3(1)]{haagerup87}). The following result is due to Okayasu \cite{okayasu24}: 
\begin{fact}\label{bc for generalfactors}
If $\BC(M,\varphi)=\C$, then $M$ is a factor and exactly one of the following statements holds:
\begin{itemize}
\item[(1)] $M$ is a finite factor and $\varphi$ is a tracial state. \item[(2)] $M$ is a type III$_{\lambda}$ factor for some $0<\lambda<1$ and $\varphi$ is a $\frac{2\pi}{|\log \lambda|}$-periodic state. 
\item[(3)] $M$ is a type III$_1$ factor.
\end{itemize} 
\end{fact}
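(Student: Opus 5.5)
Starting from $\BC(M,\varphi)=\C$, I will first show $M$ is a factor. Since $\BC(M,\varphi)\subseteq (M_\varphi)'\cap M$ always holds, the work is to show that the center $Z(M)$ sits inside $\BC(M,\varphi)$. Let $z\in Z(M)$ and let $(x_n)\in\operatorname{AC}(M,\varphi)$. Then $[z,x_n]=0$ for every $n$, so trivially $\lim_n\|zx_n-x_nz\|_\varphi=0$. Hence $z\in\BC(M,\varphi)=\C$, and $M$ is a factor. It is then natural to split according to the type of $M$, using the centralizer $M_\varphi$ and its relative commutant.

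\emph{Semifinite case.} Suppose $M$ is semifinite. Being a factor with the faithful normal state $\varphi$, $M$ is $\sigma$-finite, and $\varphi=\tau(h\,\cdot)$ for a faithful normal semifinite trace $\tau$ and a positive nonsingular $h$ affiliated with $M$. The spectral projections of $h$ lie in $M_\varphi$. I claim they also lie in $\BC(M,\varphi)$. Indeed, $\sigma^\varphi_t=\operatorname{Ad}(h^{it})$, and Haagerup's results give that the bicentralizer contains $\{h^{it}\}''$. The direct argument is that $h^{it}$ commutes with every element of $M_\varphi$, and an asymptotically centralizing sequence asymptotically commutes with $h^{it}$ by Connes' standard $\|x\varphi-\varphi x\|$ estimates. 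Granting this, $h$ is scalar, so $\varphi$ is a trace. A semifinite factor carrying a faithful normal tracial state is finite, which gives (1).

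\emph{Type III case.} Suppose $M$ is type III$_\lambda$ with $\lambda\in[0,1]$. The same observation, that $\sigma^\varphi$ is implemented asymptotically and that $\BC(M,\varphi)$ contains the "inner part" of the modular flow, should show that $\BC(M,\varphi)=\C$ forces $M_\varphi'\cap M=\C$. When $\lambda<1$, I would then invoke Connes' analysis. For $0<\lambda<1$, if $M_\varphi'\cap M=\C$ then $\varphi$ has point spectrum in $\lambda^{\mathbb Z}$ and is a periodic state with period $2\pi/|\log\lambda|$; equivalently $\sigma^\varphi_{T}=\mathrm{id}$ with $T=2\pi/|\log\lambda|$. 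For $\lambda=0$, Connes–Takesaki gives $M_\varphi'\cap M\neq\C$ for every faithful normal state on a type III$_0$ factor, since the flow of weights is non-transitive. This rules out III$_0$ and leaves (2) or (3). The three alternatives are mutually exclusive because the types are distinct.

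\emph{Main obstacle.} The hard step is the type III$_\lambda$ case with $\lambda<1$: deducing exact periodicity of $\varphi$ itself, and not just of some other state, from triviality of the bicentralizer. Here I would use the decomposition $M=M_\varphi\rtimes$ (discrete) when $\varphi$ has discrete spectrum. I would show that a nonperiodic $\varphi$ yields a nontrivial element of $\BC$: by Haagerup's Dixmier-type averaging, if $\varphi$ is not periodic, then averaging over unitaries approximately in $M_\varphi$ produces a nonscalar element commuting asymptotically with every asymptotically centralizing sequence, namely the spectral projections of the $\varphi$-modular operator restricted to the discrete core.
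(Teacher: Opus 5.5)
The paper does not prove this statement; it quotes it from Okayasu \cite{okayasu24}, so your proposal has to stand on its own. Your factoriality argument is correct. So is the semifinite case. It rests on the standard fact that a bounded sequence with $\|x_n\varphi-\varphi x_n\|\to 0$ satisfies $\|\sigma^\varphi_t(x_n)-x_n\|_\varphi\to 0$ (e.g.\ via Lemma~\ref{lem asymptotic centralizer} and $\sigma^{\varphi^{\cU}}_t=(\sigma^\varphi_t)^{\cU}$ from \cite{AH14}). Consequently any unitary of $M_\varphi$ implementing some $\sigma^\varphi_t$ lies in $\BC(M,\varphi)$.

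The genuine gap is the type III step. You assert, but do not prove, that $\BC(M,\varphi)=\C$ forces $M_\varphi'\cap M=\C$, and this is false as a general principle. Take a III$_1$ factor with trivial bicentralizer, such as $\mathcal R_\infty$, and an ergodic state $\varphi$ on it \cite{marrakchivaes2024ergodic}. By Fact~\ref{bicentralizerdichotomy}, $\BC(M,\varphi)=\C$, yet $M_\varphi'\cap M=M$. So the ``inner part of the modular flow'' heuristic cannot deliver this implication. Since both your periodicity conclusion in (2) and your exclusion of III$_0$ are routed through it, neither is established. Your ``main obstacle'' paragraph does not repair this. Spectral projections of $\Delta_\varphi$ are not elements of $M$. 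Also, an arbitrary faithful normal state need not have discrete modular spectrum, so no decomposition of $M$ as $M_\varphi$ crossed by a discrete group is available.

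For $0<\lambda<1$, the fix is to run your semifinite argument with Connes' $T$-invariant in place of $h^{it}$.
\begin{itemize}
\item Let $T_0=2\pi/|\log\lambda|$, which lies in $T(M)$. Take a $T_0$-periodic faithful normal state $\varphi_0$, and set $u_s:=(D\varphi:D\varphi_0)_s$ and $u:=u_{T_0}$. Then $\sigma^\varphi_{T_0}=\operatorname{Ad}(u)$.
\item The cocycle identity gives $u_{s+T_0}=u_s\,\sigma^{\varphi_0}_s(u)$ and also $u_{T_0+s}=u\,\sigma^{\varphi_0}_{T_0}(u_s)=u\,u_s$. Comparing these yields $\sigma^\varphi_s(u)=u_s\sigma^{\varphi_0}_s(u)u_s^*=u$, i.e.\ $u\in\mathbf{U}(M_\varphi)$.
\item Hence $\|ux_n-x_nu\|_\varphi=\|\sigma^\varphi_{T_0}(x_n)-x_n\|_\varphi\to 0$ for every $(x_n)\in\operatorname{AC}(M,\varphi)$. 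So $u\in\BC(M,\varphi)=\C$, and therefore $\sigma^\varphi_{T_0}=\mathrm{id}$.
\end{itemize}
Your semifinite case is just the instance $T(M)=\mathbb{R}$ of this argument.

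This device cannot exclude III$_0$, because $T(M)$ may be $\{0\}$ there, so a further ingredient is needed. The correct version of your ``large centralizer'' intuition lives in the ultrapower. By Fact~\ref{p3.3}, $\BC(M,\varphi)=\C$ forces $(M^{\cU}_{\varphi^{\cU}})'\cap M^{\cU}=\C$ for a nonprincipal ultrafilter $\cU$ on $\mathbb N$. In particular $M^{\cU}$ is a factor. This contradicts Ando--Haagerup's theorem that the Ocneanu ultrapower of a III$_0$ factor is not a factor \cite{AH14}. Your observation that no state on a III$_0$ factor has large centralizer is true, via the Connes spectrum. However, it only becomes usable once some such bridge from trivial bicentralizer is in place.
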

Moreover, in case $M$ is a III$_1$ factor, one always has the following dichotomy:

\begin{fact}\label{bicentralizerdichotomy}
Suppose that $M$ is a $\sigma$-finite III$_1$ factor.  Then exactly one of the following possibilities holds:
\begin{enumerate}
    \item $\BC(M,\varphi)=\bb C$ for all faithful, normal states $\varphi$ on $M$.
    \item $\BC(M,\varphi)$ is a III$_1$ factor for all faithful, normal states $\varphi$ on $M$.
\end{enumerate}
\end{fact}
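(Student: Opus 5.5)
The plan is to prove the dichotomy in two steps: first that $\BC(M,\varphi)$ does not depend on $\varphi$ up to a canonical isomorphism, and second that a nontrivial bicentralizer is a III$_1$ factor. Throughout I would use Haagerup's ultrapower description of the bicentralizer,
$$\BC(M,\varphi)=\big((M^\omega)_{\varphi^\omega}\big)'\cap M,$$
for a free ultrafilter $\omega$. This follows directly from the asymptotic-centralizer definition in Subsection \ref{BCsection}, together with Lemma \ref{lem asymptotic centralizer} and Fact \ref{ultralimitbimodule}.

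\emph{Independence of the state.} Fix faithful normal states $\varphi,\psi$ on $M$. I would use Connes' $2\times 2$ matrix trick. Set $P=M\otimes M_2(\C)$ with $\theta=\tfrac12(\varphi\oplus\psi)$. Since $e_{11},e_{22}\in P_\theta$ and $\BC(P,\theta)\subseteq P_\theta'\cap P$, every element of $\BC(P,\theta)$ commutes with $e_{11}$ and $e_{22}$. One then checks from the ultrapower description that
$$e_{11}\BC(P,\theta)e_{11}=\BC(M,\varphi)\otimes e_{11},\qquad e_{22}\BC(P,\theta)e_{22}=\BC(M,\psi)\otimes e_{22}.$$
This works because the centralizer of $\theta^\omega$, compressed to the corners, is $(M^\omega)_{\varphi^\omega}$ and $(M^\omega)_{\psi^\omega}$ respectively. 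The key input is the III$_1$ hypothesis through Proposition \ref{typeofultraproduct}(2) and \cite[Theorem 4.20]{AH14}: there is a unitary $w\in M^\omega$ with $w\varphi^\omega w^*=\psi^\omega$. Hence $w\otimes e_{21}$ is a partial isometry in $(P^\omega)_{\theta^\omega}$ linking the two corners. I would use it to show that the central supports of $e_{11}$ and $e_{22}$ agree in the relative commutant. The consequence is that $\BC(M,\varphi)\otimes e_{11}$ and $\BC(M,\psi)\otimes e_{22}$ are corners of the same algebra, cut by equivalent projections, and therefore isomorphic. In particular $\BC(M,\varphi)=\C$ if and only if $\BC(M,\psi)=\C$.

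\emph{Nontrivial bicentralizer is a III$_1$ factor.} Write $B=\BC(M,\varphi)$. The algebra $B$ is $\sigma^\varphi$-invariant, so there is a $\varphi$-preserving conditional expectation onto it, and $B\subseteq M_\varphi'\cap M$. For factoriality, I would show that $B'\cap B\subseteq \BC(M,\varphi)\cap \BC(M,\varphi)'$ reduces to scalars. The idea is to argue, again via the ultrapower and $M$ being a III$_1$ factor, that $B'\cap M\supseteq (M^\omega)_{\varphi^\omega}\cap M$ together with enough of $M$ to generate a factor. More concretely, I would use that $M_\varphi'\cap M$ and its relative commutant already have trivial intersection with the centre of $M$. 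For the type, the modular flow of $\varphi|_B$ is the restriction of $\sigma^\varphi$. I would show that the Connes spectrum of $\varphi|_B$ is all of $\R$ by transporting, through the state-independence above, eigenoperators of approximately periodic states. Ruling out the semifinite case follows because a trace on $B$ would produce a nontrivial element of $M_\varphi$ inside $B$ commuting with everything, which is impossible in a III$_1$ factor unless $B=\C$.

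\emph{Main obstacle.} I expect the hardest step to be the identification of the corners $e_{ii}\BC(P,\theta)e_{ii}$ with the bicentralizers of $\varphi$ and $\psi$, and the use of $w\otimes e_{21}$ to link them. The unitary $w$ lies only in $M^\omega$, not in $M$, so conjugating by it does not obviously preserve $M$. My first attempt would be to control this with the relative-commutant formula together with Fact \ref{ultralimitbimodule}, to see that $\operatorname{Ad}(w)$ maps $((M^\omega)_{\varphi^\omega})'\cap M$ into $((M^\omega)_{\psi^\omega})'\cap M$. Failing that, I would fall back on the canonical isomorphism $\beta_{\psi,\varphi}$ of \cite{AHHM20}.
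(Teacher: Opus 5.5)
Your first step has a genuine gap, at exactly the obstacle you flag, and the reason you give for the corner identity proves only one inclusion. Take $X=a\otimes e_{11}+b\otimes e_{22}\in\BC(P,\theta)$. Commuting with $(M^\omega)_{\varphi^\omega}\otimes e_{11}\subseteq (P^\omega)_{\theta^\omega}$ gives $a\in\BC(M,\varphi)$. But commuting with $w\otimes e_{21}$ forces $b=waw^*$. Hence $\BC(M,\varphi)\otimes e_{11}\subseteq e_{11}\BC(P,\theta)e_{11}$ holds if and only if $w\BC(M,\varphi)w^*\subseteq M$. That is the whole content of state independence, not a consequence of compressing centralizers. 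Also, $e_{11},e_{22}$ lie in $\BC(P,\theta)'$, not in $\BC(P,\theta)$. So you are dealing with induced algebras $\BC(P,\theta)e_{ii}$, not corners, and the linking element only shows that $X\mapsto Xe_{ii}$ is injective. Neither Fact \ref{ultralimitbimodule} nor a $w$ obtained abstractly from \cite[Theorem 4.20]{AH14} can put $waw^*$ back into $M$.

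The missing idea is to build $w$ from unitaries of $M$ itself. By Connes--St\o rmer transitivity, pick $u_n\in\mathbf{U}(M)$ with $\|u_n\varphi u_n^*-\psi\|\to 0$. Then $v=u_m^*u_n$ satisfies $\|v\varphi-\varphi v\|=\|v\varphi v^*-\varphi\|\to 0$ as $m,n\to\infty$. For $x\in\BC(M,\varphi)$, Fact \ref{haagerupBCchar} then gives $\|u_m^*u_nxu_n^*u_m-x\|_\varphi\to 0$. Since $\|y\|_{u_m\varphi u_m^*}=\|u_m^*yu_m\|_\varphi$, the bounded sequence $(u_nxu_n^*)_n$ is Cauchy for $\|\cdot\|_\psi^\sharp$, so it converges strongly$^*$ to some $\beta(x)\in M$. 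One checks that $(u_n)_n$ represents a unitary $w\in M^\omega$ with $w\varphi^\omega w^*=\psi^\omega$. Then $wxw^*=\beta(x)\in M\cap((M^\omega)_{\psi^\omega})'=\BC(M,\psi)$ by Fact \ref{BCultrapower}. This gives a unital injective $*$-homomorphism $\BC(M,\varphi)\to\BC(M,\psi)$, essentially the map $\beta_{\psi,\varphi}$ of \cite{AHHM20}. With this $w$ your corner identity becomes true, but the $2\times 2$ matrix trick is no longer needed.

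Your second step is not a proof as written either. The triviality of the centre of $M$ says nothing about the centre of $B=\BC(M,\varphi)$. Likewise, ``transporting eigenoperators of approximately periodic states'' is not an argument for full Connes spectrum. What you need is that $\varphi|_B$ is \emph{ergodic}. Since $B$ is $\sigma^\varphi$-invariant, $B_{\varphi|_B}=B\cap M_\varphi$. Every element of $B\cap M_\varphi$ both lies in $(M^\omega)_{\varphi^\omega}$ and commutes with it, so it lies in its centre. That centre is trivial because $(M^\omega)_{\varphi^\omega}$ is a II$_1$ factor when $M$ is a $\sigma$-finite III$_1$ factor; this is the fact from \cite[Proposition 4.24]{AH14} that the paper invokes in the proof of Theorem \ref{prop zero iff trivialBC}. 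Your semifinite-exclusion remark silently relies on exactly this fact.

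Ergodicity then gives everything at once. The centre of $B$ is fixed by $\sigma^{\varphi|_B}$, hence is contained in $B_{\varphi|_B}=\C$, so $B$ is a factor. Longo's theorem \cite{longo1979notes} (a nontrivial von Neumann algebra with a faithful normal ergodic state is a III$_1$ factor) gives the type. This is the route the paper outlines after the statement, through \cite[Theorem 3.5]{houdayerisono17prime} and Longo.
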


\textbf{Connes' bicentralizer problem} asks whether or not item (1) in the previous fact holds for every III$_1$ factor with separable predual.

  The proof of Fact \ref{bicentralizerdichotomy} uses the notion of a \textbf{self-bicentralizing state}, where a faithful normal state $\varphi$ on $M$ is called self-bicentralizing if $\BC(M,\varphi)=M$.  Suppose that $(M,\varphi)$ is a III$_1$ factor for which $\BC(M,\varphi)\not=\bb C$.  Then by \cite[Theorem 3.5]{houdayerisono17prime}, $\tilde{\varphi}:=\varphi|_{\BC(M,\varphi)}$ is a faithful normal state on a nontrivial von Neumann algebra $\tilde{M}:=\BC(M,\varphi)$ such that
$\BC(\tilde{M},\tilde{\varphi})=\tilde{M}$. A state with this property is called a self-bicentralizing state. Moreover,  $\tilde{M}_{\tilde{\varphi}}=\C$ holds, that is, $\tilde\varphi$ is an ergodic state.  By a result of Longo \cite[Proof of Theorem 3]{longo1979notes}, any nontrivial von Neumann algebra that admits a faithful, normal, ergodic state must be a III$_1$ factor.  The interested reader may consult \cite{AHHM20,Marrakchi20} for more information about self-bicentralizing states.

One of the main interests in the bicentralizer problem is the following equivalent formulation, due to Haagerup \cite[Theorem 3.1]{haagerup87}:

\begin{fact}\label{bicentralizerlargecentralizer}
For a III$_1$ factor $M$ with separable predual, the following are equivalent:
\begin{enumerate}
    \item $M$ has trivial bicentralizer.
    \item There is a faithful, normal state $\varphi$ on $M$ with \textbf{large centralizer}, that is, for which $M_\varphi'\cap M=\bb C$.
\end{enumerate}
\end{fact}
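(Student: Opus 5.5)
Haagerup's theorem (Fact \ref{bicentralizerlargecentralizer}) is cited, not reproved here, so the plan is to recall the standard route in each direction.

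\emph{(2)$\Rightarrow$(1).} Let $\varphi$ be a state with $M_\varphi'\cap M=\bb C$. From Subsection \ref{BCsection} we have $\BC(M,\varphi)\subseteq M_\varphi'\cap M=\bb C$, since constant sequences from $M_\varphi$ lie in $\operatorname{AC}(M,\varphi)$. Hence $\BC(M,\varphi)=\bb C$. By the dichotomy in Fact \ref{bicentralizerdichotomy}, the bicentralizer is then trivial for every faithful normal state. This direction is immediate.

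\emph{(1)$\Rightarrow$(2).} Here we must construct a state with large centralizer. Fix a faithful normal state $\varphi$ with $\BC(M,\varphi)=\bb C$. The key input is Haagerup's Dixmier-type description of the bicentralizer. For $a\in M$, let $K_a$ be the $\|\cdot\|_\varphi$-closure of the convex hull of $\{uau^*\}$, taken over unitaries $u$ with $\|u\varphi-\varphi u\|$ small. Haagerup's description says that $\BC(M,\varphi)\cap K_a$ is nonempty, or equivalently that $\varphi(a)1$ can be approximated in this way when the bicentralizer is trivial.

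Using this, we approximate $\varphi$ by states of the form $\varphi\circ\operatorname{Ad}(u)$ and build the state inductively. Enumerate a $\|\cdot\|_\varphi$-dense sequence $(a_n)$ in the unit ball; this is where separability of the predual is used. At stage $n$ we find a finite-dimensional abelian (or matrix) subalgebra $A_n$ inside the centralizer of a perturbed state $\varphi_n$, with two properties:
\begin{enumerate}
\item $A_n$ contains $A_{n-1}$;
\item $E_{A_n'\cap M}(a_k)$ lies within $2^{-n}$ of $\varphi_n(a_k)1$ for all $k\le n$.
\end{enumerate}
At the same time we keep $\|\varphi_n-\varphi_{n-1}\|<2^{-n}$. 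This uses Connes--Størmer transitivity for III$_1$ factors to adjust states by unitaries, together with the Dixmier averaging property: averaging by unitaries in the asymptotic centralizer yields scalars.

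Let $\psi=\lim\varphi_n$, and let $A=\left(\bigcup A_n\right)''\subseteq M_\psi$. The limit gives $A'\cap M=\bb C$, hence $M_\psi'\cap M\subseteq A'\cap M=\bb C$.

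The main obstacle is the inductive step. We need almost-centralizing unitaries to produce exact centralizer elements of a slightly perturbed state, while keeping control over the relative commutant. Concretely, one must pass from approximate averaging in $\|\cdot\|_\varphi$ to the construction of projections in the centralizer of a nearby state. This relies on Haagerup's technical lemmas on the asymptotic centralizer, such as Lemma \ref{lem asymptotic centralizer}, used in sequence form. I would first try to realize these approximate unitaries inside the ultrapower centralizer $(M^\u)_{\varphi^\u}$, then lift them back to $M$ via a standard $\epsilon$-perturbation.
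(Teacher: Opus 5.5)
Your proof of (2)$\Rightarrow$(1) is correct and is the standard one. Constant sequences from $M_\varphi$ lie in $\operatorname{AC}(M,\varphi)$, so $\BC(M,\varphi)\subseteq M_\varphi'\cap M=\bb C$, and Fact \ref{bicentralizerdichotomy} makes triviality independent of the state. The paper does not prove this fact; it quotes it from \cite[Theorem 3.1]{haagerup87}, and all the difficulty lies in (1)$\Rightarrow$(2). There your proposal has a genuine gap. The inductive step, which you yourself call ``the main obstacle'', is not carried out, and the tools you name do not produce it.

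Trivial bicentralizer gives you, via the Dixmier-type characterization in Subsection \ref{BCsection}, finitely many unitaries $u_1,\dots,u_N$ with $\|u_i\varphi-\varphi u_i\|$ small and some convex combination of the $u_i^*au_i$ close to $\varphi(a)1$ in $\|\cdot\|_\varphi$. Your condition (2) is of a different kind. For a finite-dimensional $A_n\subseteq M_{\varphi_n}$ one has $E_{A_n'\cap M}(x)=\int_{\mathbf U(A_n)}uxu^*\,du$, an average over the \emph{whole} unitary group of an algebra sitting in the \emph{exact} centralizer of $\varphi_n$. The $u_i$ do not generate a finite-dimensional algebra, need not commute with $A_{n-1}$, and lie in the centralizer of no particular state. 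So nothing in your sketch yields an $A_n$ satisfying your (1) and (2).

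The tools you name do not bridge this. Connes--St{\o}rmer transitivity only gives approximate conjugacy $\|u\varphi u^*-\varphi'\|<\epsilon$ and never places an element exactly in a centralizer. There is also no ``standard $\epsilon$-perturbation'' lifting centralizer elements of $M^{\cU}_{\varphi^{\cU}}$ back to $M$. With the state fixed this is false: by the remark after Lemma \ref{lem representing u by 1+deltabdd}, for ergodic $\varphi$ no nonscalar unitary of $M^{\cU}_{\varphi^{\cU}}$ has representatives in $M_\varphi$. The state itself must therefore be moved, compatibly with $A_{n-1}$ and with the earlier estimates, and you do not say how.

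One plausible way to complete your strategy would use the following steps:
\begin{enumerate}
\item Use that $\varphi^{\cU}$ has large centralizer when $\BC(M,\varphi)=\bb C$ (the consequence of Fact \ref{p3.3} noted in the paper).
\item Prove a Popa-type local quantization in the II$_1$ factor $M^{\cU}_{\varphi^{\cU}}$, producing a finite-dimensional algebra, say a copy of $M_s(\bb C)$, whose relative-commutant expectation nearly scalarizes $a_1,\dots,a_n$. This is an algebra, not a list of unitaries.
\item Lift its matrix units to $M$ and replace the state by $|G|^{-1}\sum_{g\in G}g\varphi g^*$, where $G$ is the finite group of signed permutation unitaries of the lifted matrix units. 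Each $g$ is almost centralizing by Fact \ref{ultralimitbimodule}, so this puts the algebra exactly in the centralizer of a nearby faithful state.
\item Run each later stage inside a corner $eMe$, with $e$ a minimal projection of $A_{n-1}$ (again a III$_1$ factor with trivial bicentralizer). Then extend the new corner state by the trace on $A_{n-1}$, so that $A_{n-1}$ stays centralized.
\end{enumerate}
You would also have to address faithfulness of $\psi=\lim\varphi_n$, since a norm limit of faithful states need not be faithful; one option is to pass to $s(\psi)Ms(\psi)\cong M$. None of this is in the proposal, and it is where the substance of Haagerup's theorem lies.
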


The following ultrapower characterization of the bicentralizer appears as \cite[Proposition 3.3]{houdayerisono17prime}:

\begin{fact}\label{BCultrapower}
Given a W$^*$-probability space $(M,\varphi)$ and a nonprincipal ultrafilter $\u$ on $\N$, we have that $\BC(M,\varphi)=(M^\u_{\varphi^\u})'\cap M$.
\end{fact}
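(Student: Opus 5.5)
The plan is to prove the two inclusions separately. Both directions move between ultralimits along $\u$ and honest sequential limits, using Lemma \ref{lem asymptotic centralizer} and Fact \ref{ultralimitbimodule}. Throughout I use that for $a\in M$ and $y=(y_n)_\u\in M^\u$ one has $[a,y]=([a,y_n])_\u$, and hence
$$\|ay-ya\|_{\varphi^\u}=\lim_{n\to\u}\|ay_n-y_na\|_\varphi,$$
directly from the definition of $\varphi^\u$.

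\emph{Inclusion $\supseteq$.} Let $a\in (M^\u_{\varphi^\u})'\cap M$ and suppose, towards a contradiction, that $a\notin\BC(M,\varphi)$.
\begin{enumerate}
\item There is then $(x_n)\in\operatorname{AC}(M,\varphi)$ with $\|ax_n-x_na\|_\varphi\not\to 0$. After passing to a subsequence, I may assume $\|ax_n-x_na\|_\varphi\geq c>0$ for all $n$; the subsequence is still in $\operatorname{AC}(M,\varphi)$.
\item Since $\|x_n\varphi-\varphi x_n\|\to0$, it tends to $0$ along $\u$ because $\u$ is nonprincipal. Lemma \ref{lem asymptotic centralizer} then gives $(x_n)\in\mathcal M^\u$ and $y:=(x_n)_\u\in M^\u_{\varphi^\u}$.
\item Hence $ay=ya$, so $0=\|ay-ya\|_{\varphi^\u}=\lim_\u\|ax_n-x_na\|_\varphi\geq c$, which is a contradiction.
\end{enumerate}

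\emph{Inclusion $\subseteq$.} Let $a\in\BC(M,\varphi)$ and $x=(x_n)_\u\in M^\u_{\varphi^\u}$, with $(x_n)$ a bounded representative in $\mathcal M^\u$.
\begin{enumerate}
\item By Fact \ref{ultralimitbimodule}, $\lim_\u\|x_n\varphi-\varphi x_n\|=\|x\varphi^\u-\varphi^\u x\|=0$.
\item Suppose $c:=\|ax-xa\|_{\varphi^\u}=\lim_\u\|ax_n-x_na\|_\varphi$ were positive. For each $k$, the set
$$A_k=\{n:\|x_n\varphi-\varphi x_n\|<1/k,\ \|ax_n-x_na\|_\varphi>c/2\}$$
belongs to $\u$, and is therefore infinite.
\item Choose $n_1<n_2<\cdots$ with $n_k\in A_k$. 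Then $(x_{n_k})_k$ is a bounded sequence in $\operatorname{AC}(M,\varphi)$ whose commutators with $a$ stay above $c/2$. This contradicts $a\in\BC(M,\varphi)$, so $c=0$ and $a$ commutes with $x$.
\end{enumerate}

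The only delicate point is the mismatch between the sequential definition of $\BC$ and ultralimits along $\u$. The diagonal extraction from sets in $\u$ in the second inclusion handles it, and it needs $\u$ to be nonprincipal on $\N$ so that each $A_k$ is infinite. The remaining ingredient is that representatives of centralizer elements of $M^\u$ are asymptotically central along $\u$. Fact \ref{ultralimitbimodule} supplies this directly, so no unitary-lifting argument is needed.
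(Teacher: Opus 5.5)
Your proof is correct. The paper gives no proof of this Fact and simply cites \cite[Proposition 3.3]{houdayerisono17prime}; your argument is the standard proof of that result. It uses Lemma \ref{lem asymptotic centralizer} for $\supseteq$, and Fact \ref{ultralimitbimodule} with a diagonal extraction along sets in $\u$ for $\subseteq$, while subsequences reconcile the sequential definition of $\BC$ with $\u$-limits; the final step tacitly uses that $\varphi^\u$ is faithful, and the argument is self-contained given the paper's preliminaries.
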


We will need to use the following facts about the bicentralizer due to Haagerup.  Fix a W$^*$-probability space $(M,\varphi)$, $a\in M$, and $\delta>0$.  Define
$$\epsilon_M(a,\delta):=\sup\{\|u^*au-a\|_\varphi \ : \ u\in U(M), \|u\varphi-\varphi u\|\leq \delta\}.$$  The following fact is embedded in the proof of \cite[Lemma 1.2]{haagerup87}:

\begin{fact}\label{haagerupBCchar}
For a W$^*$-probability space $(M,\varphi)$ and $a\in M$, one has that $a\in \BC(M,\varphi)$ if and only if:  for all $\epsilon>0$, there is $\delta>0$ such that $\epsilon_M(a,\delta)<\epsilon$.
\end{fact}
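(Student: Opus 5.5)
One direction is immediate and I handle it first. Suppose that for every $\epsilon>0$ there is $\delta>0$ with $\epsilon_M(a,\delta)<\epsilon$. Let $(x_n)\in\operatorname{AC}(M,\varphi)$ be a bounded sequence. We must show $\|ax_n-x_na\|_\varphi\to 0$, and the hypothesis only controls conjugation by unitaries, so the first step is to reduce to unitary sequences. Each $x_n$ is a linear combination of four unitaries, but these unitaries need not asymptotically commute with $\varphi$. The better route is the following:
\begin{itemize}
\item Pass to real and imaginary parts; since $\|x^*\varphi-\varphi x^*\|=\|x\varphi-\varphi x\|$, these remain in $\operatorname{AC}(M,\varphi)$.
\item Rescale so that $\|x_n\|\le 1/2$, and set $u_n:=e^{ix_n}$ for self-adjoint $x_n$. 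Expanding the exponential as a norm-convergent power series and using $\|y z\varphi-\varphi yz\|\le\|y\|\|z\varphi-\varphi z\|+\|z\|\|y\varphi-\varphi y\|$ gives $\|u_n\varphi-\varphi u_n\|\le e^{\|x_n\|}\|x_n\varphi-\varphi x_n\|\to 0$.
\item Recover $x_n$ by a Cayley or logarithm functional calculus, $x_n=-i\log u_n$, with the logarithm given by a power series in $u_n-1$ that converges in norm uniformly because $\|u_n-1\|\le e^{1/2}-1<1$.
\end{itemize}
Since $\|u_n\varphi-\varphi u_n\|\to0$, for any $\delta$ we eventually have $\|u_n^*au_n-a\|_\varphi\le\epsilon_M(a,\delta)$. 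We need commutation with all powers $(u_n-1)^k$. The cleaner option is to avoid the log altogether and get $\|[a,x_n]\|_\varphi\to0$ straight from $\|[a,u_n^{t}]\|_\varphi\to0$ for $t\in[0,1]$, using the unitaries $e^{itx_n}$, which are also asymptotically centralizing uniformly in $t$. Then
\[
x_n=\frac{u_n-1}{i}+O(\|x_n\|^2)
\]
is not exact, so I would instead use $\frac{1}{it}(e^{itx_n}-1)\to x_n$ in norm uniformly in $n$ as $t\to0$. The error $\|\tfrac1{it}(e^{itx_n}-1)-x_n\|\le t\|x_n\|^2$ is uniform, and $\|[a,e^{itx_n}]\|_\varphi=\|e^{-itx_n}ae^{itx_n}-a\|_\varphi\le \epsilon_M(a,\delta)$ eventually. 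Fixing small $t$ first and then letting $n\to\infty$ finishes this direction. Note that $\|[a,y]\|_\varphi$ compares $ay$ and $ya$ as vectors, and $\|u^*au-a\|_\varphi=\|au-ua\|_\varphi$ because $u^*$ is an isometry from the left.

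For the converse, suppose $a\in\BC(M,\varphi)$ and, for contradiction, that there are $\epsilon>0$ and unitaries $u_n$ with $\|u_n\varphi-\varphi u_n\|\le 1/n$ but $\|u_n^*au_n-a\|_\varphi\ge\epsilon$. Then $(u_n)\in\operatorname{AC}(M,\varphi)$, so the definition of the bicentralizer gives $\|au_n-u_na\|_\varphi\to0$. This equals $\|u_n^*au_n-a\|_\varphi$, a contradiction.

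The main obstacle is the first direction, specifically approximating an asymptotically centralizing element by asymptotically centralizing unitaries uniformly. I expect the exponential trick with small $t$ to handle it: only the bound $\|e^{itx}\varphi-\varphi e^{itx}\|\le t e^{t\|x\|}\|x\varphi-\varphi x\|$ is needed, and the exact constant is irrelevant.
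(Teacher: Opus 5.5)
Your proposal is correct. Two small fixes: in the contradiction argument you can only extract unitaries with $\|u_n^*au_n-a\|_\varphi>\epsilon/2$ (or $\geq\epsilon-1/n$), not $\geq\epsilon$, since $\epsilon_M(a,\delta)$ is a supremum that need not be attained. Also, your write-up should drop the abandoned detours (the logarithm/Cayley attempts and the remark about powers of $u_n-1$). Your opening sentence also calls ``immediate'' the direction you later identify as the main obstacle.

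The paper gives no proof of this Fact; it simply quotes it from the proof of \cite[Lemma 1.2]{haagerup87}. So the relevant comparison is with the usual argument for the harder direction.

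That argument differs from yours. The bounded asymptotically centralizing sequences form a unital C$^*$-subalgebra of $\ell^\infty(\mathbb N,M)$. Closure under products follows from the Leibniz-type estimate you wrote down, closure under adjoints from $\|x^*\varphi-\varphi x^*\|=\|x\varphi-\varphi x\|$, and norm-closedness is clear. Hence every element is a linear combination of four unitaries of that algebra, i.e.\ of unitary sequences $(u_n)$ with $\|u_n\varphi-\varphi u_n\|\to0$. Concretely, for self-adjoint contractions take $u_n=x_n+i\sqrt{1-x_n^2}$, which is asymptotically centralizing by Weierstrass approximation of $\sqrt{1-s^2}$ on $[-1,1]$. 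The hypothesis then applies directly through $\|au_n-u_na\|_\varphi=\|u_n^*au_n-a\|_\varphi$. So your remark that the four unitaries ``need not asymptotically commute with $\varphi$'' is misplaced: with this choice they do.

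Your route uses the family $e^{itx_n}$ instead. It combines the bound $\|e^{itx}\varphi-\varphi e^{itx}\|\le te^{t\|x\|}\|x\varphi-\varphi x\|$ with the uniform estimate $\|\tfrac{1}{it}(e^{itx_n}-1)-x_n\|\le \tfrac{t}{2}\|x_n\|^2e^{t\|x_n\|}$. This gives $\limsup_n\|[a,x_n]\|_\varphi\le C\|a\|\,t$ for each fixed $t$, and then you let $t\to0$. That is valid and fully quantitative, using only power series rather than general functional calculus. The C$^*$-algebraic reduction avoids the extra limit and the constant-tracking and is a one-line step.
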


The following is \cite[Proposition 1.3(2) and Remark 1.4]{haagerup87}:

\begin{fact}
For a W$^*$-probability space $(M,\varphi)$, the following are equivalent:
\begin{enumerate}
    \item $\BC(M,\varphi)=\bb C$.
    \item For every $\delta>0$, $\overline{\operatorname{conv}}\{u^*au \ : \ u\in U(M), \ \|u\varphi-\varphi u\|\leq \delta\}\cap \bb C\cdot 1\not=\emptyset$ (where the closure is taken in the $\sigma$-weak topology).
    \item For every $\delta>0$, $\varphi(a)\cdot 1\in \overline{\operatorname{conv}}\{u^*au \ : \ u\in U(M), \ \|u\varphi-\varphi u\|\leq \delta\}$.
\end{enumerate}
\end{fact}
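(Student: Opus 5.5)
This is Haagerup's Dixmier-type characterization, and I would prove the three implications (3)$\Rightarrow$(2)$\Rightarrow$(1)$\Rightarrow$(3).

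The implication (3)$\Rightarrow$(2) is trivial. For (2)$\Rightarrow$(1), take $a\in \BC(M,\varphi)$. By Fact \ref{haagerupBCchar}, for every $\epsilon>0$ there is $\delta>0$ with $\|u^*au-a\|_\varphi\le\epsilon$ for all unitaries $u$ satisfying $\|u\varphi-\varphi u\|\le\delta$. The set $\{x\in M:\|x\|\le\|a\|,\ \|x-a\|_\varphi\le\epsilon\}$ is convex. It is also $\sigma$-weakly closed: $\|\cdot\|_\varphi$ is the norm of $x\xi_\varphi$, which is weakly lower semicontinuous, and the ball is $\sigma$-weakly compact. So it contains the $\sigma$-weakly closed convex hull in (2). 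Hence there is a scalar $\lambda$ with $|\lambda|\le\|a\|$ and $\|\lambda-a\|_\varphi\le\epsilon$. Letting $\epsilon\to 0$ and using compactness of the disc, we get $a\in\bb C1$, since $\|\cdot\|_\varphi$ is a norm ($\varphi$ is faithful).

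The substantive direction is (1)$\Rightarrow$(3). The first step is to show that every element $x$ of $K_\delta(a):=\overline{\operatorname{conv}}^{\sigma w}\{u^*au:\|u\varphi-\varphi u\|\le\delta\}$ satisfies $\varphi(x)=\varphi(a)$. Indeed,
$$|\varphi(u^*au)-\varphi(a)|=|(\varphi u-u\varphi)(u^*a)|\le\delta\|a\|,$$
so this holds only up to $\delta\|a\|$. Therefore it suffices to show that $K_\delta(a)\cap\bb C\neq\emptyset$ for every $\delta$: the scalar $\lambda$ so obtained satisfies $|\lambda-\varphi(a)|\le\delta\|a\|$, and the argument must then be upgraded to give exactly $\varphi(a)$.

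To get a scalar, I would follow the Dixmier/Hahn--Banach strategy. Suppose $K_\delta(a)\cap\bb C=\emptyset$ for some $\delta$. By Hahn--Banach, with $K_\delta(a)$ $\sigma$-weakly compact, there is a normal functional $\omega$ separating $K_\delta(a)$ from the scalar disc. Next, I would build an asymptotically centralizing sequence $(u_n)$ with $\|u_n\varphi-\varphi u_n\|\to0$ whose conjugates $u_n^*au_n$ stay bounded away from scalars. Then, passing to the ultrapower and using Fact \ref{BCultrapower}, I would produce a nonscalar element of $(M^\u_{\varphi^\u})'\cap M$, contradicting (1).

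The cleaner route, which I would take, is via Haagerup's averaging. The set $G_\delta$ of unitaries with $\|u\varphi-\varphi u\|\le\delta$ is not a group, but $G_{\delta/2}G_{\delta/2}\subseteq G_\delta$. A Ryll-Nardzewski-type or minimal-$\|\cdot\|_\varphi$ argument can then be applied to the convex set $K_\delta(a)$, viewed in $L^2(M,\varphi)$. Take the unique element $x_0$ of minimal norm. Invariance of $K_\delta(a)$ under conjugation by $G_{\delta'}$, up to errors of order $\delta'$, forces $x_0$ to commute approximately with every $\delta'$-centralizing unitary. Taking $\delta'\to0$ and a weak limit gives an element lying in the bicentralizer, hence a scalar by (1), and it equals $\varphi(a)$ by the state estimate above with $\delta\to0$.

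The main obstacle is making this approximate-invariance argument precise: $K_{\delta}$ is only "almost" invariant, so the minimal-norm element must be controlled uniformly in $\delta'$. To handle this, I would work in the ultrapower, where $G_{\delta'}$ becomes the genuine unitary group of $M^\u_{\varphi^\u}$. There, $K_\delta(a)$ contains $\overline{\operatorname{conv}}\{u^*au:u\in\mathbf U(M^\u_{\varphi^\u})\}\cap M$ via conditional expectation onto $M$. The relative Dixmier property for $M^\u_{\varphi^\u}\subseteq M^\u$ with respect to $M^\u_{\varphi^\u}{}'\cap M=\BC(M,\varphi)=\bb C$ then yields $\varphi(a)1$.
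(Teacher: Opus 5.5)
Your arguments for (3)$\Rightarrow$(2) and (2)$\Rightarrow$(1) are correct. The latter is the computation the paper uses for (c)$\Rightarrow$(a) in Proposition \ref{prop tbprop1.3}; the Fact itself is quoted from Haagerup, and that proposition is its totally bounded analogue, reproducing his scheme.

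The gap is in (1)$\Rightarrow$(3), at the step that should produce a bicentralizer element inside the convex sets. Taking the minimal $\|\cdot\|_\varphi$-norm point $x_0$ of a \emph{single} $K_\delta(a)$ gives no control. Conjugation by $u\in G_{\delta'}$ only maps $K_\delta(a)$ into the larger set $K_{\delta+\delta'}(a)$, whose minimal norm can be strictly smaller, so nothing forces $u^*x_0u$ to be near $x_0$. Your claimed ``invariance up to errors of order $\delta'$'' is unsupported.

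The missing idea is to intersect first. Set $C(a):=\bigcap_{\delta>0}K_\delta(a)$, a nonempty $\sigma$-weakly compact convex set, and let $b$ be its minimal-norm point. Then $G_{\delta/2}G_{\delta/2}\subseteq G_\delta$ gives $K_{\delta/2}(b)\subseteq K_\delta(a)$, so $C(b)\subseteq C(a)$. Also $\|u^*bu\|_\varphi^2\le\|b\|_\varphi^2+\delta\|b\|^2$ for $u\in G_\delta$, so $\|x\|_\varphi\le\|b\|_\varphi$ on $C(b)$. Hence $C(b)=\{b\}$ exactly.

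Even then, your ``weak limit gives an element of the bicentralizer'' skips a real lemma: Haagerup's Lemma 1.2, whose analogue here is Lemma \ref{lem Haageruptb1.2}. If $b\notin\BC(M,\varphi)$, Fact \ref{haagerupBCchar} gives unitaries $u_n$ with $\|u_n\varphi-\varphi u_n\|\to 0$ and $\|u_n^*bu_n-b\|_\varphi\ge\epsilon$. A $\sigma$-weak cluster point $c\in C(b)$ of $u_n^*bu_n$ then satisfies $2\operatorname{Re}\varphi(b^*c)\le 2\|b\|_\varphi^2-\epsilon^2$, so $c\neq b$. Only after this does (1) make $b$ a scalar, which equals $\varphi(a)$ by your state estimate.

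Your ultrapower fallback has a different gap. Put $N:=M^\u_{\varphi^\u}$. The averaging argument in $M^\u$ is fine: each $\operatorname{Ad}u$, $u\in\mathbf{U}(N)$, is $\|\cdot\|_{\varphi^\u}$-isometric, so the minimal-norm point of $\overline{\operatorname{conv}}\{u^*au : u\in\mathbf{U}(N)\}$ is fixed. But that point lands in $N'\cap M^\u$, not in $N'\cap M$. Fact \ref{BCultrapower} only identifies $N'\cap M$ with $\BC(M,\varphi)$, so it does not make the point scalar. Intersecting the hull with $M$ does not help, and neither does applying the expectation $E\colon M^\u\to M$, which is not $N$-bimodular.

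What you need is $N'\cap M^\u=\bb C$. This follows from (1) only via the nontrivial Fact \ref{p3.3}, namely $N'\cap M^\u\subseteq \BC(M,\varphi)^\u$. Granting that, your route does close. The fixed point is $\varphi^\u(a)1=\varphi(a)1$. Moreover, $E(u^*au)$ is the $\sigma$-weak $\u$-limit of $u_n^*au_n$ with $u_n\in G_\delta$ for $\u$-almost all $n$, so $E$ maps the hull into every $K_\delta(a)$. That is a valid alternative, but it imports a result considerably deeper than Haagerup's elementary argument above.
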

\subsection{Arveson spectral subspaces}\label{subsec spectralsubspace}
Here, we briefly recall Arveson spectral subspaces \cite{Arveson74}. More details can be found in  \cite[Chapter XI]{takesakiII}. 
We identify the dual group $\widehat{\mathbb{R}}$ of the additive group $\mathbb{R}$ with itself. For $f\in L^1(\mathbb{R})$, we define the Fourier transform $\hat{f}$  by 
\[\hat{f}(\lambda):=\int_{\mathbb{R}}e^{it\lambda}f(t)dt,\ \ \ \ \ \lambda \in \widehat{\mathbb{R}}=\mathbb{R}.\]
We also consider the function $\sigma_f^\varphi:M\to M$ given by $\sigma_f^{\varphi}(x):=\int_{\mathbb{R}}f(t)\sigma_t^{\varphi}(x)dt$, where the integral is taken in the $\sigma$-weak sense.
\begin{itemize}
\item[(1)] 
For $x\in M$, $\text{Sp}_{\sigma^{\varphi}}(x)$ is defined by
\[\left \{\lambda \in \widehat{\mathbb{R}} \ : \ \hat{f}(\lambda)=0 \text{  for all } f\in L^1(\mathbb{R}) \text{  with  }\sigma^{\varphi}_f(x)=0\right \}.\] 
\item[(2)] The \textbf{Arveson spectrum} of $\sigma^{\varphi}$, denoted by $\text{Sp}(\sigma^{\varphi})$ is the set
\[\left \{\lambda \in \widehat{\mathbb{R}} \ : \ \hat{f}(\lambda)=0 \text{  for all } f\in L^1(\mathbb{R}) \text{  with  }\sigma^{\varphi}_f=0\right \}.\]
It is shown that ${\rm{Sp}}(\sigma^{\varphi})=\log (\sigma(\Delta_{\varphi})\setminus \{0\})$. 
\item[(3)] For a subset $E$ of $\widehat{\mathbb{R}}$, the \textbf{spectral subspace} of $\sigma^{\varphi}$ corresponding to $E$ is given by 
\[M(\sigma^{\varphi},E):=\{x\in M \ :\ \text{Sp}_{\sigma^{\varphi}}(x)\subset E\}.\]
The spectral subspaces have the following properties:
\begin{list}{}{}
\item[(i)] $M(\sigma^{\varphi},E)^*=M(\sigma^{\varphi},-E)$.
\item[(ii)] $M(\sigma^{\varphi},E)M(\sigma^{\varphi},F)\subset M(\sigma^{\varphi},\overline{E+F})$.
\item[(iii)] $\lambda \in \text{Sp}(\sigma^{\varphi})$ if and only if $M(\sigma^{\varphi},E)\neq \{0\}$ for any closed neighborhood $E$ of $\lambda$.
\item[(iv)] If $f\in L^1(\mathbb{R})$, then $\Sp_{\sigma^{\varphi}}(\sigma_f^{\varphi}(x))\subseteq \operatorname{supp}(\hat{f})\cap \Sp_{\sigma^{\varphi}}(x)$.
\end{list}
\end{itemize}
For $a>0$, the {\bf Fej\'er kernel} $F_a:\mathbb{R}\to \mathbb{R}$ is defined by 
\[F_a(t):=\begin{cases}\dfrac{1-\cos (at)}{\pi at^2} & t\neq 0\\
\ \ \ a/2\pi & t=0. \end{cases}\]
Its Fourier transform is given by
\[\widehat{F_a}(\lambda)=\int_{\mathbb{R}}e^{it\lambda}F_a(t)dt=\begin{cases}1-\dfrac{|\lambda|}{a} & |\lambda|\le a\\
 \ \ \ 0 & |\lambda|>a.\end{cases}\]
It holds that $0\le F_a$ and $\|F_a\|_1=\widehat{F}_a(0)=1$.
In particular, we have $\sigma_{F_a}^{\varphi}(x)\in M(\sigma^{\varphi},[-a,a])$ for every $a>0$ and $x\in M$. This fact will be repeatedly used in the sequel. 

The {\bf de la Vall\'ee Poussin Kernel} $D_a:\mathbb{R}\to \mathbb{R}$ is given by
\[
D_a(t)=2F_{2a}(t)-F_a(t)=\begin{cases}
\dfrac{\cos (at)-\cos (2at)}{\pi at^2} & t\neq 0\\
\ \ \ \ \ 3a/2\pi & t=0.
\end{cases}
\]
Its Fourier transform is given by
\[\widehat{D_a}(\lambda)=\begin{cases}
1 & |\lambda|\le a\\
2-\frac{|\lambda|}{a}  & a\le |\lambda|\le 2a\\
0 & |\lambda|>2a.
\end{cases}\]
More generally, for $0<b<a$, the function $D_{a,b}=\frac{1}{a-b}(aF_a-bF_b)$ belongs to $C_b(\mathbb R)\cap L^1(\mathbb R)$ and satisfies $\widehat{D_{a,b}}=1$ on $[-b,b]$ and $\operatorname{supp}(\widehat{D_{a,b}})\subset [-a,a]$.
\subsection{On totally bounded elements}\label{subsec tb elements}

For our purposes, we will need variants of the aforementioned results of Haagerup that interact better with ultraproducts.  The key technical notion involved in these variants is that of a totally bounded element as introduced in \cite{AGHS25}:
\begin{defn}
 Suppose that $(M,\varphi)$ is a $W^*$-probability space. 
    For $K>0$, an element $a\in M$ is called \textbf{right $K$-bounded} if $\|ba\|_{\varphi}\le K\|b\|_{\varphi}$ for all $b\in M$.  If both $a$ and $a^*$ are right $K$-bounded and $\|a\|\le K$, then $a$ is called \textbf{totally $K$-bounded}. An element which is totally $K$-bounded for some $K>0$ is called \textbf{totally bounded}.    
\end{defn}
The next lemma is essentially due to Connes \cite{Connes73}. 
\begin{lem}\label{lem Kbounded}
    Let $(M,\varphi)$ be a $W^*$-probability space. Fix $K>0$ and $a\in M$. 
    Then the following are equivalent:
    \begin{itemize}
        \item[{\rm (i)}] $a$ is right $K$-bounded. 
        \item[{\rm (ii)}] The map $\mathbb{R}\ni t\mapsto \sigma_t^{\varphi}(a^*)\in M$ extends to an $M$-valued bounded continuous function on $\overline{D}_{\frac12}=\{z\in \C\mid -\frac12 \le \operatorname{Im} z\le 0\}$ which is holomorphic in the interior and such that $\|\sigma_{-i/2}^{\varphi}(a^*)\|\le K^2$.
        \item[{\rm (iii)}] $\varphi(a^*xa)\le K^2\varphi(x)$ for every $x\in M_+$.  
    \end{itemize} 
\end{lem}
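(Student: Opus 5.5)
We prove the equivalences by passing through the standard form. Let $\xi_\varphi$ be the cyclic separating vector in $L^2(M,\varphi)$, $\Delta=\Delta_\varphi$, and $J$ the modular conjugation. For $b\in M$ we have $\|ba\|_\varphi=\|ba\xi_\varphi\|$. When $a\xi_\varphi$ lies in the domain of $\Delta^{1/2}$ twisted appropriately, $ba\xi_\varphi$ can be written as $Ja'Jb\xi_\varphi$, where $a'=\sigma^\varphi_{-i/2}(a^*)^*$-type element acts on the right. So right boundedness amounts to the right multiplication $b\xi_\varphi\mapsto ba\xi_\varphi$ being bounded. Fixing the convention carefully, the identity to aim for is
$$ba\xi_\varphi = J\sigma^\varphi_{-i/2}(a^*)J\, b\xi_\varphi,$$
valid whenever $a^*$ is analytic in the strip $\overline{D}_{\frac12}$. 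I will verify it first for $\sigma^\varphi$-entire $a$ using $J\Delta^{1/2}x\xi_\varphi=x^*\xi_\varphi$.

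(i)$\Leftrightarrow$(iii). Since $\|ba\|_\varphi^2=\varphi(a^*b^*ba)$ and $\|b\|_\varphi^2=\varphi(b^*b)$, condition (i) is exactly (iii) restricted to $x=b^*b$. Every $x\in M_+$ has this form with $b=x^{1/2}$, so the two conditions are equivalent.

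(ii)$\Rightarrow$(i). By the identity above, $\|ba\|_\varphi\le \|\sigma^\varphi_{-i/2}(a^*)\|\,\|b\|_\varphi$, which gives right $\sqrt{\|\sigma_{-i/2}(a^*)\|}$-boundedness... Here the exponents must be matched to the statement's $K^2$. Since (iii) is quadratic, I expect the intended relation to be $\|\sigma_{-i/2}(a^*)\|\le K$ in (i)$\Leftrightarrow$ norm form, or equivalently $K^2$ in the squared form. I will recheck the normalization and follow whichever exponent makes (ii)$\Leftrightarrow$(iii) exact; the operator inequality behind it is $\sigma_{-i/2}(a^*)^*\sigma_{-i/2}(a^*)$ versus $\varphi$-domination. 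For general $a$ satisfying (ii), I approximate by the regularizations $\sigma^\varphi_{f}(a)$ with Gaussian $f$. These are entire, converge $\sigma$-strongly, and their analytic continuations satisfy the same norm bound by the maximum principle. Passing to the limit gives the inequality.

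(i)$\Rightarrow$(ii). This is the main obstacle and is Connes' argument. Right boundedness means the map $b\xi_\varphi\mapsto ba\xi_\varphi$ extends to a bounded operator $T$ on $L^2$ with $\|T\|\le K$, and $T$ commutes with the left action of $M$. Hence $T\in M'=JMJ$, say $T=JyJ$ with $y\in M$ and $\|y\|\le K$. Applying this to $b=1$ gives $a\xi_\varphi=Jy\xi_\varphi = \Delta^{1/2}y^*\xi_\varphi$, i.e. $a\xi_\varphi=\Delta^{1/2}y^*\xi_\varphi$. By the standard characterization of analytic elements via vectors (the strip lemma, e.g. Takesaki II, Lemma VIII.3.18 / Connes), the function $t\mapsto \sigma_t(a^*)$ extends to the closed strip. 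The extension is bounded and continuous, holomorphic inside, and its endpoint value is $\sigma_{-i/2}(a^*)=y$ up to the adjoint convention. I will prove this extension concretely. For $\eta,\zeta$ in a dense set of analytic vectors, the scalar functions $\langle \sigma_z(a^*)\eta,\zeta\rangle$ are defined by Fourier/three-lines interpolation between $t\mapsto\sigma_t(a^*)$ on the line $\operatorname{Im}z=0$ and $t\mapsto\sigma_t(y)$ on the line $\operatorname{Im}z=-\tfrac12$. The boundary values agree because $\Delta^{it}$ intertwines the relation above. Phragmén–Lindelöf then gives a uniform bound $\max(\|a\|,K)$ in the interior, and the endpoint norm bound comes from $\|y\|\le K$. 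Getting the boundary matching and the boundedness of the interpolated operator-valued function is the delicate step; I would do it by smearing $a$ with Gaussians $\sigma_{f_n}$ and applying the maximum principle uniformly in $n$.
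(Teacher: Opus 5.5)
Your (i)$\Leftrightarrow$(iii) step is exactly the paper's. Your standard-form argument for (ii)$\Leftrightarrow$(i) is the content of the lemma of Connes (Takesaki, Lemma VIII.3.18) that the paper simply cites.

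\emph{The constant in (ii).} This is the genuine problem. You notice it but never settle it: ``follow whichever exponent makes (ii)$\Leftrightarrow$(iii) exact'' does not prove any definite statement. Your intermediate remark is also wrong. The identity $ba\xi_\varphi=J\sigma^\varphi_{-i/2}(a^*)J\,b\xi_\varphi$ gives $\|ba\|_\varphi\le\|\sigma^\varphi_{-i/2}(a^*)\|\,\|b\|_\varphi$, i.e.\ right $\|\sigma^\varphi_{-i/2}(a^*)\|$-boundedness, with no square root. In the converse direction you get $T=JyJ$ with $y=\sigma^\varphi_{-i/2}(a^*)$ and $\|y\|\le K$. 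So your method shows that the optimal constant in (i) is exactly $\|\sigma^\varphi_{-i/2}(a^*)\|$, and the correct (ii) reads $\|\sigma^\varphi_{-i/2}(a^*)\|\le K$.

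With $K^2$, as literally written, the equivalence fails for every $K\neq 1$. For $a=c\cdot 1$ the best constant in (i) is $c$ and $\|\sigma^\varphi_{-i/2}(a^*)\|=c$. Thus $c=2$, $K=\sqrt2$ satisfies the literal (ii) but not (i), while $c=K=\tfrac12$ satisfies (i) but not the literal (ii). This is a misprint in the statement, not something to prove around. Any correct form of the cited lemma has the linear normalization, and the paper uses it in Proposition~\ref{prop compactspec}, where $\|\sigma^\varphi_{-i/2}(x^*)\|\le e^{a/2}$ is turned into total $e^{a/2}$-boundedness. You should state the corrected equivalence and prove that.

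\emph{The strip extension in (i)$\Rightarrow$(ii).} Your regularization argument for (ii)$\Rightarrow$(i) is fine; the bound survives because $\sigma^\varphi_f$ is a contraction for a Gaussian probability density, not because of the maximum principle. The second weak point is your ``concrete'' construction in (i)$\Rightarrow$(ii). Prescribing $t\mapsto\sigma^\varphi_t(a^*)$ and $t\mapsto\sigma^\varphi_t(y)$ on the two boundary lines does not by itself give a holomorphic function in between; its existence is exactly what must be shown.

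A workable version runs as follows:
\begin{enumerate}
\item For Gaussian-regularized $b,c\in M$, set $F(z)=\varphi\bigl(\sigma^\varphi_{-z}(c^*)\,a^*\,\sigma^\varphi_{-z}(b)\bigr)$. This is entire and bounded on the strip, with $F(t)=\langle\sigma^\varphi_t(a^*)b\xi_\varphi,c\xi_\varphi\rangle$.
\item Using $xa\xi_\varphi=JyJx\xi_\varphi$ and the KMS condition, one computes $F(t-\tfrac{i}{2})=\langle\sigma^\varphi_t(y)b\xi_\varphi,c\xi_\varphi\rangle$.
\item The three-lines theorem then gives operators $G(z)$ with $\|G(z)\|\le\max(\|a\|,K)$, continuous on the closed strip and holomorphic inside.
\item Finally, $G(z)\in M$: for $x'\in M'$, $[G(z),x']$ vanishes on $\mathbb{R}$, hence everywhere by Schwarz reflection.
\end{enumerate}
Alternatively, cite the strip lemma, as the paper does.
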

\begin{proof}It is clear that (i)$\iff$(iii) holds. 
    The equivalence (ii)$\iff$(iii) follows from \cite[Lemma 4]{Connes73} (cf. \cite[Lemma VIII.3.18]{takesakiII}).  
\end{proof}

Throughout this paper, for any von Neumann algebra $M$, we let $\mathbf{U}(M)$ denote its unitary group.

\begin{cor}\label{totallyoneboundedunitary}
    Let $(M,\varphi)$ be a W$^*$-probability space and let $u\in \mathbf{U}(M)$. Then $u$ is totally 1-bounded if and only if $u\in \mathbf{U}(M_{\varphi})$.
\end{cor}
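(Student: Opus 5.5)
The plan is to read both directions off Lemma \ref{lem Kbounded}, applied with $K=1$ to $u$ and to $u^*$. A unitary always has $\|u\|=1$, so $u$ is totally $1$-bounded exactly when both $u$ and $u^*$ are right $1$-bounded. By condition (iii) of Lemma \ref{lem Kbounded}, this means
\[
\varphi(u^*xu)\le\varphi(x)\quad\text{and}\quad\varphi(uxu^*)\le\varphi(x)\qquad\text{for all }x\in M_+.
\]

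For the forward direction, assume $u$ is totally $1$-bounded. Fix $x\in M_+$ and apply the second inequality to the positive element $u^*xu$. This gives
\[
\varphi(x)=\varphi(u(u^*xu)u^*)\le\varphi(u^*xu).
\]
Combined with the first inequality, this yields $\varphi(u^*xu)=\varphi(x)$ for all $x\in M_+$. Since $M_+$ spans $M$, we get $\varphi(u^*xu)=\varphi(x)$ for every $x\in M$. Substituting $x=uy$ gives $\varphi(yu)=\varphi(uy)$ for all $y\in M$, that is, $u\varphi=\varphi u$. Hence $u\in M_\varphi$, and since $u$ is unitary, $u\in\mathbf{U}(M_\varphi)$.

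For the converse, assume $u\in\mathbf{U}(M_\varphi)$. Then $u^*$ also lies in $M_\varphi$, because $M_\varphi$ is a $*$-algebra. Using $\varphi(u^*z)=\varphi(zu^*)$ with $z=xu$, we obtain
\[
\varphi(u^*xu)=\varphi(xuu^*)=\varphi(x).
\]
In the same way, $\varphi(uxu^*)=\varphi(x)$. So both inequalities in (iii) hold with $K=1$, and $\|u\|=1$, which shows that $u$ is totally $1$-bounded.

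There is no real obstacle here. The one step needing care is in the forward direction: combining the two one-sided inequalities into an equality, which requires feeding the positive element $u^*xu$ into the bound for $u^*$. Alternatively, one could go through condition (ii) of Lemma \ref{lem Kbounded} and the analytic extension of $t\mapsto\sigma_t^\varphi(u^*)$, but the argument via (iii) avoids analytic continuation entirely.
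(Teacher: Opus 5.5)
Your proof is correct and is essentially the paper's argument. The paper also reads right $1$-boundedness of $u$ and $u^*$ as $u\varphi u^*\le\varphi$ and $u^*\varphi u\le\varphi$ (condition (iii) of Lemma \ref{lem Kbounded}), conjugates to get $\varphi\le u\varphi u^*$ and hence equality, and for the converse uses $u\varphi u^*=u^*\varphi u=\varphi$; you spell out the same steps explicitly on elements rather than in functional notation.
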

\begin{proof}
    If $u\in \mathbf{U}(M_{\varphi})$, then $u\varphi u^*=u^*\varphi u=\varphi$ and thus $u$ and $u^*$ are right 1-bounded.  
    Conversely, if $u$ is totally 1-bounded, then $u\varphi u^*\le \varphi$ and $u^*\varphi u\le \varphi$ holds, whence $u\varphi u^*=\varphi$ holds. This shows that $u\in \mathbf{U}(M_{\varphi})$. 
\end{proof}

We will also need the following facts about totally bounded elements in ultraproducts proven in \cite{AGHS25}, the second of which is fairly nontrivial:

\begin{fact}\label{TBelementsinultraproducts}
Fix a family $(M_i,\varphi_i)_{i\in I}$ of W$^*$-probability spaces and $K\geq 1$.
\begin{enumerate}
    \item If $a_i\in M_i$ is totally $K$-bounded for each $i\in I$, then the sequence $(a_i)_{i\in I}$ represents an element $a=(a_i)_\u$ in $\prod_\u (M_i,\varphi_i)$ which is itself totally $K$-bounded.
    \item Conversely, if $a\in \prod_\u (M_i,\varphi_i)$ is totally $K$-bounded, then there are totally $K$-bounded $a_i\in M_i$ such that $a=(a_i)_\u$.
\end{enumerate}
\end{fact}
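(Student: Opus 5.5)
Part (1) is a direct verification, and the real work is in part (2).

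\emph{Part (1).} I first show that $(a_i)_{i\in I}$ lies in the normalizer $\mathcal M^\u$. Fix $(y_i)\in\mathcal I_\u$. The terms with $a_i$ on the outside are controlled by operator norm:
$$\|a_iy_i\|_{\varphi_i}\le K\|y_i\|_{\varphi_i},\qquad \|y_i^*a_i^*\|_{\varphi_i}\le K\|y_i^*\|_{\varphi_i},$$
the second by right $K$-boundedness of $a_i^*$. The terms with $a_i$ on the inside use right boundedness:
$$\|y_ia_i\|_{\varphi_i}\le K\|y_i\|_{\varphi_i},\qquad \|a_i^*y_i^*\|_{\varphi_i}\le K\|y_i^*\|_{\varphi_i}.$$
All four tend to $0$ along $\u$, so $a\,\mathcal I_\u\subseteq\mathcal I_\u$ and $\mathcal I_\u\, a\subseteq \mathcal I_\u$. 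Hence $a=(a_i)_\u$ is defined. The bound $\|a\|\le K$ is immediate. For $b=(b_i)_\u$ we get $\|ba\|_\varphi=\lim_\u\|b_ia_i\|_{\varphi_i}\le K\lim_\u\|b_i\|_{\varphi_i}=K\|b\|_\varphi$, and the same argument applies to $a^*$.

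\emph{Part (2).} Start from an arbitrary representative $(x_i)\in\mathcal M^\u$ of $a$. Using a norm-controlled lift of the quotient map (truncating by functional calculus on real and imaginary parts if necessary), I may assume $\|x_i\|\le K$. By Lemma \ref{lem Kbounded}(iii), total $K$-boundedness of $a$ means
$$a\varphi a^*\le K^2\varphi \quad\text{and}\quad a^*\varphi a\le K^2\varphi,$$
where $(a\varphi a^*)(y)=\varphi(a^*ya)$. Using Fact \ref{ultralimitbimodule}-type continuity of the maps $(x_i)\mapsto x_i\varphi_ix_i^*$ along $\u$, I then show that the negative parts $\omega_i$ of $K^2\varphi_i-x_i\varphi_ix_i^*$ and $\omega_i'$ of $K^2\varphi_i-x_i^*\varphi_ix_i$ satisfy $\|\omega_i\|,\|\omega_i'\|\to0$ along $\u$. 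This amounts to checking that the ultraproduct of the functionals is the functional of the ultraproduct, and that positivity defects are detected in norm.

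The main step, and the expected obstacle, is a perturbation. I would try to produce $a_i$ with $\|a_i-x_i\|^\#_{\varphi_i}\to0$ and $a_i$ exactly totally $K$-bounded, with $\|a_i\|\le K$. Then $(a_i)_\u=(x_i)_\u=a$, since the difference lies in $\mathcal I_\u$ and $(a_i)\in\mathcal M^\u$ by part (1). The first attempt would be to write $a_i=x_id_i$ with $d_i$ a contraction obtained from the Connes Radon–Nikodym derivative of $K^2\varphi_i$ relative to $K^2\varphi_i+\omega_i$. This forces $a_i\varphi_ia_i^*\le K^2\varphi_i$, and $d_i$ is close to $1$ in $\|\cdot\|_{\varphi_i}^\#$ because $\|\omega_i\|$ is small.

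The difficulty is handling the left condition for $a_i^*$ simultaneously without destroying the right one. If a one-sided correction does not suffice, I would fall back on the analytic characterization Lemma \ref{lem Kbounded}(ii). One would smooth with Fej\'er/de la Vall\'ee Poussin kernels, $x_i\mapsto\sigma^{\varphi_i}_{F_r}(x_i)$, whose outputs lie in spectral subspaces $M(\sigma^{\varphi_i},[-r,r])$ and are therefore totally $Ke^{r/2}$-bounded. One would use that the modular flow of the ultraproduct is computed coordinatewise on such elements, and then rescale slightly and pass to a diagonal limit $r\to0$ along $\u$ to restore the exact constant $K$.
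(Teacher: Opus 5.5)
Part (1) is fine. The preliminary steps of part (2) are also sound, up to three fixable slips.

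\emph{Norm-controlled lift.} Truncating real and imaginary parts separately only gives $\|x_i\|\le 2K$. Instead, replace $x_i$ by $x_if(x_i^*x_i)$ with $f(t)=\min\{1,Kt^{-1/2}\}$. This has norm at most $K$ and the same image in $\mathcal M^\u/\mathcal I_\u$, because $f\equiv 1$ on the spectrum of $a^*a$.

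\emph{Smallness of the negative parts.} The claim is true, but the test elements witnessing a defect need not lie in $\mathcal M^\u$, so Fact \ref{ultralimitbimodule} does not suffice. You should argue in the Groh--Raynaud ultraproduct, whose predual is $\prod_\u (M_i)_*$.

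\emph{Placement of the contraction.} It must act on the left. If $0\le h_i\le 1$ satisfies $K^2\varphi_i=h_i(K^2\varphi_i+\omega_i)h_i$ (Sakai's Radon--Nikodym theorem), then $a_i:=h_ix_i$ satisfies $\|a_i\|\le K$ and $\varphi_i(a_i^*ya_i)\le K^2\varphi_i(y)$. Using $(1-h_i)^2\le 1-h_i^2$, you get $\|(1-h_i)x_i\|_{\varphi_i}^2\le (K^2\varphi_i+\omega_i)(1-h_i^2)=\omega_i(1)\to 0$. The adjoint term is bounded by $K\|1-h_i\|_{\varphi_i}$, which also tends to $0$, so $(a_i)_\u=a$. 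With $a_i=x_id_i$ as you wrote it, the hypothesis on $x_i\varphi_ix_i^*$ cannot be used at all.

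The genuine gap is the step you flag yourself. The argument above gives right $K$-bounded representatives of norm at most $K$, and symmetrically left ones, but nothing in the proposal makes $a_i$ \emph{and} $a_i^*$ right $K$-bounded at once with the exact constant. The Radon--Nikodym correction is inherently one-sided. If you repair the adjoint side with a second contraction on the right, $a_i'=h_ix_ik_i$, then $\varphi_i(a_i'^*ya_i')=\varphi_i(k_i z k_i)$ with $z=x_i^*h_iyh_ix_i$. But $\varphi_i(kzk)\le\varphi_i(z)$ fails for non-tracial $\varphi_i$, even for projections $k$. The new defect is only bounded by $2\|1-k_i\|_{\varphi_i}\|z\|$, roughly the square root of the old one, so alternating the two corrections does not converge.

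The Fej\'er fallback fails for a structural reason. For fixed $r$, the rescaled smoothed sequence represents $e^{-r/2}\sigma^{\varphi}_{F_r}(a)$, where $\varphi$ is the ultraproduct state. Since $\widehat{F_r}(\lambda)=1$ only at $\lambda=0$, $\sigma^{\varphi}_{F_r}$ fixes only elements of the centralizer. Letting $r\to\infty$ along $\u$ sends everything to $0$ via the factor $e^{-r/2}$, while letting $r\to 0$ pushes the output towards the centralizer. The estimate in Proposition \ref{prop compactspec} depends only on $\|x_i\|$ and never uses that $a$ is totally $K$-bounded, so it cannot produce the sharp constant for a general $a$.

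Compare Lemma \ref{lem representing u by 1+deltabdd}. It is exactly your smoothing argument in the most favorable case, a centralizer unitary with $\sigma_{F_r}(u)=u$, and even there it only yields totally $(1+\delta)$-bounded representatives. The exactly totally $1$-bounded representatives in the Remark after that lemma come from invoking the very Fact you are trying to prove.

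The paper does not prove this Fact; it imports it from \cite{AGHS25}, calls (2) fairly nontrivial, and indicates that the underlying theory uses Tomita--Takesaki theory in its Rieffel--van Daele bounded form together with ideas of Kadison. What is missing is a mechanism that enforces right $K$-boundedness of $a_i$ and of $a_i^*$, together with $\|a_i\|\le K$, exactly and simultaneously. No perturbation or smoothing of arbitrary representatives supplies it.
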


The following spectral subspace criterion for proving total boundedness of an element will prove useful in the sequel.

\begin{prop}\label{prop compactspec}
Let $(M,\varphi)$ be a W$^*$-probability space. Fix $x\in M$ with $\|x\|\le 1$ and $a>0$. Consider the following three conditions: 
\begin{itemize}
    \item[{\rm (i)}] $x\in M(\sigma^{\varphi},[-a,a])$.
    \item[{\rm (ii)}] The map $\mathbb{R}\ni t\mapsto \sigma_t^{\varphi}(x)\in M$ extends to an $M$-valued entire analytic function such that 
    $\|\sigma_z^{\varphi}(x)\|\le e^{a|\operatorname{Im} z|}\|x\|$ for all $z\in \C$.
    \item[{\rm (iii)}] $x$ is totally $e^{a/2}$-bounded.  
\end{itemize}
Then (i)$\iff$(ii)$\implies$(iii). 
\end{prop}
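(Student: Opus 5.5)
We plan to prove (i)$\Rightarrow$(ii), then (ii)$\Rightarrow$(i), then (ii)$\Rightarrow$(iii).

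\emph{(i)$\Rightarrow$(ii).} This is the Paley--Wiener direction. Since $\widehat{D_{2a,a}}=1$ on $[-a,a]$ and $\operatorname{Sp}_{\sigma^\varphi}(x)\subseteq[-a,a]$, spectral synthesis for Arveson spectra gives $\sigma^\varphi_{D_{2a,a}}(x)=x$. More precisely, for any $g\in L^1$ with $\hat g=1$ near $\operatorname{Sp}_{\sigma^\varphi}(x)$ we have $\sigma_g(x)=x$; we use $D_{a+\eta,a}$ and then let $\eta\to 0$.

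Hence
\[
\sigma_t^\varphi(x)=\int_{\mathbb R} g(s-t)\,\sigma^\varphi_s(x)\,ds ,
\]
with $g$ a function whose Fourier transform has compact support. Such a $g$ extends to an entire function of exponential type, so the right-hand side defines an entire extension of $t\mapsto\sigma_t^\varphi(x)$.

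To get the sharp bound $\|\sigma^\varphi_z(x)\|\le e^{a|\operatorname{Im}z|}\|x\|$, we pair with normal functionals. For $\omega\in M_*$ with $\|\omega\|\le 1$, the function $f(t)=\omega(\sigma_t^\varphi(x))$ is bounded by $\|x\|$ on $\mathbb R$. Its distributional Fourier transform is supported in $[-a,a]$, because the spectrum of $\omega\circ\sigma(x)$ is contained in $\operatorname{Sp}_{\sigma^\varphi}(x)$. By the classical Paley--Wiener--Schwartz theorem and Bernstein's inequality (equivalently, Phragm\'en--Lindel\"of applied to $f(z)e^{\pm iaz}$ on the half-planes), $f$ is entire with $|f(z)|\le e^{a|\operatorname{Im} z|}\sup_{\mathbb R}|f|$. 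Taking the supremum over $\omega$ gives the norm bound.

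\emph{(ii)$\Rightarrow$(i).} Conversely, for each such $\omega$ the function $f$ is entire of exponential type at most $a$ and bounded on $\mathbb R$. By the converse Paley--Wiener--Schwartz theorem, $\hat f$ is supported in $[-a,a]$. Consequently, whenever $h\in L^1$ has $\hat h=0$ near $[-a,a]$, we get $\omega(\sigma_h(x))=0$ for every $\omega$, so $\sigma_h(x)=0$. This shows $\operatorname{Sp}_{\sigma^\varphi}(x)\subseteq[-a,a]$.

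The technical point in both directions is matching Arveson's $L^1$-spectrum with the distributional support of $\hat f$ for a bounded continuous $f$. This is standard (Beurling spectrum) and we will cite it rather than reprove it.

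\emph{(ii)$\Rightarrow$(iii).} We have $\|x\|\le 1\le e^{a/2}$. Since $M(\sigma^\varphi,[-a,a])$ is $*$-closed by property (i), it suffices to show that $x$ and $x^*$ are right $e^{a/2}$-bounded. Apply Lemma \ref{lem Kbounded}(ii) to the element $x$ itself, which requires an extension of $t\mapsto\sigma_t^\varphi(x^*)$. By (ii) applied to $x^*\in M(\sigma^\varphi,[-a,a])$, this map is entire. It is bounded on the strip $-\tfrac12\le\operatorname{Im}z\le0$ by $e^{a/2}$, and in particular
\[
\|\sigma^\varphi_{-i/2}(x^*)\|\le e^{a/2}\le (e^{a/2})^2 .
\]
Hence $x$ is right $e^{a/2}$-bounded; in fact this gives right $e^{a/4}$-boundedness. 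The same argument applies to $x^*$.

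We expect the only genuine obstacle to be the Paley--Wiener/Bernstein identification in the first two implications. The last implication is then immediate from Lemma \ref{lem Kbounded}.
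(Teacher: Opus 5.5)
Your argument is correct and, for (i)$\Rightarrow$(ii) and (ii)$\Rightarrow$(iii), it is essentially the paper's. The paper also:
\begin{itemize}
\item passes to $f_\omega(t)=\omega(\sigma^\varphi_t(x))$;
\item shows $\operatorname{supp}\widehat{f_\omega}\subseteq[-a,a]$ (Lemma \ref{lem arvesoncptspec});
\item gets $|f_\omega(t-is)|\le e^{a|s|}\|f_\omega\|_\infty$ from Paley--Wiener--Schwartz plus Phragm\'en--Lindel\"of for $e^{\mp iaz}f_\omega(z)$ on half-planes (Lemma \ref{lem FT cpt supp});
\item then invokes Lemma \ref{lem Kbounded}.
\end{itemize}

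The differences are minor. The paper assembles the $M$-valued extension by duality ($\omega\mapsto f_\omega(z)$ is a bounded functional on $M_*$) rather than by your kernel formula $\int g(s-z)\sigma^\varphi_s(x)\,ds$. For that formula, any $g$ with $\hat g$ compactly supported and $\equiv 1$ near $[-a,a]$ will do, e.g.\ $D_{2a,3a/2}$, with no limit in $\eta$. For (ii)$\Rightarrow$(i), the paper just cites \cite[Lemma 2.5]{haagerup87}, whereas you run the converse Paley--Wiener--Schwartz theorem. There you only need to test against Schwartz $h$ whose $\hat h$ is compactly supported near a point $t\notin[-a,a]$, so no general Beurling-spectrum identification is needed.

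The one wrong statement is your closing claim that you \emph{in fact} get right $e^{a/4}$-boundedness. The $K^2$ in Lemma \ref{lem Kbounded}(ii) as printed should be $K$. Since $x\xi_\varphi=J\sigma^\varphi_{-i/2}(x^*)J\xi_\varphi$ and $JMJ=M'$, one has $\|bx\|_\varphi\le\|\sigma^\varphi_{-i/2}(x^*)\|\,\|b\|_\varphi$, and $\|\sigma^\varphi_{-i/2}(x^*)\|$ is exactly the optimal right-bound constant. So $\|\sigma^\varphi_{-i/2}(x^*)\|\le e^{a/2}$ yields precisely right $e^{a/2}$-boundedness, as (iii) asserts, and nothing better.

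Concretely, take $M_2(\mathbb{C})$ with $\varphi=\operatorname{Tr}(h\,\cdot)$, $h=\operatorname{diag}(\lambda_1,\lambda_2)$, $\lambda_1>\lambda_2$, and $a=\log(\lambda_1/\lambda_2)$. The contraction $x=e_{21}$ has $\sigma^\varphi_t(x)=e^{-iat}x$, so $x\in M(\sigma^\varphi,[-a,a])$, yet $\|e_{22}x\|_\varphi=e^{a/2}\|e_{22}\|_\varphi$. Drop that remark and apply the lemma with the corrected normalization; the conclusion of (ii)$\Rightarrow$(iii) is unaffected.
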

The equivalence (i)$\iff$(ii) can be seen as an $M$-valued Paley--Wiener--Schwartz type result, and as such it might be known to experts: indeed, (ii)$\implies$(i) is due to Haagerup \cite[Lemma 2.5]{haagerup87}, and from the inequality in the proof of \cite[Lemma 4.2]{Haagerup79}, it is straightforward to see that elements of the form $x=\sigma_{F_a}^{\varphi}(y)$ for $a>0$ satisfy (ii), where $F_a$ is the Fej\'er kernel defined in $\S$\ref{subsec spectralsubspace} (note that $x\in M(\sigma^{\varphi},[-a,a])$ in this case). Also, it does not seem to us that this type of smoothing argument by summability kernels alone gives a short proof of (i)$\implies$(ii) in full generality. An argument using distributions seems inevitable in a way similar to Haagerup's approach to \cite[Lemma 2.5]{haagerup87}. The closest result we found is Matsumoto's Paley--Wiener--Schwartz type theorem in the C$^*$-algebraic setting \cite[Theorem 9.4]{Matsumoto1997}. One can indeed use the fact that $x\in M(\sigma^{\varphi},[-a,a])$ implies $t\mapsto \sigma_{t}^{\varphi}(x)$ is norm continuous, and then pass to the C$^*$-subalgebra $A=C^*(\{\sigma_t^{\varphi}(x)\mid t\in \mathbb R\})$ on which $\sigma^{\varphi}$ defines a point-norm continuous flow.  Matsumoto's above mentioned theorem then gives the following estimate: there exists $N\in \N$ and $\gamma>0$ such that 
\[\|\sigma_z^{\varphi}(x)\|\le \gamma(1+|z|)^Ne^{a|\,\mathrm{Im}z\,|},\,\,z\in \C.\]
However, we still need to remove the polynomial factor; this can be done using a Phragm\'en--Lindel\"of type estimate in the half-plane. Since the precise norm estimate is needed in our later analysis, we give a self-contained proof of this equivalence (especially, (i)$\implies$(ii)) in Appendix \ref{appendixA}.

We also would like to point out that it is quite straightforward to see that the direction (iii)$\implies$(i) cannot be reversed in general.  In fact, in a forthcoming paper, we will prove that, unlike the set of totally bounded elements, spectral subspaces do not commute with ultrapowers when \(M\) is a type III\(_1\) factor.

Although we will not explicitly need the next result, it is an immediate consequence of Proposition \ref{prop compactspec} and seems worth recording:

\begin{cor}\label{unitaryperturbtotallybounded}
    Let $(M,\varphi)$ be a W$^*$-probability space. Then for each $u\in \mathbf{U}(M)$ and $\varepsilon>0$, there exists $v\in \mathbf{U}(M)$ which is totally bounded such that $\|u-v\|_{\varphi}<\ep$.  
\end{cor}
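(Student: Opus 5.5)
We will approximate $u$ first by an element of a spectral subspace and then correct that element to a unitary by polar decomposition. The correction is where the only real difficulty lies.

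\emph{Step 1: smoothing.} Put $x_a:=\sigma^\varphi_{F_a}(u)$ for $a>0$. Since $F_a\ge 0$ and $\|F_a\|_1=1$, we have $\|x_a\|\le 1$, and $x_a\in M(\sigma^\varphi,[-a,a])$. As $a\to\infty$, $F_a$ is an approximate identity and $t\mapsto\sigma^\varphi_t(u)\xi_\varphi=\Delta_\varphi^{it}u\xi_\varphi$ is norm continuous. Hence $\|x_a-u\|_\varphi\to 0$, and the same argument applied to $u^*$ gives $\|x_a^*-u^*\|_\varphi\to0$. By Proposition \ref{prop compactspec}, $x_a$ is totally bounded, and more precisely $\sigma^\varphi_z(x_a)$ is entire with the exponential bound (ii).

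\emph{Step 2: correcting to a unitary.} We cannot simply take the polar part of $x_a$, because total boundedness is not obviously preserved under polar decomposition. The plan is to exploit that $x_a^*x_a$ is close to $1$. Compute
\[
\|x_a^*x_a-1\|_\varphi\le\|x_a^*(x_a-u)\|_\varphi+\|(x_a^*-u^*)u\|_\varphi .
\]
The first term is at most $e^{a/2}\|x_a-u\|_\varphi$ by right boundedness, which is not directly useful. It is better to write the first term as $\|(x_a-u)\xi_\varphi\|$ and bound it using $\|x_a\|\le1$, which makes it small. The second term equals $\|u^*x_a-1\|_\varphi$, again small. So $h:=x_a^*x_a$ satisfies $0\le h\le1$ and $\|h-1\|_\varphi$ is small.

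Now take a continuous function $g$ with $g(s)=s^{-1/2}$ for $s\ge 1/4$, and set $y:=x_a\, g(h)$. Then $y$ is a partial isometry on the spectral projection $p=1_{[1/4,1]}(h)$, and $1-p$ is small in $\|\cdot\|_\varphi$ because $\varphi(1-p)\le\frac{16}{9}\|1-h\|_\varphi^2$. We would rather avoid this projection altogether. Instead, choose $v$ as follows. Since $M$ is $\sigma$-finite, approximate $u$ first by a unitary $w=e^{ik}$, with $k$ self-adjoint and $\|k\|\le\pi$ (Borel functional calculus), and take
\[
v:=e^{i\sigma^\varphi_{F_a}(k)} .
\]
Here $k_a:=\sigma^\varphi_{F_a}(k)$ is self-adjoint, since $F_a$ is real and even, and satisfies $\|k_a\|\le\pi$ and $\|k_a-k\|^\sharp_\varphi\to0$. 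Then $v$ is a genuine unitary. Because $z\mapsto\sigma^\varphi_z(k_a)$ is entire with $\|\sigma^\varphi_z(k_a)\|\le\pi e^{a|\operatorname{Im}z|}$, the power series gives that $\sigma_z^\varphi(v)=\sum_n i^n\sigma_z^\varphi(k_a)^n/n!$ is entire and bounded on the strip $\overline D_{1/2}$, with bound $\exp(\pi e^{a/2})$. Lemma \ref{lem Kbounded}(ii) then shows that $v$ and $v^*$ are right $K$-bounded with $K=\exp(\pi e^{a/2}/2)$. Together with $\|v\|=1$, $v$ is totally bounded.

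\emph{Step 3: closeness.} On bounded sets the strong$^*$ topology is given by $\|\cdot\|^\sharp_\varphi$, and $s\mapsto e^{is}$ is strong$^*$ continuous on bounded self-adjoint sets. Hence $k_a\to k$ in $\|\cdot\|^\sharp_\varphi$ gives $v=e^{ik_a}\to e^{ik}=u$ in $\|\cdot\|_\varphi$. Choosing $a$ large finishes the proof. This exponential route makes Step 2's polar-decomposition attempt unnecessary.

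The main obstacle is the convergence $\|\sigma^\varphi_{F_a}(k)-k\|^\sharp_\varphi\to0$. It requires norm continuity of $t\mapsto\Delta_\varphi^{it}k\xi_\varphi$ and of $t\mapsto\Delta_\varphi^{it}Jk\xi_\varphi$ for the $\|k\xi_\varphi\|$ and $\|k^*\xi_\varphi\|$ parts. Both are standard, since $\sigma_t^\varphi(k)\xi_\varphi=\Delta^{it}_\varphi k\xi_\varphi$ and $\varphi\circ\sigma_t=\varphi$.
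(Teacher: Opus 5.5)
Your final (exponential) argument is correct and is essentially the paper's proof: write $u=e^{ik}$, replace $k$ by a self-adjoint analytic approximant, and get total boundedness of $e^{ik_a}$ from the continuation $e^{i\sigma^\varphi_z(k_a)}$ together with Lemma \ref{lem Kbounded}. The only difference is that you take the explicit approximant $\sigma^\varphi_{F_a}(k)$, which is entire by Proposition \ref{prop compactspec}, where the paper just invokes $*$-strong density of totally bounded elements; also, the polar-decomposition detour in Steps 1--2 can be deleted, and since $k_a-k$ is self-adjoint, $\|k_a-k\|^\sharp_\varphi=\|k_a-k\|_\varphi$, so no separate argument involving $J$ is needed.
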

\begin{proof}
    Take $h\in M_{{\rm sa}}$ such that $u=e^{ih}$. Since the totally bounded elements are $*$-strongly dense in $M$, there exists $h_0\in M_{\rm sa}$ which is totally bounded such that $\|u-e^{ih_0}\|_{\varphi}<\ep$ holds.  Set $v:=e^{ih_0}$. 
    Since $\sigma_t^{\varphi}(v)=e^{i\sigma_t^{\varphi}(h_0)}$ extends to an $M$-valued bounded continuous function on $\overline{D}_{\frac12}$ which is holomorphic in the interior such that $\|\sigma_{-i/2}^{\varphi}(v)\|\le e^{\|\sigma_{-i/2}^{\varphi}(h_0)\|}$, we have that $v$ is right bounded. Similarly, $v^*$ is right bounded and thus $v$ is totally bounded.  
\end{proof}

\subsection{W$^*$-probability spaces as model-theoretic structures}\label{modeltheorypresentation}

In this subsection, we briefly recall the treatment of W$^*$-probability spaces as model-theoretic structures due to Arulseelan, Hart, Sinclair, and the second author as presented in \cite{AGHS25}.  (We remark that this class was first treated model-theoretically by Dabrowski in \cite{dabrowski2019continuous}, but we prefer the treatment in \cite{AGHS25} as it allows one to speak of actual multiplication rather than the ``smeared'' multiplication used in \cite{dabrowski2019continuous}.)

The key insight into the model-theoretic treatment of W$^*$-probability spaces as presented in \cite{AGHS25} is the choice of sorts $S_K$ of totally $K$-bounded elements, as $K$ varies over $\bb N$ (as opposed to sorts being the operator norm balls as in Dabrowski's approach \cite{dabrowski2019continuous}), where each sort is equipped with the metric corresponding to the norm $\|\cdot\|_\varphi^\#$.  (Note that the sorts are indeed complete with respect to this metric.)  Since multiplication (as a two-variable function) is uniformly continuous when restricted to each $S_K$, one may allow multiplication on each sort as a distinguished binary function symbol.  Moreover, since the union of the sorts is dense in the W$^*$-probability space (as mentioned earlier), this allows one to recover the W$^*$-probability space from its ``dissected'' version.  In fact, the map which sends a W$^*$-probability space to its dissection is an equivalence of categories for which the Ocneanu ultraproduct corresponds to the model-theoretic ultraproduct.  We stress that the proof of this theorem is nontrivial and uses ideas from Tomita-Takesaki theory (and the bounded operator approach to this theory developed by Rieffel and van Daele) as well as ideas of Kadison.  For more details on this, the reader may consult \cite{AGHS25}.

\subsection{Countably incomplete and good ultrafilters}\label{good}

In a few places in the paper, we need the notion of a \textbf{good ultrafilter}.  The definition is quite technical, so we do not give it here.  We simply explain the properties needed for the applications in this paper.  We refer the reader to \cite[Chapter 8]{goldbring2022ultrafilters} for complete details in the setting of classical logic and to \cite{keisler2024using} for a treatment in continuous logic.

Recall that an ultrafilter $\u$ is called \textbf{countably incomplete} if there is a  descending family $(A_n)_{n\in \bb N}$ of elements of $\u$ with $\bigcap_{n\in \bb N}A_n=\emptyset$.  The key point is that whenever $(\cal M_i)_{i\in I}$ is a family of structures in a countable (or separable, in the continuous setting) language and $\u$ is a countably incomplete ultrafilter on $I$, then the ultraproduct $\prod_\u \cal M_i$ is $\aleph_1$-saturated.

In order to get higher levels of saturation, one needs to work with special kinds of countably incomplete ultrafilters called good ultrafilters.  More precisely, given any cardinal $\kappa$, one can define the notion of a \textbf{$\kappa^+$-good ultrafilter} $\u$ on a set $I$, which has the key property that whenever $(\cal M_i)_{i\in I}$ is a family of structures in a language of cardinality (or density character, in the continuous setting) at most $\kappa$, then the ultraproduct $\prod_\u \cal M_i$ is $\kappa^+$-saturated.  A particular consequence of this saturation is that any structure of cardinality (or density character) at most $\kappa^+$ elementarily equivalent to $\prod_\u \cal M_i$ embeds elementarily into $\prod_\u \cal M_i$.  We stress that, in ZFC, one can prove that $\kappa^+$-good ultrafilters exist for every cardinal $\kappa$.

\section{Totally bounded variants of Haagerup's results on bicentralizers}\label{sec tb variant of Haagerup}

In this section, we prove some totally bounded variants of Haagerup's results around bicentralizers introduced in Subsection 2.3 that will be useful in the sequel.

For each $\delta>0$, let $\tilde{\mathbf{U}}_{1+\delta}(M)$ denote the set of all $v\in M$ which are totally $(1+\delta)$-bounded and such that $\max\{\|v^*v-1\|_{\varphi},\|vv^*-1\|_{\varphi}\}\le \delta$ holds.

Recall the definition of Haagerup's function $\epsilon_M(a,\delta)$ from Section \ref{BCsection}: $$\ep_{M}(a,\delta)=\sup \left \{\|u^*au-a\|_{\varphi}\colon u\in \mathbf{U}(M),\,\|u\varphi-\varphi u\|\leq \delta\right \}.$$  Replacing the condition ``$u$ is a unitary for which $\|u\varphi-\varphi u\|\leq \delta$'' with ``$u\in \tilde{\mathbf{U}}_{1+\delta}(M)$'', we obtain the following function:
\begin{defn}
Let $(M,\varphi)$ be a W$^*$-probability space.  For $a\in M$ and $\delta>0$, set    
\[\tilde{\ep}_{M}(a,\delta)=\sup \left \{\|u^*au-a\|_{\varphi}\colon u\in \tilde{\mathbf{U}}_{1+\delta}(M)\right \}.\]
\end{defn}
Let $(M,\varphi)$ be a W$^*$-probability space. Let $a\in M$ and $\delta>0$. Consider the set 
$$\widetilde{C}_{\varphi}(a,\delta):=\overline{\operatorname{co}}\{u^*au\mid u\in \tilde{\mathbf{U}}_{1+\delta}(M)\},$$
where the bar denotes the $\sigma$-weak closure. 
We will show the following totally bounded version of Haagerup's characterization of trivial bicentralizer \cite[Proposition 1.3]{haagerup87}: 
\begin{prop}\label{prop tbprop1.3}
    Let $(M,\varphi)$ be a W$^*$-probability space. The following three conditions are equivalent: 
    \begin{enumerate}
    \item[(a)] $\BC(M,\varphi)=\mathbb{C}.$
    \item[(b)] For every $a\in M$ and $\delta>0$, the condition 
    $$\widetilde{C}_{\varphi}(a,\delta)\cap \C \neq \emptyset$$
    holds. 
    \item[(c)] For every $a\in M$ and $\delta>0$, the condition 
    $$\varphi (a)1\in \widetilde{C}_{\varphi}(a,\delta)$$
    holds.
  \end{enumerate}
\end{prop}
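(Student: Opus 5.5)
We prove (c)$\Rightarrow$(b)$\Rightarrow$(a)$\Rightarrow$(c), modeling the argument on Haagerup's proof of \cite[Proposition 1.3]{haagerup87} with the totally bounded almost-unitaries $\tilde{\mathbf{U}}_{1+\delta}(M)$ replacing the unitaries $u$ with $\|u\varphi-\varphi u\|\le\delta$. The implication (c)$\Rightarrow$(b) is trivial.

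For (b)$\Rightarrow$(a), fix $a\in\BC(M,\varphi)$ and show it is scalar. First we show that elements of $\tilde{\mathbf{U}}_{1+\delta}(M)$ asymptotically commute with $a$ as $\delta\to0$, i.e.\ that $\tilde{\ep}_M(a,\delta)\to 0$. Suppose not. Then there are $\epsilon>0$ and $v_n\in\tilde{\mathbf{U}}_{1+1/n}(M)$ with $\|v_n^*av_n-a\|_\varphi\ge\epsilon$. By Fact \ref{TBelementsinultraproducts}(1), $v=(v_n)_\u$ exists in $M^\u$ and is totally $1$-bounded. Moreover $v^*v=vv^*=1$, since $\|v_n^*v_n-1\|_\varphi\to0$ and, by total boundedness, the $\|\cdot\|^\#$ norms also go to $0$. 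So $v$ is a unitary that is totally $1$-bounded, and Corollary \ref{totallyoneboundedunitary} gives $v\in M^\u_{\varphi^\u}$. Fact \ref{BCultrapower} then says $a$ commutes with $v$, hence $\lim_\u\|v_n^*av_n-a\|_\varphi=0$, a contradiction.

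This estimate is not uniform over the convex hull: a weak$^*$ limit $\lambda 1$ of convex combinations of $u^*au$ need not satisfy any estimate directly. We use lower semicontinuity instead. The set $\{x:\|x-a\|_\varphi\le\tilde\ep_M(a,\delta)\}$ is convex and $\sigma$-weakly closed on bounded sets (the $\|\cdot\|_\varphi$-ball is weakly closed), and $\widetilde C_\varphi(a,\delta)$ is bounded because $\|v\|\le 1+\delta$. Hence $\|\lambda-a\|_\varphi\le\tilde\ep_M(a,\delta)\to0$. Since $\lambda=\lambda_\delta$ may depend on $\delta$, we conclude that $a$ is a $\|\cdot\|_\varphi$-limit of scalars, so $a\in\C$.

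For (a)$\Rightarrow$(c), note first that for unitaries $u\in\mathbf{U}(M_\varphi)$ we have $u\in\tilde{\mathbf{U}}_{1+\delta}(M)$ by Corollary \ref{totallyoneboundedunitary}; more generally, the set $\tilde{\mathbf{U}}_{1+\delta}(M)$ is larger than needed. The plan is to follow Haagerup's Hahn--Banach argument. Suppose $\varphi(a)1\notin\widetilde C_\varphi(a,\delta)$. Separate it by a normal functional $\psi$ and obtain a point $b$ of the convex set minimizing some distance. Haagerup's argument uses the unique $\|\cdot\|_\varphi$-nearest point, or a minimal-norm element, in the convex set $\widetilde C_\varphi(a,\delta)$, viewed in $L^2(M,\varphi)$. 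This set is invariant under conjugation by elements of the form $w=\sigma^\varphi_{F}(u)$-approximants, and the nearest point $b$ should then commute with all unitaries in the asymptotic centralizer, giving $b\in\BC(M,\varphi)=\C$. Hence $b=\varphi(b)=\varphi(a)$, where the equality of $\varphi$-values needs $\varphi(v^*av)\approx\varphi(a)$; this holds up to an error controlled by $\delta$, and must be made exact, e.g.\ by using unitaries in $M_\varphi$.

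The main obstacle is this direction. The composition of two elements of $\tilde{\mathbf{U}}_{1+\delta}$ is only in $\tilde{\mathbf{U}}_{1+\delta'}$, so $\widetilde C_\varphi(a,\delta)$ is not invariant under its own conjugations. I would first try to reduce to Haagerup's original statement: given a unitary $u$ with $\|u\varphi-\varphi u\|$ small, produce a totally bounded $v\in\tilde{\mathbf{U}}_{1+\delta}(M)$ close to $u$ in $\|\cdot\|^\#_\varphi$. The candidate is $v=\sigma^\varphi_{F_a}(u)$ with $a$ small, via Proposition \ref{prop compactspec}, which bounds the total boundedness constant by $e^{a/2}$, together with \cite[Lemma 2.8]{haagerup87} to control $\|\sigma_t^\varphi(u)-u\|_\varphi$. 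Then $\varphi(a)1\in\overline{\operatorname{conv}}\{u^*au\}$, which holds by Haagerup's (a)$\Rightarrow$(3), transfers to $\widetilde C_\varphi(a,\delta)$ by approximating each $u^*au$ by $v^*av$ in $\|\cdot\|_\varphi$, hence weakly on bounded sets.
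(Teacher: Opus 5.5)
\textbf{Verdict.} Your proof is correct in outline. The converse direction matches the paper; for (a)$\Rightarrow$(c) your reduction to Haagerup's result is a genuinely different route that works, once two details are filled in (given at the end).

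\textbf{Converse direction.} This is essentially the paper's argument. The contradiction argument giving $\tilde{\ep}_M(a,\delta)\to 0$ for $a\in\BC(M,\varphi)$ is the forward half of Lemma~\ref{lem tildeepsilon and BC}. The bound $\|x-a\|_\varphi\le\tilde{\ep}_M(a,\delta)$ on $\widetilde{C}_\varphi(a,\delta)$ is how the paper proves (c)$\Rightarrow$(a). Running it with an arbitrary scalar $\lambda_\delta$ to get (b)$\Rightarrow$(a) directly is fine.

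\textbf{Your first sketch of (a)$\Rightarrow$(c).} This would not work as written. As you note, a single $\widetilde{C}_\varphi(a,\delta)$ is not invariant under the relevant conjugations. Restricting to $\mathbf{U}(M_\varphi)$ makes $\varphi(u^*au)=\varphi(a)$ exact, but you lose the scalar, e.g.\ when $\varphi$ is ergodic.

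\textbf{The paper's route.} The paper takes the minimal-$\|\cdot\|_\varphi$ point $b$ of $\widetilde{C}_\varphi(a)=\bigcap_{\delta>0}\widetilde{C}_\varphi(a,\delta)$. Lemma~\ref{lem tildeU is multiplicative} gives $\widetilde{C}_\varphi(b)\subseteq\widetilde{C}_\varphi(a)$, hence $\widetilde{C}_\varphi(b)=\{b\}$. Then $b\in\BC(M,\varphi)=\C$ by Lemma~\ref{lem Haageruptb1.2}. Finally the scalar is identified via an explicit estimate showing $|\varphi(u^*au)-\varphi(a)|\to 0$ uniformly on $\tilde{\mathbf{U}}_{1+\delta}(M)$.

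\textbf{Your reduction.} You import Haagerup's Proposition~1.3/Remark~1.4 (the Fact recorded in Subsection~\ref{BCsection}), which already yields $\varphi(a)1$ exactly. You then transfer it along a uniform approximation of almost-centralizing unitaries by elements of $\tilde{\mathbf{U}}_{1+\delta}(M)$. That approximation is precisely the paper's Proposition~\ref{almostcentralizingalmostunitary}, whose proof does not use the present proposition, so there is no circularity.

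\textbf{Comparison.} Your route is shorter and sidesteps both the invariance problem and the $\varphi$-value estimate, but it uses Haagerup's theorem as a black box. The paper rebuilds the analogue intrinsically and obtains Lemma~\ref{lem Haageruptb1.2} along the way.

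\textbf{Detail 1: uniformity of the approximation.} The approximation must depend only on a bound for $\|u\varphi-\varphi u\|$. It must also give $\|v^*v-1\|_\varphi,\|vv^*-1\|_\varphi\le\delta$. For $v=\sigma^\varphi_{F_c}(u)$, pointwise-in-$t$ control of $\|\sigma^\varphi_t(u)-u\|_\varphi$ is not enough by itself, since the mass of $F_c$ lives at $|t|\sim 1/c$. It is cleaner to argue spectrally:
\begin{itemize}
\item Let $\eta=u\xi_\varphi$ and let $\mu_\eta$ be its spectral measure for $\log\Delta_\varphi$.
\item Then $\|v\xi_\varphi-\eta\|^2=\int\min(1,|\lambda|/c)^2\,d\mu_\eta(\lambda)$.
\item Also $\int|1-e^{\lambda/2}|^2\,d\mu_\eta(\lambda)=\|u\xi_\varphi-\xi_\varphi u\|^2$, which is controlled by $\|u\varphi-\varphi u\|$.
\item Since $|1-e^{\lambda/2}|\ge\tfrac38\min(1,|\lambda|)$, the first integral is controlled by the second.
\end{itemize}
Do the same for $u^*$, and the almost-unitarity follows. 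Alternatively, simply cite Proposition~\ref{almostcentralizingalmostunitary}.

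\textbf{Detail 2: general $a\in M$.} You cannot bound $\|v^*av-u^*au\|_\varphi$ by $\|v-u\|^\#_\varphi$ alone. Lemma~\ref{lem v*av-u*au} needs $a\in S_1(M)$ because of the term $\|(v^*-u^*)a\|_\varphi$. Since you only need $\sigma$-weak approximation on a bounded set, test against vectors from the dense subspace $M'\xi_\varphi$. For $x',y'\in M'$ one has $|\langle (v^*av-u^*au)x'\xi_\varphi,y'\xi_\varphi\rangle|\le(1+\|v\|)\|a\|\|x'\|\|y'\|\|v-u\|_\varphi$. Alternatively, first approximate $a$ in $\|\cdot\|_\varphi$ by a totally bounded $a_0$.
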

We need some preparation. We will need the following analogue of Haagerup's characterization of the bicentralizer (Fact \ref{haagerupBCchar} above):

\begin{lem}\label{lem tildeepsilon and BC}
    Let $(M,\varphi)$ be a W$^*$-probability space. Then for every $a\in M$, we have
    \[a\in \BC(M,\varphi)\iff \lim_{\delta\to 0}\tilde{\ep}_{M}(a,\delta)=0.\]
\end{lem}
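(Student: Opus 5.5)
The plan is to argue both directions by contradiction, passing to an ultrapower $(M,\varphi)^\u$ with $\u$ a nonprincipal ultrafilter on $\N$. Three facts do the work: Fact \ref{TBelementsinultraproducts} (totally bounded elements lift to, and come from, totally bounded representatives), Corollary \ref{totallyoneboundedunitary} (totally $1$-bounded unitaries are exactly the centralizer unitaries), and the characterizations of the bicentralizer in Fact \ref{BCultrapower} and Fact \ref{haagerupBCchar}.

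($\Rightarrow$) Suppose $a\in \BC(M,\varphi)$ but $\tilde{\ep}_M(a,\delta)\not\to 0$. Since $\tilde{\ep}_M(a,\cdot)$ is monotone in $\delta$, there is $\ep>0$ with $\tilde{\ep}_M(a,\delta)\geq \ep$ for all $\delta$.
\begin{enumerate}
\item Choose $v_n\in \tilde{\mathbf{U}}_{1+1/n}(M)$ with $\|v_n^*av_n-a\|_\varphi\geq \ep/2$.
\item Each $v_n$ is totally $2$-bounded, so by Fact \ref{TBelementsinultraproducts}(1) the element $v=(v_n)_\u\in M^\u$ is defined. For each fixed $K>1$, the $v_n$ are eventually totally $K$-bounded, so $v$ is totally $K$-bounded for every $K>1$. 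Letting $K\downarrow 1$ in the defining inequalities $\|bv\|_{\varphi^\u}\le K\|b\|_{\varphi^\u}$, $\|bv^*\|_{\varphi^\u}\le K\|b\|_{\varphi^\u}$ and $\|v\|\le K$ shows that $v$ is totally $1$-bounded.
\item Because $\|v_n^*v_n-1\|_\varphi\to 0$ and $\|v_nv_n^*-1\|_\varphi\to 0$, and these sequences are uniformly bounded, we get $\|v^*v-1\|_{\varphi^\u}=0$ and $\|vv^*-1\|_{\varphi^\u}=0$. By faithfulness of $\varphi^\u$, $v$ is a unitary.
\item By Corollary \ref{totallyoneboundedunitary}, $v\in \mathbf{U}(M^\u_{\varphi^\u})$. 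By Fact \ref{BCultrapower}, $a$ commutes with $v$.
\end{enumerate}
Since all the sequences involved lie in $\mathcal{M}^\u$ and products are computed coordinatewise,
$$0=\|v^*av-a\|_{\varphi^\u}=\lim_{\u}\|v_n^*av_n-a\|_\varphi\geq \ep/2,$$
which is a contradiction.

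($\Leftarrow$) Suppose $\lim_\delta\tilde{\ep}_M(a,\delta)=0$ but $a\notin\BC(M,\varphi)$. By Fact \ref{haagerupBCchar} there are $\ep>0$ and unitaries $u_n$ with $\|u_n\varphi-\varphi u_n\|\to 0$ and $\|u_n^*au_n-a\|_\varphi\geq \ep$.
\begin{enumerate}
\item By Lemma \ref{lem asymptotic centralizer}, $u=(u_n)_\u$ is a unitary in $M^\u_{\varphi^\u}$.
\item By Corollary \ref{totallyoneboundedunitary}, $u$ is totally $1$-bounded. By Fact \ref{TBelementsinultraproducts}(2), there are totally $1$-bounded $v_n\in M$ with $(v_n)_\u=u$, so $\|v_n-u_n\|^\#_\varphi\to 0$ along $\u$.
\item Since $(v_n^*v_n)_\u=u^*u=1$, we get $\|v_n^*v_n-1\|_\varphi\to 0$ along $\u$, and likewise for $v_nv_n^*$. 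Hence for each $\delta>0$, $\u$-almost every $v_n$ lies in $\tilde{\mathbf{U}}_{1+\delta}(M)$, and so $\|v_n^*av_n-a\|_\varphi\le \tilde{\ep}_M(a,\delta)$ for those $n$.
\item Since $(v_n^*av_n)_\u=u^*au=(u_n^*au_n)_\u$, we get
$$\lim_\u\|u_n^*au_n-a\|_\varphi=\|u^*au-a\|_{\varphi^\u}=\lim_\u\|v_n^*av_n-a\|_\varphi\le \tilde{\ep}_M(a,\delta).$$
\end{enumerate}
The right-hand side holds for every $\delta>0$ and tends to $0$, so the left-hand side is $0$. This contradicts $\|u_n^*au_n-a\|_\varphi\geq\ep$.

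The main obstacle is the lifting step in ($\Leftarrow$): replacing asymptotically centralizing unitaries by genuinely totally bounded approximants. This is handled entirely by the nontrivial Fact \ref{TBelementsinultraproducts}(2), and the remaining steps are bookkeeping of representatives in $\mathcal{M}^\u$.
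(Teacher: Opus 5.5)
Your proof is correct. The forward direction is the paper's argument, and you also justify why $(v_n)_\u$ is a totally $1$-bounded unitary, which the paper states without comment.

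The converse takes a different route at the lifting step.

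\begin{itemize}
\item \emph{The paper's route.} It first fixes $\delta$ with $\tilde{\ep}_M(a,\delta)<\ep/2$. It then invokes Lemma \ref{lem representing u by 1+deltabdd}, which smooths an arbitrary unitary representative with the Fej\'er kernel, $u=(\sigma^{\varphi}_{F_a}(\tilde u_n))_\u$. Proposition \ref{prop compactspec} shows these are contractions in $\tilde{\mathbf{U}}_{1+\delta}(M)$ for that one $\delta$, and the argument closes with the $\ep/2$ comparison.
\item \emph{Your route.} You apply Fact \ref{TBelementsinultraproducts}(2) with $K=1$ to the totally $1$-bounded unitary $u$. The resulting totally $1$-bounded representatives lie in $\tilde{\mathbf{U}}_{1+\delta}(M)$ for $\u$-almost all $n$, for every $\delta$ simultaneously. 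So you can pass directly to the limit without fixing $\delta$ beforehand.
\end{itemize}

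Your route is shorter and gives a sharper lift. In fact it shows that Lemma \ref{lem representing u by 1+deltabdd} follows immediately from Fact \ref{TBelementsinultraproducts}(2); the Remark after that lemma uses exactly this lift. The cost is that it rests entirely on the lifting theorem of \cite{AGHS25}, which the paper flags as fairly nontrivial. The paper's smoothing argument is explicit, builds the representatives from any unitary representing sequence, and relies only on the spectral estimate of Proposition \ref{prop compactspec}, proved in the appendix.
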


To prove the previous lemma, we will need the following lemma, which will also prove important in later sections:

\begin{lem}\label{lem representing u by 1+deltabdd}
    Let $(M,\varphi)$ be a W$^*$-probability space and $\u$ an ultrafilter on an index set $I$.  Fix $u\in \mathbf{U}(M^{\cU}_{\varphi^{\cU}})$. Then for each $\delta>0$, there exists a sequence of contractions $(u_i)_{i\in I}$ in $\tilde{\mathbf{U}}_{1+\delta}(M)$ such that $u=(u_i)_{\cU}$.
\end{lem}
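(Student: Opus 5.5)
Since $u$ lies in the centralizer of $\varphi^\u$, it is totally $1$-bounded in $(M^\u,\varphi^\u)$ by Corollary \ref{totallyoneboundedunitary}. By Fact \ref{TBelementsinultraproducts}(2), we can therefore write $u=(v_i)_\u$ with each $v_i\in M$ totally $1$-bounded; in particular $\|v_i\|\le 1$, so each $v_i$ is already a contraction. What remains is to arrange the conditions $\|v_i^*v_i-1\|_\varphi\le\delta$ and $\|v_iv_i^*-1\|_\varphi\le\delta$, since total $1$-boundedness gives total $(1+\delta)$-boundedness for free.

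For these two conditions, we note that $u^*u=uu^*=1$ in $M^\u$. Since $(v_i)_i$ lies in $\mathcal M^\u$ and the quotient map is a $*$-homomorphism, $(v_i^*v_i-1)_\u=0$, i.e.\ $\lim_\u\|v_i^*v_i-1\|_\varphi^\#=0$, and similarly for $v_iv_i^*-1$. Hence the set
\[A:=\{i\in I : \max(\|v_i^*v_i-1\|_\varphi,\|v_iv_i^*-1\|_\varphi)\le\delta\}\]
belongs to $\u$. We set $u_i:=v_i$ for $i\in A$ and $u_i:=1$ for $i\notin A$. Then every $u_i$ is a contraction in $\tilde{\mathbf{U}}_{1+\delta}(M)$, and $(u_i)_\u=(v_i)_\u=u$ because the two sequences agree on a set in $\u$.

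The only nontrivial input is the lifting of totally bounded elements, Fact \ref{TBelementsinultraproducts}(2), which is taken from \cite{AGHS25}. If we could only lift $u$ with totally $K$-bounded representatives for some constant $K>1$, we would need to improve the constant to $1+\delta$; this is the point where the argument would otherwise require real work. Here no such improvement is needed, because Corollary \ref{totallyoneboundedunitary} gives the sharp constant $K=1$ at the level of the ultrapower, and the lifting preserves it. Everything else is routine bookkeeping with the ultrafilter.
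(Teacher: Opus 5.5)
Your argument is correct, but it is not the route the paper takes. You lift $u$ directly: $u$ is totally $1$-bounded by Corollary~\ref{totallyoneboundedunitary}, so Fact~\ref{TBelementsinultraproducts}(2) with $K=1$ gives totally $1$-bounded representatives, and the almost-unitarity conditions then hold on a $\u$-large set.

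The paper never invokes Fact~\ref{TBelementsinultraproducts}(2). It starts from arbitrary unitary representatives $\tilde u_i$ of $u$, picks $a>0$ with $e^{a/2}<1+\delta$, and smooths them with the Fej\'er kernel. Since $u$ is fixed by $\sigma^{\varphi^{\u}}$ and $\int F_a=1$, one has $u=\sigma^{\varphi^{\u}}_{F_a}(u)=(\sigma^\varphi_{F_a}(\tilde u_i))_{\u}$ by \cite[Lemma 4.14]{AH14}. The elements $\sigma^\varphi_{F_a}(\tilde u_i)$ are contractions in $M(\sigma^\varphi,[-a,a])$, hence totally $e^{a/2}$-bounded by Proposition~\ref{prop compactspec}. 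The final ultrafilter bookkeeping is the same as yours.

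What each route buys:
\begin{itemize}
\item Your route is shorter and gives more: the representatives are totally $1$-bounded, not merely $(1+\delta)$-bounded. The paper's own Remark following the lemma uses the same observation $S_1(M^{\u})=S_1(M)^{\u}$. The cost is that everything rests on the lifting theorem from \cite{AGHS25}, which the paper calls fairly nontrivial.
\item The paper's route avoids that black box. It uses only the compatibility of the modular flow with Ocneanu ultraproducts and the spectral-subspace criterion; for Fej\'er-smoothed elements the needed case is essentially classical.
\item The paper's route is also explicit. The good representatives come from any unitary representatives by applying one fixed normal map $\sigma^\varphi_{F_a}$, and they are entire-analytic elements with controlled spectrum. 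The price is the constant $e^{a/2}$ in place of $1$.
\end{itemize}
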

\begin{proof}
    Let $(\tilde{u}_i)_{i\in I}$ be a sequence of unitaries in $M$ so that $u=(\tilde{u}_i)_\u$. 
    Choose $a>0$ such that $e^{a/2}<1+\delta$ and consider the Fej\'er kernel $F_a$ defined in Subsection 2.4. Since $\int_{\mathbb R}F_a(t)dt=1$, we have, using \cite[Lemma 4.14]{AH14}, that
    \begin{align*}
    u&=\int_{\mathbb R}F_a(t)\sigma_t^{\varphi^{\cU}}(u)dt\\
    &=\sigma_{F_a}^{\varphi^{\cU}}(u)\\
    &=(\sigma_{F_a}^{\varphi}(\tilde{u}_i))_{\cU}.
    \end{align*}
    (We note that \cite[Lemma 4.14]{AH14} was stated for only countable index sets; however, the proof shows that the result holds for arbitrary index sets.)  For each $i\in I$, set $u_i=\sigma_{F_a}^{\varphi}(\tilde{u}_i)$. Then $\|u_i\|\le \|\tilde{u}_i\|=1$.  Moreover, since $\operatorname{supp}(\hat F_a)\subseteq [-a,a]$, we have that $u_i\in M(\sigma^{\varphi},[-a,a])$.  By Proposition \ref{prop compactspec}, for all $i$, $u_i$ is totally $e^{a/2}$-bounded and thus totally $(1+\delta)$-bounded. Moreover, since 
    \[0=\max(\|u^*u-1\|_{\varphi^{\cU}},\|uu^*-1\|_{\varphi^{\cU}})=\lim_{\cU}\max(\|u_i^*u_i-1\|_{\varphi},\|u_iu_i^*-1\|_{\varphi}),\]
    we have that $u_i\in \tilde{\mathbf{U}}_{1+\delta}(M)$ for $\cU$-almost every $i$.  By setting $u_i=1$ for the remaining $i$ (a $\cU$-null set), we have that $u_i\in \tilde{\mathbf{U}}_{1+\delta}(M)$ for every $i$. 
\end{proof}

\begin{remark}
    In some sense, the previous lemma is optimal.  Indeed, let $\varphi$ be an ergodic state on a III$_1$ factor $M$ with separable predual (such a state exists by the main result of \cite{marrakchivaes2024ergodic}). 
Take $u\in \mathbf{U}(M^{\cU}_{\varphi^{\cU}})\setminus \C$; such an element exists as $M^{\cU}_{\varphi^{\cU}}$ is a II$_1$ factor.  By Corollary \ref{totallyoneboundedunitary}, $u$ is totally 1-bounded in $M^{\cU}$. Since $S_1(M^{\cU})=S_1(M)^{\cU}$, $u$ has a representing sequence $(v_n)_n$ of totally 1-bounded elements in $M$, that is, $u=(v_n)_{\cU}$. However, since $\mathbf{U}(M_{\varphi})=1$ and $u\notin \C$, such ``almost unitaries'' $v_n$ cannot be taken to be unitaries in $M_\varphi$.
\end{remark}

\begin{proof}[Proof of Lemma \ref{lem tildeepsilon and BC}]
    For the forward direction, assume that there exists $\ep>0$ such that $\tilde{\ep}_M(a,\tfrac{1}{n})\ge \ep$ for every $n\in \N$.  For each $n\geq 1$, choose $u_n\in \tilde{\mathbf{U}}_{1+1/n}(M)$ such that $\|u_n^*au_n-a\|_{\varphi}\ge \ep-1/n$. 
    Then for each $n$ and $x_n\in M_+$, we have 
    \[\varphi(u_nx_nu_n^*)\le (1+\tfrac{1}{n})^2\varphi(x_n) \text{ and }
    \varphi(u_n^*x_nu_n)\le (1+\tfrac{1}{n
    })^2\varphi(x_n).\]
    Fix a nonprincipal ultrafilter $\u$ on $\N$. By Fact \ref{TBelementsinultraproducts}, we are entitled to consider $u=(u_n)_{\cU}\in M^{\cU}$. Then $u$ is a totally 1-bounded unitary in $M^{\cU}$, whence $u\in M^{\mathcal{U}}_{\varphi^{\mathcal{U}}}$ by Corollary \ref{totallyoneboundedunitary}.  Moreover, $\|u^*au-a\|_{\varphi^{\mathcal{U}}}\ge \ep$ holds. By Fact \ref{BCultrapower}, this shows that $a\notin \BC(M,\varphi)=(M^{\mathcal{U}}_{\varphi^{\mathcal{U}}})'\cap M$.\\
    
    To prove the converse, assume, towards a contradiction, that $\lim_{\delta\to 0}\tilde{\ep
    }_M(a,\delta)=0$ and yet $a\notin \BC(M,\varphi)$. By Fact \ref{haagerupBCchar}, $\displaystyle \ep:=\lim_{\delta\to 0}\ep_M(a,\delta)>0$. Choose $\delta>0$ such that $\tilde{\ep}_M(a,\delta)<\frac{\ep}{2}$. By the definition of $\epsilon$, for every $n\in \N$, there exists $u_n\in \mathbf{U}(M)$ such that $\|u_n\varphi-\varphi u_n\|<\frac{1}{n}$ and $\|u_n^*au_n-a\|_{\varphi}\ge \ep$. Then $u=(u_n)_{\mathcal{U}}\in \mathbf{U}(M^{\mathcal{U}}_{\varphi^{\mathcal{U}}})$ by Lemma \ref{lem asymptotic centralizer}. 
    By Lemma \ref{lem representing u by 1+deltabdd}, there is a sequence $(y_n)_{n\in \N}$ of contractions in $\tilde{\mathbf{U}}_{1+\delta}(M)$ such that $u=(y_n)_\u$.  Moreover, since $$\lim_{ \cU}\|y_n^*ay_n-a\|_{\varphi}=\|u^*au-a\|_{\varphi^{\cU}}\geq \varepsilon,$$ we have $\|y_n^*ay_n-a\|_{\varphi}\ge \frac{\ep}{2}$ for $\cU$-almost all $n$, which contradicts $\tilde{\ep}_M(a,\delta)<\ep/2$.
\end{proof}

\begin{lem}\label{lem tildeU is multiplicative}
Suppose $(M,\varphi)$ is a W$^*$-probability space and $\delta_1,\delta_2>0$. Then $$\tilde{\mathbf{U}}_{1+\delta_1}(M)\cdot\tilde{\mathbf{U}}_{1+\delta_2}(M)\subset \tilde{\mathbf{U}}_{1+\delta}(M),$$ where $\delta:=\delta_1+\delta_2+\delta_1\delta_2$. 
\end{lem}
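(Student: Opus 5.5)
Take $u\in \tilde{\mathbf{U}}_{1+\delta_1}(M)$ and $v\in \tilde{\mathbf{U}}_{1+\delta_2}(M)$ and check the two defining conditions for $uv$ separately; note that $1+\delta=(1+\delta_1)(1+\delta_2)$, which is what drives every estimate.

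\emph{Total boundedness.} The operator norm is immediate: $\|uv\|\le \|u\|\|v\|\le (1+\delta_1)(1+\delta_2)=1+\delta$. For right boundedness of $uv$, for any $b\in M$,
\[
\|b\,uv\|_\varphi\le (1+\delta_2)\|bu\|_\varphi\le (1+\delta_1)(1+\delta_2)\|b\|_\varphi ,
\]
using right $(1+\delta_2)$-boundedness of $v$ and then right $(1+\delta_1)$-boundedness of $u$. Since $(uv)^*=v^*u^*$ and $v^*,u^*$ are right bounded with the same constants, the same two-step estimate gives right $(1+\delta)$-boundedness of $(uv)^*$. So $uv$ is totally $(1+\delta)$-bounded.

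\emph{Almost-isometry condition.} Write
\[
(uv)^*(uv)-1=v^*(u^*u-1)v+(v^*v-1).
\]
The second term has $\|\cdot\|_\varphi$-norm at most $\delta_2$. For the first, this is the one place to be careful, since $\|\cdot\|_\varphi$ is not a two-sided norm. We have $\|v^*(u^*u-1)v\|_\varphi\le \|v^*\|\,\|(u^*u-1)v\|_\varphi$, and right $(1+\delta_2)$-boundedness of $v$ (applied with $b=u^*u-1$) gives $\|(u^*u-1)v\|_\varphi\le (1+\delta_2)\delta_1$. Combined with $\|v^*\|\le 1+\delta_2$ this yields $(1+\delta_2)^2\delta_1$, which is too big. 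To avoid the extra factor, instead use the representation
\[
(uv)^*(uv)-1=v^*(u^*u-1)v+(v^*v-1)
\]
only after noting that right boundedness of $v$ is exactly $\varphi(v^*xv)\le(1+\delta_2)^2\varphi(x)$ for $x\ge0$ (Lemma \ref{lem Kbounded}). Then write $\|v^*yv\|_\varphi^2=\varphi(v^*y^*vv^*yv)\le \|vv^*\|\varphi(v^*y^*yv)\le (1+\delta_2)^2\cdot(1+\delta_2)^2\|y\|_\varphi^2$. This still gives a factor. So the expected main obstacle is matching the precise constant $\delta_1+\delta_2+\delta_1\delta_2$.

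What I would try first is to split the difference differently:
\[
(uv)^*uv-1=v^*(u^*u-1)v+(v^*v-1),\qquad \|v^*(u^*u-1)v\|_\varphi\le \|v^*\|\,(1+\delta_2)\|u^*u-1\|_\varphi
\]
and accept a bound of the form $C(\delta_1,\delta_2)$. If the exact constant fails, I would check whether the contractions case suffices. In the applications (e.g.\ Lemma \ref{lem representing u by 1+deltabdd}) the elements are contractions, so $\|v^*\|\le1$ and the bound becomes $(1+\delta_2)\delta_1+\delta_2=\delta$ exactly. If that does not work, I would look for a way to place the right-bounded factor so that only one factor $(1+\delta_2)$ appears. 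The same computation with $uv(uv)^*-1=u(vv^*-1)u^*+(uu^*-1)$, using right boundedness of $u^*$, handles the other term symmetrically, giving at most $(1+\delta_1)\delta_2+\delta_1=\delta$ when $\|u\|\le 1$. I expect this norm bookkeeping, and in particular getting exactly $\delta$ rather than a slightly larger constant, to be the only nontrivial point.
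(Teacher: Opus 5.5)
Your total-boundedness step is correct and coincides with the paper's ($S_{1+\delta_1}S_{1+\delta_2}\subseteq S_{(1+\delta_1)(1+\delta_2)}$). Your decomposition $(uv)^*uv-1=v^*(u^*u-1)v+(v^*v-1)$ is also the one the paper uses. But you leave $\|(uv)^*uv-1\|_\varphi\le\delta$ unproved for non-contractive $u,v$, and this gap cannot be closed: the statement is false as written, so your suspicion about the constant was justified.

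Take $M=\mathbb{C}^2$ with the faithful state $\varphi(x,y)=\tfrac19x+\tfrac89y$, $\delta_1=\delta_2=1$, and $u=v=(2,1)$. Since $M$ is commutative, $\|bu\|_\varphi^2=\varphi(|u|^2|b|^2)\le\|u\|^2\|b\|_\varphi^2$, so $u=u^*$ is totally $2$-bounded. Also $\|u^*u-1\|_\varphi=\|uu^*-1\|_\varphi=\|(3,0)\|_\varphi=1$, hence $u,v\in\tilde{\mathbf{U}}_{2}(M)$. Yet $w=uv=(4,1)$ has $\|w^*w-1\|_\varphi=\|(15,0)\|_\varphi=5>3=\delta$, so $uv\notin\tilde{\mathbf{U}}_{4}(M)$.

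This is not an artifact of large parameters. With $u=v=(1+\epsilon,1)$ and $\varphi(x,y)=px+(1-p)y$, $p=(2+\epsilon)^{-2}$, one has $u,v\in\tilde{\mathbf{U}}_{1+\epsilon}(M)$. But $\|w^*w-1\|_\varphi=(1+\epsilon)^2\epsilon+\epsilon=\delta+\epsilon^2(1+\epsilon)$, so your estimate $(1+\delta_2)^2\delta_1+\delta_2$ is actually attained.

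The paper reaches $\delta$ only through the step $\|v^*(u^*u-1)v\|_\varphi^2=\varphi(v^*(u^*u-1)^*(u^*u-1)v)$, which drops the middle factor $vv^*$. The correct expression is $\varphi(v^*(u^*u-1)^*vv^*(u^*u-1)v)$, and it is bounded by $(1+\delta_2)^2\delta_1^2$ only when $\|v\|\le 1$ --- exactly the obstruction you hit.

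What your computations do prove is the correct replacement. If $u$ and $v$ are in addition contractions, then $uv$ is a contraction in $\tilde{\mathbf{U}}_{1+\delta}(M)$; you need $\|v\|\le1$ for the $w^*w$ term and $\|u\|\le 1$ for the $ww^*$ term. In general, $uv\in\tilde{\mathbf{U}}_{1+\delta'}(M)$ with $\delta'=\max\{(1+\delta_2)^2\delta_1+\delta_2,\,(1+\delta_1)^2\delta_2+\delta_1\}$, which dominates $\delta$ and so also absorbs the total-boundedness constant.

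Your justification that ``in the applications the elements are contractions'' does not fit the lemma's only use. That use is in (a)$\Rightarrow$(b) of Proposition~\ref{prop tbprop1.3}, where $u,v$ range over all of $\tilde{\mathbf{U}}_{1+\delta_0}(M)$. Contractions arise in Lemma~\ref{lem representing u by 1+deltabdd}, which does not invoke this lemma. That argument only needs the constant to tend to $0$ with $\delta_0$. It is therefore repaired by choosing $\delta_0$ with $\delta_0(2+2\delta_0+\delta_0^2)\le\delta$ instead of $(1+\delta_0)^2=1+\delta$.

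So the proposal should not end with the constant left open. It should give a counterexample to the stated $\delta$ and a proof of one of these corrected versions.
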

\begin{proof}
    Take $u\in \tilde{\mathbf{U}}_{1+\delta_1}(M)$ and $v\in \tilde{\mathbf{U}}_{1+\delta_2}(M)$, and set $w:=uv$. Then $$w\in S_{1+\delta_1}(M)S_{1+\delta_2}(M)\subseteq S_{(1+\delta_1)(1+\delta_2)}(M)=S_{1+\delta}(M).$$ Moreover, 
    \begin{align*}
        \|w^*w-1\|_{\varphi}&=\|v^*u^*uv-1\|_{\varphi}\leq\|v^*(u^*u-1)v\|_{\varphi}+\|v^*v-1\|_{\varphi},\\
        \|v^*(u^*u-1)v\|_{\varphi}^2&=\varphi(v^*(u^*u-1)^*(u^*u-1)v)\\
        &\le (1+\delta_2)^2\varphi((u^*u-1)^*(u^*u-1))\le (1+\delta_2)^2\delta_1^2,
    \end{align*}
    and therefore 
    $$\|w^*w-1\|_{\varphi}\le (1+\delta_2)\delta_1+\delta_2=\delta.$$
    Similarly, $\|ww^*-1\|_{\varphi}\le \delta$ holds. This shows that $w\in \tilde{\mathbf{U}}_{1+\delta}(M)$. 
\end{proof}
The next lemma is a totally bounded version of \cite[Lemma 1.2]{haagerup87}. 
\begin{lem}\label{lem Haageruptb1.2}
Suppose $(M,\varphi)$ is a W$^*$-probability space and $a\in M$. Then 
$$a\in \BC(M,\varphi)\iff \bigcap_{\delta>0}\widetilde{C}_{\varphi}(a,\delta)=\{a\}.$$
\end{lem}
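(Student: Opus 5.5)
We prove the two implications separately, modelling the argument on Haagerup's proof of \cite[Lemma 1.2]{haagerup87} and using Lemma \ref{lem tildeepsilon and BC} in place of Fact \ref{haagerupBCchar}.

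\emph{Forward direction.} Assume $a\in \BC(M,\varphi)$ and set $\epsilon(\delta):=\tilde{\ep}_M(a,\delta)$. By Lemma \ref{lem tildeepsilon and BC}, $\epsilon(\delta)\to 0$ as $\delta\to 0$. For $u\in\tilde{\mathbf U}_{1+\delta}(M)$ we have $\|u^*au-a\|_\varphi\le \epsilon(\delta)$. Convexity of the norm extends this to every convex combination $b=\sum_j\lambda_j u_j^*au_j$, giving $\|b-a\|_\varphi\le\epsilon(\delta)$.

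The operator norm is also controlled: each $u_j$ is totally $(1+\delta)$-bounded, so $\|u_j^*au_j\|\le (1+\delta)^2\|a\|$. The set $\{b:\|b\|\le (1+\delta)^2\|a\|,\ \|b-a\|_\varphi\le \epsilon(\delta)\}$ is convex and $\sigma$-weakly closed. Closedness holds because $b\mapsto \|b-a\|_\varphi=\|(b-a)\xi_\varphi\|$ is weakly lower semicontinuous, and the operator-norm ball is $\sigma$-weakly compact. Hence $\widetilde C_\varphi(a,\delta)$ lies in this set. Any $b$ in the intersection over all $\delta$ therefore satisfies $\|b-a\|_\varphi=0$, so $b=a$ by faithfulness of $\varphi$. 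Finally, $a\in\widetilde C_\varphi(a,\delta)$ since $1\in\tilde{\mathbf U}_{1+\delta}(M)$, so the intersection equals $\{a\}$.

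\emph{Converse direction.} We argue by contraposition. Suppose $a\notin\BC(M,\varphi)$. By Fact \ref{BCultrapower}, fix a nonprincipal ultrafilter $\u$ on $\N$ and $u\in \mathbf U(M^\u_{\varphi^\u})$ with $u^*au\neq a$. The natural candidate for a point of $\bigcap_\delta\widetilde C_\varphi(a,\delta)$ other than $a$ is a conditional-expectation-type average. Following Haagerup, consider
\[
K:=\overline{\operatorname{co}}^{\sigma\text{-weak}}\{w^*aw: w\in \mathbf U(M^\u_{\varphi^\u})\}\subseteq M^\u .
\]
Apply the Dixmier-type averaging in the finite von Neumann algebra $M^\u_{\varphi^\u}$, using the normal conditional expectation $E:M^\u\to M$. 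The goal is a point $b\in E(K)\subseteq M$ with $b\neq a$.

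For instance, take $b=E(\tfrac12(a+u^*au))$. This lies in $E(K)$. One checks $b\neq a$ by computing $\varphi(a^*b)$: assuming (after normalization) $a\neq 0$, $\varphi$-orthogonality considerations give $\mathrm{Re}\,\varphi(a^*u^*au)<\varphi(a^*a)$ when $u^*au\ne a$, since $u^*au$ and $a$ have equal $\|\cdot\|_\varphi$-norm because $u$ commutes with $\varphi^\u$.

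The remaining claim is that $E(K)\subseteq \widetilde C_\varphi(a,\delta)$ for every $\delta$. By Lemma \ref{lem representing u by 1+deltabdd}, each unitary $w\in \mathbf U(M^\u_{\varphi^\u})$ is represented as $(w_n)_\u$ with $w_n\in \tilde{\mathbf U}_{1+\delta}(M)$ contractions. Then
\[
E(w^*aw)=\sigma\text{-weak}\lim_{n\to\u} w_n^*aw_n ,
\]
and this limit lies in $\widetilde C_\varphi(a,\delta)$ because the $w_n^*aw_n$ are uniformly bounded and the set is $\sigma$-weakly closed. The same holds for convex combinations, and then for $\sigma$-weak limits.

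\emph{Main obstacle.} The last step is where the difficulty lies. We must justify that the Ocneanu ultraproduct expectation $E$ is given by $\sigma$-weak limits along $\u$ of representatives. We must also justify that $E$ is $\sigma$-weakly continuous, so that it passes through the closed convex hull. Both should follow from the standard description of $E((x_n)_\u)$ as the weak$^*$-limit of $x_n$ (cf.\ \cite{AH14}).
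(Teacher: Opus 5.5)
Your proof is correct. The forward direction is the paper's argument; the paper bounds the $\sigma$-weak closure via $\sigma$-$*$-strong density of the bounded convex hull, and your lower-semicontinuity argument is equivalent.

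The converse takes a genuinely different route. The paper argues from inside $M$. Since $a\notin\BC(M,\varphi)$, Lemma \ref{lem tildeepsilon and BC} gives $u_n\in\tilde{\mathbf U}_{1+1/n}(M)$ with $\|u_n^*au_n-a\|_\varphi\ge\epsilon/2$. The point $b:=\sigma\text{-weak-}\lim_{n\to\u}u_n^*au_n$ then lies in every $\widetilde C_\varphi(a,\delta)$ simply because the parameters $1/n$ shrink. Finally $b\neq a$, because $(u_n)_\u$ is a totally $1$-bounded unitary and hence lies in $M^\u_{\varphi^\u}$ by Corollary \ref{totallyoneboundedunitary}; the same polarization computation you use then applies.

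You argue from the ultrapower down. You fix one unitary $u\in M^\u_{\varphi^\u}$ not commuting with $a$ (Fact \ref{BCultrapower}). Then, separately for each $\delta$, you represent $u$ via Lemma \ref{lem representing u by 1+deltabdd} by elements of $\tilde{\mathbf U}_{1+\delta}(M)$. What makes this work is that $E(u^*au)$ does not depend on the representative, so a single element lands in all the $\widetilde C_\varphi(a,\delta)$.

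What each route buys:
\begin{itemize}
\item Your route bypasses Lemma \ref{lem tildeepsilon and BC}, and with it Haagerup's Fact \ref{haagerupBCchar} and Corollary \ref{totallyoneboundedunitary}.
\item The price is the formula $E((x_n)_\u)=\sigma\text{-weak-}\lim_{n\to\u}x_n$ and its well-definedness on classes. This is standard: the paper uses it in Proposition \ref{prop EM implies existential emb}, and it is well defined because bounded sequences in $\mathcal{I}_{\cU}$ converge strongly to $0$ along $\u$.
\item The paper avoids this issue entirely, since its $b$ comes from one fixed sequence.
\end{itemize}

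Two simplifications of your write-up. First, the set $K$ and the ``Dixmier-type averaging'' play no role, and $b:=E(u^*au)$ alone already suffices, since $\operatorname{Re}\varphi(a^*E(u^*au))=\operatorname{Re}\varphi^\u(a^*u^*au)=\|a\|_\varphi^2-\tfrac12\|u^*au-a\|_{\varphi^\u}^2<\|a\|_\varphi^2$. Second, since your chosen $b$ is a finite convex combination, normality of $E$ is never used, so half of your stated ``main obstacle'' disappears.
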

\begin{proof}
    We first note that if $u\in \tilde{\mathbf{U}}_{1+\delta}(M)$, then $\|u^*au-a\|_{\varphi}\le \tilde{\ep}_{M}(a,\delta)$. This implies that 
    \begin{equation}\|x-a\|_{\varphi}\le \tilde{\ep}_{M}(a,\delta)\label{eq x-aphi}
    \end{equation}
    for every $x$ in the convex hull $\widetilde{C}_{\varphi}^0(a,\delta)=\operatorname{co}\{u^*au \ : \  u\in \tilde{\mathbf{U}}_{1+\delta}(M)\}$, and thus for every $x\in \widetilde{C}_{\varphi}(a,\delta)$ as $\widetilde{C}_{\varphi}^0(a,\delta)$ is a bounded, convex, $\sigma$-weakly (hence $\sigma$-*strongly) dense subset of $\widetilde{C}_{\varphi}(a,\delta)$.\\
    
    First assume that $a\in \BC(M,\varphi)$. Then by Lemma \ref{lem tildeepsilon and BC}, $\displaystyle \lim_{\delta\to 0}\tilde{\ep}_M(a,\delta)=0$ and thus $\bigcap_{\delta>0}\widetilde{C}_{\varphi}(a,\delta)=\{a\}$  holds by (\ref{eq x-aphi}).\\
    
    Now assume that $a\notin \BC(M,\varphi)$ and thus $\ep:=\lim_{\delta\to 0}\tilde{\ep}_{M}(a,\delta)>0$. Then for each $n\in \mathbb{N}$, there exists $u_n\in \tilde{\mathbf{U}}_{1+\frac{1}{n}}(M)$ such that $\|u_n^*au_n-a\|_{\varphi}\ge \frac{\ep}{2}$. Fix a nonprincipal ultrafilter $\cU$ on $\mathbb{N}$ and set $b:=\lim_{ \cU}u_n^*au_n$ ($\sigma$-weak limit). Given $\delta>0$, choose $n_0\in \mathbb{N}$ such that $1/n_0<\delta$. For any $\sigma$-weak open neighborhood $W$ of $b$ in $M$, there exists $\cU$-many $n>n_0$ such that $u_n^*au_n\in W\cap \widetilde{C}_{\varphi}(a,\frac{1}{n})\subseteq W\cap \widetilde{C}_{\varphi}(a,\delta)$. Therefore, $b\in \widetilde{C}_{\varphi}(a,\delta)$ holds.  This shows that $b\in \bigcap_{\delta>0}\widetilde{C}_{\varphi}(a,\delta)$. We show that $b\neq a$. Note that the condition $u_n\in \tilde{\mathbf{U}}_{1+\frac{1}{n}}(M)$ implies that $(u_n)_{n\in \mathbb{N}}$ defines an element $u=(u_n)_{\cU}$, which is a totally 1-bounded unitary, and thus $u\in \mathbf{U}(M^{\cU}_{\varphi^{\cU}})$ holds by Corollary \ref{totallyoneboundedunitary}. In particular, we have 
    \begin{align*}
        \lim_{n\to \cU}\|u_n^*au_n\|_{\varphi}^2&=\lim_{n\to \cU}\varphi(u_n^*a^*au_n)=\varphi^{\cU}(u^*a^*au)\\
        &=\varphi^{\cU}(a^*a)=\|a\|_{\varphi}^2.
    \end{align*}
    On the other hand, the equation 
    \begin{align*}
        2\operatorname{Re}\varphi(a^*u_n^*au_n)&=\|a\|_{\varphi}^2+\|u_n^*au_n\|_{\varphi}^2-\|a-u_n^*au_n\|_{\varphi}^2
    \end{align*}
    implies that 
    \begin{align*}
        2\operatorname{Re}\varphi(a^*b)&=\lim_{n\to \cU}2\operatorname{Re}\varphi(a^*u_n^*au_n)\\
        &=2\|a\|_{\varphi}^2-\lim_{n\to \cU}\|a-u_n^*au_n\|_{\varphi}^2\\
        &\le 2\|a\|_{\varphi}^2-\frac{\ep^2}{4}.
    \end{align*}
    Therefore, $a\neq b$, as desired. 
\end{proof}
\begin{proof}[Proof of Proposition \ref{prop tbprop1.3}]
(a)$\implies$(b):  Take $a\in M$ and set $\widetilde{C}_{\varphi}(a):=\bigcap_{\delta>0}\widetilde{C}_{\varphi}(a,\delta)$, which is a $\sigma$-weakly compact convex subset of $M$, whence it is nonempty since $a\in \widetilde{C}_{\varphi}(a,\delta)$ for each $\delta>0$. Let $\Lambda_{\varphi}\colon M\to L^2(M,\varphi)$ be the canonical embedding, namely $\Lambda_\varphi(x):=x\xi_\varphi$, which is a WOT-weak continuous linear map. Therefore, $\Lambda_{\varphi}(\widetilde{C}_{\varphi}(a))$ is a nonempty weakly (hence norm) closed convex subset of $L^2(M,\varphi)$. Therefore, there exists $b\in \widetilde{C}_{\varphi}(a)$ such that $\|x\|_{\varphi}>\|b\|_{\varphi}$ for every $x\in \widetilde{C}_{\varphi}(a)\setminus \{b\}$.  We show $b\in \BC(M,\varphi)$. Fix $\delta>0$. Let $\delta_0$ be such that $(1+\delta_0)^2=1+\delta$.  If $u,v\in \tilde{\mathbf{U}}_{1+\delta_0}(M)$, then $vu\in \tilde{\mathbf{U}}_{1+\delta}(M)$ by Lemma \ref{lem tildeU is multiplicative}. Therefore, $u^*v^*avu\in \widetilde{C}_{\varphi}(a,\delta)$. Since $b\in \widetilde{C}_{\varphi}(a,\delta_0)$ and  $\widetilde{C}_{\varphi}(a,\delta)$ is $\sigma$-strongly closed and convex, we have 
$$u^*bu\in \widetilde{C}_{\varphi}(a,\delta) \text{ for }u\in \tilde{\mathbf{U}}_{1+\delta_0}(M).$$
Taking the $\sigma$-strong closed convex hull of those elements in turn shows 
$$\widetilde{C}_{\varphi}(b)\subseteq \widetilde{C}_{\varphi}(b,\delta_0)\subseteq \widetilde{C}_{\varphi}(a,\delta).$$
Since $\delta>0$ is arbitrary, we obtain $\widetilde{C}_{\varphi}(b)\subseteq \widetilde{C}_{\varphi}(a)$. On the other hand, for each $\delta>0$ and $u\in \tilde{\mathbf{U}}_{1+\delta}(M)$,  we have 
\begin{align*}
    \|u^*bu\|_{\varphi}^2&=\varphi(u^*b^*uu^*bu)\le (1+\delta)^2\varphi(b^*uu^*b)\\
    &\le (1+\delta)^4\varphi(b^*b).
\end{align*}
Therefore, $\|u^*bu\|_{\varphi}\le (1+\delta)^2\|b\|_{\varphi}$. Since $\widetilde{C}_{\varphi}(b,\delta)$ is $\sigma$-strongly closed and convex, it follows that $\|x\|_{\varphi}\le (1+\delta)^2\|b\|_{\varphi}$ for every $x\in \widetilde{C}_{\varphi}(b,\delta)$, and thus $\|x\|_{\varphi}\le \|b\|_{\varphi}$ for every $x\in \widetilde{C}_{\varphi}(b)$, whence $x=b$. That is, $\widetilde{C}_{\varphi}(b)=\{b\}$, which by Lemma \ref{lem Haageruptb1.2}, shows that $b\in \BC(M,\varphi)$. By (a), we have $b\in \C$, so $b\in \widetilde{C}_{\varphi}(a,\delta)\cap \C$ for every $\delta>0$. This shows (b).\\

To show (b)$\implies$(c), we first show that any $x\in \widetilde{C}_{\varphi}(a)$ satisfies $\varphi(x)=\varphi(a)$.  To this end,
we show that for every $\eta>0$, there exists $\delta>0$ such that, for all $u\in \tilde{\mathbf{U}}_{1+\delta}(M)$, we have
\[|\varphi(a)-\varphi(u^*au)|\le \eta.\]
Since $\varphi$ is linear and $M_+$ spans $M$, we may assume that $a\in M_+$ and that 
$\max(\|a\|_{\varphi},\|a^*\|_{\varphi})=1$.
Choose $\delta>0$ such that $\delta(4+7\delta+4\delta^2+\delta^3)\le \eta$. Then for every $u\in \tilde{\mathbf{U}}_{1+\delta}(M)$, we have 
$\varphi(u^*au)\le (1+\delta)^2\varphi(a)$, so that 
$$\varphi(u^*au)-\varphi(a)\le (2\delta+\delta^2)\varphi(a)\le (2\delta+\delta^2)\le \eta.$$ On the other hand, $\varphi(uu^*auu^*)\le (1+\delta)^2\varphi(u^*au)$, thus $$\varphi(uu^*auu^*)-\varphi(u^*au)\le (2\delta+\delta^2)\varphi(u^*au)\le (1+\delta)^2(2\delta+\delta^2)\varphi(a)$$ and 
\begin{align*}
    |\varphi(uu^*auu^*)-\varphi(a)|&\le |\varphi((uu^*-1)auu^*)|+|\varphi(a(uu^*-1))|\\
    &\le \|uu^*-1\|_{\varphi}\|auu^*\|_{\varphi}+\|a^*\|_{\varphi}\|uu^*-1\|_{\varphi}\\
    &\le \delta(1+\delta)^2\|a\|_{\varphi}+\|a^*\|_{\varphi}\delta\\
    &\le \delta(2+2\delta+\delta^2).
\end{align*}
This implies that 
\begin{align*}
    \varphi(a)-\varphi(uau^*)&\le |\varphi(a)-\varphi(uu^*auu^*)|+\varphi(uu^*auu^*)-\varphi(uau^*)\\
    &\le \delta(2+2\delta+\delta^2)+\delta(1+\delta)^2(2+\delta)\\
    &=\delta(4+7\delta+4\delta^2+\delta^3)\le \eta.
\end{align*}
This shows that $|\varphi(u^*au)-\varphi(a)|\le \eta$. Consequently, for every $\eta>0$ we have  
$$|\varphi(y)-\varphi(a)|\le \eta \text{ for }y\in \widetilde{C}_{\varphi}(a,\delta).$$

Therefore, $\varphi(x)=\varphi(a)$ holds for every $x\in \widetilde{C}_{\varphi}(a)$. 
Now assume that (b) holds. Since the sets $\widetilde{C}_{\varphi}(a,\delta)\cap \mathbb{C}1$ form a decreasing family of nonempty $\sigma$-weakly compact sets, its intersection $\widetilde{C}_{\varphi}(a)\cap \mathbb{C}1$ is nonempty. Let $\lambda\in \mathbb{C}$ be such that $\lambda 1\in \widetilde{C}_{\varphi}(a)$. Then by the above argument, we have $\lambda=\varphi(\lambda 1)=\varphi(a)$. This shows (c).\\
To prove that (c)$\implies$(a), take $a\in \BC(M,\varphi)$ and $\ep>0$. By Lemma \ref{lem tildeepsilon and BC}, there exists $\delta>0$ such that $\tilde{\ep}_{M}(a,\delta)<\frac{\ep}{2}$. Since $\varphi(a)1\in \widetilde{C}_{\varphi}(a,\delta)$, there exist $u_1,\dots,u_N\in \tilde{\mathbf{U}}_{1+\delta}(M)$ such that 
$$\left \|\frac{1}{N}\sum_{k=1}^Nu_k^*au_k-\varphi (a)1\right \|_{\varphi}<\frac{\ep}{2}.$$
On the other hand, for each $k=1,\dots,N$, we have 
$\|u_k^*au_k-a\|_{\varphi}\le \tilde{\ep}_M(a,\delta)<\frac{\ep}{2}$. Thus 
\begin{align*}
    \|a-\varphi(a)1\|_{\varphi}&\le \left \|\frac{1}{N}\sum_{k=1}^N(a-u_k^*au_k)\right \|_{\varphi}+\left \|\frac{1}{N}\sum_{k=1}^Nu_k^*au_k-\varphi(a)1\right \|_{\varphi}\\
    &< \frac{1}{N}\sum_{k=1}^N\|a-u_k^*au_k\|_{\varphi}+\frac{\ep}{2}<\ep.
\end{align*}
Since $\ep>0$ is arbitrary, $a=\varphi(a)1$ holds. 
\end{proof}

We conclude this subsection by pointing to an asymptotic connection between almost centralizing unitaries and elements of $\tilde{\mathbf{U}}_{1+\delta}$:

\begin{prop}\label{almostcentralizingalmostunitary}
    For all $\epsilon,\eta>0$, there is $\delta=\delta(\epsilon,\eta)>0$ such that whenever $(M,\varphi)$ is a W$^*$-probability space and $u\in U(M)$ is such that $\|u\varphi-\varphi u\|<\delta$, then there is a contraction $v\in \tilde{\mathbf{U}}_{1+\eta}(M,\varphi)$ with $\|u-v\|_\varphi^\#<\epsilon$. 
\end{prop}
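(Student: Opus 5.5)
We argue by contradiction, using ultraproducts together with Lemma \ref{lem representing u by 1+deltabdd}. Suppose the statement fails for some $\epsilon,\eta>0$. Then for each $n\in\N$ there are a W$^*$-probability space $(M_n,\varphi_n)$ and a unitary $u_n\in \mathbf{U}(M_n)$ with $\|u_n\varphi_n-\varphi_nu_n\|<1/n$, such that no contraction $v\in\tilde{\mathbf{U}}_{1+\eta}(M_n)$ satisfies $\|u_n-v\|^\#_{\varphi_n}<\epsilon$. Fix a nonprincipal ultrafilter $\u$ on $\N$ and set $(M,\varphi):=\prod_\u(M_n,\varphi_n)$. By Lemma \ref{lem asymptotic centralizer}, the sequence $(u_n)$ lies in $\mathcal M^\u$, and $u:=(u_n)_\u$ is a unitary in $M_\varphi$.

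Next we lift $u$ to almost unitaries with controlled spectrum. Choose $a>0$ with $e^{a/2}<1+\eta$ and put $v_n:=\sigma^{\varphi_n}_{F_a}(u_n)$. Each $v_n$ is a contraction lying in $M_n(\sigma^{\varphi_n},[-a,a])$, so by Proposition \ref{prop compactspec} it is totally $e^{a/2}$-bounded, hence totally $(1+\eta)$-bounded. This is exactly the construction in the proof of Lemma \ref{lem representing u by 1+deltabdd}, except that the spaces now vary with $n$. The key identity needed is
\[(\sigma^{\varphi_n}_{F_a}(u_n))_\u=\sigma^{\varphi}_{F_a}(u).\]
This follows from the ultraproduct version of \cite[Lemma 4.14]{AH14}: the ultraproduct modular flow is computed coordinatewise, and that proof applies equally to ultraproducts. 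Since $u\in M_\varphi$, we have $\sigma^\varphi_t(u)=u$ for all $t$, and $\int F_a=1$, so $\sigma^\varphi_{F_a}(u)=u$. Hence $(v_n)_\u=u=(u_n)_\u$.

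From this we obtain two limits. First, $\lim_\u\|u_n-v_n\|^\#_{\varphi_n}=0$, because both sequences represent the same element of the ultraproduct and $\mathcal I_\u$ is defined through $\|\cdot\|^\#$. Second, $u$ is unitary and $(v_n)_\u=u$, so
\[\lim_\u\max\bigl(\|v_n^*v_n-1\|_{\varphi_n},\|v_nv_n^*-1\|_{\varphi_n}\bigr)=0.\]
Consequently, for $\u$-many $n$ we have $\max(\|v_n^*v_n-1\|_{\varphi_n},\|v_nv_n^*-1\|_{\varphi_n})\le\eta$, so $v_n\in\tilde{\mathbf{U}}_{1+\eta}(M_n)$, and also $\|u_n-v_n\|^\#_{\varphi_n}<\epsilon$. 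This contradicts the choice of $u_n$.

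The main obstacle is justifying the displayed identity when the ambient spaces vary. It requires that the modular flow of the Ocneanu ultraproduct restricted to a representing sequence is computed coordinatewise, and that the $\sigma$-weak integral against $F_a$ commutes with the ultraproduct, which is the content of \cite[Lemma 4.14]{AH14}. A secondary check is that $(\sigma^{\varphi_n}_{F_a}(u_n))_n$ lies in $\mathcal M^\u$. This holds because each term is totally $e^{a/2}$-bounded, so Fact \ref{TBelementsinultraproducts}(1) applies. Note also that $\delta$ depends only on $\epsilon$ and $\eta$, since the argument bounds the failure uniformly over all W$^*$-probability spaces at once.
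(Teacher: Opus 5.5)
Your argument is correct and is essentially the paper's proof. You argue by contradiction, pass to the ultraproduct, use Lemma \ref{lem asymptotic centralizer} to get $u\in M_\varphi$, and then lift $u$ by Fej\'er smoothing to contractions in $\tilde{\mathbf{U}}_{1+\eta}(M_n)$ that are $\|\cdot\|^\#$-close to $u_n$ for $\u$-almost all $n$. The one difference is that you re-run the proof of Lemma \ref{lem representing u by 1+deltabdd} for an ultraproduct of varying spaces, while the paper simply cites that lemma even though it is stated only for ultrapowers, so your version is slightly more careful.
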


\begin{proof}
Suppose this is not the case for some $\epsilon,\eta>0$.  
Then for each $n\geq 1$, there is a W$^*$-probability space $(M_n,\varphi_n)$ and $u_n\in U(M_n)$ such that $\|u_n\varphi_n-\varphi_n u_n\|<1/n$ and yet $d(u_n,\tilde{\mathbf{U}}_{1+\eta}(M_n,\varphi_n))\geq \epsilon$.  Set $(M,\varphi):=\prod_\u (M_n,\varphi_n)$.  By Lemma \ref{lem asymptotic centralizer}, we may consider $u:=(u_n)_\u\in M_\varphi$.  By Lemma \ref{lem representing u by 1+deltabdd}, we may write $u=(x_n)_\u$ with each $x_n\in \tilde{\mathbf{U}}_{1+\eta}$ a contraction.  Since $\|u_n-x_n\|_\varphi^\#<\epsilon$ for $\u$-almost all $n$, this is a contradiction.
\end{proof}

The previous proposition has a sort of converse:

\begin{prop}\label{almmostunitaryalmostcentralizing}
For every $\epsilon,\delta>0$, there is $\eta>0$ such that whenever $(M,\varphi)$ is a W$^*$-probability space and $v\in \tilde{\mathbf{U}}_{1+\eta}(M)$ is a contraction, then there is $u\in U(M)$ such that $\|u-v\|_\varphi^\#<\epsilon$ and $\|u\varphi-\varphi u\|<\delta$.
\end{prop}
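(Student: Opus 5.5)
We argue by contradiction with an ultraproduct, in the same style as the proof of Proposition \ref{almostcentralizingalmostunitary}. Suppose the statement fails for some $\epsilon,\delta>0$. Then for each $n\geq 1$ there are a W$^*$-probability space $(M_n,\varphi_n)$ and a contraction $v_n\in \tilde{\mathbf{U}}_{1+1/n}(M_n)$ with the following property: every $u\in \mathbf{U}(M_n)$ with $\|u-v_n\|_{\varphi_n}^\#<\epsilon$ satisfies $\|u\varphi_n-\varphi_n u\|\geq\delta$.

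Fix a nonprincipal ultrafilter $\u$ on $\N$ and set $(M,\varphi):=\prod_\u(M_n,\varphi_n)$. Each $v_n$ is totally $2$-bounded, so Fact \ref{TBelementsinultraproducts}(1) gives an element $v:=(v_n)_\u\in M$. This $v$ is totally $(1+1/n)$-bounded for every $n$, hence totally $1$-bounded. Moreover,
\[\|v^*v-1\|_\varphi=\lim_\u\|v_n^*v_n-1\|_{\varphi_n}=0,\]
and likewise $\|vv^*-1\|_\varphi=0$. Since $\varphi$ is faithful, $v$ is a unitary. By Corollary \ref{totallyoneboundedunitary}, $v\in \mathbf{U}(M_\varphi)$.

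The main step is to lift the unitary $v\in M_\varphi$ to unitaries $u_n\in M_n$ that are asymptotically centralizing. Write $v=e^{ih}$ with $h=h^*\in M_\varphi$, using Borel functional calculus inside the von Neumann algebra $M_\varphi$. We then want a representing sequence $(h_n)$ of self-adjoint elements with $\sup_n\|h_n\|<\infty$ and $\lim_\u\|h_n\varphi_n-\varphi_n h_n\|=0$.

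To produce it, we first represent $h=(k_n)_\u$ with $k_n$ bounded and self-adjoint; self-adjointness can be arranged by taking real parts. Next we smooth: put $h_n:=\sigma^{\varphi_n}_{F_a}(k_n)$. Using \cite[Lemma 4.14]{AH14} and $h\in M_\varphi$, this sequence still represents $\sigma^{\varphi}_{F_a}(h)=h$, for any $a>0$. Because $\sigma_{F_a}^{\varphi_n}(k_n)\in M_n(\sigma^{\varphi_n},[-a,a])$, we expect an estimate of the form
\[\|h_n\varphi_n-\varphi_nh_n\|\le C(a)\,\|k_n\|,\]
with $C(a)\to 0$ as $a\to0$. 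This should follow from Proposition \ref{prop compactspec}: the element is totally $e^{a/2}$-bounded, so $\varphi_n(h_n\,\cdot)$ and $\varphi_n(\cdot\, h_n)$ differ by an amount controlled by $\|\sigma_{-i}^{\varphi_n}(h_n)-h_n\|\le (e^{a}-1)\|k_n\|$.

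Now take a diagonal sequence $a=a_n\to0$ along $\u$. One must make sure $(\sigma^{\varphi_n}_{F_{a_n}}(k_n))_\u$ still equals $h$. I expect this to be the main obstacle. To handle it, I would choose the $a_n$ slowly enough, using the fact that for each fixed $a$ the represented elements agree, together with a standard $\u$-diagonal argument over the countably many values $a=1/m$.

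Once the $h_n$ are in hand, set $u_n:=e^{ih_n}$. Functional calculus by a power series commutes with taking representatives on bounded sets in $\mathcal{M}^\u$, so $u=(u_n)_\u=e^{ih}=v$. Also $\|u_n\varphi_n-\varphi_nu_n\|\to0$ along $\u$. To see this, expand $e^{ih_n}$ in its power series and use
\[\|h_n^k\varphi_n-\varphi_nh_n^k\|\le k\|h_n\|^{k-1}\|h_n\varphi_n-\varphi_nh_n\|,\]
which sums uniformly because $\sup_n\|h_n\|<\infty$.

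Finally, $\lim_\u\|u_n-v_n\|^\#_{\varphi_n}=\|v-v\|_\varphi^\#=0$. So for $\u$-almost every $n$ we have $\|u_n-v_n\|^\#_{\varphi_n}<\epsilon$ and $\|u_n\varphi_n-\varphi_nu_n\|<\delta$, which contradicts the choice of $v_n$.
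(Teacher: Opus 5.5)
Your proof is correct in outline and has the same skeleton as the paper's: contradiction, an ultraproduct along a nonprincipal $\u$, and the observation that $v=(v_n)_\u$ is a totally $1$-bounded unitary, hence lies in $\mathbf{U}(M_\varphi)$ by Corollary~\ref{totallyoneboundedunitary}. The difference is the lifting step, which you do the hard way.

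The paper takes \emph{any} unitary representatives $v=(u_n)_\u$ and applies Fact~\ref{ultralimitbimodule}, which gives $\lim_\u\|u_n\varphi_n-\varphi_nu_n\|=\|v\varphi-\varphi v\|=0$ at once. Your construction $u_n=e^{ih_n}$ is essentially the standard proof that unitaries lift. The Fej\'er smoothing with $a_n\to 0$ and the $\u$-diagonal argument then try to force asymptotic centralization by hand. Fact~\ref{ultralimitbimodule} makes this redundant. It already gives $\lim_\u\|k_n\varphi_n-\varphi_nk_n\|=\|h\varphi-\varphi h\|=0$ for your unsmoothed self-adjoint lift. So $e^{ik_n}$ works directly with your power-series commutator bound, and no smoothing or diagonalization is needed. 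The only thing your route buys is independence from \cite[Lemma 4.36]{AH14}.

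If you keep the smoothing route, one step needs repair. The bound $\|\sigma^{\varphi_n}_{-i}(h_n)-h_n\|\le (e^{a}-1)\|k_n\|$ does not follow from Proposition~\ref{prop compactspec}. That proposition only yields $\|\sigma^{\varphi_n}_{-i}(h_n)\|\le e^{a}\|h_n\|$, hence a difference bound $(e^a+1)\|k_n\|$, which does not tend to $0$. Also, the KMS identity $\varphi(xy)=\varphi(y\sigma^\varphi_{-i}(x))$ needs the entire extension in (ii), not total $e^{a/2}$-boundedness.

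The missing estimate follows from Bernstein's inequality. Apply it on each horizontal line to the entire function $z\mapsto\omega(\sigma^{\varphi_n}_z(h_n))$, which is of exponential type $a$ by (ii). This gives the derivative bound $a e^{a|\operatorname{Im}z|}\|\omega\|\|h_n\|$. Integrating from $0$ to $-i$ gives exactly $(e^a-1)\|h_n\|$. Alternatively, a contour shift gives $\sigma^{\varphi_n}_{-i}(\sigma^{\varphi_n}_{F_a}(k_n))=\int F_a(t+i)\sigma^{\varphi_n}_t(k_n)\,dt$, and scaling shows $\|F_a(\cdot+i)-F_a\|_1\to 0$.

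With that repair, the rest of your argument goes through: the diagonalization over $a=1/m$, the identity $(h_n)_\u=h$, $(e^{ih_n})_\u=e^{ih}=v$, and the power-series commutator estimate.
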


\begin{proof}
Suppose, towards a contradiction, that the proposition is false for some $\epsilon,\delta>0$.  Consequently, for every $n\geq 1$, there are W$^*$-probability spaces $(M_n,\varphi_n)$ and contractions $v_n\in \tilde{\mathbf{U}}_{1+1/n}(M_n)$ for which, whenever $u_n$ is a unitary in $M_n$ with $\|u_n-v_n\|_\varphi^\#<\epsilon$, one has $\|u_n\varphi_n-\varphi_n u_n\|\geq \delta$.  Take a nonprincipal ultrafilter $\u$ on $\bb N$ and set $(M,\varphi):=\prod_\u (M_n,\varphi_n)$ and $v:=(v_n)_\u$.  Then $v$ is a totally 1-bounded unitary in $M$ and thus $v\in M_\varphi$ by Corollary \ref{totallyoneboundedunitary}.  Write $v=(u_n)_\u$ with each $u_n$ a unitary in $M_n$; note that $\|u_n-v_n\|_\varphi^\#<\epsilon$ for $\u$-almost all $n$.  Since $v\varphi=\varphi v$, by Fact \ref{ultralimitbimodule} we have that $$\lim_\u \|u_n\varphi_n-\varphi_n u_n\|=0,$$ whence $\|u_n\varphi_n-\varphi_n u_n\|<\delta$ for $\u$-almost all $n$, contradicting the choice of $v_n$.
\end{proof}

\section{Axiomatizing W$^*$-probability spaces with trivial bicentralizer}\label{sec axiomatize TBC}

\subsection{On selfless W$^*$-probability spaces}

In this subsection we prove that the class of selfless W$^*$-probability spaces is axiomatizable.  First, we recall the relevant definitions from the introduction.

An inclusion $(N,\psi)\subseteq (M,\varphi)$ is called \textbf{existential} if:  for any quantifier-free formula $\theta(\vec x,\vec a)$ with parameters from $N$, we have
$$\left(\inf_{\vec x}\theta(\vec x,\vec a)\right)^{(N,\psi)}=\left(\inf_{\vec x} \theta(\vec x,\vec a)\right)^{(M,\varphi)}.$$
Equivalently, the inclusion $(N,\psi)\subseteq (M,\varphi)$ is existential if there is an embedding $(M,\varphi)\hookrightarrow (N,\psi)^\u$ that restricts to the diagonal embedding $(N,\psi)\hookrightarrow (N,\psi)^\u$.  If $N$ is separable, then $\u$ can be taken to be any nonprincipal ultrafilter on any index set; otherwise, one needs to choose $\u$ to be a suitably good ultrafilter (see Subsection \ref{good}).  Identifying a W$^*$-probability space with its image under an embedding, one may also speak of existential embeddings.  

We will need the following well-known and easy facts about existential embeddings:

\begin{fact}\label{ecembeddingfact1}
Suppose that $i:(N,\psi)\hookrightarrow (M,\varphi)$ and $j:(M,\varphi)\hookrightarrow (P,\rho)$ are embeddings.
\begin{enumerate}
    \item  If both $i$ and $j$ are existential, then the composition $j\circ i:(N,\psi)\hookrightarrow (P,\rho)$ is also existential.
    \item If the composition $j\circ i:(N,\psi)\hookrightarrow (P,\rho)$ is existential, then so is $i$.
\end{enumerate}
\end{fact}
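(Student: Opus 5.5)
Both parts follow directly from the definition of an existential inclusion in terms of quantifier-free formulas. Here a formula $\inf_{\vec x}\theta(\vec x,\vec a)$ has $\vec a$ ranging over parameters from the smaller structure and $\vec x$ ranging over the appropriate sorts $S_K$.

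The one background fact we need is that embeddings preserve quantifier-free formulas. An embedding $\iota$ of W$^*$-probability spaces, in the sense of Subsection \ref{subsec Wstar prob}, is precisely a morphism of the dissected structures described in Subsection \ref{modeltheorypresentation}. Since the paper treats this as an equivalence of categories, we take it that $\iota$ maps each sort $S_K$ into the corresponding sort and commutes with the function symbols and with the metric $\|\cdot\|^\#$. It follows that for every quantifier-free $\theta$ we have $\theta^{(M,\varphi)}(\iota(\vec b))=\theta^{(N,\psi)}(\vec b)$. Consequently, for every tuple $\vec a$ from $N$ we always get
$$\left(\inf_{\vec x}\theta(\vec x,\vec a)\right)^{(N,\psi)}\geq\left(\inf_{\vec x}\theta(\vec x,\iota(\vec a))\right)^{(M,\varphi)}.$$

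For (1), take a quantifier-free $\theta$ and parameters $\vec a$ from $N$. Existentiality of $i$ gives equality of the infimum in $N$ at $\vec a$ with the infimum in $M$ at $i(\vec a)$. Since $i(\vec a)$ is a tuple from $M$, existentiality of $j$ then gives equality of the infimum in $M$ at $i(\vec a)$ with the infimum in $P$ at $j(i(\vec a))$. Chaining these two equalities shows that $j\circ i$ is existential.

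For (2), fix $\theta$ and $\vec a$ from $N$. Applying the displayed inequality first to $i$ and then to $j$ yields
$$\left(\inf_{\vec x}\theta\right)^{N}\geq\left(\inf_{\vec x}\theta\right)^{M}\geq\left(\inf_{\vec x}\theta\right)^{P},$$
with parameters $\vec a$, $i(\vec a)$ and $j(i(\vec a))$ respectively. Because $j\circ i$ is existential, the two outer terms are equal, so both inequalities are in fact equalities. In particular the first one is, which says exactly that $i$ is existential.

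Nothing in this argument is difficult. The only point requiring attention is that embeddings map $S_K$ into $S_K$ and preserve quantifier-free formulas. I would justify this by citing \cite{AGHS25}, where the embeddings defined here are identified with the model-theoretic embeddings of dissected structures. Alternatively, one can check directly that a state-preserving embedding with expectation preserves total $K$-boundedness, using criterion (iii) of Lemma \ref{lem Kbounded} together with the expectation $E$.
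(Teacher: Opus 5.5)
Your proof is correct. The paper records this as a well-known fact without proof, and your verification directly from the quantifier-free definition of existential inclusion is the standard argument; in particular, your direct justification of sort preservation is right, since for $a\in N$ right $K$-bounded and $y\in M_+$ one has $\varphi(a^*ya)=\psi(a^*E(y)a)\le K^2\psi(E(y))=K^2\varphi(y)$ by bimodularity of $E$.
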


\begin{fact}\label{ecembeddingfact2}
Suppose that, for each $i\in I$, $j_i:(N_i,\psi_i)\hookrightarrow (M_i,\varphi_i)$ are existential embeddings.  Suppose further that $\u$ is an ultrafilter on $I$.  Then the ultraproduct map $\prod_\u j_i:\prod_\u (N_i,\psi_i)\hookrightarrow \prod_\u (M_i,\varphi_i)$ is also existential. 
\end{fact}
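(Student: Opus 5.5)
The plan is to work directly with the first characterization of existentiality: an embedding $j:(N,\psi)\hookrightarrow (M,\varphi)$ is existential iff for every quantifier-free formula $\theta(\vec x,\vec a)$ with parameters $\vec a$ from $N$, $\left(\inf_{\vec x}\theta\right)^{N}=\left(\inf_{\vec x}\theta\right)^{M}$. Since $N$ sits inside $M$, the inequality $\left(\inf_{\vec x}\theta\right)^{M}\le \left(\inf_{\vec x}\theta\right)^{N}$ always holds, because quantifier-free formulas are preserved by embeddings. So only the reverse inequality needs proof.

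For Fact \ref{ecembeddingfact2}, let $N:=\prod_\u N_i$, $M:=\prod_\u M_i$, and $J:=\prod_\u j_i$. We first observe that $J$ is a genuine embedding of W$^*$-probability spaces. Here I rely on the equivalence of categories from Subsection \ref{modeltheorypresentation}: the Ocneanu ultraproduct coincides with the model-theoretic ultraproduct of the dissected structures, and the ultraproduct of embeddings is an embedding of structures. Alternatively, one can check directly that the expectations $E_i$ induce a faithful normal expectation $(E_i)_\u$.

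Now fix $\theta(\vec x,\vec a)$ quantifier-free with $\vec a=(a_i)_\u$ from $N$. By Fact \ref{TBelementsinultraproducts}(2), we may choose the representatives $a_i$ in the correct sorts. Łoś's theorem (valid for the dissected structures) gives
$$\left(\inf_{\vec x}\theta(\vec x,\vec a)\right)^{N}=\lim_\u\left(\inf_{\vec x}\theta(\vec x,\vec a_i)\right)^{N_i}=\lim_\u\left(\inf_{\vec x}\theta(\vec x,j_i(\vec a_i))\right)^{M_i}=\left(\inf_{\vec x}\theta(\vec x,J(\vec a))\right)^{M}.$$
The middle equality is exactly the existentiality of each $j_i$, and the outer equalities are Łoś applied to $\inf$-formulas. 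This proves the fact.

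The only delicate point is the appeal to Łoś. It requires that Ocneanu ultraproducts match model-theoretic ultraproducts sortwise, which is exactly the content of \cite{AGHS25} together with Fact \ref{TBelementsinultraproducts}. Granting that, the argument is routine, so I do not expect a genuine obstacle here.
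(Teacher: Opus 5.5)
Your proof is correct. The paper records this as a well-known fact without proof, and your argument is the standard one it implicitly relies on: apply Łoś's theorem to the formula $\inf_{\vec x}\theta(\vec x,\vec y)$ on both sides, use the existentiality of each $j_i$ coordinatewise, and identify the Ocneanu ultraproduct (and the ultraproduct embedding) with the model-theoretic ultraproduct of the dissected structures from \cite{AGHS25} together with Fact \ref{TBelementsinultraproducts}. Your chain of equalities already gives equality directly, so the preliminary inequality you mention is not needed.
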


The W$^*$-probability space $(M,\varphi)$ is called \textbf{selfless} if the first factor inclusion $(M,\varphi)\subseteq (M,\varphi)*(M,\varphi)$ is existential.

Before proving our axiomatizability result, we need a few lemmas.

\begin{lem}\label{inducedembedding}
Suppose that $(M_{1,i},\varphi_{1,i})_{i\in I}$ and $(M_{2,i},\varphi_{2,i})_{i\in I}$ are families of W$^*$-probability spaces and $\u$ is an ultrafilter on $I$. 
Let $(M_k^{\cU},\varphi_k^{\cU})=\prod_{\cU}(M_{k,i},\varphi_{k,i})$ and $(M^{\cU},\varphi^{\cU})=\prod_{\cU}(M_i,\varphi_i)$ denote the corresponding Ocneanu ultraproducts, where $$(M_i,\varphi_i)=(M_{1,i},\varphi_{1,i})*(M_{2,i},\varphi_{2,i}), \quad i\in I,\,k=1,2.$$ 
Then the canonical embeddings of $M_1^\mathcal{U}$ and $M_2^\mathcal{U}$ into $M^\mathcal{U}$ are freely independent with respect to the ultraproduct state $\varphi^\mathcal{U}$. 

Consequently, there is a canonical embedding of W$^*$-probability spaces 
$$(\prod_\u (M_{1,i},\varphi_{1,i}))*(\prod_\u (M_{2,i},\varphi_{2,i}))\hookrightarrow \prod_\u ((M_{1,i},\varphi_{1,i})*(M_{2,i},\varphi_{2,i})).$$
\end{lem}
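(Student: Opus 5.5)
The plan is to verify free independence of $M_1^\u$ and $M_2^\u$ inside $M^\u$ with respect to $\varphi^\u$ directly from the definition, and then invoke the universal property of reduced free products. Free independence concerns only moments, and $\varphi^\u$ is computed as an ultralimit of the $\varphi_i$.

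First, the canonical embeddings. The inclusions $M_{k,i}\subseteq M_i$ come with $\varphi_i$-preserving faithful normal conditional expectations $E_{k,i}\colon M_i\to M_{k,i}$, the free product expectations. So $(M_{k,i},\varphi_{k,i})\hookrightarrow (M_i,\varphi_i)$ is an embedding of W$^*$-probability spaces in the sense of Subsection \ref{subsec Wstar prob}. Embeddings with expectation induce embeddings of Ocneanu ultraproducts (standard, e.g. via \cite{AH14}, or model-theoretically since ultraproducts of embeddings are embeddings). The map is $(x_i)_\u\mapsto (x_i)_\u$, it intertwines the states, and it comes with the expectation $(E_{k,i})_\u$. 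In particular, any element of $M_k^\u$ can be represented by a bounded sequence $(x_i)$ with $x_i\in M_{k,i}$.

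Second, the freeness computation. Take $n\ge 1$ and alternating indices $k_1\neq k_2\neq\dots\neq k_n$. Take $a_j\in M_{k_j}^\u$ with $\varphi^\u(a_j)=0$, represented by $(a_{j,i})_i$ with $a_{j,i}\in M_{k_j,i}$. Replace $a_{j,i}$ by $a_{j,i}-\varphi_{k_j,i}(a_{j,i})1$. This changes the representative by a scalar sequence tending to $0$ along $\u$, so it still represents $a_j$, remains uniformly bounded and in $\mathcal M^\u$, and now each $\varphi_i(a_{j,i})=0$ exactly. Multiplication in $M^\u$ is computed coordinatewise on representatives in $\mathcal M^\u$. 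Hence
$$\varphi^\u(a_1\cdots a_n)=\lim_\u \varphi_i(a_{1,i}\cdots a_{n,i})=0,$$
because the factors $M_{1,i},M_{2,i}$ are free in $(M_i,\varphi_i)$. This gives free independence.

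Third, the ``consequently'' part. The von Neumann algebra generated by $M_1^\u$ and $M_2^\u$ in $M^\u$, with state $\varphi^\u$, is generated by two free subalgebras on which the state restricts faithfully. By the uniqueness of the reduced free product (the GNS representation is determined by moments), it is canonically isomorphic to $(M_1^\u,\varphi_1^\u)*(M_2^\u,\varphi_2^\u)$. It remains to get a faithful normal $\varphi^\u$-preserving conditional expectation onto this subalgebra. By Takesaki's theorem, this is equivalent to the subalgebra being globally invariant under $\sigma^{\varphi^\u}$. Each $M_k^\u$ is $\sigma^{\varphi^\u}$-invariant, since it is the range of a state-preserving expectation. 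Therefore the generated algebra is invariant as well, and the expectation exists.

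The main obstacle I expect is the bookkeeping in the first step: ensuring the induced maps between Ocneanu ultraproducts are well defined and land in $\mathcal M^\u$. For this I would cite the general fact that embeddings with expectation pass to Ocneanu ultraproducts, or the equivalence-of-categories result from \cite{AGHS25}. Once that is in place, the freeness computation is routine.
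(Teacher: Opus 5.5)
Your proof is correct and follows the paper's argument: you induce the inclusions on Ocneanu ultraproducts, center the representatives exactly, use freeness of $M_{1,i},M_{2,i}$ in $(M_i,\varphi_i)$ coordinatewise and pass to the ultralimit, then invoke uniqueness of the reduced free product. Your additional step, using Takesaki's theorem to produce the $\varphi^{\mathcal U}$-preserving conditional expectation onto $M_1^{\mathcal U}\vee M_2^{\mathcal U}$, is a correct detail that the paper leaves implicit but that its definition of embedding requires.
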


\begin{proof}
The inclusions of W$^*$-probability spaces $$(M_{k,i},\varphi_{k,i}) \subset (M_i,\varphi_i)\quad i\in I,\,k=1,2$$ induce an inclusion of the W$^*$-probability spaces $$(M_k^\mathcal{U},\varphi_k^{\cU}) \subset (M^\mathcal{U},\varphi^{\cU})\quad k=1,2.$$ Here, we identify the ultrapowers $M_k^\mathcal{U}$ with their respective images in $M^\mathcal{U}$.

To verify freeness, let $x_1, x_2, \dots, x_m$ be an alternating sequence of elements such that $x_j \in M_{\ell_j}^\mathcal{U}$ and $\varphi_{\ell_j}^\u(x_j) = 0$ for all $j=1, \dots, m$ ($\ell_j\neq \ell_{j+1},\,j=1,\dots,m-1$). 

Each $x_j$ can be represented by a norm-bounded sequence $(x_{j, i})_{i \in I}$ with $x_{j, i} \in M_{\ell_j,i}$. The condition $\varphi_{\ell_j}^\u(x_j) = 0$ implies that $\lim_{i\to \cU} \varphi_{\ell_j,i}(x_{j, i}) = 0$. By replacing each representative $x_{j, i}$ with $x_{j, i} - \varphi_{\ell_j,i}(x_{j, i})1$, we can choose representatives such that $\varphi_{\ell_j,i}(x_{j, i}) = 0$ holds exactly, for every $i\in I$. 

Thus, for each fixed $i\in I$, the elements $x_{1, i}, \dots, x_{m, i}$ form an alternating word of centered elements in the original free product $(M_{1,i},\varphi_{1,i}) * (M_{2,i},\varphi_{2,i})$. By the free independence of $M_{1,i}$ and $M_{2,i}$ inside $M_i$ with respect to $\varphi_i=\varphi_{1,i}*\varphi_{2,i}$, we have
\[
\varphi_i(x_{1, i} x_{2, i} \cdots x_{m, i}) = 0 \quad \text{for all } i\in I.
\]
Passing to the ultralimit along $\mathcal{U}$, we obtain
\[
\varphi^{\u}(x_1 x_2 \cdots x_m) = \lim_{i \to \mathcal{U}} \varphi_i(x_{1, i} x_{2, i} \cdots x_{m, i}) = 0.
\]
This shows that the subalgebras $M_1^\mathcal{U}$ and $M_2^\mathcal{U}$ are free with respect to the state $\varphi^\mathcal{U}$. By the universal property of the free product, the von Neumann algebra they generate inside $M^\mathcal{U}$ is canonically isomorphic to $(M_1^\mathcal{U}, \varphi_1^\mathcal{U}) * (M_2^\mathcal{U}, \varphi_2^\mathcal{U})$, which yields the desired canonical embedding.
\end{proof}

\begin{lem}\label{freeexistential}
A free product of existential embeddings of W$^*$-probability spaces is again existential.   
\end{lem}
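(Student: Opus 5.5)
We need to show: if $(N_k,\psi_k)\subseteq (M_k,\varphi_k)$ are existential embeddings for $k=1,2$, then $(N_1,\psi_1)*(N_2,\psi_2)\subseteq (M_1,\varphi_1)*(M_2,\varphi_2)$ is existential. The plan is to use the ultrapower characterization of existentiality recalled above, together with Lemma \ref{inducedembedding}.

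First, choose an ultrafilter $\u$ on some index set that is good enough for both inclusions simultaneously. If the $N_k$ are separable, any nonprincipal ultrafilter on $\bb N$ works. In general, take a $\kappa^+$-good ultrafilter with $\kappa$ at least the density characters of $M_1$ and $M_2$ (Subsection \ref{good}). Then there are embeddings $\pi_k:(M_k,\varphi_k)\hookrightarrow (N_k,\psi_k)^\u$ restricting to the diagonal embeddings on $N_k$.

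Next, by the universal property of free products of W$^*$-probability spaces (embeddings with expectation induce an embedding of free products), we get an embedding
$$\pi_1*\pi_2:(M_1,\varphi_1)*(M_2,\varphi_2)\hookrightarrow (N_1^\u,\psi_1^\u)*(N_2^\u,\psi_2^\u).$$
Composing with the canonical embedding of Lemma \ref{inducedembedding} (constant families $N_{k,i}=N_k$) yields
$$(M_1,\varphi_1)*(M_2,\varphi_2)\hookrightarrow \big((N_1,\psi_1)*(N_2,\psi_2)\big)^\u.$$

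It remains to check that this composite restricts to the diagonal embedding on $N_1*N_2$. On each $N_k$ it is the diagonal embedding, since the embedding of Lemma \ref{inducedembedding} sends $(x_i)_\u\in N_k^\u$ to $(x_i)_\u$ in the ultrapower of the free product. Because $N_1$ and $N_2$ generate $N_1*N_2$ as a von Neumann algebra and all maps are normal $*$-homomorphisms, the two embeddings agree on $N_1*N_2$.

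If the ultrafilter chosen is not sufficient to conclude directly (for instance, if goodness is required relative to $N_1*N_2$), we instead use the syntactic definition. A quantifier-free formula over the free product with witnesses in $M_1*M_2$ can be approximated by finitely many words in elements of $M_1$ and $M_2$. We pull these back to $N_k^\u$ and then to $N_k$ at $\u$-many coordinates, using that the ultrapower of the free product computes values coordinatewise.

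The main obstacle is that the model-theoretic sorts consist of totally bounded elements. We must make sure that witnesses in the ultrapower are genuine totally $K$-bounded elements, and that the density argument used in the pull-back is compatible with the sorts. Fact \ref{TBelementsinultraproducts}(2) handles the representation of such witnesses coordinatewise. The other point to verify is that $\pi_1*\pi_2$ is a genuine embedding, i.e. it admits a faithful normal conditional expectation. This follows from the expectations $E_k$ via the standard free product of conditional expectations.
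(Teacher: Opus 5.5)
Your proof is correct and follows the same route as the paper. In your notation, the paper's entire proof is the chain $N_1*N_2\subseteq M_1*M_2\hookrightarrow N_1^{\mathcal U}*N_2^{\mathcal U}\hookrightarrow (N_1*N_2)^{\mathcal U}$, with Lemma \ref{inducedembedding} supplying the last map; your check that the composite restricts to the diagonal embedding on $N_1*N_2$ is a detail the paper leaves implicit.

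Your fallback paragraph and your worry about the totally bounded sorts are unnecessary. An embedding into \emph{any} ultrapower that restricts to the diagonal already forces existentiality, since quantifier-free formulas are preserved by embeddings and the diagonal embedding is elementary by \L o\'s. So goodness of $\mathcal U$ is needed only to produce the $\pi_k$. For that, it is separability of the larger algebras $M_k$, not of the $N_k$, that would allow an arbitrary nonprincipal ultrafilter on $\mathbb N$.
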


\begin{proof}
For $i=1,2$, let $\alpha_i:(M_i,\varphi_i)\hookrightarrow (N_i,\psi_i)$ be existential embeddings.  Lemma \ref{inducedembedding} shows that $\alpha_1*\alpha_2$ yields an embedding \begin{alignat}{2}
(M_1,\varphi_1)*(M_2,\varphi_2)&\hookrightarrow (N_1,\psi_1)*(N_2,\psi_2)\notag \\ \notag
    &\hookrightarrow (M_1,\varphi_1)^\u*(M_2,\varphi_2)^\u\\ \notag 
    &\hookrightarrow ((M_1,\varphi_1)*(M_2,\varphi_2))^\u,
\end{alignat}
as desired.
\end{proof}

\begin{lem}\label{selflessexistential}
If $(N,\psi)$ is an existential substructure of $(M,\varphi)$ and $(M,\varphi)$ is selfless, then $(N,\psi)$ is selfless.
\end{lem}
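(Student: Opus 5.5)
We will show that the first factor inclusion $(N,\psi)\subseteq (N,\psi)*(N,\psi)$ is existential by factoring it through a chain of existential embeddings and then applying Fact \ref{ecembeddingfact1}(2).

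Let $\iota:(N,\psi)\hookrightarrow(M,\varphi)$ be the given existential inclusion. By Lemma \ref{freeexistential}, the free product map
\[
\iota*\iota:(N,\psi)*(N,\psi)\hookrightarrow (M,\varphi)*(M,\varphi)
\]
is existential. Since $(M,\varphi)$ is selfless, the first factor inclusion $j:(M,\varphi)\hookrightarrow (M,\varphi)*(M,\varphi)$ is existential. Composing with $\iota$, Fact \ref{ecembeddingfact1}(1) shows that
\[
j\circ\iota:(N,\psi)\hookrightarrow (M,\varphi)*(M,\varphi)
\]
is existential.

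Next we compare maps. Let $k:(N,\psi)\hookrightarrow(N,\psi)*(N,\psi)$ be the first factor inclusion. Then $(\iota*\iota)\circ k=j\circ\iota$, because $\iota*\iota$ restricted to the first copy of $N$ is $\iota$ followed by the first factor inclusion of $M$. So $(\iota*\iota)\circ k$ is existential. Fact \ref{ecembeddingfact1}(2) then says $k$ is existential, which is exactly the statement that $(N,\psi)$ is selfless.

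The one point needing care is that $\iota*\iota$ really is an embedding of W$^*$-probability spaces compatible with the first factor inclusions. This includes the existence of a faithful normal conditional expectation onto its image, which follows from the standard free product of conditional expectations. We also need the identity $(\iota*\iota)\circ k=j\circ\iota$ on the nose. Both are standard properties of reduced free products with respect to state-preserving inclusions with expectation, and Lemma \ref{freeexistential} already presupposes them. We do not expect any real obstacle here.
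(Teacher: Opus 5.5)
Your proposal is correct and is essentially the paper's argument, which likewise combines Lemma~\ref{freeexistential} with the existentiality of $(M,\varphi)\subseteq(M,\varphi)*(M,\varphi)$ and concludes via the factorization $N\subseteq N*N\subseteq M*M$ together with Fact~\ref{ecembeddingfact1}. As in the paper, the existentiality of $\iota*\iota$ is never actually used: Fact~\ref{ecembeddingfact1}(2) only needs $\iota*\iota$ to be an embedding compatible with the first factor inclusions, which is exactly the point you flag.
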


\begin{proof}
By Lemma \ref{freeexistential}, we have that the inclusion $(N,\psi)*(N,\psi)\subseteq (M,\varphi)*(M,\varphi)$ is existential.  Since the inclusion $(M,\varphi)\subseteq (M,\varphi)*(M,\varphi)$ is also existential, the result follows.  
\end{proof}

\begin{remark}
One can prove Lemma \ref{selflessexistential} without using Lemma \ref{freeexistential} as follows.
Since both $(N,\psi)\subseteq (M,\varphi)$ and 
$(M,\varphi)\subseteq (M,\varphi)*(M,\varphi)$ are existential, there are ultrafilters $\cU_1,\cU_2$ on some index sets $I_1, I_2$ and embeddings of W$^*$-probability spaces of the form
\[(N,\psi)\stackrel{\pi}{\subseteq}(M,\varphi)\stackrel{\theta}{\subseteq}(N,\psi)^{\cU_1}\]
and 
\[(M,\varphi)\stackrel{j_1}{\subseteq}(M,\varphi)*(M,\varphi)\stackrel{k_1}{\subseteq}(M,\varphi)^{\cU_2},\]
so that $\theta\circ \pi=\iota_N$ and $k_1\circ j_1=\iota_M$; here $\iota_N\colon (N,\psi)\hookrightarrow (N,\psi)^{\cU_1}$ and $\iota_M\colon (M,\varphi)\hookrightarrow (M,\varphi)^{\cU_2}$ are the corresponding diagonal embeddings, and $j_1$ is the first factor embedding. 
Then we have the following embedding: 
\begin{align*}
    (N,\psi)&\stackrel{i_1}{\subseteq}(N,\psi)*(N,\psi)\stackrel{\pi*\pi}{\subseteq}(M,\varphi)*(M,\varphi)\\
    &\stackrel{k_1}{\subseteq}(M,\varphi)^{\cU_2}
    \stackrel{\theta^{\cU_2}}{\subseteq}((N,\psi)^{\cU_1})^{\cU_2}=(N,\psi)^{\cU_2\otimes \cU_1}.
\end{align*}
Here, $i_1$ is the first factor embedding. Since $(k_1\circ j_1)(y)=y$ for all $y\in M$, for each $x\in N$, we have 
\begin{align*}
    (\theta^{\u_2}\circ k_1\circ (\pi*\pi)\circ i_1)(x)&=(\theta^{\u_2}\circ k_1\circ j_1)(\pi(x))\\ &=\theta^{\u_2}(\pi(x))\\
    &=x. 
\end{align*}
Therefore, this shows that $(N,\psi)\subseteq (N,\psi)*(N,\psi)$ is existential.
\end{remark}

\begin{thm}\label{selflessaxiomatizable}
The class of selfless W$^*$-probability spaces is axiomatizable.
\end{thm}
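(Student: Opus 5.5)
To prove that the class of selfless W$^*$-probability spaces is axiomatizable, I will use the standard characterization in continuous logic: a class is axiomatizable if and only if it is closed under isomorphisms, ultraproducts and ultraroots. Closure under isomorphism is immediate. For ultraroots, suppose $(M,\varphi)^\u$ is selfless. The diagonal embedding $(M,\varphi)\hookrightarrow (M,\varphi)^\u$ is elementary, and in particular existential. So $(M,\varphi)$ is an existential substructure of a selfless space, and Lemma \ref{selflessexistential} shows that $(M,\varphi)$ is selfless. More generally, this argument shows the class is closed under elementary substructures, hence under elementary equivalence once ultraproduct closure is known.

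The main step is closure under ultraproducts. Suppose each $(M_i,\varphi_i)$ is selfless, and set $(M,\varphi):=\prod_\u (M_i,\varphi_i)$. Selflessness of each factor means the first factor inclusion $j_i:(M_i,\varphi_i)\hookrightarrow (M_i,\varphi_i)*(M_i,\varphi_i)$ is existential. By Fact \ref{ecembeddingfact2}, the ultraproduct map
\[\prod_\u j_i:(M,\varphi)\hookrightarrow \prod_\u\big((M_i,\varphi_i)*(M_i,\varphi_i)\big)\]
is existential. By Lemma \ref{inducedembedding}, we have a canonical embedding $(M,\varphi)*(M,\varphi)\hookrightarrow \prod_\u((M_i,\varphi_i)*(M_i,\varphi_i))$. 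Composing this with the first factor inclusion $(M,\varphi)\subseteq (M,\varphi)*(M,\varphi)$ gives exactly $\prod_\u j_i$, since on the first factor both maps send $(x_i)_\u$ to $(x_i)_\u$ sitting in the first copy. Fact \ref{ecembeddingfact1}(2) then shows that the first factor inclusion $(M,\varphi)\subseteq (M,\varphi)*(M,\varphi)$ is existential, that is, $(M,\varphi)$ is selfless.

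The only point needing care, and the one I expect to be the main obstacle, is this compatibility check. It is not enough that $M_1^\u$ and $M_2^\u$ are free in the ultraproduct (Lemma \ref{inducedembedding}). One must also verify that the map in Lemma \ref{inducedembedding} is an embedding of W$^*$-probability spaces in the sense of Subsection \ref{subsec Wstar prob}, meaning it admits a faithful normal state-preserving conditional expectation. This should follow from the fact that the generated subalgebra is globally invariant under the modular group of $\varphi^\u$, by Takesaki's theorem. One must likewise check that its restriction to the first factor agrees with the ultraproduct of the first factor inclusions. Both are routine from the construction in Lemma \ref{inducedembedding}.

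For the $\forall\exists$ refinement stated in the introduction, I would add closure under direct limits (unions of chains). This follows similarly: if $(M,\varphi)=\overline{\bigcup_\alpha (M_\alpha,\varphi_\alpha)}$ with each $M_\alpha$ selfless, then any quantifier-free condition over finitely many parameters of $M$ inside $M*M$ can be approximated by parameters from some $M_\alpha$ inside $M_\alpha*M_\alpha\subseteq M*M$, where it is approximately realized by selflessness of $M_\alpha$. That part, however, is the subsequent corollary rather than the theorem itself.
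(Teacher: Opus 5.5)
Your proof is correct and matches the paper's argument: closure under ultraroots via Lemma \ref{selflessexistential}, and closure under ultraproducts by factoring the ultraproduct of the first factor inclusions through the canonical embedding of Lemma \ref{inducedembedding}, then applying Facts \ref{ecembeddingfact2} and \ref{ecembeddingfact1}(2). The paper leaves implicit in Lemma \ref{inducedembedding} that this canonical map admits a state-preserving conditional expectation, and your Takesaki argument for it is sound: both copies of the ultraproduct are $\sigma^{\varphi^\u}$-invariant.
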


\begin{proof}
We verify that the class of selfless W$^*$-probability spaces is closed under isomorphism, ultraproduct, and ultraroots.  Closure under isomorphism is clear and closure under ultraroot is a special case of Lemma \ref{selflessexistential}.  To prove closure under ultraproducts, note that Lemma \ref{inducedembedding} yields
$$\prod_\u (M_i,\varphi_i)\subseteq \prod_\u (M_i,\varphi_i)*\prod_\u (M_i,\varphi_i)\hookrightarrow \prod_\u ((M_i,\varphi_i)*(M_i,\varphi_i))$$
whose composition is the ultraproduct of the first factor embeddings.  Since the ultraproduct of existential embeddings is existential by Fact \ref{ecembeddingfact2}, the composition is existential, whence so is the first inclusion by Fact \ref{ecembeddingfact1}(2), yielding the desired result.
\end{proof}

\begin{remark}
A result similar to the above was proven in the setting of \cstar-probability spaces by Robert \cite[Theorem 4.5]{robert2025selfless}:  the class of selfless \cstar-probability spaces $(A,\rho)$ with $A$ a simple, purely infinite \cstar-algebra, is axiomatizable.  It is in fact $\forall\exists$-axiomatizable, which is also true in our context, as we will soon prove.
\end{remark}

We next observe that the union of a chain of selfless W$^*$-probability spaces is again selfless.  To prove this, we need the following:

\begin{lem}\label{injectivelemma}
Suppose that $(M_i,\varphi_i)_{i\in I}$ is a directed system of W$^*$-probability spaces with injective bonding maps and direct limit $(M,\varphi)$.  Further suppose that there is a W$^*$-probability space $(N,\psi)$ and embeddings $\alpha_i:(M_i,\varphi_i)\hookrightarrow (N,\psi)$ compatible with the directed system.  Then the induced map $\alpha:(M,\varphi)\to (N,\psi)$ is also injective.
\end{lem}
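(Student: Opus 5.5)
The plan is to show that $\alpha$ is isometric for $\|\cdot\|_\varphi^\#$ on a dense subalgebra and then pass to the whole of $M$ by normality. First I fix what the direct limit is. It is built from the algebraic union $M_0:=\bigcup_i M_i$ (identified via the injective bonding maps), which carries the compatible state $\varphi_0$ determined by $\varphi_0|_{M_i}=\varphi_i$. One forms the GNS representation on $L^2(M_0,\varphi_0)$, and $M$ is the von Neumann algebra generated there, with $\varphi$ the vector state. Because the bonding maps are embeddings of W$^*$-probability spaces, they come with compatible expectations, so the modular groups are compatible and $\varphi$ is faithful on $M$. The map $\alpha$ is then the normal extension of the $*$-homomorphism $\alpha_0:M_0\to N$ induced by the $\alpha_i$. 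It remains to show $\ker\alpha=0$.

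\emph{Step 1: isometry on the union and existence of $\alpha$.} For $x\in M_i$ we have $\psi(\alpha_0(x)^*\alpha_0(x))=\psi(\alpha_i(x^*x))=\varphi_i(x^*x)=\varphi(x^*x)$, and similarly for $xx^*$. So $\alpha_0$ is isometric for $\|\cdot\|_\varphi^\#$ and $\|\cdot\|_\psi^\#$, and of course contractive in operator norm. We then use Kaplansky density: every element of the unit ball of $M$ is a $*$-strong limit of a bounded net $(x_\lambda)$ in the unit ball of $M_0$, hence a $\|\cdot\|_\varphi^\#$-Cauchy net. Its image $(\alpha_0(x_\lambda))$ is then a bounded $\|\cdot\|^\#_\psi$-Cauchy net in $N$, so it converges $*$-strongly in $N$. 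Setting $\alpha(x)$ to be this limit is well defined, and it satisfies $\|\alpha(x)\|_\psi^\#=\|x\|_\varphi^\#$ and $\psi\circ\alpha=\varphi$.

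\emph{Step 2: injectivity.} If $\alpha(x)=0$, then $\|x\|_\varphi^\#=\|\alpha(x)\|_\psi^\#=0$, and faithfulness of $\varphi$ gives $x=0$. So injectivity follows from Step 1 once we know that $\alpha$ is indeed the normal $*$-homomorphism induced by the directed system. Multiplicativity and normality follow because, on bounded sets, multiplication is jointly $*$-strongly continuous.

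\emph{Main obstacle.} The delicate point is Step 1's passage from $M_0$ to $M$. The directed-limit W$^*$-probability space might be defined abstractly, for instance via the dissected structures of \cite{AGHS25}, where the limit is the completion of the union of the sorts $S_K$. In that case I would argue sortwise: $\alpha_i$ maps $S_K(M_i)$ isometrically into $S_K(N)$, since embeddings preserve total $K$-boundedness because $\psi\circ\alpha_i=\varphi_i$ and the expectation intertwines. So $\alpha$ is isometric on each sort of the limit, which is dense. Faithfulness of $\varphi$ on the limit then gives injectivity.
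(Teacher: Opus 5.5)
Your proof is correct, but it takes a different route from the paper.

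\textbf{The paper's route.} The paper never checks directly that $\alpha$ preserves the state. It picks an ultrafilter $\u$ on $I$ containing every set $X_a=\{i : a\in M_i\}$ with $a\in M_0$. Using the $\varphi$-preserving expectations $E_i\colon M\to M_i$, it builds an embedding of W$^*$-probability spaces $\Phi\colon M\to\prod_\u(M_i,\varphi_i)$, $\Phi(a)=(E_i(a))_\u$. It then observes that the embedding $(\prod_\u\alpha_i)\circ\Phi\colon M\to N^\u$ agrees with $\iota_N\circ\alpha$ on the dense subalgebra $M_0$, which forces $\alpha$ to be injective.

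\textbf{Your route.} You show that $\alpha_0$ is a $\|\cdot\|^\#$-isometry because $\psi\circ\alpha_i=\varphi_i$. You extend it using Kaplansky density and the $\|\cdot\|^\#_\psi$-completeness of the unit ball of $N$. You then conclude from faithfulness of $\varphi$. The core of your Step 2 is the general remark that a $*$-homomorphism $\alpha$ with $\psi\circ\alpha=\varphi$ and $\varphi$ faithful is injective, since $\alpha(x)=0$ gives $\varphi(x^*x)=\psi(\alpha(x)^*\alpha(x))=0$.

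\textbf{What each buys.} Your argument is more elementary and uses only state-preservation of the $\alpha_i$. It needs neither ultrafilters nor the expectations $E_i$, beyond whatever goes into knowing that $\varphi$ is faithful on the limit. It also actually justifies the existence of the induced normal map $\alpha$, which the paper takes for granted.

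The paper's route gives a stronger conclusion at no extra cost. Since $\iota_N\circ\alpha$ coincides with a composite of embeddings of W$^*$-probability spaces, it comes with a state-preserving expectation $N^\u\to\iota_N(\alpha(M))$. Restricting this expectation to $N$ shows that $\alpha$ is itself an embedding of W$^*$-probability spaces, i.e.\ $\alpha(M)$ admits a $\psi$-preserving conditional expectation, not merely that $\alpha$ is injective. With your approach you would need a separate step to get this. For example, Takesaki's theorem applies, because $\alpha(M)$ is the $\sigma$-weak closure of the $\sigma^\psi$-invariant algebras $\alpha_i(M_i)$.
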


\begin{proof}
Without loss of generality, let us assume that each $(M_i,\varphi_i)\subseteq (M,\varphi)$ and set $M_0:=\bigcup_{i\in I}M_i$, a dense $*$-subalgebra of $M$.  For each $a\in M_0$, set $$X_a:=\{i\in I \ : \ a\in M_i\}.$$  Since $I$ is directed, the family $(X_a)_{a\in M}$ has the finite intersection property.  Let $\u$ be an ultrafilter on $I$ such that each $X_a\in \u$.  Define a map $$\Phi:(M,\varphi)\to \prod_\u (M_i,\varphi_i)$$ by $\Phi(a)=(E_i(a))_\u$, where $E_i:M\to M_i$ is a state-preserving conditional expectation map.  Note that, for all $a\in M_0$ and $\u$-almost all $i\in I$, we have $E_i(a)=a$.  It follows that $\Phi$ is an embedding of W$^*$-probability spaces and hence $\prod_\u \alpha_i\circ \Phi:M\to N^\u$ is an embedding.  But $\Phi(a)=(\alpha(a))_\u$ for all $a\in M_0$, whence $\alpha$ is also injective, as desired.
\end{proof}

\begin{prop}\label{limit}
Suppose that $(M_i,\varphi_i)_{i\in I}$ is a directed system of W$^*$-probability spaces with embeddings as bonding maps and direct limit $(M,\varphi)$.  If each $(M_i,\varphi_i)$ is selfless, then so is $(M,\varphi)$.
\end{prop}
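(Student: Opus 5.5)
We will prove that $(M,\varphi)\subseteq (M,\varphi)*(M,\varphi)$ is existential by testing on formulas with parameters from the dense subalgebra $M_0:=\bigcup_i M_i$, after identifying each $(M_i,\varphi_i)$ with its image in $(M,\varphi)$.

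First, the free product inclusions $(M_i,\varphi_i)*(M_i,\varphi_i)\hookrightarrow (M,\varphi)*(M,\varphi)$ form a directed system compatible with the bonding maps. Its direct limit $(P,\rho)$ maps into $(M,\varphi)*(M,\varphi)$, and this map is injective by Lemma \ref{injectivelemma}. Its image is a von Neumann subalgebra containing both copies of $M_0$, with state-preserving expectation. Since $M_0$ is $*$-strongly dense in $M$, the image contains both copies of $M$, so it is all of $(M,\varphi)*(M,\varphi)$. We may therefore regard $(M,\varphi)*(M,\varphi)$ as the direct limit of the $(M_i,\varphi_i)*(M_i,\varphi_i)$.

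Next, fix a quantifier-free formula $\theta(\vec x,\vec a)$. Because formulas are uniformly continuous in their parameters, and $M_0$ is dense in each sort (one could also invoke the density of totally bounded elements in the sorts), it suffices to take $\vec a$ from $M_0$, so $\vec a\in M_j$ for some $j$. The inequality $(\inf_{\vec x}\theta)^{M*M}\le(\inf_{\vec x}\theta)^{M}$ is automatic. For the reverse inequality, suppose $\vec b$ in $M*M$ nearly attains the infimum. By density of $\bigcup_i M_i*M_i$ in each sort of $M*M$, we can replace $\vec b$ by some $\vec b'$ in $M_i*M_i$ with $i\ge j$ at small cost. Selflessness of $M_i$ then gives $\vec c\in M_i\subseteq M$ with $\theta(\vec c,\vec a)$ close to $\theta(\vec b',\vec a)$. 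Here we use that $M_i\subseteq M$ and $M_i*M_i\subseteq M*M$ are embeddings, so quantifier-free values are preserved.

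The main obstacle is the density step, namely that totally $K$-bounded elements of the limit are approximable in $\|\cdot\|^\#$ by totally $K$-bounded elements of the stages. If this is troublesome, I would instead use the ultraproduct argument of Lemma \ref{injectivelemma}. Take $\u$ on $I$ containing every $X_a$, and embed $M*M\hookrightarrow \prod_\u (M_i*M_i)$ using the conditional expectations. Then apply the existential embeddings $M_i*M_i\hookrightarrow M_i^{\mathcal V}$ and the inclusions $M_i\subseteq M$ to land in an ultrapower of $M$. Compatibility with the diagonal on $M_0$, and hence on $M$, follows as in that lemma. This yields the ultrapower characterization of existentiality, provided a suitably good ultrafilter is used, which can be arranged via Subsection \ref{good}.
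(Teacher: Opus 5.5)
Your main argument is essentially the paper's proof. You approximate the parameters by a tuple from some $M_j$ and the near-witness in $(M,\varphi)*(M,\varphi)$ by a tuple from some $(M_j,\varphi_j)*(M_j,\varphi_j)$, then invoke selflessness of $(M_j,\varphi_j)$; the detour through the abstract direct limit $P$ and Lemma \ref{injectivelemma} is unnecessary, since the union of the images of the $M_i*M_i$ is already a dense $*$-subalgebra of $(M,\varphi)*(M,\varphi)$.

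The sortwise density you flag as the main obstacle, which the paper uses without comment, follows from the state-preserving conditional expectations $E_i$ onto $M_i$ (and $E_i*E_i$ onto $M_i*M_i$). These commute with the modular flow, so they map totally $K$-bounded elements to totally $K$-bounded elements, and they converge to the identity in $\|\cdot\|_\varphi^\#$; hence your ultraproduct fallback is not needed.
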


\begin{proof}
We may assume, after identifying each $(M_i,\varphi_i)$ with its image in the direct limit, that
\[
        M_i\subseteq M_j\subseteq M
        \quad\text{whenever }i\leq j,
        \qquad
        \varphi_j|_{M_i}=\varphi_i,
        \qquad
        \varphi|_{M_i}=\varphi_i,
\]
and that $\bigcup_{i\in I}M_i$ is dense in $M$.

Set
$$A_i:=(M_i,\varphi_i)*(M_i,\varphi_i) \text{ and }
        A:=(M,\varphi)*(M,\varphi).$$
The inclusions $M_i\subseteq M_j$ induce state-preserving embeddings $A_i\hookrightarrow A_j$
and the inclusions $M_i\subseteq M$ induce state-preserving embeddings $A_i\hookrightarrow A$.

We prove that the first factor embedding
\[
        (M,\varphi)\hookrightarrow (M,\varphi)*(M,\varphi)=A
\]
is existential. Let $\theta(\vec x,\vec a)$ be a quantifier-free formula with parameters $\vec a$ from $M$.  Fix $\epsilon>0$. Choose a tuple $\vec b$ from $A$ such that
\[
        \theta^A(\vec b,\vec a)
        <
        \inf_{\vec x\in A}\theta^A(\vec x,\vec a)+\epsilon.
\]
By uniform continuity of the quantifier-free formula $\theta$, choose $\eta>0$ such that replacing the tuple of variables and parameters by tuples within $\eta$ changes the value of $\theta$ by less than $\epsilon$.

Since $\bigcup_i M_i$ is dense in $M$, there are some $i_0\in I$ and a tuple $\vec a_0$ from $M_{i_0}$ such that $d_M(\vec a,\vec a_0)<\eta$.
After increasing $i_0$ if necessary, there are some $j\geq i_0$ and a tuple $\vec b_j$ from $A_j$
such that $d_A(\vec b,\vec b_j)<\eta$.
Then
\[
        \bigl|\theta^A(\vec b,\vec a)-\theta^{A_j}(\vec b_j,\vec a_0)\bigr|<\epsilon,
\]
where we identify $A_j$ with its image in $A$.

Since $(M_j,\varphi_j)$ is selfless, the first factor embedding $(M_j,\varphi_j)\hookrightarrow A_j$
is existential. 
Thus we may choose a tuple $\vec c$ from $M_j$ such that
\[
        \theta^{M_j}(\vec c,\vec a_0)
        <
        \theta^{A_j}(\vec b_j,\vec a_0)+\epsilon.
\]
Viewing $\vec c$ as a tuple from $M$, and using again uniform continuity, we obtain
\[
        \theta^M(\vec c,\vec a)
        <
        \theta^{M_j}(\vec c,\vec a_0)+\epsilon.
\]
Combining the preceding inequalities gives
\[
\begin{aligned}
        \inf_{\vec x\in M}\theta^M(\vec x,\vec a)
        &\leq \theta^M(\vec c,\vec a) \\
        &< \theta^{M_j}(\vec c,\vec a_0)+\epsilon \\
        &< \theta^{A_j}(\vec b_j,\vec a_0)+2\epsilon \\
        &< \theta^A(\vec b,\vec a)+3\epsilon \\
        &< \inf_{\vec x\in A}\theta^A(\vec x,\vec a)+4\epsilon.
\end{aligned}
\]
Since $\epsilon>0$ was arbitrary, this proves
\[
        \inf_{\vec x\in M}\theta^M(\vec x,\vec a)
        =
        \inf_{\vec x\in A}\theta^A(\vec x,\vec a).
\]
Therefore the first factor embedding $(M,\varphi)\hookrightarrow (M,\varphi)*(M,\varphi)$
is existential and hence $(M,\varphi)$ is selfless.
\end{proof}

It is a well-known fact in model theory (see, for example, \cite[Proposition 2.4.4]{farah2021model}), that an axiomatizable class of structures is $\forall\exists$-axiomatizable if and only if it is closed under unions of chains.  As a result of the previous proposition, we have:

\begin{cor}\label{selflessAEaxiomatizable}
The class of selfless W$^*$-probability spaces is $\forall\exists$-axiomatizable.
\end{cor}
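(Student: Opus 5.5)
The plan is to combine the two results just established. Theorem \ref{selflessaxiomatizable} says the class of selfless W$^*$-probability spaces is axiomatizable. Proposition \ref{limit} says the class is closed under direct limits of directed systems with embeddings as bonding maps, so in particular under unions of chains. We then apply the preservation theorem quoted just before the statement (see \cite[Proposition 2.4.4]{farah2021model}): an axiomatizable class is $\forall\exists$-axiomatizable if and only if it is closed under unions of chains.

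Concretely, I would proceed in three steps.

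First, recall that Theorem \ref{selflessaxiomatizable} was proved by checking closure under isomorphism, ultraproducts and ultraroots. This gives axiomatizability in the language of \cite{AGHS25}, with sorts $S_K$ of totally $K$-bounded elements.

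Second, I would check that a union of a chain in the model-theoretic sense is the same as the W$^*$-direct limit used in Proposition \ref{limit}.
\begin{itemize}
\item A chain of substructures $(M_\alpha,\varphi_\alpha)$ in this language is a chain of embeddings of W$^*$-probability spaces. This uses the equivalence of categories from \cite{AGHS25}, under which substructure inclusions correspond to state-preserving embeddings with expectation.
\item The model-theoretic union is the completion of the union of the dissected sorts. I would identify it with the dissection of the W$^*$-direct limit $(M,\varphi)$.
\item For this identification, the key point is that a totally $K$-bounded element of $M$ is a $\|\cdot\|_\varphi^\#$-limit of totally $K$-bounded elements of the $M_\alpha$. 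To see this, apply the state-preserving conditional expectations $E_\alpha\colon M\to M_\alpha$. These preserve total $K$-boundedness, since $\varphi(E_\alpha(a)^*xE_\alpha(a))\le\varphi(E_\alpha(a^*xa))$ for $x\in (M_\alpha)_+$ by Kadison–Schwarz-type arguments.
\end{itemize}
Once this is in place, Proposition \ref{limit} shows that the union of a chain of selfless spaces is selfless.

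Third, invoke the cited proposition to conclude that the class is $\forall\exists$-axiomatizable.

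The only step requiring care is the second: matching the model-theoretic union with the operator-algebraic direct limit, especially showing that the sorts of the limit are the closures of the unions of the sorts. If the expectation argument turns out to be delicate, I would instead use the universal property of the direct limit together with Lemma \ref{injectivelemma}. That route shows the completed union of the dissections is a W$^*$-probability space into which every $M_\alpha$ embeds compatibly, and that it coincides with $(M,\varphi)$. The rest is a direct citation.
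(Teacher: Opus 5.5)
Your proposal is correct and follows exactly the paper's route: axiomatizability from Theorem \ref{selflessaxiomatizable}, plus closure under unions of chains from Proposition \ref{limit}, fed into the preservation theorem of \cite[Proposition 2.4.4]{farah2021model}. Your second step, identifying model-theoretic unions of chains with W$^*$-direct limits, is a point the paper leaves implicit, and your argument for it is sound: the inequality $E_\alpha(a)^*xE_\alpha(a)=E_\alpha(x^{1/2}a)^*E_\alpha(x^{1/2}a)\le E_\alpha(a^*xa)$ for $x\in (M_\alpha)_+$ follows from bimodularity and Kadison--Schwarz, and $E_\alpha(a)\to a$ in $\|\cdot\|_\varphi^\#$ follows from the density of $\bigcup_\alpha M_\alpha$.
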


\begin{remark}
An alternate proof of the previous corollary would be to use the fact that existential substructures of selfless W$^*$-probability spaces are again selfless (Lemma \ref{selflessexistential} above) together with Keisler's sandwich theorem (see \cite[Theorem 4.2]{goldbringhoudayer}).  This then gives an alternate proof of Proposition \ref{limit}.
\end{remark}

We end this section with a few further observations about selfless W$^*$-probability spaces.

\begin{lem}\label{separableenoughselfless}
A W$^*$-probability space $(M,\varphi)$ is selfless if and only if all of its separable existential substructures are selfless.
\end{lem}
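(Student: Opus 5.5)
The forward direction is immediate from Lemma \ref{selflessexistential}: if $(M,\varphi)$ is selfless and $(N,\psi)\subseteq (M,\varphi)$ is existential, then $(N,\psi)$ is selfless. The plan is therefore to prove the converse. Assume every separable existential substructure of $(M,\varphi)$ is selfless, and show that $(M,\varphi)$ is a directed union of separable existential (indeed elementary) substructures. Proposition \ref{limit} then gives that $(M,\varphi)$ is selfless.

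First, use downward L\"owenheim--Skolem for the (separable) language of W$^*$-probability spaces from \cite{AGHS25}. Every separable subset $A$ of $M$ (meaning a countable subset of $\bigcup_K S_K(M)$) is contained in a separable elementary substructure $(N_A,\psi_A)\preceq (M,\varphi)$, where $\psi_A=\varphi|_{N_A}$. By the equivalence of categories in Subsection \ref{modeltheorypresentation}, $N_A$ is a von Neumann subalgebra with a state-preserving faithful normal conditional expectation, so the inclusion is an embedding of W$^*$-probability spaces. Elementary inclusions are in particular existential, so by hypothesis each $(N_A,\psi_A)$ is selfless.

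Next, build a directed system. Let $I$ be the set of separable elementary substructures of $(M,\varphi)$, ordered by inclusion. The family is directed: given $N_1,N_2\in I$, apply L\"owenheim--Skolem to a countable dense subset of $N_1\cup N_2$ to obtain $N_3\in I$ containing both. An elementary substructure of $M$ contained in another is elementary in it, so the bonding maps are embeddings, and in fact existential. Every element of $\bigcup_K S_K(M)$ lies in some member of $I$, so the union is dense. The direct limit in the category of W$^*$-probability spaces is then the closure of this union inside $M$, which is all of $M$ because of the density of the totally bounded elements. Lemma \ref{injectivelemma} ensures that the induced map from the abstract direct limit to $(M,\varphi)$ is injective, and hence an isomorphism. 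Proposition \ref{limit} now yields that $(M,\varphi)$ is selfless.

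The main obstacle is identifying the abstract direct limit with $(M,\varphi)$ itself. The union $\bigcup_{A} N_A$ is only a dense $*$-subalgebra, and one has to check that the state-preserving expectations are compatible, so that Lemma \ref{injectivelemma} applies and the resulting map is surjective onto $M$. There is an alternative that avoids this step: the proof of Proposition \ref{limit} uses only that $\bigcup_i M_i$ is dense in $M$ and that each $A_i$ embeds compatibly into $A=(M,\varphi)*(M,\varphi)$, so its $\epsilon$-approximation argument can be rerun directly with the family $(N_A)$ in place of $(M_i)$. Density of $\bigcup_A (N_A*N_A)$ in $A$ follows because the free product is generated by the two copies of $M$.
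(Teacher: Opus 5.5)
Your proposal is correct and is essentially the paper's proof. The forward direction is Lemma \ref{selflessexistential}, and the converse writes $(M,\varphi)$ as the directed union of its separable elementary substructures via downward L\"owenheim--Skolem and then applies Proposition \ref{limit}. The identification worry you raise is harmless: the system already sits concretely inside $M$ with dense union, which is exactly the normalized form assumed at the start of the proof of Proposition \ref{limit}.
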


\begin{proof}
The forward direction follows from Lemma \ref{selflessexistential} and the reverse direction follows from Lemma \ref{limit} and Downward L\"owenheim-Skolem (as $(M,\varphi)$ is the direct limit of all of its separable elementary substructures).
\end{proof}

\begin{prop}\label{selflessfactor}
A nontrivial selfless W$^*$-probability space is a factor of type II$_1$ or type III$_\lambda$ for some $\lambda\in (0,1]$.
\end{prop}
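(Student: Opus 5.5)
Our plan is to show first that a nontrivial selfless $(M,\varphi)$ has trivial bicentralizer, and then to invoke Okayasu's trichotomy (Fact \ref{bc for generalfactors}). That fact immediately yields that $M$ is a factor which is either finite with $\varphi$ tracial, or of type III$_\lambda$ with $0<\lambda<1$, or of type III$_1$. It then remains to rule out the finite factors that are not of type II$_1$, namely the matrix algebras $M_n(\bb C)$ with $n\geq 2$; the case $n=1$ is the trivial space, which the statement excludes.

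For $\BC(M,\varphi)=\bb C$ we use the ultrapower characterization (Fact \ref{BCultrapower}) in the form $\BC(M,\varphi)\subseteq (M^\u_{\varphi^\u})'\cap M$. We need this also when $M$ is nonseparable, where the Houdayer--Marrakchi theorem is not yet available. Fix a suitably good ultrafilter $\u$, so that selflessness yields an embedding $\pi:(M,\varphi)*(M,\varphi)\hookrightarrow (M,\varphi)^\u$ which restricts to the diagonal on the first factor. Let $a\in \BC(M,\varphi)$. The second copy $M_2$ of $M$ lands in $M^\u$, and its centralizer $(M_2)_{\varphi}$ lands inside $M^\u_{\varphi^\u}$ because the embedding is state preserving with expectation. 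We want $a$ to commute with these elements. By Fact \ref{haagerupBCchar}, membership of $a$ in the bicentralizer is witnessed uniformly by almost-centralizing unitaries, and unitaries of $M^\u_{\varphi^\u}$ are represented by asymptotically centralizing unitaries of $M$ (Lemma \ref{lem asymptotic centralizer} and Fact \ref{ultralimitbimodule}). This argument works for arbitrary index sets, so $a$ commutes with $\pi((M_2)_\varphi)$.

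This alone is not enough when $M_\varphi$ is small, and this is the main obstacle. We would therefore use freeness more substantially. Inside $M*M$, the unitaries $u$ of the form $\lambda$-eigenoperators, or more simply Haar-type unitaries coming from the free group generated by conjugates, lie in the centralizer of the free product state whenever they come from $(M_2)_\varphi$. If $M_\varphi$ is trivial (the ergodic case), we instead argue through the classical fact that a free product $(M,\varphi)*(M,\varphi)$ of nontrivial spaces has large centralizer relative to $M$: by free absorption results (Ueda / Houdayer), $((M*M)_{\varphi*\varphi})'\cap (M*M)\subseteq\bb C$ when $M\neq\bb C$. Granting this, an element $a\in\BC(M,\varphi)$ commutes, via the embedding into $M^\u$ and the ultrapower characterization, with $\pi((M*M)_{\varphi*\varphi})$. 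Transporting back, $a$ lies in the relative commutant of the centralizer of the free product, so $a\in\bb C$. Verifying carefully that elements of $(M*M)_{\varphi*\varphi}$ map into $M^\u_{\varphi^\u}$, and that the ultrapower characterization of $\BC$ holds for arbitrary $\u$, is the delicate bookkeeping step.

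Finally, suppose $M=M_n(\bb C)$ with $n\geq2$ is selfless. We would exhibit a quantifier-free (or existential) sentence that separates $M_n$ from $M_n*M_n$. One option is to note that $M*M$ contains a diffuse abelian subalgebra, which is expressible existentially by the existence of approximately orthogonal projections $p_1,\dots,p_k$ with $\varphi(p_j)\approx 1/k$ for large $k$. This is impossible in $M_n$, where the values $\varphi(p)$ for projections $p$ form a finite set bounded away from $1/k$ in a suitable sense. A simpler alternative is to observe that $M_n^\u=M_n$, since finite-dimensional spaces are their own ultrapowers, so $M_n*M_n$ would embed into $M_n$; this is absurd because the free product is infinite-dimensional.
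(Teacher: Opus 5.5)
There is a genuine gap in your first step, the claim that selflessness forces $\BC(M,\varphi)=\mathbb{C}$.

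The bookkeeping you flag is harmless. For any ultrafilter, Facts \ref{haagerupBCchar} and \ref{ultralimitbimodule}, together with lifting unitaries of $M^{\mathcal U}$ to unitaries of $M$, give $\BC(M,\varphi)\subseteq (M^{\mathcal U}_{\varphi^{\mathcal U}})'\cap M$. Since $\pi$ admits a state-preserving expectation, it intertwines the modular groups, so $\pi((M*M)_{\varphi*\varphi})\subseteq M^{\mathcal U}_{\varphi^{\mathcal U}}$.

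But that is all your argument extracts: $\BC(M,\varphi)$, sitting in the first copy, lies in $((M*M)_{\varphi*\varphi})'\cap(M*M)$. The ``classical fact'' that this relative commutant is $\mathbb{C}$ whenever $M\neq\mathbb{C}$ is false.
\begin{itemize}
\item Already $(\mathbb{C}^2,\tau)*(\mathbb{C}^2,\tau)\cong L(\mathbb{Z}_2*\mathbb{Z}_2)$, with $\tau$ uniform, is tracial with diffuse center.
\item Worse, in exactly the ergodic case where you invoke it, the centralizer of $\varphi*\varphi$ can be trivial. Take a free Araki--Woods factor $M=\Gamma(H_{\mathbb R},U_t)''$ with $U$ having no eigenvectors. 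Then $M*M\cong\Gamma(H_{\mathbb R}\oplus H_{\mathbb R},U_t\oplus U_t)''$, with $\varphi*\varphi$ the free quasi-free state. The modular group acts on the $n$-particle space of the full Fock space by $(U_t\oplus U_t)^{\otimes n}$, which has no nonzero fixed vectors for $n\geq 1$. Testing on the vacuum vector gives $(M*M)_{\varphi*\varphi}=\mathbb{C}$, so the relative commutant is all of $M*M$.
\end{itemize}
This $M$ has trivial bicentralizer (Houdayer), hence is selfless by Houdayer--Marrakchi. So your argument fails on genuine instances of the statement.

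The missing ingredient is a diffuse piece of the ultrapower centralizer that is free from $M$. Getting something of this kind out of selflessness is the nontrivial content of the implication (ii)$\Rightarrow$(iii) in Houdayer--Marrakchi. That step produces a diffuse $(N,\psi)$ with $(M,\varphi)\subseteq(M,\varphi)*(N,\psi)$ existential, and it uses neither separability nor diffuseness of $M$.

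The paper does not prove triviality of the bicentralizer at this stage. Instead:
\begin{itemize}
\item It gets factoriality from that existential inclusion, Ueda's theorem that $(M,\varphi)*(N,\psi)$ is a factor, and the fact that existential substructures of factors are factors.
\item It excludes $M_n(\mathbb{C})$ exactly by your ``simpler alternative''. Your first option is flawed, since $\varphi(p)$ ranges over a continuum when $\varphi$ is not tracial.
\item It excludes types II$_\infty$ and III$_0$ by passing, via Downward L\"owenheim--Skolem and Lemma \ref{selflessexistential}, to separable elementary substructures. These remain selfless and, by preservation of type under elementary equivalence, of the same type, so Houdayer--Marrakchi and Okayasu's trichotomy apply there.
\end{itemize}
Trivial bicentralizer for nonseparable diffuse spaces is obtained only later, in Theorem \ref{generalizing}.
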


\begin{proof}
Suppose that $(M,\varphi)$ is a nontrivial selfless W$^*$-probability space.  The proof of the implication (ii)$\Rightarrow$(iii) in \cite[Theorem A]{cyrilamine} then shows that there is a diffuse W$^*$-probability space $(N,\psi)$ such that $(M,\varphi)\subseteq (M,\varphi)*(N,\psi)$ is existential; we stress that even though $(M,\varphi)$ is assumed to be both separable and diffuse in the statement of \cite[Theorem A]{cyrilamine}, neither of these assumptions are used to deduce the aforementioned fact.  By \cite[Theorem 3.4]{ueda2011factoriality}, $(M,\varphi)*(N,\psi)$ is a factor.  By \cite[Lemma 3.3]{goldbringhoudayer}, $M$ is a factor.

We next show that $M$ cannot be type I$_n$.  If $M\cong M_n(\bb C)$, then $(M_n(\bb C),\varphi)^\u\cong (M_n(\bb C),\varphi)$ by \cite[Lemma 2.8]{MT16Rohlin} and thus cannot contain a copy of the infinite-dimensional W$^*$-probability space $(M_n(\bb C),\varphi)*(M_n(\bb C),\varphi)$, whence $M_n(\bb C)$ is not selfless.

Finally, as noted in Fact \ref{bc for generalfactors}, a \emph{separable} selfless W$^*$-probability space cannot be type II$_\infty$ or III$_0$.  By Lemma \ref{selflessexistential} and Downward L\"owenheim-Skolem, it thus suffices to check that an elementary substructure of a II$_\infty$ factor (resp. III$_0$ factor) is also a type II$_\infty$ factor (resp. III$_0$ factor).  This follows from the fact that types II$_1$, II$_\infty$, and III$_\lambda$ for $\lambda\in (0,1]$ are preserved under elementary equivalence (see \cite[Propositions 8.7 and 8.8]{AGHS25}).  
\end{proof}

\begin{remark}
  The main result of \cite[Theorem A]{cyrilamine} is that for separable, diffuse W$^*$-probability spaces $(M,\varphi)$, being selfless is equivalent to $\BC(M,\varphi)=\bb C$.  We will soon remove the separability assumption in their result, but we comment that the diffuse assumption in their result is necessary by the previous lemma.  Moreover, to deduce that selfless W$^*$-probability spaces are factors using their theorem, one needs to first assume that the underlying von Neumann algebra is diffuse to then conclude that it has trivial bicentralizer, from which factoriality follows immediately; in the proof of the previous proposition, we first deduced factoriality directly from the selflessness assumption, and then diffuseness followed quite easily.
\end{remark}

\subsection{Selflessness and trivial bicentralizer}

In \cite{cyrilamine}, it is shown that, for a \emph{separable} diffuse W$^*$-probability space, one has that selflessness is equivalent to having trivial bicentralizer.  We wish to extend this fact to all diffuse W$^*$-probability spaces.  First, we need some preparation. 

\begin{prop}\label{upanddown}
Fix an inclusion $(N,\psi)\subseteq (M,\varphi)$ of W$^*$-probability spaces.  Then for all $a\in N$ and $\epsilon,\delta>0$, we have:
\begin{enumerate}
    \item If $\epsilon_M(a,\delta)<\epsilon$, then $\epsilon_N(a,\delta)\leq \epsilon$.
    \item If $(N,\psi)$ is an existential substructure of $(M,\varphi)$ and $\epsilon_N(a,\delta)<\epsilon$, then $\epsilon_M(a,\delta/2)\leq \epsilon$.
\end{enumerate}
\end{prop}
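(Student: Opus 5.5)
For (1), I would compare the commutator norm of $\varphi$ with that of $\psi$ directly, using the conditional expectation. By the definition of an embedding, there is a faithful normal conditional expectation $E\colon M\to N$ with $\psi\circ E=\varphi$. For $u\in \mathbf{U}(N)$ and $b\in M$, the $N$-bimodularity of $E$ gives
$$\varphi(bu)-\varphi(ub)=\psi(E(b)u-uE(b)).$$
Since $\|E(b)\|\le\|b\|$ and $N\subseteq M$, this shows $\|u\varphi-\varphi u\|=\|u\psi-\psi u\|$ for all $u\in\mathbf{U}(N)$. Moreover $\|u^*au-a\|_\varphi=\|u^*au-a\|_\psi$ because $\varphi|_N=\psi$. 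Hence every unitary competing in the supremum defining $\epsilon_N(a,\delta)$ also competes in the one defining $\epsilon_M(a,\delta)$, with the same value. This gives $\epsilon_N(a,\delta)\le\epsilon_M(a,\delta)<\epsilon$.

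For (2), I would argue by contradiction, using the ultrapower characterization of existential inclusions from the subsection on selfless spaces. Suppose $\epsilon_M(a,\delta/2)>\epsilon$. Pick $u\in\mathbf{U}(M)$ with $\|u\varphi-\varphi u\|\le\delta/2$ and $\|u^*au-a\|_\varphi>\epsilon$. Since $(N,\psi)\subseteq(M,\varphi)$ is existential, there is an embedding $(M,\varphi)\hookrightarrow(N,\psi)^\u$ restricting to the diagonal embedding on $N$; here $\u$ is a suitably good ultrafilter as in Subsection \ref{good}. Applying the computation of part (1) to the inclusion $M\subseteq N^\u$, which also comes with an expectation, gives $\|u\psi^\u-\psi^\u u\|=\|u\varphi-\varphi u\|\le\delta/2$. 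Also $\|u^*au-a\|_{\psi^\u}>\epsilon$, since $a\in N$ sits diagonally.

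Next I lift $u$ to a family of unitaries $(u_i)_{i\in I}$ in $N$ with $u=(u_i)_\u$; unitaries in Ocneanu ultraproducts lift to unitaries. By Fact \ref{ultralimitbimodule},
$$\lim_\u\|u_i\psi-\psi u_i\|=\|u\psi^\u-\psi^\u u\|\le\delta/2<\delta.$$
In addition, $\lim_\u\|u_i^*au_i-a\|_\psi=\|u^*au-a\|_{\psi^\u}>\epsilon$. So for $\u$-almost every $i$, the unitary $u_i$ satisfies $\|u_i\psi-\psi u_i\|\le\delta$ and $\|u_i^*au_i-a\|_\psi>\epsilon$. This forces $\epsilon_N(a,\delta)>\epsilon$, contradicting the hypothesis. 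Therefore $\epsilon_M(a,\delta/2)\le\epsilon$.

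The main point to check is the passage from the existential inclusion to the ultrapower embedding together with the equality of commutator norms in $M$ and in $N^\u$. That equality rests on the expectation $N^\u\to M$ compatible with $\psi^\u$, and on choosing $\u$ good enough when $N$ is nonseparable. The halving of $\delta$ only supplies the slack needed to pass from an ultralimit to $\u$-almost all $i$.
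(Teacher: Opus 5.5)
Your proof is correct and follows the paper's argument. In part (1) you use the $\varphi$-preserving expectation $E\colon M\to N$ to compare $\|u\varphi-\varphi u\|$ with $\|u\psi-\psi u\|$; in part (2) you embed $(M,\varphi)$ into $(N,\psi)^\u$ over $N$, reuse the computation from (1), lift $u$ to unitaries $u_i\in\mathbf{U}(N)$, and apply Fact \ref{ultralimitbimodule} with the slack $\delta/2<\delta$. The only differences are cosmetic: you phrase (2) by contradiction rather than directly, and in (1) you record the slightly sharper inequality $\epsilon_N(a,\delta)\le\epsilon_M(a,\delta)$.
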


\begin{proof}
For (1), suppose that $\epsilon_M(a,\delta)<\epsilon$.  Fix $u\in U(N)$ such that $\|u\psi-\psi u\|\leq \delta$.  Let $E:M\to N$ be a state-preserving conditional expectation.  Then for $b\in \operatorname{ball} M$, one has $|\varphi(ub)-\varphi(bu)|=|\psi(uE(b))-\psi(E(b)u)|\leq \delta$.  It follows that $\|u\varphi-\varphi u\|\leq \delta$ and thus $\|u^*au-a\|_\varphi<\epsilon$; since $u\in U(N)$ was an arbitrary unitary with $\|u\psi-\psi u\|\leq \delta$, we have $\epsilon_N(a,\delta)\leq \epsilon$.

For (2), suppose that $\epsilon_N(a,\delta)<\epsilon$ and fix $u\in U(M)$ with $\|u\varphi-\varphi u\|\leq \delta/2$.  Fix an embedding $i:(M,\varphi)\hookrightarrow (N,\psi)^\u$ that restricts to the diagonal embedding on $(N,\psi)$ and write $i(u)=(u_i)_\u$, where $u_i\in \mathbf{U}(N)$ for every $i\in I$.  Then $\|i(u)\psi^\u-\psi^\u i(u)\|\leq \delta/2$ by the argument in part (1).  Therefore, by Fact \ref{ultralimitbimodule}, we have $\lim_\u \|u_i\psi-\psi u_i\|<\delta$ and so $\|u_i\psi-\psi u_i\|<\delta$ for $\u$-almost all $i$.  It follows that $\|u_i^*au_i-a\|_{\varphi_i}<\epsilon$ for $\u$-almost all $i$ and thus $\|u^*au-a\|_\varphi\leq \epsilon$.
\end{proof}

\begin{cor}\label{BCpreceq}
If $(N,\psi)$ is an existential substructure of $(M,\varphi)$, then $\BC(N,\psi)=\BC(M,\varphi)\cap N$.
\end{cor}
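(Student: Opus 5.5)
The plan is to combine Fact \ref{haagerupBCchar} with the two parts of Proposition \ref{upanddown}, proving the two inclusions separately. Throughout, fix $a\in N$ and regard it also as an element of $M$. Since $\varphi|_N=\psi$, we have $\|x\|_\varphi=\|x\|_\psi$ for $x\in N$, so the $\epsilon$-quantities for $N$ and for $M$ are computed with compatible norms. This is already implicit in Proposition \ref{upanddown}.

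For the inclusion $\BC(M,\varphi)\cap N\subseteq \BC(N,\psi)$, let $a\in \BC(M,\varphi)\cap N$ and $\epsilon>0$. Fact \ref{haagerupBCchar} gives $\delta>0$ with $\epsilon_M(a,\delta)<\epsilon$. Proposition \ref{upanddown}(1) then yields $\epsilon_N(a,\delta)\leq\epsilon$. Since $\epsilon$ is arbitrary, we obtain $\inf_\delta\epsilon_N(a,\delta)=0$, and Fact \ref{haagerupBCchar} applied in $(N,\psi)$ gives $a\in\BC(N,\psi)$. This direction uses only that $N\subseteq M$ is an embedding, that is, the existence of the state-preserving expectation; existentiality is not needed.

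For the reverse inclusion, let $a\in \BC(N,\psi)$ and $\epsilon>0$. Fact \ref{haagerupBCchar} gives $\delta>0$ with $\epsilon_N(a,\delta)<\epsilon$. Because $(N,\psi)$ is existential in $(M,\varphi)$, Proposition \ref{upanddown}(2) gives $\epsilon_M(a,\delta/2)\leq\epsilon$. Letting $\epsilon\to 0$ and applying Fact \ref{haagerupBCchar} in $(M,\varphi)$, we get $a\in\BC(M,\varphi)$, and trivially $a\in N$.

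The main obstacle has already been handled in Proposition \ref{upanddown}(2): transferring almost-centralizing unitaries of $M$ into $N^\u$ via an existential embedding, with the loss of the factor $2$ in $\delta$. Two small points remain to check. The strict versus non-strict inequalities are harmless, since we may replace $\epsilon$ by $\epsilon/2$. The definition of $\epsilon_M$ with ``$\leq\delta$'' matches the one used in Fact \ref{haagerupBCchar}, which is immaterial in the limit $\delta\to0$.
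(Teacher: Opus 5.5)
Your proof is correct and is essentially the paper's own argument: the paper states the corollary without a separate proof, as an immediate consequence of Proposition \ref{upanddown}(1) and (2) combined with Haagerup's characterization in Fact \ref{haagerupBCchar}, exactly as you do. Your observation that the inclusion $\BC(M,\varphi)\cap N\subseteq\BC(N,\psi)$ uses only the state-preserving conditional expectation, and not existentiality, is also accurate.
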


\begin{remark}\label{rem separableunnecessary}
By Lemma \ref{BCpreceq}, one sees now that if the bicentralizer problem has a positive solution, then $\BC(M,\varphi)=\bb C$, regardless of whether or not $M$ is separable.
\end{remark}

We can now prove the bicentralizer analog of Lemma \ref{separableenoughselfless}.
\begin{lem}\label{separableenoughbicentralizer}
If $(M,\varphi)$ is a W$^*$-probability space, then $\BC(M,\varphi)=\mathbb C$ if and only if $\BC(N,\psi)=\C$ for all separable existential substructures $(N,\psi)$ of $(M,\varphi)$.
\end{lem}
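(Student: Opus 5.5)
The forward direction is immediate from Corollary \ref{BCpreceq}: if $\BC(M,\varphi)=\C$ and $(N,\psi)$ is any (separable) existential substructure of $(M,\varphi)$, then
$$\BC(N,\psi)=\BC(M,\varphi)\cap N=\C\cap N=\C.$$

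For the converse, assume every separable existential substructure has trivial bicentralizer, and let $a\in \BC(M,\varphi)$. We want to show that $a\in\C$. By Downward L\"owenheim--Skolem (in the language of \cite{AGHS25}, which has density character $\aleph_0$), we pick a separable elementary substructure $(N,\psi)\preceq (M,\varphi)$ containing $a$. Elementary substructures are in particular existential, so Corollary \ref{BCpreceq} gives
$$a\in \BC(M,\varphi)\cap N=\BC(N,\psi)=\C.$$
Hence $\BC(M,\varphi)=\C$.

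The only point needing care is that $a$ must lie in the substructure. The sorts of the language are the totally $K$-bounded balls $S_K$, so an arbitrary $a\in M$ need not be an element of any sort. This causes no difficulty. Every $a\in M$ is a $\|\cdot\|_\varphi^\#$-limit of a sequence $(a_n)$ of totally bounded elements (totally bounded elements are $*$-strongly dense, as recalled in Subsection \ref{modeltheorypresentation}). We apply Downward L\"owenheim--Skolem to the countable set $\{a_n\}$. The resulting separable $(N,\psi)$ is, via the equivalence of categories in \cite{AGHS25}, a von Neumann subalgebra with state-preserving expectation, so it is closed in the relevant topology and therefore contains $a$.

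Alternatively, one can argue without placing $a$ in $N$. If $\BC(M,\varphi)\neq\C$, choose a nonscalar element of $\BC(M,\varphi)$. By the Kaplansky density theorem it is approximated by totally bounded elements, and one can arrange for a separable elementary substructure containing a nonscalar element of $\BC(M,\varphi)$. Either way, this bookkeeping is the only step requiring any thought; the rest follows directly from Corollary \ref{BCpreceq}.
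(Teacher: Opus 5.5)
Your proof is correct and takes the same route as the paper, which proves the lemma as an immediate application of Corollary \ref{BCpreceq} and Downward L\"owenheim--Skolem. Your extra bookkeeping, which ensures that $a$ lies in the separable elementary substructure via totally bounded approximants, is a detail the paper leaves implicit; it is sound, since $N$ is closed under $\|\cdot\|_\varphi$-limits because the $\varphi$-preserving conditional expectation onto $N$ is $\|\cdot\|_\varphi$-contractive.
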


\begin{proof}
This is an immediate application of Corollary \ref{BCpreceq} and Downward L\"owenheim-Skolem.
\end{proof}

\begin{lem}\label{diffuselemma}
If $(N,\psi)$ is an existential substructure of a diffuse W$^*$-probability space $(M,\varphi)$, then $(N,\psi)$ is also diffuse.
\end{lem}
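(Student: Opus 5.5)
We argue by contraposition: assuming $N$ is not diffuse, we show that $M$ has a minimal projection. Suppose $N$ has a minimal projection $p\neq 0$, so that $pNp=\bb C p$. The aim is to write this down as a quantifier-free property of $p$ that transfers up the existential inclusion $(N,\psi)\subseteq(M,\varphi)$. Since $M$ is diffuse, there is a nonzero projection $q\le p$ in $M$ with $q\neq p$. The plan is to approximate such a $q$, together with the partial data witnessing that it is a proper nonzero subprojection, by elements of $N$ via the embedding $(M,\varphi)\hookrightarrow(N,\psi)^\u$ that restricts to the diagonal embedding.

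The concrete route is as follows. Identify $M$ with its image in $(N,\psi)^\u$ and set $p=(p)_\u$ (a constant sequence). The subprojection $q$ of $p$ in $M$ becomes a projection $q\in (N,\psi)^\u$ with $q\le p$ and $0<\varphi^\u(q)<\varphi(p)$. It lies in $p N^\u p$. We then compare this with $(pNp)^\u$, or more precisely with the corner $p N^\u p$, computed with the restricted state $\psi(p\,\cdot\,p)/\psi(p)$. This corner should be the Ocneanu ultrapower of $(pNp,\psi_p)$, which is $\bb C p$, because corners by a projection from the base commute with Ocneanu ultrapowers. If that holds, then $q\in\bb C p$, which contradicts $0<\varphi^\u(q)<\psi(p)$.

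The main obstacle is justifying $p N^\u p=(pNp)^\u$ for a projection $p\in N$ that need not lie in the centralizer $N_\psi$. I would avoid this issue by working with the quantifier-free formula directly. Fix a representative $q=(q_i)_\u$. Then $pq_ip$ is a sequence in $pNp=\bb C p$, say $pq_ip=\lambda_i p$. We also have $\|q-pqp\|^\#_{\varphi^\u}=0$, and the $\|\cdot\|_\varphi^\#$-convergence of the sequence $(q_i)$ to $(pq_ip)$ along $\u$ follows from $\lim_\u\|q_i-pq_ip\|^\#_\psi=0$. That limit requires $(pq_ip)_i$ to represent $pqp$, which holds because multiplication by a fixed element of $N$ is compatible with the Ocneanu ultraproduct: constant sequences lie in $\mathcal M^\u$. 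Hence $q=(\lambda_i p)_\u=(\lim_\u\lambda_i)p\in\bb C p$, giving the contradiction.

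Alternatively, to stay within the paper's model-theoretic framework, one can note that $p$ may not be totally bounded, and instead approximate the condition on $M$ by an $\inf$-formula over sorts. For the argument above, however, the ultrapower route suffices, and the contradiction shows that $N$ has no minimal projections, i.e. $N$ is diffuse.
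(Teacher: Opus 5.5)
Your argument is correct and is essentially the paper's. Both embed $(M,\varphi)$ into $(N,\psi)^{\mathcal U}$ over the diagonal copy of $N$ and pull a proper nonzero subprojection $q<p$ back to $N$ along $\mathcal U$. Your contrapositive version compresses a representative of $q$ by the minimal projection $p$, so that $pq_ip=\lambda_ip$ and hence $q=(\lim_{\mathcal U}\lambda_i)p\in\{0,p\}$; this has the small advantage of bypassing a step the paper uses without comment, namely that $q$ can be represented by subprojections $q_n\le p$ lying in $N$.
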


\begin{proof}
Let $p$ be a nonzero projection in $N$.  Let $q$ be a nonzero projection in $M$ with $q<p$.  Fix an embedding $i\colon (M,\varphi)\hookrightarrow (N,\psi)^{\cU}$ that restricts to the diagonal embedding on $(N,\psi)$ and write $q=(q_n)_\u$ with each $q_n\in N$ a subprojection of $p$.  Then for $\u$-almost all $n$, $q_n$ is a nonzero proper subprojection of $p$; since $p$ was an arbitrary nonzero projection in $N$, we have that $N$ is diffuse.   
\end{proof}

We can now remove the separability requirement in the result of Houdayer and Marrakchi:

\begin{thm}\label{generalizing}
For any diffuse W$^*$-probability space $(M,\varphi)$ (not necessarily separable), we have that $(M,\varphi)$ is selfless if and only if $\BC(M,\varphi)=\mathbb C$.
\end{thm}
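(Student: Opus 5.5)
The plan is to reduce to the separable case of \cite[Theorem A]{cyrilamine} by passing to separable existential substructures, using the two parallel ``separable detection'' results already established. By Lemma \ref{separableenoughselfless}, $(M,\varphi)$ is selfless if and only if every separable existential substructure $(N,\psi)$ of $(M,\varphi)$ is selfless. By Lemma \ref{separableenoughbicentralizer}, $\BC(M,\varphi)=\bb C$ if and only if $\BC(N,\psi)=\bb C$ for every such $(N,\psi)$. So it suffices to check that, for each separable existential substructure $(N,\psi)$, selflessness of $(N,\psi)$ is equivalent to $\BC(N,\psi)=\bb C$.

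For this, I would invoke Lemma \ref{diffuselemma}: since $(M,\varphi)$ is diffuse and $(N,\psi)$ is an existential substructure, $(N,\psi)$ is diffuse. It is also separable by choice. Hence \cite[Theorem A]{cyrilamine} applies to $(N,\psi)$ and gives exactly the desired equivalence. Chaining the three equivalences yields the theorem.

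One point needs care: the two separable-detection lemmas must quantify over the same family of substructures. Both are phrased in terms of all separable existential substructures, so they match. If one wanted to be cautious, one could restrict to separable \emph{elementary} substructures. These are cofinal by Downward L\"owenheim--Skolem and are in particular existential. Both directions of each lemma hold for this restricted family as well: the forward directions apply to any existential substructure, and the reverse directions only use a directed family of substructures whose union is dense, via Proposition \ref{limit} and Corollary \ref{BCpreceq}.

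I expect the only real subtlety to be confirming that Lemma \ref{diffuselemma} legitimately transfers diffuseness. In particular, one must check that a nonzero proper subprojection of $p$ in $M$ lifts to a representing sequence of subprojections of $p$ in $N$. This can be arranged by lifting $q$ to projections and then compressing, using the fact that $q\le p$ is expressible in the language. Everything else is bookkeeping.
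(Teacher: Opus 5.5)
Your proposal is correct and matches the paper's proof: the paper also deduces the theorem from \cite[Theorem A]{cyrilamine} by chaining Lemmas \ref{separableenoughselfless}, \ref{separableenoughbicentralizer} and \ref{diffuselemma}, which quantify over the same family of separable existential substructures. One small correction to your optional remark: the bicentralizer direction needs every element of $M$ to lie in some separable elementary substructure (which Downward L\"owenheim--Skolem guarantees), not merely a directed family of substructures with dense union.
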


\begin{proof}
The theorem follows immediately from the main result of \cite{cyrilamine} using Lemmas \ref{separableenoughselfless}, \ref{separableenoughbicentralizer}, and \ref{diffuselemma}. 
\end{proof}

Theorem \ref{selflessaxiomatizable}, Corollary \ref{selflessAEaxiomatizable}, and Theorem \ref{generalizing} immediately yield:

\begin{cor}\label{bicentralizeraxiomatizable}
For any axiomatizable class $\cal K$ of diffuse W$^*$-probability spaces, the set of elements of $\cal K$ with trivial bicentralizer is axiomatizable.  If $\cal K$ is $\forall\exists$-axiomatizable, then so is the class of elements of $\cal K$ with trivial bicentralizer.  In particular, the class of III$_1$ factors with trivial bicentralizer is $\forall\exists$-axiomatizable.
\end{cor}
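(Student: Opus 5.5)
The plan is to obtain Corollary \ref{bicentralizeraxiomatizable} by intersecting $\cal K$ with the class of selfless W$^*$-probability spaces and then reading off the axiomatizability from results already proved. Let $\cal S$ denote the class of selfless W$^*$-probability spaces. Since every member of $\cal K$ is diffuse, Theorem \ref{generalizing} gives, for each $(M,\varphi)\in\cal K$, that $\BC(M,\varphi)=\bb C$ if and only if $(M,\varphi)\in\cal S$. Hence the class in question is exactly $\cal K\cap\cal S$.

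By Theorem \ref{selflessaxiomatizable}, $\cal S$ is axiomatized by some theory $T_{\cal S}$. If $\cal K$ is axiomatized by $T_{\cal K}$, then $T_{\cal K}\cup T_{\cal S}$ axiomatizes $\cal K\cap\cal S$, which gives the first assertion. For the second assertion, if $T_{\cal K}$ can be chosen to consist of $\forall\exists$-sentences, then by Corollary \ref{selflessAEaxiomatizable} so can $T_{\cal S}$. Their union is then a set of $\forall\exists$-sentences axiomatizing $\cal K\cap\cal S$. Alternatively, both classes are closed under unions of chains, so their intersection is as well, and \cite[Proposition 2.4.4]{farah2021model} applies.

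For the final sentence, I need the class of III$_1$ factors (with faithful normal states) to be a $\forall\exists$-axiomatizable class of diffuse W$^*$-probability spaces. Diffuseness is automatic for III$_1$ factors. For axiomatizability I would invoke \cite[Propositions 8.7 and 8.8]{AGHS25}, as already used in Proposition \ref{selflessfactor}, which show that being a III$_1$ factor is axiomatizable.

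The main obstacle is the $\forall\exists$ form of that axiomatization. If \cite{AGHS25} does not record it explicitly, I would verify closure under unions of chains directly. Let $(M,\varphi)$ be a direct limit of a chain of III$_1$ factors $(M_i,\varphi_i)$. Factoriality should pass to the limit: a central projection $z$ is approximated via the conditional expectations $E_i(z)$, which become asymptotically central in each $M_i$, and I would argue this forces $z$ to be scalar. Type III$_1$ I would get from the diameter or ultrapower characterizations, since these are expressible by conditions that persist under unions. If the direct check proves too messy, I would settle for plain axiomatizability in the III$_1$ case and rely on the general statement for the $\forall\exists$ claim.
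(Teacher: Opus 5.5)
Your proposal is correct and follows the paper's own argument. By Theorem \ref{generalizing}, the class in question is $\mathcal K$ intersected with the selfless class, and the axiomatizability and $\forall\exists$-axiomatizability of that class come from Theorem \ref{selflessaxiomatizable} and Corollary \ref{selflessAEaxiomatizable}. The III$_1$ case then uses that III$_1$ factors form a $\forall\exists$-axiomatizable class of diffuse spaces, which the paper also simply takes as known.

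One small caution. Your fallback of using only plain axiomatizability of the III$_1$ class would not give the $\forall\exists$ conclusion. Your direct chain argument does go through, though: each $E_i(z)$ is exactly (not asymptotically) central in $M_i$, hence scalar. Also, $d(M)=0$ passes to the limit because $\psi\circ E_i\to\psi$ in norm for every $\psi\in M_*$.
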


Similarly, Lemma \ref{limit} and Theorem \ref{generalizing} yield:

\begin{cor}
The class of diffuse W$^*$-probability spaces with trivial bicentralizer  is closed under direct limits.
\end{cor}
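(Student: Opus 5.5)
The plan is to combine Proposition \ref{limit}, which shows that selfless W$^*$-probability spaces are closed under directed unions with embeddings as bonding maps, with Theorem \ref{generalizing}, which identifies selflessness with trivial bicentralizer for diffuse spaces. Concretely, let $(M_i,\varphi_i)_{i\in I}$ be a directed system of diffuse W$^*$-probability spaces with embeddings as bonding maps, each with $\BC(M_i,\varphi_i)=\C$, and let $(M,\varphi)$ be its direct limit. By Theorem \ref{generalizing}, each $(M_i,\varphi_i)$ is selfless, so by Proposition \ref{limit} the limit $(M,\varphi)$ is selfless.

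To return to the bicentralizer via Theorem \ref{generalizing}, we also need $(M,\varphi)$ to be diffuse. I expect this to be the only real (though mild) obstacle. There is a trivial case to dispose of first. If $M=\C$, then it is not diffuse, but its bicentralizer is trivially $\C$, so the conclusion holds anyway; alternatively, one can note that this cannot occur when the $M_i$ are diffuse and nonzero.

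Otherwise, $M$ is nontrivial and selfless. By Proposition \ref{selflessfactor}, $M$ is then a factor of type II$_1$ or of type III$_\lambda$ for some $\lambda\in(0,1]$. In particular, $M$ has no minimal projections, so it is diffuse.

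Applying Theorem \ref{generalizing} to the diffuse, selfless space $(M,\varphi)$ now gives $\BC(M,\varphi)=\C$. We also remark that diffuseness of the limit could be checked directly: one would approximate a minimal projection of $M$ by elements of $\bigcup_i M_i$ and use the conditional expectations onto the $M_i$. However, the route through Proposition \ref{selflessfactor} avoids this computation entirely.
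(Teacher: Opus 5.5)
Your argument is correct and is the paper's own: Proposition \ref{limit} gives selflessness of the limit, and Theorem \ref{generalizing} converts between selflessness and trivial bicentralizer in both directions. Your use of Proposition \ref{selflessfactor} to check that the limit is diffuse, which Theorem \ref{generalizing} requires, makes explicit a point that the paper's one-line deduction leaves implicit.
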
 

\subsection{Pseudo-periodic III$_1$ factors}
By Proposition \ref{typeofultraproduct}, if $(M_i,\varphi_i)_{i\in I}$ is a family of W$^*$-probability spaces, where $M_i$ is a type III$_{\lambda_i}$-factor with $\lambda_i\in (0,1)$, 
and $\u$ is an ultrafilter on $I$ for which $\lim_\u \lambda_i=1$, if we set $(M,\varphi):=\prod_\u (M_i,\varphi_i)$, then $M$ is a type III$_1$ factor, as is then any W$^*$-probability space elementarily equivalent to $(M,\varphi)$.  We can give an alternate characterization of such W$^*$-probability spaces:  

\begin{prop}\label{pseudoperiodicequivalence}
Suppose that $(M,\varphi)$ is a W$^*$-probability space. Then the following conditions are equivalent:
\begin{enumerate}
    \item There is a family $(M_i,\varphi_i)_{i\in I}$ of W$^*$-probability spaces, where $M_i$ is a type III$_{\lambda_i}$-factor with $\lambda_i\in (0,1)$, and an ultrafilter $\u$ on $I$ such that $\lim_\u \lambda_i=1$ for which $(M,\varphi)\equiv \prod_\u (M_i,\varphi_i)$.
    \item For every sentence $\sigma$ with the property that, for some $\lambda_0\in (0,1)$, we have $\sigma^{(N,\psi)}=0$ whenever $N$ is a III$_\lambda$ factor with $\lambda\in [\lambda_0,1)$, then $\sigma^{(M,\varphi)}=0$.
    \item For every sentence $\sigma$ such that $\sigma^{(M,\varphi)}=0$, every $\epsilon>0$, and every $\lambda_0\in (0,1)$, there is a W$^*$-probability space $(N,\psi)$ with $N$ a type III$_\lambda$-factor with $\lambda\in [\lambda_0,1)$ such that $\sigma^{(N,\psi)}<\epsilon$.
\end{enumerate}
\end{prop}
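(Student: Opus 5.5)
I will prove the cycle (1)$\Rightarrow$(2)$\Rightarrow$(3)$\Rightarrow$(1). This is a standard "pseudo-elementary limit" argument in continuous logic: it uses only \L{}o\'s's theorem for Ocneanu ultraproducts and a compactness-style construction of an ultrafilter. I take sentences to be $[0,1]$-valued (or at least nonnegative after replacing $\sigma$ by $|\sigma|$), so that ``$\sigma^{(N,\psi)}=0$'' is the relevant condition.

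\emph{(1)$\Rightarrow$(2).} Let $\sigma$ and $\lambda_0$ be as in (2). Since $\lim_\u\lambda_i=1$, the set $\{i : \lambda_i\geq\lambda_0\}$ belongs to $\u$. Each $\lambda_i\in(0,1)$, so $\sigma^{(M_i,\varphi_i)}=0$ for $\u$-almost all $i$. By \L{}o\'s's theorem (the Ocneanu ultraproduct is the model-theoretic ultraproduct, see Subsection \ref{modeltheorypresentation}), $\sigma^{\prod_\u(M_i,\varphi_i)}=\lim_\u\sigma^{(M_i,\varphi_i)}=0$. Elementary equivalence then gives $\sigma^{(M,\varphi)}=0$.

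\emph{(2)$\Rightarrow$(3).} Argue contrapositively. Suppose $\sigma^{(M,\varphi)}=0$, and suppose there are $\epsilon>0$ and $\lambda_0$ with $\sigma^{(N,\psi)}\geq\epsilon$ for every III$_\lambda$ factor $N$ with $\lambda\in[\lambda_0,1)$. Put $\tau:=\max(0,\epsilon\dotminus\sigma)$, i.e.\ $\epsilon\dotminus\sigma$. Then $\tau^{(N,\psi)}=0$ for all such $(N,\psi)$, so (2) gives $\tau^{(M,\varphi)}=0$, that is, $\sigma^{(M,\varphi)}\geq\epsilon$. This contradicts $\sigma^{(M,\varphi)}=0$.

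\emph{(3)$\Rightarrow$(1).} This is the main step, where the index set and ultrafilter must be built. Let $I$ be the set of triples $i=(\sigma,\epsilon,\lambda_0)$ with $\sigma^{(M,\varphi)}=0$, $\epsilon>0$ and $\lambda_0\in(0,1)$. For each $i$, use (3) to choose $(M_i,\varphi_i)$ with $M_i$ a III$_{\lambda_i}$ factor, $\lambda_i\in[\lambda_0,1)$ and $\sigma^{(M_i,\varphi_i)}<\epsilon$. For each triple $i_0=(\sigma,\epsilon,\lambda_0)$ in $I$, define
\[
A_{i_0}:=\{(\sigma',\epsilon',\lambda_0')\in I : \sigma'\geq\sigma,\ \epsilon'\leq\epsilon,\ \lambda_0'\geq\lambda_0\},
\]
where $\sigma'\geq\sigma$ is understood pointwise in every structure. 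These sets have the finite intersection property: given finitely many triples, take the maximum of their sentences (the maximum of finitely many sentences is again a sentence vanishing in $(M,\varphi)$), the minimum of their $\epsilon$'s, and the maximum of their $\lambda_0$'s; the resulting triple lies in all the corresponding $A_{i_0}$. Let $\u$ be an ultrafilter on $I$ containing every $A_{i_0}$.

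For this $\u$ we have $\lim_\u\lambda_i=1$, because $A_{(0,1,\lambda_0)}\in\u$ and $\lambda_i\geq\lambda_0$ on that set, for every $\lambda_0<1$. Moreover, if $\sigma^{(M,\varphi)}=0$, then for every $\epsilon>0$ the set $A_{(\sigma,\epsilon,1/2)}$ lies in $\u$. For $i=(\sigma',\epsilon',\lambda_0')$ in this set, $\sigma^{(M_i,\varphi_i)}\leq\sigma'^{(M_i,\varphi_i)}<\epsilon'\leq\epsilon$. By \L{}o\'s's theorem, $\sigma$ vanishes in $\prod_\u(M_i,\varphi_i)$.

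To conclude, apply this to the sentences $|\theta-\theta^{(M,\varphi)}|$ for an arbitrary sentence $\theta$: each vanishes in $(M,\varphi)$, hence vanishes in the ultraproduct. Therefore $(M,\varphi)\equiv\prod_\u(M_i,\varphi_i)$, which is (1).
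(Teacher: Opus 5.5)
Your proof is correct and follows essentially the paper's route. The implications (1)$\Rightarrow$(2)$\Rightarrow$(3) are the routine \L{}o\'s and $\epsilon\dotminus\sigma$ arguments, which the paper simply calls immediate. For (3)$\Rightarrow$(1), both you and the paper build an ultrafilter, via the finite intersection property, on an index set that codes finitely many sentences vanishing in $(M,\varphi)$ together with a lower bound on $\lambda$. The differences are cosmetic: the paper indexes by finite subsets of $X\sqcup\mathbb{N}^{>0}$ and implicitly applies (3) to the maximum of each index's sentences, whereas you index by single triples and put closure under finite maxima into the cones $A_{i_0}$. Your use of $|\theta-\theta^{(M,\varphi)}|$ also makes explicit the final elementary-equivalence step that the paper leaves tacit.
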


\begin{proof}
The implications (1) implies (2) and (2) implies (3) are immediate.  Now suppose that $(M,\varphi)$ satisfies (3).  Let $X$ denote the set of sentences $\sigma$ such that $\sigma^{(M,\varphi)}=0$.  Let $I$ denote the set of finite subsets of $X\sqcup \bb N^{>0}$ that intersect both $X$ and $\bb N$.  For each $\sigma\in X$ and $m\geq 1$, let $O_{\sigma,m}$ denote those $i\in I$ that contain both $\sigma$ and $m$.  Note that the collection of such sets $O_{\sigma,m}$ has the finite intersection property, whence there is an ultrafilter $\u$ on $I$ containing each $O_{\sigma,m}$.  By (3), for each $i\in I$, setting $m_i:=\max (\bb N\cap i)$, we may find a W$^*$-probability space $(M_i,\varphi_i)$ with $M_i$ a III$_{\lambda_i}$ factor satisfying $\lambda_i\geq 1-1/{m_i}$ and with $\sigma^{(M_i,\varphi_i)}<1/m_i$ for all $\sigma\in X\cap i$.  Note then that $\lim_\u \lambda_i=1$ and $M\equiv \prod_\u (M_i,\varphi_i)$, establishing (1).
\end{proof}

Note that any W$^*$-probability space $(M,\varphi)$ satisfying the equivalent conditions of the previous proposition necessarily has that $M$ is a III$_1$ factor, whence the property is independent of the state $\varphi$.  We call such a III$_1$ factor a \textbf{pseudo-periodic III$_1$ factor}. Note that the use of ``period'' here refers to the one for the flow of weights, that is, we do not assume the states appearing in the ultraproduct are periodic. 

Suppose that $M$ is a pseudo-periodic III$_1$ factor and $(M_i,\varphi_i)_{i\in I}$ and $\u$ are as in condition (1) of the previous proposition.  By Proposition \ref{typeofultraproduct}(2), we may assume that each $\varphi_i$ is a $\frac{2\pi}{|\log(\lambda_i)|}$-periodic state on $M_i$.  Then, as mentioned above, $\BC(M_i,\varphi_i)=\bb C$ for each $i\in I$, whence $\BC(M,\varphi)=\bb C$ by Corollary \ref{bicentralizeraxiomatizable} above.  (To apply the corollary, one may assume that each $\lambda\geq 1/2$ and then one can let $\cal K$ be the class of W$^*$-probability spaces whose underlying von Neumann algebra is a III$_\lambda$-factor for $\lambda\geq 1/2$.  This is indeed axiomatizable by \cite[Proposition 8.8]{AGHS25}.)  This establishes the following:

\begin{thm}
If $M$ is a pseudo-periodic III$_1$ factor, then $M$ has trivial bicentralizer.
\end{thm}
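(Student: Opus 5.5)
The plan is essentially to formalize the argument sketched just before the statement. Let $M$ be a pseudo-periodic III$_1$ factor and fix any faithful normal state $\varphi$ on $M$. By Fact \ref{bicentralizerdichotomy} it suffices to show $\BC(M,\varphi)=\bb C$ for this one state. Being pseudo-periodic does not depend on the state, so by Proposition \ref{pseudoperiodicequivalence}(1) we get a family $(M_i,\varphi_i)_{i\in I}$ and an ultrafilter $\u$ on $I$ with the following properties: each $M_i$ is a III$_{\lambda_i}$ factor with $\lambda_i\in(0,1)$, $\lim_\u\lambda_i=1$, and $(M,\varphi)\equiv\prod_\u(M_i,\varphi_i)$.

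Since $\lim_\u\lambda_i=1$, the set $\{i : \lambda_i\ge 1/2\}$ lies in $\u$. Discarding the complement does not change the ultraproduct, so we may assume $\lambda_i\ge 1/2$ for all $i$. We next replace each $\varphi_i$ by a $\frac{2\pi}{|\log\lambda_i|}$-periodic faithful normal state $\psi_i$; such states exist on III$_\lambda$ factors for $0<\lambda<1$ by Connes' structure theory. By Proposition \ref{typeofultraproduct}(2), in the case $\lambda=1$, we have $\prod_\u(M_i,\varphi_i)\cong\prod_\u(M_i,\psi_i)$ as W$^*$-probability spaces. So we may assume each $\varphi_i$ is periodic with the right period. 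For such states the bicentralizer is trivial: with $\varphi_i$ periodic, $M_{\varphi_i}'\cap M_i=\bb C$ by the discrete decomposition, and $\BC\subseteq M_{\varphi}'\cap M$. This is case (2) of Fact \ref{bc for generalfactors} read in the converse direction, which is what the paper invokes as "mentioned above".

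Now take $\cal K$ to be the class of W$^*$-probability spaces whose underlying algebra is a III$_\lambda$ factor for some $\lambda\in[1/2,1]$. This class is axiomatizable by \cite[Proposition 8.8]{AGHS25} and consists of diffuse spaces. By Corollary \ref{bicentralizeraxiomatizable}, its members with trivial bicentralizer form an axiomatizable class $\cal K_0$. Each $(M_i,\varphi_i)$ lies in $\cal K_0$. Axiomatizable classes are closed under ultraproducts, so $\prod_\u(M_i,\varphi_i)\in\cal K_0$. They are also closed under elementary equivalence, so $(M,\varphi)\in\cal K_0$, giving $\BC(M,\varphi)=\bb C$.

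The main point to check carefully is that the class $\cal K$ really is axiomatizable with the endpoint $\lambda=1$ included. Ultraproducts of III$_{\lambda_i}$ factors with $\lambda_i\ge1/2$ must stay in the class, and by Proposition \ref{typeofultraproduct}(1) they are III$_{\lim\lambda_i}$, so the closed interval is right. Otherwise everything is a direct assembly of earlier results.
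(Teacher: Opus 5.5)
Your proposal is correct and is essentially the paper's argument: you pass to the ultraproduct representation, restrict to $\lambda_i\ge 1/2$, swap in $\frac{2\pi}{|\log\lambda_i|}$-periodic states via Proposition \ref{typeofultraproduct}(2), and apply Corollary \ref{bicentralizeraxiomatizable} to the class of III$_\lambda$ factors with $\lambda\ge 1/2$. Your justification that $\BC(M_i,\varphi_i)=\bb C$ via $\BC(M_i,\varphi_i)\subseteq M_{\varphi_i}'\cap M_i=\bb C$ (from Connes' discrete decomposition) is a useful clarification, since Fact \ref{bc for generalfactors} as stated only gives the converse direction.
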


In connection with Connes' bicentralizer problem, this then leads to the following:

\begin{question}
Is every III$_1$ factor a pseudo-periodic III$_1$ factor?
\end{question}

A positive solution to the previous question could be viewed as some form of a type III \emph{Lefschetz principle}.  Recall that the classical Lefschetz principle states that an algebraically closed field of characteristic $0$ is elementarily equivalent to an ultraproduct of algebraically closed fields of positive characteristic.  That being said, a question more in line with the classical Lefschetz principle would be:  is every existentially closed III$_1$ factor a pseudo-periodic III$_1$ factor?  However, we already know that all existentially closed III$_1$ factors are selfless and thus have trivial bicentralizer and are more interested in the general form of the question stated above.

Given that existential subfactors of III$_1$ factors with trivial bicentralizer again have trivial bicentralizer, in order to give a positive answer to the bicentralizer problem, it suffices to give a positive answer to the following, a priori easier, question:

\begin{question}
Does every W$^*$-probability space $(M,\varphi)$ with $M$ a III$_1$ factor admit an existential embedding into an ultraproduct $\prod_\u (M_i,\varphi_i)$, where each $M_i$ is a type III$_{\lambda_i}$ factor with $\lambda_i\in (0,1)$? 
\end{question}

There is a connection between the previous question and the \textbf{Effros-Mar\'echal (EM) topology} that we now explain.  Let $(M,\varphi)$ be a W$^*$-probability space, $\SA(M)$ be the set of all von Neumann subalgebras of $M$, and $\SA_{\varphi}(M)$ be the set of all globally $\sigma^{\varphi}$-invariant von Neumann subalgebras of $M$. 
We consider $\SA_\varphi(M)$ as equipped with its EM-topology; for the definition of this topology, see \cite[Section 2]{HW98}. We will only need the following characterization of convergence in the EM-topology, as explained in \cite[page 575]{HW98}: for a sequence $(N_n)_{n=1}^{\infty}$ and another element $N$ in $\SA_{\varphi}(M)$, we have
\[\lim_{n\to \infty}N_n=N\iff \E_{N_n}(x)\xrightarrow{\mathrm{so*}}\E_N(x),\,\,\,x\in M.\]
Here, for each $N\in \SA_{\varphi}(M)$, $\E_N$ is the unique normal faithful $\varphi$-preserving conditional expectation of $M$ onto $N$.
\begin{prop}\label{prop EM implies existential emb}
Let $(M,\varphi)$ be a W$^*$-probability space with $M_*$ separable. Let $(M_n)_{n=1}^{\infty}$ be a sequence in $\SA_{\varphi}(M)$ such that $\displaystyle \lim_{n\to \infty}M_n=M$ in the EM-topology. 
Then for each nonprincipal ultrafilter $\cU$ on $\N$, there exists an existential embedding of $(M,\varphi)$ into $\prod_{\cU}(M_n,\varphi_n)$, where $\varphi_n:=\varphi|_{M_n}$. 
\end{prop}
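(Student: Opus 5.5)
The plan is to define the embedding concretely through the conditional expectations and then check existentiality via the ultrapower criterion recalled at the start of this subsection. Write $\E_n:=\E_{M_n}$. Define $\Phi\colon M\to \prod_\cU(M_n,\varphi_n)$ by $\Phi(x):=(\E_n(x))_\cU$.

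First I will show that $(\E_n(x))_n$ lies in the normalizer $\mathcal M^\cU$ and that $\Phi$ is an embedding of W$^*$-probability spaces. Since $M_n$ is $\sigma^\varphi$-invariant, $\E_n$ commutes with $\sigma^\varphi$, so $\sigma^{\varphi_n}_t(\E_n(x))=\E_n(\sigma_t^\varphi(x))$. In particular $\E_n$ maps totally $K$-bounded elements of $M$ to totally $K$-bounded elements of $M_n$; one can check this directly from Lemma \ref{lem Kbounded}(iii), since $\varphi_n(\E_n(a)^*y\E_n(a))\le\varphi(a^*ya)$ for $y\in (M_n)_+$ by the Kadison--Schwarz inequality for the conditional expectation. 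By Fact \ref{TBelementsinultraproducts}(1), $(\E_n(a))_\cU$ is then a well-defined totally $K$-bounded element for such $a$.

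Next, EM convergence gives $\E_n(x)\to x$ $*$-strongly. Hence for totally bounded $a,b$ we get $\E_n(a)\E_n(b)-\E_n(ab)\to 0$ in $\|\cdot\|^\#_\varphi$, because $\E_n(a)\E_n(b)-ab\to0$ strongly$^*$ on bounded sets. Also $\varphi_n(\E_n(a))=\varphi(a)$. So $\Phi$ is a state-preserving unital $*$-homomorphism on the dense $*$-subalgebra of totally bounded elements. Since state preservation makes it isometric for $\|\cdot\|_\varphi^\#$, it extends to all of $M$ as a normal embedding. A faithful normal state-preserving conditional expectation onto the image exists because $\Phi$ intertwines the modular groups, via Takesaki's criterion; alternatively, one checks the embedding axioms on the sorts $S_K$ as in \cite{AGHS25}.

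For existentiality, I will show that $\prod_\cU M_n$ embeds into $M^\cU$ over $\Phi(M)$; that is, I want $j\colon\prod_\cU(M_n,\varphi_n)\hookrightarrow (M,\varphi)^\cU$ with $j\circ\Phi$ equal to the diagonal embedding. Take $j$ to be the ultraproduct of the inclusions $M_n\subseteq M$. These inclusions are embeddings of W$^*$-probability spaces, since they come with the expectations $\E_n$, so their ultraproduct is an embedding. Then $j(\Phi(x))=(\E_n(x))_\cU$ computed in $M^\cU$, and this equals $x$ because $\|\E_n(x)-x\|^\#_\varphi\to0$. Composing, the diagonal map $M\to M^\cU$ factors as $j\circ\Phi$. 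The diagonal embedding is elementary, hence existential, so Fact \ref{ecembeddingfact1}(2) yields that $\Phi$ is existential. Note that this part needs no separability; separability of $M_*$ only guarantees that the EM topology is described by sequences as stated.

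The main obstacle I expect is the first step: verifying rigorously that $(\E_n(x))_n$ lies in $\mathcal M^\cU$ and that $\Phi$ is multiplicative with an expectation onto its image. I would handle this through totally bounded elements and Fact \ref{TBelementsinultraproducts}, rather than working directly with the normalizer.
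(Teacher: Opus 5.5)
Your proposal is correct and is essentially the paper's proof: define $x\mapsto(\E_n(x))_\cU$, compose with the ultraproduct of the inclusions $M_n\subseteq M$ to recover the diagonal embedding $M\subseteq M^\cU$ (because $\E_n(x)\to x$ $*$-strongly), and conclude existentiality. The paper treats arbitrary $x\in M$ directly, and your detour through totally bounded elements is unnecessary, since $(\E_n(x))_n$ differs from the constant sequence by a bounded $*$-strongly null sequence; it also writes the expectation onto the image explicitly as $(x_n)_\cU\mapsto\mathrm{wot}\text{-}\lim_{n\to\cU}x_n$ rather than invoking Takesaki's criterion.
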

\begin{proof}
    For each $n$, let $\E_n$ be the unique $\varphi$-preserving conditional expectation of $M$ onto $M_n$. 
    Since $(M_n,\varphi_n)\subseteq (M,\varphi)$ is an inclusion of W$^*$-probability spaces for every $n\in \N$, it induces an embedding of the W$^*$-probability spaces $$\prod_{\cU}(M_n,\varphi_n)\stackrel{k}{\subseteq} \prod_{\cU}(M,\varphi)$$ 
    given by inclusion, and the $\varphi^{\cU}$-preserving conditional expectation is given by $\E((x_n)_{\cU})=(\E_n(x_n))_{\cU}$ for $(x_n)_{\cU}\in \prod_{\cU}(M,\varphi)$. If $x\in M$, then $(\E_n(x))_{n=1}^{\infty}$ defines an element in $\prod_{\cU}(M_n,\varphi_n)$. 
    Define $j\colon M\to \prod_{\cU}(M_n,\varphi_n)$ by $j(x)=(\E_n(x))_{\cU}$. Since 
    $\E_n(x)\xrightarrow{\mathrm{so*}}x$, it is an injective unital $*$-homomorphism, and it is normal and faithful, since $\psi(j(x))=\varphi(x)$, where $\psi=(\varphi_n)_{\cU}$ is a faithful normal state on $\prod_{\cU}(M_n,\varphi_n)$. The $\psi$-preserving conditional expectation $\ep\colon \prod_{\cU}(M_n,\varphi_n)\to M$ is given by $\ep((x_n)_{\cU}):=\mathrm{wot-}\lim_{n\to \cU}x_n$. Thus, we obtain the embeddings of W$^*$-probability spaces 
    \[(M,\varphi)\stackrel{j}{\subseteq} \prod_{\cU}(M_n,\varphi_n)\stackrel{k}{\subseteq} \prod_{\cU}(M,\varphi).\]
    Moreover, for each $x\in M$, $(k\circ j)(x)=(\E_n(x))_{\cU}=x$ in $\prod_{\cU}(M,\varphi)$ as $\E_n(x)\xrightarrow{\mathrm{so*}}x$. Therefore, the embedding $(M,\varphi)\subseteq \prod_{\cU}(M_n,\varphi_n)$ is existential.  
\end{proof}

\begin{cor}
If, in the context of Proposition \ref{prop EM implies existential emb}, we have that each $M_n$ is diffuse and $\BC(M_n,\varphi_n)=\bb C$, then $\BC(M,\varphi)=\bb C$ as well.
\end{cor}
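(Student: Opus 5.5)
The plan is to combine Proposition \ref{prop EM implies existential emb} with the results of the previous subsection. Fix a nonprincipal ultrafilter $\u$ on $\N$. By Proposition \ref{prop EM implies existential emb}, there is an existential embedding of $(M,\varphi)$ into $(P,\rho):=\prod_\u (M_n,\varphi_n)$. The aim is to show that $\BC(P,\rho)=\bb C$. Corollary \ref{BCpreceq} would then give $\BC(M,\varphi)=\BC(P,\rho)\cap M=\bb C$.

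To get $\BC(P,\rho)=\bb C$, I will apply Theorem \ref{generalizing} to each factor and then close under ultraproducts.
\begin{enumerate}
\item Each $(M_n,\varphi_n)$ is diffuse with trivial bicentralizer, so by Theorem \ref{generalizing} (or already by the Houdayer--Marrakchi result, since $M_n\subseteq M$ is separable) each $(M_n,\varphi_n)$ is selfless.
\item By Theorem \ref{selflessaxiomatizable}, the class of selfless W$^*$-probability spaces is closed under ultraproducts, so $(P,\rho)$ is selfless.
\item By Lemma \ref{selflessexistential}, $(M,\varphi)$, being an existential substructure of $(P,\rho)$, is selfless.
\end{enumerate}
Doing it this way lets me apply Theorem \ref{generalizing} directly to $(M,\varphi)$ instead of passing through $\BC(P,\rho)$.

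The one remaining hypothesis for that last application is that $M$ is diffuse. This should be a minor step, and I have two routes to it.
\begin{itemize}
\item Selflessness with nontriviality: Proposition \ref{selflessfactor} shows $M$ is a II$_1$ or III$_\lambda$ factor with $\lambda>0$, hence diffuse. If $M=\bb C$, the statement is trivial anyway.
\item Directly: an ultraproduct of diffuse spaces is diffuse, and $M$ is then an existential substructure of a diffuse space, so Lemma \ref{diffuselemma} applies.
\end{itemize}
Once $M$ is known to be diffuse, Theorem \ref{generalizing} gives $\BC(M,\varphi)=\bb C$.

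The only point I expect to need care is the diffuseness of the ultraproduct in the second route. Given a nonzero projection $p=(p_n)_\u$, I would choose representatives $p_n$ that are projections and then halve them in each diffuse $M_n$. Since the first route avoids this entirely, I will rely on it.
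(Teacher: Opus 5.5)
Your proposal is correct and is essentially the argument the paper has in mind. The paper states this corollary without proof, as an immediate consequence of Proposition \ref{prop EM implies existential emb} combined with Theorem \ref{generalizing}, the closure of selflessness under ultraproducts (Theorem \ref{selflessaxiomatizable}), and its passage to existential substructures (Lemma \ref{selflessexistential}, or equivalently Corollary \ref{BCpreceq} applied to the ultraproduct); your use of Proposition \ref{selflessfactor} to get diffuseness of $M$ is fine, and since $M_*$ is separable you can invoke Houdayer--Marrakchi directly at the last step.
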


\begin{remark}
We give an example showing that the existential embedding $$(M,\varphi)\hookrightarrow \prod_\u(M_n,\varphi_n)$$ constructed in Proposition \ref{prop EM implies existential emb} need not be an elementary embedding.  By the proof in \cite[Proposition 4.4]{goldbringhoudayer}, for $M=R_{\infty}$ and a suitable faithful normal state $\varphi$ on $M$, there exists an increasing chain of embeddings $$(M_1,\varphi_1)\subseteq (M_2,\varphi_2)\subseteq \dots \subseteq (M,\varphi)=\bigvee_{n\in \N} (M_n,\varphi_n)$$
of $W^*$-probability spaces such that every $M_n$ is a type III$_{\lambda}$ factor for a fixed $0<\lambda<1$, independent of $n$. 
Then $E_{M_n}(x)\xrightarrow{\mathrm{so*}}x$ for every $x\in M$, where $E_{M_n}$ is the unique $\varphi$-preserving conditional expectation of $M$ onto $M_n$. 
Consequently, $\prod_{\cU}(M_n,\varphi_n)$ is also a type III$_{\lambda}$ factor, while $M$ itself is a type III$_1$ factor. Thus, the existential embedding $(M,\varphi)\subseteq \prod_{\cU}(M_n,\varphi_n)$ constructed in Proposition \ref{prop EM implies existential emb} is not an elementary embedding. 
\end{remark}

\subsection{Large centralizers}

The following is a special case of a result of Haagerup, Houdayer, Marrakchi, and the first author (see \cite[Proposition 3.3]{AHHM20}):

\begin{fact}\label{p3.3}
Suppose that $(M,\varphi)$ is a W$^*$-probability space and $\u$ is a nonprincipal ultrafilter on $\mathbb N$.  Then $(M^{\u}_{\varphi^{\u}})'\cap M^{\u}\subseteq \BC(M,\varphi)^{\u}$.   
\end{fact}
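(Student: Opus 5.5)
We prove $(M^{\u}_{\varphi^{\u}})'\cap M^{\u}\subseteq \BC(M,\varphi)^{\u}$ by a diagonal (reindexing) argument, which upgrades the inclusion $\BC(M,\varphi)=(M^{\u}_{\varphi^{\u}})'\cap M$ of Fact \ref{BCultrapower} from $M$ to $M^\u$. Fix $x=(x_n)_\u$ in the relative commutant with $\|x\|\le 1$, write $B:=\BC(M,\varphi)$, and let $E_B$ be the $\varphi$-preserving conditional expectation onto $B$; it exists because $B\subseteq M_\varphi'\cap M$ is $\sigma^\varphi$-invariant. It suffices to show $\lim_\u\|x_n-E_B(x_n)\|_\varphi^\#=0$, since then $x=(E_B(x_n))_\u\in B^\u$. (That $(E_B(x_n))_n$ lies in $\mathcal M^\u$ follows from $E_B$ commuting with $\sigma^\varphi$ and preserving $\varphi$.)

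\textbf{Step 1: a quantitative form of Fact \ref{BCultrapower}.} We claim: for every $\epsilon>0$ there are $\delta>0$ and $k\in\N$ such that every contraction $y\in M$ satisfying
\[
\|u^*yu-y\|_\varphi^\#<\delta\quad\text{for all } u\in \tilde{\mathbf U}_{1+1/k}(M)
\]
also satisfies $\|y-E_B(y)\|_\varphi^\#<\epsilon$. Since $M$ alone is not compact, a naive contradiction argument through an ultraproduct does not directly land back in $M$. Instead we use the averaging characterization of Proposition \ref{prop tbprop1.3} and Lemma \ref{lem Haageruptb1.2}, applied to the relative situation. For $y$ almost fixed by $\tilde{\mathbf U}_{1+\delta}$, every element of $\widetilde C_\varphi(y,\delta)$ is $\|\cdot\|_\varphi$-close to $y$ by (\ref{eq x-aphi}). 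On the other hand, the minimal-norm element of $\bigcap_\delta \widetilde C_\varphi(y,\delta)$ lies in $B$, by the argument in the proof of (a)$\Rightarrow$(b). We still need this element to equal $E_B(y)$; we would try to derive that from the conditional-expectation analogue of the computation in (b)$\Rightarrow$(c), namely that almost-centralizing conjugations almost preserve $E_B$.

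\textbf{Step 2: transfer.} Given $x$ in the relative commutant, for any $\epsilon$ pick $\delta,k$ as in Step 1. For each $n$, let $s_n$ be the supremum of $\|u^*x_nu-x_n\|^\#_\varphi$ over $u\in\tilde{\mathbf U}_{1+1/k}(M)$. If $s_n\ge\delta$ for $\u$-almost all $n$, then by choosing witnesses $u_n$ and applying Fact \ref{TBelementsinultraproducts} together with Lemma \ref{lem Kbounded}, the element $u=(u_n)_\u$ is only approximately centralizing, not exactly. This is the \textbf{main obstacle}. To handle it, we would choose $k_n\to\infty$ along $\u$ by a diagonal argument, forcing the limiting $u$ to be a totally $1$-bounded unitary and hence, by Corollary \ref{totallyoneboundedunitary}, an element of $M^\u_{\varphi^\u}$ failing to commute with $x$. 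That contradicts the hypothesis on $x$. Consequently $\|x_n-E_B(x_n)\|^\#_\varphi<\epsilon$ for $\u$-almost all $n$, and letting $\epsilon\to0$ completes the proof.
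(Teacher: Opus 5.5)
\textbf{Verdict.} The paper does not prove this statement itself. It quotes it as a special case of \cite[Proposition 3.3]{AHHM20} and indicates that the cited argument goes through Popa's lemma (Fact \ref{lem: Popa81}). Your route through the totally bounded averaging of Section \ref{sec tb variant of Haagerup} is different and can be made to work. As written, however, it has a genuine gap exactly at the step you call the main obstacle.

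\textbf{Where the gap is.} Step 1 is unfinished: you only say you would try to show that the minimal-norm point equals $E_B(y)$. More importantly, Step 1 is formulated with a single $k=k(\epsilon)$. From it you can only extract witnesses $u_n\in\tilde{\mathbf{U}}_{1+1/k}(M)$ for that fixed $k$, and their ultraproduct is merely an approximately centralizing almost-unitary. The diagonalization $k_n\to\infty$ needs witnesses $u_n\in\tilde{\mathbf{U}}_{1+1/k_n}(M)$ that move $x_n$ by an amount bounded below independently of $n$. Nothing you state provides this. As $k$ grows, $\tilde{\mathbf{U}}_{1+1/k}(M)$ shrinks, so the hypothesis of Step 1 becomes weaker and its conclusion harder to obtain. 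If the $\delta$ attached to $k$ must shrink with $k$, the diagonal unitary moves $x$ by $0$ and there is no contradiction.

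\textbf{The fix.} One observation closes both holes.
\begin{itemize}
\item You do not need $b=E_B(y)$. The map $E_B$ extends to the orthogonal projection of $L^2(M,\varphi)$ onto $\overline{B\xi_\varphi}$, so $\|y-E_B(y)\|_\varphi\le\|y-b\|_\varphi$ for every $b\in B$.
\item Take $b$ to be the minimal-norm point of $\widetilde C_\varphi(y)=\bigcap_{\eta>0}\widetilde C_\varphi(y,\eta)$. It lies in $B$ by the proof of (a)$\Rightarrow$(b) in Proposition \ref{prop tbprop1.3}, and it lies in \emph{every} $\widetilde C_\varphi(y,\eta)$.
\item Inequality \eqref{eq x-aphi} then gives
\[
\|y-E_B(y)\|_\varphi\le\tilde{\ep}_M(y,\eta)\qquad\text{for all }\eta>0 .
\]
This is Step 1 uniformly in $\eta$, with $\delta=\epsilon$.
\end{itemize}
With this, Step 2 goes through as follows. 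Suppose $c:=\lim_\u\|x_n-E_B(x_n)\|_\varphi>0$. For $\u$-almost all $n$, choose $u_n\in\tilde{\mathbf{U}}_{1+1/n}(M)$ with $\|u_n^*x_nu_n-x_n\|_\varphi>c/2$, and set $u_n=1$ otherwise. Exactly as in the proof of Lemma \ref{lem tildeepsilon and BC}, $u=(u_n)_\u$ is a totally $1$-bounded unitary, by Fact \ref{TBelementsinultraproducts}. By Corollary \ref{totallyoneboundedunitary}, $u\in M^\u_{\varphi^\u}$, yet $\|u^*xu-x\|_{\varphi^\u}\ge c/2$, a contradiction. Applying the same argument to $x^*$ gives the $\|\cdot\|_\varphi^\#$ statement, hence $x=(E_B(x_n))_\u\in B^\u$.

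\textbf{A smaller point.} The existence of $E_B$ requires $B$ itself to be $\sigma^\varphi$-invariant. This holds because each $\sigma^\varphi_t$ preserves $\varphi$ and hence maps $\operatorname{AC}(M,\varphi)$ to itself. Being contained in $M_\varphi'\cap M$ is not the reason.

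\textbf{Comparison.} Once repaired, your argument is a self-contained alternative to the Popa-lemma proof. It uses only the machinery of Section \ref{sec tb variant of Haagerup} and yields the quantitative bound above. The cited result, by contrast, is proved in a more general setting, of which Fact \ref{p3.3} is a special case.
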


In particular, if $\BC(M,\varphi)=\bb C$, then $\varphi^\u$ has large centralizer, that is, $(M^{\u}_{\varphi^\u})'\cap M^\u=\bb C$.  We wish to generalize this latter observation to arbitrary ultraproducts, but under the additional assumption that each state involved has large centralizer:

\begin{prop}\label{ultracyril}
    Let \((M_i,\psi_i)_{i\in I}\) be a family of W\(^*\)-probability spaces such that $(M_i)_{\psi_i}'\cap M_i=\bb C$ for all $i\in I$. Let \(\cU\) be an ultrafilter on \(I\) and set \((M,\psi)=\prod_{\cU}(M_i,\psi_i)\). Then \(M_{\psi}'\cap M=\C.\)  
\end{prop}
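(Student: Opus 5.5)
The plan is to run a Dixmier-type averaging argument inside the centralizers $(M_i)_{\psi_i}$ and then transfer it to the ultraproduct. The key point is that the centralizer of $\psi$ contains $\prod_\u (M_i)_{\psi_i}$, where sequences of unitaries $u_i\in \mathbf{U}((M_i)_{\psi_i})$ define elements of $M_\psi$ by Lemma \ref{lem asymptotic centralizer}. So it suffices to show that any $x\in M$ commuting with all such $(u_i)_\u$ is scalar.

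First I would record a uniform averaging fact. Each $(M_i)_{\psi_i}\subseteq M_i$ is an irreducible inclusion with a $\psi_i$-preserving conditional expectation. By Connes' ergodicity/Dixmier-type argument in $L^2(M_i,\psi_i)$, for every $a\in M_i$ the $\|\cdot\|_{\psi_i}$-closed convex hull of $\{u^*au : u\in \mathbf{U}((M_i)_{\psi_i})\}$ contains $\psi_i(a)1$. Concretely, the unique element of minimal norm in the closed convex hull of $\{u^*a\xi_{\psi_i}u\}$ is fixed by $\mathrm{Ad}\,u$, since these unitaries preserve $\xi_{\psi_i}$ and act isometrically on $L^2$. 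Hence this element lies in the $L^2$-version of the relative commutant, which is $\C\xi_{\psi_i}$ because $(M_i)_{\psi_i}'\cap M_i=\C$ and the state is faithful; one uses the standard fact that the fixed vectors in $L^2$ of $\mathrm{Ad}\,\mathbf{U}(M_\psi)$ come from $M_\psi'\cap M$. The scalar is $\psi_i(a)$ because $\psi_i(u^*au)=\psi_i(a)$.

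The main obstacle is uniformity. The number of unitaries needed to get within $\epsilon$ depends on $i$, so averages with a fixed number of terms do not obviously pass to the ultralimit. I would handle this with a single average in the ultraproduct, as follows. Fix $x=(x_i)_\u\in M_\psi'\cap M$ with $\|x_i\|\le \|x\|$ and $\epsilon>0$. For each $i$, choose a finite convex combination $y_i=\sum_k t_{k,i}u_{k,i}^*x_iu_{k,i}$ with $\|y_i-\psi_i(x_i)1\|_{\psi_i}<\epsilon$ and $u_{k,i}\in \mathbf{U}((M_i)_{\psi_i})$.

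To bound $\|y_i-x_i\|_{\psi_i}$, take the maximum over $k$ of $\|u_{k,i}^*x_iu_{k,i}-x_i\|_{\psi_i}$. Choosing for each $i$ an index $k(i)$ attaining this maximum yields a unitary $v=(u_{k(i),i})_\u\in M_\psi$. Since $x$ commutes with $v$,
\begin{equation*}
\lim_\u \max_k\|u_{k,i}^*x_iu_{k,i}-x_i\|_{\psi_i}=\|v^*xv-x\|_\psi=0,
\end{equation*}
and the unitaries $u_{k,i}\in (M_i)_{\psi_i}$ make the $\|\cdot\|_{\psi_i}$ norms behave isometrically, so the expression above is well defined.

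It follows that $\lim_\u\|x_i-\psi_i(x_i)1\|_{\psi_i}\le\epsilon$, and hence $\|x-\psi(x)1\|_\psi\le\epsilon$. Letting $\epsilon\to0$ gives $x=\psi(x)1$, which proves the claim.
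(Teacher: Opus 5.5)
Your proof is correct, and it takes a genuinely different route from the paper's.

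The paper splits the argument into two steps. First, it applies Popa's lemma (Fact~\ref{lem: Popa81}) coordinatewise: whenever $\E_i(x_i)=0$, this gives a single unitary $u_i\in\mathbf{U}((M_i)_{\psi_i})$ with $\|u_ix_iu_i^*-x_i\|_{\psi_i}^2\ge\|x_i\|_{\psi_i}^2$, which forces $x=0$ for any $x\in M_\psi'\cap M$ with $\E(x)=0$. Second, it shows that $\E(x)$ is central in $\prod_\u (M_i)_{\psi_i}$, and uses that this center is $\prod_\u\mathcal{Z}((M_i)_{\psi_i})=\C$ because the centralizers are finite factors.

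You instead run the Hilbert-space minimal-norm (Dixmier) argument inside each $M_i$ to place $\psi_i(x_i)1$ in the $\|\cdot\|_{\psi_i}$-closed convex hull of $\{u^*x_iu\}$. You then get around the lack of any uniform bound on the number of unitaries by choosing, in each coordinate, the unitary of the average that moves $x_i$ the most. By Lemma~\ref{lem asymptotic centralizer}, $(u_{k(i),i})_\u$ is a unitary in $M_\psi$, so the ultralimit of this worst-case displacement vanishes. Since it dominates $\|y_i-x_i\|_{\psi_i}$ however many terms the convex combination has, the uniformity problem disappears.

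Your route is more self-contained: it needs neither Popa's lemma nor the computation of the center of $\prod_\u (M_i)_{\psi_i}$, and it handles the central part and the part killed by $\E$ in one stroke. The paper's route is shorter given those citations, and it yields the sharper coordinatewise fact that one centralizer unitary already detects any $x_i$ with $\E_i(x_i)=0$.

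The one step you should make explicit is the passage from the invariant vector to the relative commutant. As in the proof of Proposition~\ref{prop tbprop1.3}, the norm-closed convex hull of $\{u^*au\,\xi_{\psi_i}\}$ is the image under $y\mapsto y\xi_{\psi_i}$ of the ($\sigma$-weakly compact) $\sigma$-weakly closed convex hull of $\{u^*au\}$. Hence the minimal-norm vector has the form $y\xi_{\psi_i}$ with $y\in M_i$. Since $\xi_{\psi_i}$ is separating, invariance gives $u^*yu=y$ for all $u\in\mathbf{U}((M_i)_{\psi_i})$, i.e.\ $y\in (M_i)_{\psi_i}'\cap M_i=\C$.
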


The proof of Proposition \ref{ultracyril} (as well as the proof of Fact \ref{p3.3}) uses the following result of Popa \cite[Lemma 2.3]{popa1981problem}:

\begin{fact}\label{lem: Popa81}
Let $M$ be a $\sigma$-finite von Neumann algebra, let $\varphi$ be a normal faithful state on $M$, and let $N$ be a von Neumann subalgebra of $M_{\varphi}$. Fix $\varepsilon>0$ and suppose that $x\in M\setminus \{0\}$ is such that $E_N^{\varphi}(x)=0$. Then there exists $u\in \mathbf{U}(N)$ such that 
\[\|uxu^*-x\|_{\varphi}^2>(2-\varepsilon)\|x\|_{\varphi}^2.\] 
\end{fact}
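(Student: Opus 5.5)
The plan is to reduce the inequality to an inner-product estimate and then use a minimal-norm (Dixmier-type) averaging argument. Throughout, $\xi_\varphi$ denotes the cyclic vector of $\varphi$ in $L^2(M,\varphi)$, so that $\|y\|_\varphi=\|y\xi_\varphi\|$.

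\emph{Step 1: reduction.} For $u\in\mathbf U(N)\subseteq\mathbf U(M_\varphi)$ we have $\varphi(uyu^*)=\varphi(y)$ for all $y\in M$. Hence $\|uxu^*\|_\varphi^2=\varphi(ux^*xu^*)=\|x\|_\varphi^2$, and therefore
$$\|uxu^*-x\|_\varphi^2=2\|x\|_\varphi^2-2\operatorname{Re}\varphi(x^*uxu^*).$$
So it suffices to prove
$$\inf_{u\in\mathbf U(N)}\operatorname{Re}\varphi(x^*uxu^*)\le 0 .$$
Since $\|x\|_\varphi>0$ by faithfulness, this infimum then lies strictly below $\tfrac{\varepsilon}{2}\|x\|_\varphi^2$, and any $u$ nearly attaining it gives the claimed strict inequality.

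\emph{Step 2: minimal-norm point.} Let $K$ be the $\sigma$-weak closure of $\operatorname{co}\{uxu^*: u\in\mathbf U(N)\}$. It is bounded by $\|x\|$, hence $\sigma$-weakly compact, and its image under $y\mapsto y\xi_\varphi$ is a weakly compact convex subset of $L^2(M,\varphi)$. Let $y_0\in K$ be the unique element with $\|y_0\|_\varphi$ minimal, using strict convexity of the Hilbert norm.

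Conjugation $y\mapsto vyv^*$ with $v\in\mathbf U(N)$ maps $K$ onto itself and preserves $\|\cdot\|_\varphi$, by Step 1. By uniqueness of the minimizer, $vy_0v^*=y_0$ for all $v\in\mathbf U(N)$, so $y_0\in N'\cap M$.

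The variational inequality for a minimal-norm point gives $\operatorname{Re}\varphi(y_0^*(z-y_0))\ge0$ for all $z\in K$. In particular, for the orbit points $z=uxu^*$,
$$\operatorname{Re}\varphi(y_0^*uxu^*)=\operatorname{Re}\varphi(u^*y_0^*u\,x)=\operatorname{Re}\varphi(y_0^*x),$$
which is independent of $u$. By convexity and $\sigma$-weak continuity, $\operatorname{Re}\varphi(y_0^*z)=\operatorname{Re}\varphi(y_0^*x)$ for every $z\in K$, and taking $z=y_0$ gives $\operatorname{Re}\varphi(y_0^*x)=\|y_0\|_\varphi^2$.

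Next use the functional $z\mapsto \operatorname{Re}\varphi(x^*z)$ on $K$. It is affine, $\sigma$-weakly continuous, and equals $\operatorname{Re}\varphi(x^*uxu^*)$ on orbit points. Its infimum over $K$ therefore equals the infimum over the orbit, and
$$\inf_{u\in\mathbf U(N)}\operatorname{Re}\varphi(x^*uxu^*)\le\operatorname{Re}\varphi(x^*y_0)=\|y_0\|_\varphi^2 .$$

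\emph{Step 3: main obstacle, showing $y_0=0$.} This is where the hypothesis on $x$ enters, and as literally stated (only $E_N(x)=0$) it cannot suffice. For instance, if $N=\mathbb C$ then $\mathbf U(N)=\mathbb T$, every $u$ gives $uxu^*=x$, and $\|uxu^*-x\|_\varphi=0$ for any $x\ne0$ with $\varphi(x)=0$. The argument above works under the natural strengthening $E_{N'\cap M}(x)=0$ (in Popa's applications, e.g.\ $N'\cap M\subseteq N$, the two conditions agree).

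Under that assumption the argument closes as follows. Since $N\subseteq M_\varphi$, the algebra $N'\cap M$ is $\sigma^\varphi$-invariant, so the $\varphi$-preserving conditional expectation $E_{N'\cap M}$ exists. Because $y_0\in N'\cap M$,
$$\|y_0\|_\varphi^2=\varphi(y_0^*x)=\varphi\bigl(y_0^*E_{N'\cap M}(x)\bigr)=0 .$$
Hence $y_0=0$, the infimum in Step 1 is at most $0$, and the required $u\in\mathbf U(N)$ exists.
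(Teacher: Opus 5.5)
Your proposal is correct, and so is your objection to the statement as transcribed. With $N=\mathbb C1$, or more generally any nonzero $x\in N'\cap M$ with $E_N(x)=0$, one has $uxu^*=x$ for all $u\in\mathbf U(N)$. The inequality therefore fails for every $\varepsilon\le 2$, and the hypothesis has to be $E^{\varphi}_{N'\cap M}(x)=0$, which is the form of Popa's lemma.

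The paper gives no proof of its own; it only cites Popa. Your argument is the standard Dixmier-type proof of that lemma. The unique $\|\cdot\|_\varphi$-minimal point $y_0$ of the $\sigma$-weakly closed convex hull of $\{uxu^* : u\in\mathbf U(N)\}$ is $\mathbf U(N)$-invariant, so it lies in $N'\cap M$. It also satisfies $\|y_0\|_\varphi^2=\operatorname{Re}\varphi(y_0^*x)=\operatorname{Re}\varphi(y_0^*E_{N'\cap M}(x))$. The supporting steps all check out:
\begin{itemize}
\item invariance of $\|\cdot\|_\varphi$ under $\operatorname{Ad}(u)$ for $u\in M_\varphi$;
\item existence of $E_{N'\cap M}$, by $\sigma^\varphi$-invariance of $N'\cap M$;
\item passing from the infimum over the closed convex hull to the infimum over the orbit.
\end{itemize}
The corrected version is exactly what the paper needs. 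In its only application, the proof of Proposition \ref{ultracyril}, one has $N=(M_i)_{\psi_i}$ with $N'\cap M_i=\mathbb C$. So $E_N(x_i)=0$ forces $\psi_i(x_i)=0$, i.e.\ $E_{N'\cap M_i}(x_i)=0$.

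Two small remarks. First, when $N'\cap M\subseteq N$ the two conditions do not ``agree''. Only the implication $E_N(x)=0\Rightarrow E_{N'\cap M}(x)=0$ holds, because $E_{N'\cap M}=E_{N'\cap M}\circ E_N$; but that is the direction you need.

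Second, the variational inequality in Step 2 is never used. Your identity $\varphi(y_0^*uxu^*)=\varphi(y_0^*x)$ already gives $\|y_0\|_\varphi^2=\operatorname{Re}\varphi(y_0^*x)$. Running the same computation with an arbitrary $w\in N'\cap M$ in place of $y_0$ gives $\varphi(w^*y_0)=\varphi(w^*x)$, i.e.\ $y_0=E_{N'\cap M}(x)$. This makes it transparent why $E_{N'\cap M}(x)=0$ is the right hypothesis.
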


The argument below was  communicated to the first author by Cyril Houdayer during the joint work that resulted in \cite{AHHM20}. This was later extended to \cite[Proposition 3.3]{AHHM20}  mentioned above. We thank him for his permission to include the proof here.

\begin{proof}[Proof of Proposition \ref{ultracyril}]
For each $i\in I$, let $\E_i:M_i\to M_{\psi_i}$ denote the canonical conditional expectation and set $\E:=\prod_\cU \E_i:M\to \prod_\cU M_{\psi_i}$.  We first claim that if $x=(x_i)_\cU\in M_\psi'\cap M$ satisfies $\E(x)=0$, then $x=0$.  Without loss of generality, we may suppose that $\E_i(x_i)=0$ for all $i\in I$. By Fact \ref{lem: Popa81} (applied to $N=M_{\psi_i}, \varepsilon=1$), for each $i\in I$, there exists $u_i\in \mathbf{U}(M_{\psi_i})$ such that 
\[\|u_ix_iu_i^*-x_i\|_{\psi_i}^2\geq \|x_i\|_{\psi_i}^2.\]
Since each $M_{\psi_i}$ is finite, the sequence $(u_i)_{i\in I}$ defines an element $u=(u_i)_{\cU}\in \mathbf{U}(\prod_\cU M_{\psi_i})\subseteq \mathbf{U}(M_\psi)$.  We then have 
$\|x\|_{\psi}^2\leq \|uxu^*-x\|_{\psi}^2=0,$
whence $x=0$. 

We next claim that $(\prod_\cU M_{\psi_i})'\cap M=\mathbb C$.  To see this, suppose that $x\in (\prod_\cU M_{\psi_i})'\cap M$. Then $\E(x)\in (\prod_\cU M_{\psi_i})'\cap M$.  Indeed, if $a\in \prod_\cU M_{\psi_i}$, then $xa=ax$, whence
$$\E(x)a=\E(xa)=\E(ax)=a\E(x).$$
It follows that $\E(x)\in \mathcal{Z}(\prod_{\cU}M_{\psi_i})$.  Since each $M_{\psi_i}$ is a finite factor, we have $\mathcal{Z}(\prod_{\cU}M_{\psi_i})=\prod_{\cU}\mathcal{Z}(M_{\psi_i})$. Therefore 
\[\E(x)\in \prod_{\cU}\mathcal{Z}(M_{\psi_i})\subseteq \prod_{\cU}((M_i)_{\psi_i}'\cap M_i)=\mathbb{C}.\]
By the first claim, it follows that 
$$x=(x-\E(x))+\E(x)=0+\E(x)\in \mathbb{C},$$
as desired.

Since $\prod_{\cU}M_{\psi_i}\subseteq M_{\psi}$, we can conclude that 
$M_{\psi}'\cap M=\mathbb{C}$. 
\end{proof}

\begin{cor}\label{ultracyrilcorollary}
    Let \((M_i,\varphi_i)_{i\in I}\) be a family of W\(^*\)-probability spaces with each \(M_i\) a type III\(_1\) factor with separable predual and trivial bicentralizer. Let \(\cU\) be a countably incomplete ultrafilter on \(I\) and let \((M,\varphi)=\prod_{\cU}(M_i,\varphi_i)\). Then \(M_{\varphi}'\cap M=\C.\)  
\end{cor}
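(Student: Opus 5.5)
The plan is to reduce to Proposition \ref{ultracyril} by changing the states. By Fact \ref{bicentralizerlargecentralizer}, each $M_i$ (a III$_1$ factor with separable predual and trivial bicentralizer) admits a faithful normal state $\psi_i$ with $(M_i)_{\psi_i}'\cap M_i=\bb C$. Proposition \ref{ultracyril} then gives $M_\psi'\cap M=\C$ for $(M,\psi):=\prod_\u (M_i,\psi_i)$, and this holds for any ultrafilter $\u$.

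Next I transfer this back to the original states $\varphi_i$ using Proposition \ref{typeofultraproduct}(2). Each $M_i$ is type III$_{\lambda_i}$ with $\lambda_i=1$, so $\lambda=\lim_\u\lambda_i=1$. Hence there is a $*$-isomorphism
$$\Theta\colon \prod_\u(M_i,\varphi_i)\to\prod_\u(M_i,\psi_i)$$
with $(\psi_i)_\u\circ\Theta=(\varphi_i)_\u$, i.e.\ an isomorphism of W$^*$-probability spaces. Such an isomorphism carries the centralizer $M_\varphi$ onto the centralizer of $\psi$ and relative commutants onto relative commutants. Therefore $M_\varphi'\cap M\cong M_\psi'\cap M=\C$, which is the claim.

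The step I expect to need care is the appeal to Proposition \ref{typeofultraproduct}(2) in the generality of the corollary: arbitrary index sets $I$, with the isomorphism moving the ultraproduct state. Its proof invokes the Connes--Størmer transitivity argument from \cite[Theorem 4.20]{AH14}, which needs $\lim_\u d(M_i)=0$; this holds trivially here since every $M_i$ is III$_1$. Countable incompleteness of $\u$ is not actually needed in this route. If $\u$ were principal, the statement would still be immediate from Fact \ref{bicentralizerdichotomy} and Haagerup's theorem applied to a single $M_i$, but with the state $\varphi_i$ rather than $\psi_i$ it would fail. So I expect the hypothesis is there to keep the ultraproduct genuinely non-trivial, making the passage through Proposition \ref{typeofultraproduct}(2) (rather than the principal-ultrafilter case, where $\prod_\u(M_i,\varphi_i)=(M_{i_0},\varphi_{i_0})$) legitimate. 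I would check that the isomorphism in Proposition \ref{typeofultraproduct}(2) really uses nonprincipality, and invoke countable incompleteness there.
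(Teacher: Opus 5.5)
Your argument is the same as the paper's. You pick Haagerup states $\psi_i$ with $(M_i)_{\psi_i}'\cap M_i=\C$, apply Proposition \ref{ultracyril} (which does hold for every ultrafilter), and transport the conclusion back to $(\varphi_i)_\u$. The paper phrases the last step as ``since $\u$ is countably incomplete, $M$ has strictly homogeneous state space.'' Your appeal to the state-preserving isomorphism of Proposition \ref{typeofultraproduct}(2) is the same ingredient, because that clause is proved by exactly this homogeneity, via the extension of \cite[Theorem 4.20]{AH14}.

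You should, however, retract the claim that countable incompleteness is not needed on this route. The fact $\lim_\u d(M_i)=0$ only gives approximate unitary conjugacy of faithful normal states (Connes--St\o rmer transitivity). Producing a unitary $w$ with $w^*(\varphi_i)_\u w=(\psi_i)_\u\circ\pi$ exactly is a countable saturation argument, and that is precisely where countable incompleteness of $\u$ is used.

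Nonprincipality is not the right condition either. Take every $(M_i,\varphi_i)$ equal to $(\mathcal R_\infty,\varphi_0)$, where $\varphi_0$ is an ergodic state, which exists by \cite{marrakchivaes2024ergodic}. Let $\u$ be principal, or countably complete as in the remark following Corollary \ref{cor generalizing Haagerup for ultraproducts}. Then $(M,\varphi)\cong(\mathcal R_\infty,\varphi_0)$, so $M_\varphi'\cap M=\mathcal R_\infty\neq\C$. In this situation both the state-preserving isomorphism of Proposition \ref{typeofultraproduct}(2) and the corollary itself fail.

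So the ``moreover'' clause of Proposition \ref{typeofultraproduct}(2), although stated there with no hypothesis on $\u$, must be read with $\u$ countably incomplete. That is exactly where the corollary's hypothesis enters, rather than at a nonprincipality check. With the hypothesis placed at that step, your proof is complete.
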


\begin{proof}
By Fact \ref{bicentralizerlargecentralizer}, for each $i\in I$, there is a faithful normal state $\psi_i$ on $M_i$ such that $M_{\psi_i}'\cap M_i=\bb C$.  Set $(M,\psi):=\prod_\u (M_i,\psi_i)$.  By Proposition \ref{ultracyril}, we have that $M_\psi'\cap M=\bb C$.  However, since $\u$ is countably incomplete, $M$ has strictly homogeneous state space, whence $M_\varphi'\cap M=\bb C$.
\end{proof}

We can use Proposition \ref{ultracyril} to give a different proof of the axiomatizability of the class of III$_1$ factors with trivial bicentralizer.  As in the proof of Proposition \ref{pseudoperiodicequivalence}, let $T$ denote the sentences true in all III$_1$ factors with trivial bicentralizer.  It suffices to show that a model of $T$ has trivial bicentralizer.  Suppose $(N,\psi)$ is a model of $T$; by Lemma \ref{separableenoughbicentralizer}, we may assume that $(N,\psi)$ is separable.  As in the proof of Proposition \ref{pseudoperiodicequivalence}, there is a countable family $(M_n,\varphi_n)_{n\in \bb N}$ of III$_1$ factors with trivial bicentralizer and an ultrafilter $\cU$ on $\mathbb N$ such that $(N,\psi)\equiv (M,\varphi):=\prod_\cU (M_n,\varphi_n)$.  (One can indeed take a countable such family by replacing $T$ with a countable dense subset of $T$.)  Without loss of generality, by Corollary \ref{bicentralizeraxiomatizable} and Downward L\"owenheim-Skolem, we may assume that each $M_n$ is separable.  By Fact \ref{bicentralizerlargecentralizer}
and Connes-Stormer transitivity (see also \cite[Proposition 3.12]{goldbringhoudayer}), we may assume that each $\varphi_n$ has large centralizer.  By Proposition \ref{ultracyril}, $\varphi$ has large centralizer, whence $(M,\varphi)$ has trivial bicentralizer.  Since $(N,\psi)$ admits an elementary embedding into $(M,\varphi)$ (by $\aleph_1$-saturation of $(M,\varphi)$  \cite[Proposition 7.6]{yaacov2008model}), Corollary \ref{BCpreceq} implies that $(N,\psi)$ has trivial bicentralizer.

Fact \ref{bicentralizerlargecentralizer} states that a III$_1$ factor $M$ with separable predual and trivial bicentralizer always admits a faithful normal state $\varphi$ with large centralizer, that is, with $M_\varphi'\cap M=\bb C$.  Whether or not this result holds for nonseparable $M$ seems to be unknown.  Corollary \ref{ultracyrilcorollary} shows that the conclusion of Fact \ref{bicentralizerlargecentralizer} holds when $(M,\varphi)$ is a countably incomplete ultraproduct of III$_1$ factors with separable predual.  We can use Proposition \ref{ultracyril} to prove something even more general:

\begin{prop}
Suppose that $(M,\varphi)$ is an $\aleph_0$-saturated W$^*$-probability space with trivial bicentralizer for which $M$ is a III$_1$ factor.  Then $\varphi$ has large centralizer.
\end{prop}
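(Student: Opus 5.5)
The plan is to realize $(M,\varphi)$, up to a suitable embedding, inside an ultraproduct to which Proposition \ref{ultracyril} applies, and then transfer large centralizer back down using saturation.

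First, by Lemma \ref{separableenoughbicentralizer} and Downward L\"owenheim--Skolem, every separable elementary substructure $(N,\psi)$ of $(M,\varphi)$ is a III$_1$ factor with separable predual and trivial bicentralizer. By Fact \ref{bicentralizerlargecentralizer}, such an $N$ admits a state $\psi'$ with large centralizer. Connes--St\o rmer transitivity then lets us approximate $\psi$ by states $\psi''=u\psi' u^*$ with $u\in\mathbf U(N)$; these still have large centralizer, and $\|\psi-\psi''\|$ can be made arbitrarily small. Fixing a nonprincipal ultrafilter $\u$ on $\bb N$ and choosing $\psi_n$ with $\|\psi-\psi_n\|<1/n$, Proposition \ref{typeofultraproduct}(2) and Proposition \ref{ultracyril} give $(N^\u,\psi^\u)\cong\prod_\u(N,\psi_n)$ with $(N^\u)'_{\psi^\u}\cap N^\u=\bb C$. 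Alternatively, one can invoke Fact \ref{p3.3} directly: $\BC(N,\psi)=\bb C$ gives $(N^\u_{\psi^\u})'\cap N^\u=\bb C$ without using Proposition \ref{ultracyril}.

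Second, I will express large centralizer as a family of conditions and verify them in $(M,\varphi)$. Suppose, towards a contradiction, that $x\in M$ with $\|x\|\le1$ and $\varphi(x)=0$ satisfies $\|x\|_\varphi=1$ and $[x,M_\varphi]=0$. Using the totally bounded sorts, I may take $x$ totally $K$-bounded, after smoothing by $\sigma^\varphi_{F_a}$ and noting that this commutes with $M_\varphi$. The condition ``$x$ commutes with all totally $1$-bounded unitaries up to $\epsilon$'' is then a type $p(x)$ over the empty set; by Corollary \ref{totallyoneboundedunitary}, ``totally $1$-bounded unitary'' is a quantifier-free condition on the sort $S_1$. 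Using $\aleph_0$-saturation, together with a separable elementary substructure $N\ni x$, I want to show that this type is not realized in $(N,\psi)^\u$, hence has no approximate realizations in $N$. Consequently it cannot be consistent in the theory of $(M,\varphi)$, since $N\equiv M$.

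The main obstacle is the quantifier structure. Commuting with \emph{every} element of $M_\varphi$ is universal in the unitary variable, so the property ``$\exists x$ commuting with $M_\varphi$'' is $\exists\forall$ and does not transfer directly between $N^\u$ and $M$. To handle this, I would use saturation on the $M$ side. Since $M$ is $\aleph_0$-saturated, for the finitely many parameters involved, any unitary in $M^\u_{\varphi^\u}$ that witnesses noncommutation can be pulled back to a genuine unitary of $M_\varphi$. Concretely, Lemma \ref{lem representing u by 1+deltabdd} shows that elements of $\mathbf U(M^\u_{\varphi^\u})$ are limits of elements of $\tilde{\mathbf U}_{1+\delta}(M)$, and $\aleph_0$-saturation turns ``for every $\delta$ there is $v\in\tilde{\mathbf U}_{1+\delta}(M)$ with $\|[v,x]\|_\varphi\ge c$'' into an actual totally $1$-bounded unitary, hence an element of $M_\varphi$, that fails to commute with $x$.

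It therefore remains to produce such $v$, that is, to show that $\sup_{v\in\tilde{\mathbf U}_{1+\delta}}\|[v,x]\|_\varphi$ is bounded below for $x$ centered. This bound is exactly what $(N^\u_{\psi^\u})'\cap N^\u=\bb C$ supplies, via Fact \ref{lem: Popa81} applied in $N^\u$ with $\varepsilon=1$, uniformly in $x$ and hence expressible by a sentence that transfers from $N$ to $M$. I would first try to make this uniformity precise using the Popa estimate $\|uxu^*-x\|^2\ge\|x - E(x)\|^2$.
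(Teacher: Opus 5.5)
\textbf{The gap.} Your skeleton is sound: pass to a separable $N\preceq M$ containing $x$, use $(N^{\cU}_{\psi^{\cU}})'\cap N^{\cU}=\mathbb C$, and use $\aleph_0$-saturation of $M$ to turn approximate witnesses in $\tilde{\mathbf U}_{1+\delta}(M)$ into a genuine unitary of $M_\varphi$. But the step that carries the content is not done. That step is to produce, for every $\delta>0$, some $v\in\tilde{\mathbf U}_{1+\delta}(M)$ with $\|[v,x]\|_\varphi\ge c$. You leave it as something you ``would try to make precise,'' and the route you suggest fails as stated.

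Fact~\ref{lem: Popa81}, applied in $N^{\cU}$ with subalgebra $N^{\cU}_{\psi^{\cU}}$, requires $E_{N^{\cU}_{\psi^{\cU}}}(x)=0$, not merely $\varphi(x)=0$. If $N_\psi\neq\mathbb C$, any nonscalar centered $x\in N_\psi$ has $E_{N^{\cU}_{\psi^{\cU}}}(x)=x\neq0$. So Popa's lemma gives no bound on centered elements, let alone one ``uniform in $x$'' that could be packaged as a sentence.

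Relatedly, the ``type'' in your second paragraph is not a type. $\mathbf U(M_\varphi)\cap S_1$ is a zeroset but not a definable set, since centralizers do not commute with ultraproducts. So ``commutes with all of it up to $\epsilon$'' cannot be expressed by formulas, and the inference from non-realization in $N^{\cU}$ to $M$ is unjustified. That paragraph should be dropped.

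\textbf{The fix.} No uniformity is needed, because you already chose $N$ to contain $x$. Since $x\in N\subseteq N^{\cU}$ is nonscalar and $(N^{\cU}_{\psi^{\cU}})'\cap N^{\cU}=\mathbb C$ (Fact~\ref{p3.3}), there is $b\in\mathbf U(N^{\cU}_{\psi^{\cU}})$ with $\|[b,x]\|_{\psi^{\cU}}=2c>0$. By Lemma~\ref{lem representing u by 1+deltabdd}, $b=(v_n)_{\cU}$ with $v_n\in\tilde{\mathbf U}_{1+\delta}(N)$, so $\|[v_n,x]\|_\varphi>c$ for $\cU$-almost all $n$. Moreover $\tilde{\mathbf U}_{1+\delta}(N)\subseteq\tilde{\mathbf U}_{1+\delta}(M)$, since this set is the zeroset of $\Phi_\delta$ and $N\preceq M$. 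Your saturation argument over the single parameter $x$ then yields $v\in\mathbf U(M_\varphi)$ with $\|[v,x]\|_\varphi\ge c$, the desired contradiction.

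Equivalently, $\inf_{y\in S_1}\max\bigl(\sup_{z\in S_1}|\varphi(yz-zy)|,\ c\dotminus\|[x,y]\|_\varphi\bigr)$ has value $0$ in $N^{\cU}$. Hence it is $0$ in $N$ by \L o\'s and in $M$ by elementarity, and $\aleph_0$-saturation realizes it in $M$. This is the paper's transfer step.

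\textbf{Comparison with the paper.} The paper fixes a single separable $N\preceq M$ and a sufficiently good ultrafilter, so that $M$ embeds elementarily into $N^{\cU}$; it gets large centralizer of $\psi^{\cU}$ from Corollary~\ref{ultracyrilcorollary}. Your choice of $N\ni x$ lets you use a nonprincipal ultrafilter on $\mathbb N$ and Fact~\ref{p3.3}, avoiding good ultrafilters altogether. That is a genuine simplification once the step above is filled in.
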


\begin{proof}
Let $(N,\psi)$ be a separable elementary substructure of $(M,\varphi)$.  By Corollary \ref{BCpreceq}, $(N,\psi)$ still has trivial bicentralizer, whence, by Corollary \ref{ultracyrilcorollary}, we have that $\psi^\cU$ has large centralizer, where $\cU$ is a sufficiently good ultrafilter so that $(M,\varphi)$ admits an elementary embedding into $(N,\psi)^\cU$.

Suppose now that $a\in M_\varphi'\cap M$.  It suffices to show that $a\in (N^{\u}_{\psi^\cU})'\cap N^\cU$.  Suppose, towards a contradiction, that there is $b\in N^\u_{\psi^\cU}$ such that $\|[a,b]\|_\varphi=:\epsilon>0$.  Without loss of generality, we may assume that $b\in S_1(N^{\cU})$.  Then 
$$(N^\cU,\psi^\cU)\models \inf_{y\in S_1}\max\left(\sup_{z\in S_1}|\psi^\u(yz-zy)|,\epsilon \dminus \|[a,y]\|_{\psi^{\cU}}\right).$$
By elementarity, we have that 
$$(M,\varphi)\models \inf_{y\in S_1}\max\left(\sup_{z\in S_1}|\varphi(yz-zy)|,\epsilon \dminus \|[a,y]\|_{\varphi}\right).$$
By $\aleph_0$-saturation, such an element $y\in S_1(M,\varphi)$ must actually exist, contradicting the fact that $a\in M_\varphi'\cap M$. 
\end{proof}

\begin{cor}\label{cor generalizing Haagerup for ultraproducts}
Suppose that $(M_i,\varphi_i)_{i\in I}$ is a family of III$_1$ factors with trivial bicentralizer.  Further suppose that $\cU$ is a countably incomplete ultrafilter on $I$.  Set $(M,\varphi):=\prod_\cU (M_i,\varphi_i)$.  Then $\varphi$ has large centralizer.
\end{cor}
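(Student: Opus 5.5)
The corollary should follow from the preceding proposition once we check its two hypotheses for $(M,\varphi)=\prod_\cU (M_i,\varphi_i)$: that $M$ is a III$_1$ factor with trivial bicentralizer, and that $(M,\varphi)$ is $\aleph_0$-saturated (in fact $\aleph_1$-saturated).

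\emph{Type III$_1$.} Each $M_i$ is a III$_1$ factor, so $\lambda_i=1$ for all $i$. Hence $\lim_\cU \lambda_i=1$, and Proposition \ref{typeofultraproduct}(1) shows that $M$ is a type III$_1$ factor.

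\emph{Trivial bicentralizer.} By Corollary \ref{bicentralizeraxiomatizable}, the class of III$_1$ factors with trivial bicentralizer is axiomatizable. It is therefore closed under ultraproducts by Łoś's theorem, since the Ocneanu ultraproduct corresponds to the model-theoretic ultraproduct in the framework of Subsection \ref{modeltheorypresentation}. This gives $\BC(M,\varphi)=\C$.

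\emph{Saturation.} Because $\cU$ is countably incomplete and the language of W$^*$-probability spaces from \cite{AGHS25} is separable, Subsection \ref{good} tells us that $\prod_\cU (M_i,\varphi_i)$ is $\aleph_1$-saturated. In particular it is $\aleph_0$-saturated, which is the form of saturation the preceding proposition uses: it is only invoked to realize a type over the single parameter $a$.

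The preceding proposition now applies and gives $M_\varphi'\cap M=\C$, i.e., $\varphi$ has large centralizer.

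The only step needing care is the saturation claim. The language here is many-sorted, with sorts $S_K$ and the $\|\cdot\|_\varphi^\#$-metric, so one should confirm that it is indeed separable; this is the setup of \cite{AGHS25}. The appeal to saturation in the proposition's proof also involves only formulas in one variable of sort $S_1$ with finitely many parameters, so the standard fact covers it. As a sanity check, this recovers Corollary \ref{ultracyrilcorollary} without assuming the $M_i$ have separable predual.
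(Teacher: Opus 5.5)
Your proposal is correct and follows the paper's intended route: the paper gives this corollary no separate proof, presenting it as an immediate consequence of the preceding proposition. It rests on the same three facts you check, namely that a countably incomplete ultraproduct (in the separable language of W$^*$-probability spaces) is $\aleph_1$-saturated, is a III$_1$ factor by Proposition \ref{typeofultraproduct}, and has trivial bicentralizer by Corollary \ref{bicentralizeraxiomatizable} and \L os' theorem.
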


\begin{remark}
One cannot remove the assumption that the ultrafilter is countably incomplete in Corollary \ref{cor generalizing Haagerup for ultraproducts}.  Indeed, suppose that $\u$ is a countably complete ultrafilter and $(M,\varphi)$ is a W$^*$-probability space with $M$ a III$_1$ factor with separable predual and trivial bicentralizer and $\varphi$ any faithful normal state on $M$ that does not have large centralizer (such as an ergodic state).  Then the ultrapower $(M,\varphi)^\u$ is isomorphic to $(M,\varphi)$ and thus $\varphi^\u$ does not have large centralizer.
One might instead ask for a variant of Corollary \ref{cor generalizing Haagerup for ultraproducts} for arbitrary ultrafilters that merely asks if the ultraproduct has a state with large centralizer.  However, in some sense, this is tantamount to asking if all III$_1$ factors with trivial bicentralizer have a state with large centralizer.  Indeed, if there is a countably complete ultrafilter $\u$, then there is a measurable cardinal $\kappa$ (see \cite[Proposition 17.3.4]{goldbring2022ultrafilters}).  If $M$ is a III$_1$ factor with trivial bicentralizer and with predual of density character less than $\kappa$ and $\varphi$ is any state on $M$, the argument in the preceding section shows that the ultrapower $(M,\varphi)^\u$ is isomorphic to $M$, so asking that this ultrapower have a state with large centralizer is the same as asking that $M$ itself has a state with large centralizer.
\end{remark}

\subsection{Axioms}

We now work towards giving concrete axioms for the above classes.  Fix an axiomatizable class $\cal K$ of diffuse W$^*$-probability spaces.

We begin with a calculation:

\begin{lem}\label{lem v*av-u*au}
Suppose that $(M,\varphi)$ is a W$^*$-probability space and fix $\eta>0$. For all $u\in \mathbf{U}(M)$, $a\in S_1(M)$ and contraction $v\in \tilde{\mathbf{U}}_{1+\eta}(M)$, the following inequality holds:
\begin{equation*}
\|v^*av-u^*au\|_{\varphi}\le (2+\eta)\sqrt{2}\|v-u\|_{\varphi}^{\#}.
\end{equation*}
\end{lem}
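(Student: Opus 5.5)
The plan is to split the difference into two terms, each containing a factor $v-u$ or $(v-u)^*$. Every other factor will be controlled either by operator norm (when it multiplies on the left) or by right boundedness (when it multiplies on the right). Concretely, I would write
\[
v^*av-u^*au=(v-u)^*av+u^*a(v-u),
\]
which follows by expanding the right-hand side: $v^*av-u^*av+u^*av-u^*au$.

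For the second term, left multiplication is bounded in $\|\cdot\|_\varphi$ by the operator norm, since $\|xy\|_\varphi\le\|x\|\,\|y\|_\varphi$. Because $a\in S_1(M)$ we have $\|a\|\le 1$, and $u$ is unitary, so
\[
\|u^*a(v-u)\|_\varphi\le\|v-u\|_\varphi\le\sqrt2\,\|v-u\|_\varphi^{\#}.
\]
The last step uses $\|x\|_\varphi^2\le 2(\|x\|_\varphi^{\#})^2$.

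For the first term, the factors $a$ and $v$ sit on the right, so I will use right boundedness rather than operator norm. Since $v\in\tilde{\mathbf U}_{1+\eta}(M)$, the element $v$ is right $(1+\eta)$-bounded, and this gives $\|(v-u)^*av\|_\varphi\le(1+\eta)\|(v-u)^*a\|_\varphi$. Since $a$ is totally $1$-bounded, it is right $1$-bounded, so $\|(v-u)^*a\|_\varphi\le\|(v-u)^*\|_\varphi$. Finally $\|(v-u)^*\|_\varphi\le\sqrt2\,\|v-u\|_\varphi^{\#}$, again directly from the definition of $\|\cdot\|^{\#}_\varphi$.

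Adding the two bounds gives
\[
\|v^*av-u^*au\|_\varphi\le\bigl(1+(1+\eta)\bigr)\sqrt2\,\|v-u\|_\varphi^{\#}=(2+\eta)\sqrt2\,\|v-u\|_\varphi^{\#}.
\]

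The only real obstacle is choosing the splitting correctly. The unitary $u$ has no right-boundedness control, since it need not be totally bounded. Any decomposition that puts $u$ on the far right next to a factor $(v-u)$ in a form requiring right multiplication by $u$, such as $(v-u)^*au$, cannot be estimated. The decomposition above avoids this: it places $v$, which is right bounded, as the rightmost factor in the term where $v-u$ appears on the left, and it lets $u^*$ act only by left multiplication, which operator norm controls. The hypothesis that $v$ is a contraction is not needed for this particular bound, since the left factor in the second term is $u^*a$ rather than $v^*a$.
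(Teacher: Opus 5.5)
Your proof is correct and is essentially the paper's argument: the same splitting $v^*av-u^*au=(v-u)^*av+u^*a(v-u)$, right $(1+\eta)$-boundedness of $v$ and right $1$-boundedness of $a$ on the first term, the operator-norm bound on the second, and $\|x\|_\varphi,\|x^*\|_\varphi\le\sqrt{2}\|x\|_\varphi^{\#}$ to finish. Your remark that the contraction hypothesis on $v$ goes unused also holds for the paper's proof.
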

\begin{proof}
    Since $a\in S_1(M)$ and $v\in \tilde{\mathbf{U}}_{1+\eta}(M)$, we have 
    \[\|(v^*-u^*)av\|_{\varphi}\le (1+\eta)\|(v^*-u^*)a\|_{\varphi}\le (1+\eta)\|v^*-u^*\|_{\varphi}.\]
    Therefore, by using the inequality $\|x\|_{\varphi}\le \sqrt{2}\|x\|_{\varphi}^{\#}$, we obtain
    \begin{align*}
        \|v^*av-u^*au\|_{\varphi}&\le \|(v^*-u^*)av\|_{\varphi}+\|u^*a(v-u)\|_{\varphi}\\
        &\le (1+\eta)\|v^*-u^*\|_{\varphi}+\|u^*a\|\|v-u\|_{\varphi}\\
        &\le \sqrt{2}(2+\eta)\|v-u\|_{\varphi}^{\#}.
    \end{align*}
\end{proof}

\begin{prop}\label{boundonconvex}
For all $\ep,\eta>0$, there is $N=N(\ep,\eta)$ such that:  whenever $(M,\varphi)\in \cal K$ has trivial bicentralizer and $a\in S_1(M)$, there are $u_1,\ldots,u_N\in \tilde{\mathbf{U}}_{1+\eta}(M)$ with \begin{equation}\|\frac{1}{N}\sum_{i=1}^N u_i^*au_i-\varphi(a)\cdot 1\|_\varphi<\ep.\label{eq N combinations of u*au}
\end{equation}
\end{prop}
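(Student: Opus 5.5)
We argue by contradiction, using ultraproducts together with Proposition \ref{prop tbprop1.3}. Suppose that for some $\ep,\eta>0$ no such $N$ exists. Then for every $n\geq 1$ we obtain $(M_n,\varphi_n)\in\cal K$ with trivial bicentralizer and $a_n\in S_1(M_n)$ such that, for every $u_1,\ldots,u_n\in\tilde{\mathbf{U}}_{1+\eta}(M_n)$, we have $\|\frac1n\sum_{i=1}^n u_i^*a_nu_i-\varphi_n(a_n)1\|_{\varphi_n}\geq\ep$. Averaging repeated copies shows that if a good $k$-term average exists, then a good $mk$-term average also exists. So we may in fact arrange that, for each $n$, the failure holds for all $N\leq n$. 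Concretely, we let $a_n$ witness failure at $N=n!$; this fails for every $N\mid n!$, and in particular for all $N\leq n$.

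Fix a nonprincipal ultrafilter $\u$ on $\bb N$ and set $(M,\varphi):=\prod_\u(M_n,\varphi_n)$ and $a:=(a_n)_\u$. By Fact \ref{TBelementsinultraproducts}, $a\in S_1(M)$. Since $\cal K$ is axiomatizable, $(M,\varphi)\in\cal K$, so $M$ is diffuse. By Corollary \ref{bicentralizeraxiomatizable}, trivial bicentralizer is axiomatizable within $\cal K$; hence $\BC(M,\varphi)=\bb C$. We now apply Proposition \ref{prop tbprop1.3}(c) with $\delta=\eta/2$. This gives $\varphi(a)1\in\widetilde C_\varphi(a,\eta/2)$, and hence $\varphi(a)1$ is also in the $\sigma$-strong closure of the convex hull. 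On the bounded convex set in question, $\|\cdot\|_\varphi$-approximation is available: $\widetilde C^0_\varphi$ is convex and bounded, so its $\sigma$-weak closure coincides with its $\sigma$-strong closure. We therefore get $u_1,\ldots,u_k\in\tilde{\mathbf{U}}_{1+\eta/2}(M)$ and a convex combination $\sum_j t_ju_j^*au_j$ within $\ep/4$ of $\varphi(a)1$. Perturbing the weights to rationals with common denominator $N_0$, at a cost of $\ep/4$ (each $\|u_j^*au_j\|_\varphi\leq(1+\eta)^2$ is uniformly bounded), we may assume the combination is an equal-weight average of $N_0$ terms with repetitions allowed.

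The crux is to pull the $u_j\in\tilde{\mathbf{U}}_{1+\eta/2}(M)$ back to the $M_n$ while landing in $\tilde{\mathbf{U}}_{1+\eta}(M_n)$. By Fact \ref{TBelementsinultraproducts}(2), each $u_j=(u_{j,n})_\u$ with $u_{j,n}$ totally $(1+\eta/2)$-bounded. Moreover, $\lim_\u\|u_{j,n}^*u_{j,n}-1\|_{\varphi_n}=\|u_j^*u_j-1\|_\varphi\leq\eta/2<\eta$, and similarly for $u_{j,n}u_{j,n}^*$. Hence $u_{j,n}\in\tilde{\mathbf{U}}_{1+\eta}(M_n)$ for $\u$-almost all $n$; this is exactly why we used the slack $\eta/2$. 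Multiplication is continuous on bounded sorts in the ultraproduct, and $\varphi(a)=\lim_\u\varphi_n(a_n)$. It follows that
\[
\lim_\u\Bigl\|\frac1{N_0}\sum_i u_{i,n}^*a_nu_{i,n}-\varphi_n(a_n)1\Bigr\|_{\varphi_n}<\ep/2 .
\]
For $\u$-almost all $n\geq N_0$ this produces a good $N_0$-term average in $M_n$, contradicting the choice of $a_n$. Thus $N(\ep,\eta)$ exists.

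The main obstacle is this transfer step. One must verify that the elements of $\tilde{\mathbf{U}}_{1+\eta/2}$ in the ultraproduct lift to elements of $\tilde{\mathbf{U}}_{1+\eta}$ coordinatewise, which is handled by Fact \ref{TBelementsinultraproducts} plus the strict inequality. One must also check that $\|\cdot\|_\varphi$ of the polynomial expression is computed coordinatewise; this holds since all the entries are totally bounded. The second, minor, obstacle is the bookkeeping that lets us pass from a fixed $N$ to all multiples, so that failure at every $N$ can be encoded in a single sequence, as done above.
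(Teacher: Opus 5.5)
Your argument is correct, but it takes a different route from the paper's proof.

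\textbf{The paper's route.} It applies Haagerup's original Dixmier-type characterization (the unlabeled Fact at the end of Subsection \ref{BCsection}) in the ultraproduct. This produces genuine unitaries $u_i$ with $\|u_i\varphi-\varphi u_i\|$ small. These lift trivially to unitaries $u_{i,n}$, and Fact \ref{ultralimitbimodule} makes them almost centralizing for $\u$-almost all $n$. The uniform approximation result, Proposition \ref{almostcentralizingalmostunitary}, then replaces each $u_{i,n}$ by a nearby contraction in $\tilde{\mathbf{U}}_{1+\eta}(M_n)$. Lemma \ref{lem v*av-u*au} controls how much the average changes.

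\textbf{Your route.} You apply the totally bounded characterization, Proposition \ref{prop tbprop1.3}(c), directly in the ultraproduct at level $\eta/2$. You then lift the resulting elements with Fact \ref{TBelementsinultraproducts}(2). The strict inequality $\eta/2<\eta$ pushes the conditions on $\|u^*u-1\|_\varphi$ and $\|uu^*-1\|_\varphi$ down to $\u$-almost every coordinate; this is exactly why the slack is needed.

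\textbf{Comparison.} Your route bypasses Proposition \ref{almostcentralizingalmostunitary} and Lemma \ref{lem v*av-u*au} entirely and is shorter. The price is that it rests on the nontrivial lifting theorem for totally bounded elements. It also rests on Proposition \ref{prop tbprop1.3}, whose proof already contains the Fej\'er-kernel smoothing of Lemma \ref{lem representing u by 1+deltabdd}. The paper's route needs only the trivial lifting of unitaries plus its two approximation results, and it incidentally yields contractions, which the statement does not require.

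Your bookkeeping, choosing $a_n$ to fail at $N=n!$, is equivalent to the paper's reduction to ``some $N\le N_0$ works'' followed by taking $N_0!$. Your explicit rational-weight step fills in the passage from a convex combination to an equal-weight average, which the paper uses tacitly.
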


\begin{proof}
First, we observe that it suffices to show the following slightly weaker statement: for all $\ep,\eta>0$, there exists $N_0=N_0(\ep,\eta)\in \N$ such that whenever $(M,\varphi)\in \mathcal{K}$ has trivial bicentralizer and $a\in S_1(M)$, there are $N\le N_0$ and $u_1,\dots, u_N\in \tilde{\mathbf{U}}_{1+\eta}(M)$ such that the inequality (\ref{eq N combinations of u*au}) holds. Indeed, for each $\ep,\eta>0$, we may then set $N(\ep,\eta)=N_0(\ep,\eta)!$, and if $(M,\varphi)\in \mathcal{K}$ satisfies $\BC(M,\varphi)=\C$ and $a\in S_1(M)$, then we may find $N_1\le N_0(\ep,\eta)$ and $u_1,\dots,u_{N_1}\in \tilde{\mathbf{U}}_{1+\eta}(M)$ such that (\ref{eq N combinations of u*au}) holds where $N$ is replaced by $N_1$. Set $m:=N(\ep,\eta)/N_1\in \N$.  Then the $N(\ep,\eta)$ elements $u_{i,j}:=u_i\, i=1,\dots,N_1,\,j=1,\dots,m$ in $\tilde{\mathbf{U}}_{1+\eta}(M)$ satisfy the  
inequality (\ref{eq N combinations of u*au}), where the $u_i$'s are replaced by the $u_{i,j}$'s. 

Suppose that the weaker statement does not hold for some $\ep,\eta>0$.  For each $n\geq 1$, take a counterexample $(M_n,\varphi_n)$ and $a_n\in S_1(M_n)$.  Choose a nonprincipal ultrafilter $\cU$ on $\N$ and set $(M,\varphi):=\prod_\u (M_n,\varphi_n)$ and $a:=(a_n)_\u\in S_1(M)$.  By Corollary \ref{bicentralizeraxiomatizable}, $(M,\varphi)$ has trivial bicentralizer, whence we may find unitaries $u_1,\ldots,u_N\in M$ such that $$\|u_i\varphi-\varphi u_i\|<\delta\left(\frac{\ep}{2\sqrt{2}(2+\eta)},\eta\right)$$ and $$\|\frac{1}{N}\sum_{i=1}^N u_i^*au_i-\varphi(a)\cdot 1\|_\varphi<\ep/2,$$ where $\delta(\frac{\ep}{2\sqrt{2}(2+\eta)},\eta)$ is as in Proposition \ref{almostcentralizingalmostunitary}.  Write $u_i=(u_{i,n})_\u$ with each $u_{i,n}$ a unitary in $M_n$ and note that $\|u_{i,n}\varphi_n-\varphi_n u_{i,n}\|<\delta(\frac{\ep}{2\sqrt{2}(2+\eta)},\eta)$ for $\u$-almost all $n$.  By the definition of $\delta$, for these $n$ there are contractions $v_{i,n}\in \tilde{\mathbf{U}}_{1+\eta}(M_n)$ such that $\|u_{i,n}-v_{i,n}\|_{\varphi_n}^\#<\frac{\ep}{2\sqrt{2}(2+\eta)}$.  By Lemma \ref{lem v*av-u*au}, we have $$\|v_{i,n}^*a_nv_{i,n}-u_{i,n}^*a_nu_{i,n}\|_{\varphi_n}<\frac{\ep}{2}$$ for $\cU$-almost all $n$ and $i=1,\dots, N$. Therefore, for $\u$-almost all $n$, we have $\|\frac{1}{N}\sum_{i=1}^N v_{i,n}^*a_nv_{i,n}-\varphi_n(a_n)\cdot 1\|_{\varphi_n}<\ep$. In particular, we may find such $n\ge N$, which is a contradiction to the choice of $a_n$.
\end{proof}

\begin{lem}
Let $\Phi_\eta(y)$ be the formula which is the maximum of the following two formulae:
$$\max(\|y^*y-1\|_\varphi,\|yy^*-1\|_\varphi)\dotminus \eta$$ $$\sup_{a\in S_1}\max(\varphi(y^*a^*ay)\dotminus (1+\eta)^2\varphi(a^*a),\varphi(ya^*ay^*)\dotminus (1+\eta)^2\varphi(a^*a)).$$
Then $\tilde{\mathbf{U}}_{1+\eta}$ is the zeroset of $\Phi_{\eta}(y)$.
\end{lem}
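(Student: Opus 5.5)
The plan is to verify the two inclusions directly, keeping in mind that $\Phi_\eta$ is evaluated on the sort $S_{1+\eta}$ (or a larger sort $S_K$ containing it). It helps to name the pieces. Write $\Phi_\eta=\max(\Phi^1_\eta,\Phi^2_\eta)$, where $\Phi^1_\eta$ is the first formula (the almost-isometry condition) and $\Phi^2_\eta$ is the supremum formula (the right-boundedness condition). Then $\Phi_\eta(y)=0$ if and only if $\Phi^1_\eta(y)=0$ and $\Phi^2_\eta(y)=0$.

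The first formula needs little comment. $\Phi^1_\eta(y)=0$ says exactly that $\max\{\|y^*y-1\|_\varphi,\|yy^*-1\|_\varphi\}\le\eta$, which is the second defining clause of $\tilde{\mathbf{U}}_{1+\eta}(M)$.

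The second formula carries the content. $\Phi^2_\eta(y)=0$ says that $\|ay\|_\varphi\le(1+\eta)\|a\|_\varphi$ and $\|ay^*\|_\varphi\le(1+\eta)\|a\|_\varphi$ for all $a\in S_1(M)$. The job is to upgrade this to all $b\in M$, i.e.\ to right $(1+\eta)$-boundedness of $y$ and $y^*$.
\begin{itemize}
\item Both sides of the inequality are homogeneous in $a$, so the condition passes from $S_1$ to every totally bounded $a$ after rescaling: a totally $K$-bounded element divided by $K$ lies in $S_1$.
\item Totally bounded elements are $\ast$-strongly dense in $M$. By Kaplansky-type density one can approximate a given $b\in M$ in $\|\cdot\|_\varphi$ by a bounded net of totally bounded $a_\alpha$; for instance $a_\alpha=\sigma^\varphi_{F_n}(b)$, which is totally bounded by Proposition \ref{prop compactspec}, has norm at most $\|b\|$, and converges $\sigma$-strongly to $b$.
\item Since $y$ is bounded, $\|a_\alpha y\|_\varphi\to\|by\|_\varphi$. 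Passing to the limit gives $\|by\|_\varphi\le(1+\eta)\|b\|_\varphi$ for all $b$, and likewise for $y^*$.
\end{itemize}
This is the step I expect to need the most care, since it is where density of the totally bounded elements is used. It is routine given Proposition \ref{prop compactspec} together with the strong continuity of right multiplication by a fixed bounded $y$.

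Membership of $y$ in the sort gives $\|y\|\le 1+\eta$, provided the formula is taken over $S_{1+\eta}$ (or we intersect with it). Combined with the right-boundedness of $y$ and $y^*$, this makes $y$ totally $(1+\eta)$-bounded, so $y\in\tilde{\mathbf{U}}_{1+\eta}(M)$.

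Conversely, suppose $y\in\tilde{\mathbf{U}}_{1+\eta}(M)$. Right $(1+\eta)$-boundedness of $y$ and $y^*$ gives, for every $a$,
$$\varphi(y^*a^*ay)=\|ay\|_\varphi^2\le(1+\eta)^2\varphi(a^*a),$$
and the same with $y^*$ in place of $y$. Hence $\Phi^2_\eta(y)=0$. The almost-isometry clause gives $\Phi^1_\eta(y)=0$, so $\Phi_\eta(y)=0$.

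Finally, I would note that $\Phi_\eta$ is a legitimate formula, i.e.\ uniformly continuous on the sort, because multiplication is uniformly continuous on each $S_K$. This is also what the later $\forall\exists$ axioms need: writing $\tilde{\mathbf{U}}_{1+\eta}$ as the zeroset of $\Phi_\eta$, and using the bound of Proposition \ref{boundonconvex}, lets one quantify over it approximately.
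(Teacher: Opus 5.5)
Your argument is correct and is essentially the paper's proof, which simply cites Lemma~\ref{lem Kbounded}; you also make explicit the density step that the paper leaves implicit, namely passing from $a\in S_1$ to all $a\in M$ via the bounded, $\sigma$-strongly convergent approximants $\sigma^\varphi_{F_n}(b)$ together with strong continuity of $x\mapsto xy$. One small refinement concerns the norm clause: the paper's sorts are the $S_K$ with $K\in\mathbb N$ and $\Phi_\eta$ is applied to $y\in S_2$, so $\|y\|\le 1+\eta$ cannot be read off from the sort, but it follows anyway from right $(1+\eta)$-boundedness of $y$ and $y^*$ by the three-lines theorem applied to $z\mapsto\sigma^\varphi_z(y^*)$ on the strip $|\operatorname{Im}z|\le\frac12$, whose boundary values are controlled by the right-boundedness constants of $y$ and $y^*$.
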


\begin{proof}
This follows immediately from Lemma \ref{lem Kbounded}.
\end{proof}

For each $\epsilon>0$ and $0<\eta<1$, set $N:=N(\epsilon,\eta)$ (as in Proposition \ref{boundonconvex}) and let $\theta_{\epsilon,\eta}$ denote the sentence
$$\sup_{x\in S_1} \inf_{u_1,\ldots,u_N\in S_2}\max(\max_i \Phi_\eta(u_i),\|\frac{1}{N}\sum_{i=1}^N u_i^*xu_i-\varphi(x)\cdot 1\|_\varphi\dotminus \epsilon).$$

\begin{thm}\label{thm trivialBCaxiomatizable}
Suppose $T_{\cal K}$ is a set of axioms for $\cal K$.  Then 
$$T_{\cal K}\cup\{\theta_{\epsilon,\eta}=0 \ : \ \epsilon,\eta>0\}$$
axiomatizes the class of elements of $\cal K$ with trivial bicentralizer.
\end{thm}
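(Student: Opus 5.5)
There are two inclusions to prove. The first is soundness: every $(M,\varphi)\in\cal K$ with trivial bicentralizer satisfies $\theta_{\epsilon,\eta}=0$. The second is completeness: every model of the displayed theory has trivial bicentralizer.

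\emph{Soundness.} Fix $(M,\varphi)\in\cal K$ with $\BC(M,\varphi)=\C$, and fix $x\in S_1(M)$. Proposition \ref{boundonconvex} gives $u_1,\dots,u_N\in\tilde{\mathbf{U}}_{1+\eta}(M)$ whose average $\frac1N\sum_i u_i^*xu_i$ is within $\epsilon$ of $\varphi(x)1$, where $N=N(\epsilon,\eta)$. Since $\eta<1$, each $u_i$ is totally $2$-bounded, so each $u_i$ lies in $S_2$. By the preceding lemma, $\Phi_\eta(u_i)=0$. Hence the inner infimum in $\theta_{\epsilon,\eta}$ is $0$ for every $x\in S_1$, and so $\theta_{\epsilon,\eta}^{(M,\varphi)}=0$.

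\emph{Completeness.} Let $(M,\varphi)$ be a model of $T_{\cal K}$ with $\theta_{\epsilon,\eta}^{(M,\varphi)}=0$ for all $\epsilon,\eta$. The value $0$ of an infimum need not be attained, so I would pass to an $\aleph_1$-saturated elementary extension, namely an ultrapower $(M,\varphi)^\u$ over a nonprincipal ultrafilter on $\N$.

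\begin{enumerate}
\item The ultrapower $(M,\varphi)^\u$ is still in $\cal K$ and still satisfies all the $\theta_{\epsilon,\eta}$. Saturation then makes the infima attained in it.
\item By Corollary \ref{BCpreceq}, applied to the elementary (hence existential) inclusion $M\subseteq M^\u$, we have $\BC(M,\varphi)=\BC(M^\u,\varphi^\u)\cap M$. So it suffices to show that $(M^\u,\varphi^\u)$ has trivial bicentralizer. Alternatively one can work directly in $M$ with approximate witnesses, because the conditions below are closed conditions.
\item Fix $a\in M^\u$, scaled so that $a\in S_1$. Since $S_1$ is $\|\cdot\|^\#$-dense in norm-bounded sets, an approximation argument handles arbitrary $a$. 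Actually it is enough to treat $a\in\BC$ directly, since such $a$ satisfies the estimate of Lemma \ref{lem tildeepsilon and BC}.
\item Given $\delta>0$, the sentence $\theta_{\epsilon,\eta}$ with $\eta\le\delta$ and saturation give elements $u_i$ with $\Phi_\eta(u_i)=0$. By the preceding lemma these $u_i$ lie in $\tilde{\mathbf{U}}_{1+\eta}\subseteq\tilde{\mathbf{U}}_{1+\delta}$. They satisfy $\|\frac1N\sum u_i^*au_i-\varphi(a)1\|_\varphi\le\epsilon$.
\end{enumerate}

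Next comes the conclusion, mirroring the proof of (c)$\Rightarrow$(a) in Proposition \ref{prop tbprop1.3}, using only elements of $S_1$. Take $a\in\BC(M,\varphi)$. Since $\BC$ is a von Neumann algebra, I may first reduce to $a\in\BC\cap S_1$: spectral subspace smoothing $\sigma^\varphi_{F_a}$ keeps $\BC$ invariant, because $\BC$ is $\sigma^\varphi$-invariant, and by Proposition \ref{prop compactspec} it produces totally bounded approximants. Now fix $\epsilon>0$. Lemma \ref{lem tildeepsilon and BC} gives $\delta$ with $\tilde\ep_M(a,\delta)<\epsilon/2$. Pick witnesses $u_i\in\tilde{\mathbf{U}}_{1+\delta}(M)$ for $\theta_{\epsilon/2,\eta}$ as in step 4; if no saturation is used, pick approximate witnesses, with $\Phi_\eta(u_i)$ small. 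The triangle inequality then gives $\|a-\varphi(a)1\|_\varphi<\epsilon$. Hence $a\in\C$.

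\emph{Main obstacle.} The delicate step is handling approximate witnesses: elements $u_i$ in $S_2$ with $\Phi_\eta(u_i)$ small, rather than $0$, need not lie in $\tilde{\mathbf{U}}_{1+\eta}$. I expect two routes around this. The first is to use saturation so that the infimum is attained, together with Corollary \ref{BCpreceq} to transfer the conclusion back down to $M$. The second is to show that $\Phi_\eta(u)<\gamma$ implies $u\in\tilde{\mathbf{U}}_{1+\eta'}$, where $\eta'$ depends only on $\eta$ and $\gamma$ and tends to $\eta$ as $\gamma\to0$, via Lemma \ref{lem Kbounded}(iii). I would try the saturation route first.
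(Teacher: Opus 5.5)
Your argument is correct and is essentially the paper's proof: soundness comes from Proposition \ref{boundonconvex}, and for the converse you pass to $(M,\varphi)^\u$, use countable saturation so that the infima in $\theta_{\epsilon,\eta}$ are attained by genuine elements of $\tilde{\mathbf{U}}_{1+\eta}$, run the (c)$\Rightarrow$(a) argument of Proposition \ref{prop tbprop1.3}, and pull the conclusion back via Corollary \ref{BCpreceq}; your smoothing step $a\mapsto\sigma^\varphi_{F_r}(a)$, which uses the $\sigma^\varphi$-invariance of $\BC$, usefully makes explicit a reduction the paper leaves implicit, since the axioms only give condition (c) for totally bounded $a$. Do commit to the saturation route, though: the fallback you float is false as stated, because $\Phi_\eta$ controls right boundedness only up to an additive error (in a diffuse tracial algebra, $u=1+p$ with $\tau(p)$ tiny has $\Phi_\eta(u)$ tiny yet $\|u\|=2$, so $u\notin\tilde{\mathbf{U}}_{1+\eta'}$ for any $\eta'<1$).
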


\begin{proof}
By the previous proposition, the elements of $\cal K$ with trivial bicentralizer model these axioms.  Conversely, suppose that $(M,\varphi)$ models these axioms.  By Corollary \ref{BCpreceq}, it is enough to show that $(M,\varphi)^\u$ has trivial bicentralizer, where $\u$ is any nonprincipal ultrafilter on $\bb N$.  Fix $a\in S_1(M,\varphi)^\u$ and $\eta>0$.  Let $u_1,\ldots,u_N\in S_2(M,\varphi)^\u$ realize the infimum; this is possible by countable saturation of the ultraproduct.  Then $u_i\in \tilde{\mathbf{U}}_{1+\eta}((M,\varphi)^\u)$ for all $i$.  By Proposition \ref{prop tbprop1.3}, we have that $(M,\varphi)^\u$ has trivial bicentralizer, as desired.
\end{proof}

\begin{remark}
Suppose that $\cal K$ is $\forall\exists$-axiomatizable, such as in the case that $\cal K$ is the class of III$_1$ factors.  Note then that the axiomatization we just gave is $\forall\exists\forall$, even though, by Corollary \ref{bicentralizeraxiomatizable}, there is in principle a set of $\forall\exists$-axioms for this class.
\end{remark}

\section{Is the bicentralizer a zeroset?}\label{sec BCP and zeroset}

Recall from Subsection \ref{BCsection} that if there is a counterexample to the bicentralizer problem, then there is a counterexample with a self-bicentralizing state.  This allows us to prove the following equivalent formulation of the bicentralizer problem:

\begin{thm}\label{prop zero iff trivialBC}
    The following are equivalent: 
    \begin{enumerate}
        \item The bicentralizer problem has a positive solution. 
        \item For every family $(M_i,\varphi_i)_{i\in I}$ of W$^*$-probability spaces with $M_i$ a $\mathrm{III}_1$ factor and every ultrafilter $\u$ on $I$, one has 
        $$\prod_\u \BC(M_i,\varphi_i)\subseteq \BC\left(\prod_\u (M_i,\varphi_i)\right).$$
        \item The same as (2), but only for ultra\emph{powers} with respect to countable index sets.
    \end{enumerate}
\end{thm}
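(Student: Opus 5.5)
We prove (1)$\Rightarrow$(2)$\Rightarrow$(3)$\Rightarrow$(1).

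\emph{(1)$\Rightarrow$(2).} By Remark \ref{rem separableunnecessary}, a positive solution implies $\BC(M,\varphi)=\bb C$ for every III$_1$ factor $M$ of arbitrary density character. Hence each $\BC(M_i,\varphi_i)=\bb C$, and $\prod_\u \BC(M_i,\varphi_i)=\bb C\subseteq \BC(\prod_\u(M_i,\varphi_i))$, since the bicentralizer of any W$^*$-probability space contains the scalars. This direction needs no assumption on $\u$ (and indeed no knowledge of the type of the ultraproduct).

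\emph{(2)$\Rightarrow$(3)} is trivial.

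\emph{(3)$\Rightarrow$(1).} This is the main direction; we argue by contraposition. Suppose $M$ is a III$_1$ factor with separable predual and $\BC(M,\varphi)\neq\bb C$. As recalled in Subsection \ref{BCsection}, $\tilde M:=\BC(M,\varphi)$ with $\tilde\varphi:=\varphi|_{\tilde M}$ is a III$_1$ factor, and $\tilde\varphi$ is self-bicentralizing and ergodic, i.e. $\BC(\tilde M,\tilde\varphi)=\tilde M$ and $\tilde M_{\tilde\varphi}=\bb C$. It is also separable, being a subalgebra of $M$ with expectation. Fix a nonprincipal ultrafilter $\u$ on $\N$. Applying (3) to the constant family $(\tilde M,\tilde\varphi)$ gives
\[
\tilde M^\u=\BC(\tilde M,\tilde\varphi)^\u\subseteq \BC(\tilde M^\u,\tilde\varphi^\u).
\]
Thus $\tilde\varphi^\u$ is self-bicentralizing. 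Since $\BC\subseteq (M_\varphi)'\cap M$ in general, it follows that $\tilde M^\u_{\tilde\varphi^\u}$ is contained in the center of $\tilde M^\u$. But $\tilde M^\u$ is a factor by Proposition \ref{typeofultraproduct}, so $\tilde M^\u_{\tilde\varphi^\u}=\bb C$.

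The step I expect to be the main obstacle is deriving a contradiction from $\tilde M^\u_{\tilde\varphi^\u}=\bb C$. The known fact to invoke is that for a III$_1$ factor with separable predual and any faithful normal state, the ultrapower centralizer $\tilde M^\u_{\tilde\varphi^\u}$ is nontrivial, in fact a II$_1$ factor. This is the fact used in the Remark after Lemma \ref{lem representing u by 1+deltabdd}, relying on Connes--St\o rmer transitivity / \cite{AH14}. If one prefers to stay within the paper, one can argue as follows. Since $\tilde M$ is III$_1$, Connes--St\o rmer transitivity gives, for each $n$, unitaries $u_n$ with $\|u_n\tilde\varphi-\tilde\varphi u_n\|<1/n$ that are far from scalars in $\|\cdot\|_{\tilde\varphi}$. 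This uses that $\tilde\varphi$ is approximately unitarily equivalent to $\tilde\varphi\circ\operatorname{Ad}(w)$ for a nonscalar unitary $w$ in a small centralizer-type algebra, or alternatively the existence of nontrivial asymptotically centralizing sequences. Then $(u_n)_\u\in \tilde M^\u_{\tilde\varphi^\u}$ by Lemma \ref{lem asymptotic centralizer}, and it is nonscalar, a contradiction. Hence $\BC(M,\varphi)=\bb C$, which is (1).
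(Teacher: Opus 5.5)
Your proposal is correct and follows the paper's proof essentially verbatim. You pass to a III$_1$ factor with a self-bicentralizing (ergodic) state and apply (3) to its ultrapower over $\N$, so that $\varphi^\u$ is self-bicentralizing and hence ergodic; your derivation of ergodicity via $\BC\subseteq (M_\varphi)'\cap M$ and factoriality just makes explicit the paper's one-line ``hence it is an ergodic state.'' You then contradict the fact from \cite[Proposition 4.24]{AH14} that $M^\u_{\varphi^\u}$ is a II$_1$ factor. Your optional Connes--St\o rmer sketch is too vague to stand as written, but it is unnecessary given that citation.
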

\begin{proof}
    The implication (1) implies (2) follows from Remark \ref{rem separableunnecessary} while the implication (2) implies (3) is obvious. Assume, towards a contradiction, that (3) holds and yet the bicentralizer problem has a negative solution.  By the previous discussion, there is a III$_1$ factor $M$ which has a self-bicentralizing state $\varphi$. Fix a nonprincipal ultrafilter $\u$ on $\N$. By (3), we have 
    \[(M^{\u},\varphi^{\u})=(\BC(M,\varphi)^{\u},\varphi^{\u})\subseteq \BC(M^{\u},\varphi^{\u})\subseteq (M^{\u},\varphi^{\u}),\]
    whence 
    \[(M^{\u},\varphi^{\u})=\BC(M^{\u},\varphi^{\u}).\]
    This implies that $\varphi^{\u}$ is a self-bicentralizing state on $M^{\u}$, hence it is an ergodic state. However, this contradicts the fact that $M^{\u}$ is a type III$_1$ factor with strictly homogeneous state space, so that  $M^{\u}_{\varphi^{\u}}$ is a II$_1$ factor (see \cite[Proposition 4.24]{AH14}). 
\end{proof}

Condition (2) in the previous theorem has model-theoretic meaning, namely, it means that the bicentralizer is a \textbf{$T_{\mathrm{III}_1}$-zeroset}, where $T_{\mathrm{III}_1}$ is the theory of III$_1$ factors.  For an explanation of the relevant model theory, see Appendix \ref{definabilitysection}.  As such, we can summarize Theorem \ref{prop zero iff trivialBC} as saying:  the bicentralizer problem has a positive solution if and only if the bicentralizer is a $T_{\mathrm{III}_1}$-zeroset.

We can use the previous theorem to give a quantitative reformulation of the bicentralizer problem having a positive solution.  Before doing so, we need the following:

\begin{lem}\label{saturatedlemma}
Suppose that $(M,\varphi)$ is an $\aleph_1$-saturated W$^*$-probability space.  Then whenever $a\notin \BC(M,\varphi)$, there is $u\in \mathbf{U}(M_\varphi)$ with $u^*au\not=a$.
\end{lem}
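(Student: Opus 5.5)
Assume $(M,\varphi)$ is $\aleph_1$-saturated and $a\notin \BC(M,\varphi)$. By Lemma \ref{lem tildeepsilon and BC}, $\ep:=\lim_{\delta\to 0}\tilde{\ep}_M(a,\delta)>0$. Since $\tilde{\ep}_M(a,\cdot)$ is monotone in $\delta$, for every $n\ge 1$ we may choose $v_n\in \tilde{\mathbf{U}}_{1+1/n}(M)$ with $\|v_n^*av_n-a\|_\varphi\ge \ep/2$. The aim is to use saturation to produce a single element $u$ that is simultaneously in every $\tilde{\mathbf{U}}_{1+1/n}(M)$ and still moves $a$ by at least $\ep/2$.

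We work in the sort $S_2$; we may take $n\ge 1$ so that $1+1/n\le 2$. The conditions to realize are the countably many conditions with parameter $a$,
\[
\Phi_{1/n}(y)=0\ (n\ge 1),\qquad \tfrac{\ep}{2}\dotminus \|y^*ay-a\|_\varphi=0.
\]
Here $\Phi_\eta$ is the formula from the lemma preceding Theorem \ref{thm trivialBCaxiomatizable}, whose zeroset is $\tilde{\mathbf{U}}_{1+\eta}$. The parameter $a$ lies in some sort $S_K$ by density of totally bounded elements, or we approximate appropriately. One subtlety is that $a$ need not be totally bounded. To handle this, I would replace $a$ by a totally bounded $a'$ close to it in $\|\cdot\|_\varphi^\#$, for instance $\sigma_{F_r}^\varphi(a)$ for large $r$, and check that the estimate survives. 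The approximation is compatible because conjugation by $y\in\tilde{\mathbf{U}}_{1+1/n}$ changes $\|\cdot\|_\varphi$-distances by at most a factor $(1+1/n)^2\|y\|$, and it costs at most an $\ep/8$ adjustment.

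Since $\Phi_{1/n}\ge \Phi_{1/m}$ for $n\ge m$, the element $v_n$ satisfies all conditions with index $\le n$ together with the last one. The type is therefore finitely satisfiable, and $\aleph_1$-saturation yields $u\in S_2(M)$ with $u\in\bigcap_n\tilde{\mathbf{U}}_{1+1/n}(M)$ and $\|u^*au-a\|_\varphi\ge \ep/2>0$.

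It remains to identify $u$. It is totally $(1+1/n)$-bounded for every $n$, hence totally $1$-bounded, since right $K$-boundedness is a closed condition in $K$. Moreover $u^*u=uu^*=1$, because $\|u^*u-1\|_\varphi\le 1/n$ for all $n$ and $\varphi$ is faithful. So $u$ is a totally $1$-bounded unitary, and Corollary \ref{totallyoneboundedunitary} gives $u\in\mathbf{U}(M_\varphi)$. The main obstacle I expect is the bookkeeping of making $a$ a legitimate parameter in a sort, together with the uniform-continuity estimate that justifies replacing it by a totally bounded approximant. Everything else is a routine compactness argument.
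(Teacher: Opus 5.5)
Your argument is correct, but it gets its witnesses from a different place than the paper does.

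The paper starts from Haagerup's characterization (Fact \ref{haagerupBCchar}). It picks genuine unitaries $u_n$ with $\|u_n\varphi-\varphi u_n\|\le 1/n$ and $\|u_n^*au_n-a\|_\varphi\ge\epsilon$. Such $u_n$ need not be totally bounded, so they are not elements of any sort and cannot witness finite satisfiability in $M$ directly. The paper therefore passes to $u=(u_n)_{\mathcal U}\in\mathbf{U}(M^{\mathcal U}_{\varphi^{\mathcal U}})$ via Lemma \ref{lem asymptotic centralizer}. It notes that $u\in S_1(M^{\mathcal U})$ realizes the type ``$x\in S_1$ is a unitary moving $a$ by $\epsilon$ and almost commuting with $\varphi$''. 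It then gets finite satisfiability in $M$ from elementarity of the diagonal embedding $M\hookrightarrow M^{\mathcal U}$.

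You instead start from the totally bounded characterization, Lemma \ref{lem tildeepsilon and BC}. Its witnesses $v_n\in\tilde{\mathbf{U}}_{1+1/n}(M)\subseteq S_2(M)$ already live in a sort and exactly satisfy each finite fragment of your type, so no ultrapower is needed inside this proof. The ultrapower and Fej\'er-kernel work is absorbed into Lemma \ref{lem tildeepsilon and BC}, through Lemma \ref{lem representing u by 1+deltabdd} and Proposition \ref{prop compactspec}. So the paper's route relies only on Haagerup's classical description plus elementarity of the ultrapower. Yours stays inside $M$ and reuses the formulas $\Phi_\eta$ from the axiomatization, at the cost of depending on the Section 3 machinery.

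The endgame coincides: the realized element is a totally $1$-bounded unitary, so Corollary \ref{totallyoneboundedunitary} places it in $\mathbf{U}(M_\varphi)$. This also shows that the almost-commutation conditions in the paper's type are redundant.

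Your treatment of a non-totally-bounded $a$ via $a'=\sigma^\varphi_{F_r}(a)$ addresses a point the paper leaves implicit (its only application has $a\in S_1$). To close it, note that for the realized $u\in\mathbf{U}(M_\varphi)$ conjugation is $\|\cdot\|_\varphi$-isometric. Hence $\|u^*au-a\|_\varphi\ge\|u^*a'u-a'\|_\varphi-2\|a-a'\|_\varphi>0$ once $\|a-a'\|_\varphi$ is small enough.
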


\begin{proof}
Since $a\notin \BC(M,\varphi)$, there is $\epsilon>0$ such that $\inf_{\delta>0}\epsilon_M(a,\delta)\geq \epsilon$, meaning that, for each $n\geq 1$, there is a unitary $u_n\in M$ such that $\|u_n\varphi-\varphi u_n\|\leq 1/n$ and $\|u_n^*au_n-a\|_\varphi \geq \epsilon$.  By Lemma \ref{lem asymptotic centralizer}, we may consider $u:=(u_n)_\u\in M^\cU_{\varphi^\u}$.  Then $u$ belongs to the centralizer of $\varphi^\u$ and is in particular totally $1$-bounded.

Now consider the partial type $\Gamma(x)$, where $x$ ranges over $S_1$, consisting of the following formulae:
\begin{itemize}
    \item $\max(d(xx^*,1),d(x^*x,1))=0$
    \item $\epsilon\dotminus \|x^*ax-a\|_\varphi=0$
    \item $\sup_{b\in S_1}|\varphi(bx)-\varphi(xb)|\dotminus 1/n=0$, one such condition for each $n\geq 1$.
\end{itemize}
Then $\Gamma$ is realized in $M^\u$ by $u$, whence $\Gamma$ is finitely satisfiable in $M$.  Since $M$ is $\aleph_1$-saturated, it follows that $\Gamma$ is realized in $M$, as desired.
\end{proof}

\begin{thm}\label{bicentzeroset}
The following are equivalent:
\begin{enumerate}
    \item The bicentralizer problem has a positive solution.
    \item For all $\epsilon>0$, there is $\delta=\delta(\epsilon)$ such that, for all $(M,\varphi)\models T_{\mathrm{III}_1}$ and $a\in S_1(\BC(M,\varphi))$, we have $\tilde{\epsilon}_M(a,\delta)<\epsilon$.
    \item The same as (2), but with $\tilde{\epsilon}_M$ replaced by $\epsilon_M$.
\end{enumerate}
\end{thm}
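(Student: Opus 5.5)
The proof goes through Theorem \ref{prop zero iff trivialBC}, whose condition (2) says that the bicentralizer is a $T_{\mathrm{III}_1}$-zeroset, together with the Haagerup-type characterizations Fact \ref{haagerupBCchar} and Lemma \ref{lem tildeepsilon and BC}.

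For (1)$\Rightarrow$(2) and (1)$\Rightarrow$(3), assume a positive solution. By Remark \ref{rem separableunnecessary}, every model of $T_{\mathrm{III}_1}$ then has $\BC(M,\varphi)=\bb C$, regardless of density character. Hence $S_1(\BC(M,\varphi))$ consists of scalars $a=\lambda 1$ with $|\lambda|\le 1$. For a scalar, $u^*au-a=\lambda(u^*u-1)$. This vanishes for unitaries, so $\epsilon_M(a,\delta)=0$ and (3) holds trivially. For $u\in\tilde{\mathbf{U}}_{1+\delta}(M)$ we get $\|u^*au-a\|_\varphi\le|\lambda|\,\|u^*u-1\|_\varphi\le\delta$, so $\tilde\epsilon_M(a,\delta)\le\delta$. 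Taking $\delta(\epsilon)=\epsilon/2$ gives (2).

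For the converses we verify condition (3) of Theorem \ref{prop zero iff trivialBC}. Fix a III$_1$ factor $(M,\varphi)$, a nonprincipal ultrafilter $\u$ on $\N$, and $a=(a_n)_\u\in\BC(M,\varphi)^\u$. The bicentralizer is a von Neumann subalgebra whose state-preserving expectation is the restriction of the ambient one. I would therefore first arrange that $a\in S_1(\BC(M,\varphi)^\u)$, scaling if necessary. Since $\BC(M,\varphi)$ is $\sigma^\varphi$-invariant and totally bounded elements are computed inside the subalgebra with the restricted state, Fact \ref{TBelementsinultraproducts} then lets us take $a_n\in S_1(\BC(M,\varphi))$. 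By linearity and density of the totally bounded elements in $\BC(M,\varphi)^\u$ (closedness of the bicentralizer takes care of the approximation), this suffices.

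Assume (2). Given $\epsilon>0$, let $\delta=\delta(\epsilon)$, so that $\tilde\epsilon_M(a_n,\delta)<\epsilon$ for all $n$. Take $v\in\tilde{\mathbf{U}}_{1+\delta/2}(M^\u)$. Realizing $\tilde{\mathbf{U}}$ as the zeroset of $\Phi_\eta$ from the preceding lemma, or arguing directly as in Lemma \ref{lem representing u by 1+deltabdd}, we can write $v=(v_n)_\u$ with $v_n\in\tilde{\mathbf{U}}_{1+\delta}(M)$. The bound on $\|v_n^*v_n-1\|$ holds only $\u$-almost surely, which is why the slack $\delta/2\to\delta$ is needed. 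Then $\|v^*av-a\|=\lim_\u\|v_n^*a_nv_n-a_n\|\le\epsilon$, so $\tilde\epsilon_{M^\u}(a,\delta/2)\le\epsilon$. Lemma \ref{lem tildeepsilon and BC} now gives $a\in\BC(M^\u,\varphi^\u)$.

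Assume (3). The same argument applies to unitaries $u\in M^\u$ with $\|u\varphi^\u-\varphi^\u u\|\le\delta/2$. Write $u=(u_n)_\u$ with $u_n$ unitaries. By Fact \ref{ultralimitbimodule}, $\|u_n\varphi-\varphi u_n\|<\delta$ for $\u$-almost all $n$, so $\epsilon_{M^\u}(a,\delta/2)\le\epsilon$. This is exactly the argument of Proposition \ref{upanddown}(2), and Fact \ref{haagerupBCchar} gives $a\in\BC(M^\u,\varphi^\u)$.

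In both cases condition (3) of Theorem \ref{prop zero iff trivialBC} holds, hence so does (1). The main obstacle is the lifting step: representing totally bounded elements of $\BC(M,\varphi)^\u$ by totally bounded elements of $\BC(M,\varphi)$, and elements of $\tilde{\mathbf{U}}_{1+\delta/2}(M^\u)$ by elements of $\tilde{\mathbf{U}}_{1+\delta}(M)$. I would handle both through Fact \ref{TBelementsinultraproducts} applied to the subalgebra, respectively to $M$, with the small loss in $\delta$ absorbed as above.
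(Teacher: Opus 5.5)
Your proof is correct. The forward directions coincide with the paper's, but the converse directions are organized differently.

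The paper argues by contradiction using $\aleph_1$-saturation. If $a=(a_n)_{\mathcal U}\notin\BC$ of the ultraproduct, Lemma \ref{saturatedlemma} produces an honest unitary $u$ in the centralizer of the ultraproduct state with $\|u^*au-a\|_\varphi>\epsilon$. Only this single centralizer unitary is then lifted: either to contractions in $\tilde{\mathbf{U}}_{1+\delta}(M_n)$ via the Fej\'er smoothing of Lemma \ref{lem representing u by 1+deltabdd} (with no loss in $\delta$), or to unitaries $u_n$ with $\|u_n\varphi_n-\varphi_nu_n\|\to 0$.

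You instead transfer the uniform modulus directly, in the style of Proposition \ref{upanddown}(2). Lifting an arbitrary element of $\tilde{\mathbf{U}}_{1+\delta/2}(M^{\mathcal U})$ (resp.\ an arbitrary $\delta/2$-almost centralizing unitary) gives $\tilde\epsilon_{M^{\mathcal U}}(a,\delta(\epsilon)/2)\le\epsilon$ (resp.\ the same bound for $\epsilon_{M^{\mathcal U}}$). You then conclude with Lemma \ref{lem tildeepsilon and BC} (resp.\ Fact \ref{haagerupBCchar}).

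Your route buys the following. It uses no saturation, so it works verbatim for arbitrary ultraproducts and ultrafilters, giving condition (2) of Theorem \ref{prop zero iff trivialBC} directly. It also shows that the modulus $\delta(\epsilon)$ survives ultraproducts with only a factor-$2$ loss, and it makes the (3)$\Rightarrow$(1) direction particularly light.

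The price, for (2)$\Rightarrow$(1), is that you need the nontrivial lifting result Fact \ref{TBelementsinultraproducts}(2) for general totally bounded elements. You also need Lemma \ref{lem tildeepsilon and BC}, whose proof itself passes through an ultrapower and Lemma \ref{lem representing u by 1+deltabdd}; the paper only ever lifts centralizer unitaries. On the other hand, you are more explicit than the paper about reducing to representatives $a_n\in S_1(\BC(M,\varphi))$, via $\sigma^\varphi$-invariance of the bicentralizer, density, and closedness. The paper leaves that step implicit.

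One caution about lifting $v\in\tilde{\mathbf{U}}_{1+\delta/2}(M^{\mathcal U})$: of the options you list, only Fact \ref{TBelementsinultraproducts}(2) works as stated.
\begin{itemize}
\item Passing the zeroset formula $\Phi_{\delta/2}$ through \L o\'s's theorem only gives $\Phi_{\delta/2}(v_n)\to 0$. That is an additive error in $\varphi(v_n^*b^*bv_n)\le(1+\delta/2)^2\varphi(b^*b)$, which does not yield right $(1+\delta)$-boundedness, because the error is not proportional to $\varphi(b^*b)$.
\item The smoothing in Lemma \ref{lem representing u by 1+deltabdd} recovers the original element only because a centralizer unitary is fixed by the modular flow. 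A general $v$ is not.
\end{itemize}
Since you ultimately commit to Fact \ref{TBelementsinultraproducts}(2) — lifting $v$ to totally $(1+\delta/2)$-bounded $v_n$ and using the slack $\delta/2<\delta$ for the almost-unitarity conditions — the argument stands.
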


\begin{proof}
Since $\|u^*u-1\|_\varphi\leq \delta$ whenever $u\in \tilde{\mathbf{U}}_{1+\delta}$, the implication (1) $\Rightarrow$ (2) follows immediately by taking $\delta:=\epsilon/2$.  The implication (1) $\Rightarrow$ (3) trivially holds when $\delta:=\epsilon$.

Now suppose that (2) holds.  We verify condition (3) in Theorem \ref{prop zero iff trivialBC}.  Towards this end, take a family $(M_n,\varphi_n)_{n\in \bb N}$ of W$^*$-probability spaces, where each $M_n$ is a $\mathrm{III}_1$ factor, an ultrafilter $\u$ on $\bb N$, and elements $a_n\in S_1(\BC(M_n,\varphi_n))$; it suffices to show that $a:=(a_n)_\u\in \BC(M,\varphi)$, where $(M,\varphi):=\prod_\u (M_n,\varphi_n)$.  If the desired conclusion fails, then by Lemma \ref{saturatedlemma}, there is $u\in \mathbf{U}(M_\varphi)$ and $\epsilon>0$ such that $\|u^*au-a\|_\varphi>\epsilon$.  Set $\delta:=\delta(\epsilon)$ and write $u=(x_n)_\u$ with $x_n\in \tilde{\mathbf{U}}_{1+\delta}(M_n)$, which is possible by Lemma \ref{lem representing u by 1+deltabdd}.  Then $\|x_n^*a_nx_n-a_n\|_{\varphi_n}>\epsilon$ for $\u$-almost all $n$, which shows $\tilde{\epsilon}_{M_n}(a_n,\delta)> \epsilon$ for $\u$-almost all $n$, contradicting the definition of $\delta$.

The proof that (3) implies (1) is similar.  Using the notation of the previous paragraph, one can write $u=(u_n)_\u$ with each $u_n\in \mathbf{U}(M_n)$.  Since $$\|u\varphi-\varphi u\|=\lim_{n\to \u}\|u_n\varphi_n-\varphi_n u_n\|,$$ we have that $\|u_n\varphi_n-\varphi_n u_n\|<\delta$ for $\u$-almost all $n$, which shows $\epsilon_{M_n}(a_n,\delta)> \epsilon$ for $\u$-almost all $n$, contradicting the definition of $\delta$.
\end{proof}

\appendix 
\section{Proof of Proposition \ref{prop compactspec}}\label{appendixA}
In this appendix, we give a proof of Proposition \ref{prop compactspec}. 
The first lemma is well-known in Schwartz distribution theory. 
\begin{lem}\label{lem Linfty is tempered}
Every $f\in L^\infty(\mathbb R)$ defines a tempered distribution. More precisely,
the map
\[
L^\infty(\mathbb R)\ni f \longmapsto T_f\in \mathcal S'(\mathbb R),
\qquad
\langle T_f,\phi\rangle:=\int_{\mathbb R} f(t)\phi(t)\,dt
\]
is a well-defined linear embedding.
\end{lem}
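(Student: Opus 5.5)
The plan is to check two things: that $\langle T_f,\phi\rangle$ defines a continuous linear functional on the Schwartz space $\mathcal S(\mathbb R)$, and that the assignment $f\mapsto T_f$ is linear and injective. Here $L^\infty(\mathbb R)$ is understood modulo null sets, as usual.

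For well-definedness, I will fix $f\in L^\infty(\mathbb R)$ and $\phi\in\mathcal S(\mathbb R)$. The integral converges absolutely, and we have the estimate
\[
|\langle T_f,\phi\rangle|\le \|f\|_\infty\int_{\mathbb R}|\phi(t)|\,dt\le \|f\|_\infty\int_{\mathbb R}\frac{dt}{1+t^2}\cdot\sup_{t\in\mathbb R}(1+t^2)|\phi(t)| = \pi\|f\|_\infty\, p(\phi),
\]
where $p(\phi):=\sup_t(1+t^2)|\phi(t)|$. This quantity is dominated by a finite sum of the standard Schwartz seminorms $\sup_t|t^k\phi^{(m)}(t)|$, namely $k=0,2$ and $m=0$. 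The map $\phi\mapsto\langle T_f,\phi\rangle$ is clearly linear, so the bound gives continuity in the Fréchet topology of $\mathcal S(\mathbb R)$, and hence $T_f\in\mathcal S'(\mathbb R)$. Linearity of $f\mapsto T_f$ is immediate from linearity of the integral. Changing $f$ on a null set does not change $T_f$, so the map is well defined on equivalence classes.

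The only point needing a little care is injectivity. Suppose $T_f=0$. Since $C_c^\infty(\mathbb R)\subseteq\mathcal S(\mathbb R)$, we get $\int f\phi=0$ for all $\phi\in C_c^\infty(\mathbb R)$. I would then apply the fundamental lemma of the calculus of variations (du Bois-Reymond): a locally integrable function annihilating $C_c^\infty$ vanishes almost everywhere. To keep the argument self-contained, I would argue via mollification. Fix an approximate identity $\rho_\varepsilon\in C_c^\infty$ and a cutoff $\chi\in C_c^\infty$. The function $g:=\chi f$ lies in $L^1$, and
\[
(g*\rho_\varepsilon)(x)=\int f(t)\chi(t)\rho_\varepsilon(x-t)\,dt=0
\]
for every $x$, because $t\mapsto\chi(t)\rho_\varepsilon(x-t)$ belongs to $C_c^\infty$. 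Since $g*\rho_\varepsilon\to g$ in $L^1$, it follows that $g=0$ almost everywhere. As $\chi$ was an arbitrary cutoff, $f=0$ almost everywhere.

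An alternative route to injectivity, which I would only use if preferred, is to pass through Fourier transforms: pairing with Gaussians $e^{-(t-x)^2/\varepsilon}$ gives $f*\text{Gaussian}\equiv0$, and these convolutions converge to $f$ almost everywhere (Lebesgue points). The mollifier argument is simpler, so that is the one I would write out. Nothing here is a real obstacle; the main thing to state explicitly is the identification of $L^\infty$ with classes of almost-everywhere-equal functions, so that "embedding" means injective on those classes.
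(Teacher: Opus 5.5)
Your proposal is correct and matches the paper's proof: the same bound $|\langle T_f,\phi\rangle|\le \pi\|f\|_\infty\sup_{t}(1+t^2)|\phi(t)|$ gives continuity, and injectivity comes from restricting to $C_c^\infty(\mathbb R)\subseteq\mathcal S(\mathbb R)$. The only difference is that you prove the fundamental lemma (vanishing almost everywhere) by mollification, whereas the paper simply invokes it.
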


\begin{proof}
Fix $f\in L^\infty(\mathbb R)$. For each $\phi\in \mathcal S(\mathbb R)$, since
$\phi$ is rapidly decreasing, it belongs to $L^1(\mathbb R)$, whence the
integral
\[
\langle T_f,\phi\rangle:=\int_{\mathbb R} f(t)\phi(t)\,dt
\]
is well-defined and satisfies
\[
|\langle T_f,\phi\rangle|
\le \|f\|_{L^\infty}\|\phi\|_{L^1}.
\]
Thus $T_f$ is a linear functional on $\mathcal S(\mathbb R)$.

It remains to prove continuity for the Schwartz topology. For every
$\phi\in \mathcal S(\mathbb R)$,
\[
\|\phi\|_{L^1}
=
\int_{\mathbb R} |\phi(t)|\,dt
=
\int_{\mathbb R} \frac{1}{1+t^2}(1+t^2)|\phi(t)|\,dt
\le
\left(\int_{\mathbb R}\frac{dt}{1+t^2}\right)
\sup_{t\in\mathbb R}(1+t^2)|\phi(t)|.
\]
Hence
\[
|\langle T_f,\phi\rangle|
\le
\|f\|_{L^\infty}\|\phi\|_{L^1}
\le
\pi \|f\|_{L^\infty}\sup_{t\in\mathbb R}(1+t^2)|\phi(t)|.
\]
The map
\[
\phi\longmapsto \sup_{t\in\mathbb R}(1+t^2)|\phi(t)|
\]
is one of the standard seminorms defining the Schwartz topology on
$\mathcal S(\mathbb R)$. Therefore $T_f$ is continuous on $\mathcal S(\mathbb R)$,
that is, $T_f\in \mathcal S'(\mathbb R).$

Finally, if $T_f=0$ in $\mathcal S'(\mathbb R)$, then
\[
\int_{\mathbb R} f(t)\phi(t)\,dt=0
\qquad
\]
for all $\phi\in \mathcal S(\mathbb R)$.
Since $C_c^\infty(\mathbb R)\subset \mathcal S(\mathbb R)$, it follows that
$f=0$ as a distribution, hence $f=0$ almost everywhere. Therefore the map
$f\mapsto T_f$ is injective.
\end{proof}

The next lemma is certainly known, but we were unable to find a reference in the literature and so we provide a proof.
\begin{lem}\label{lem Arvesonsp}
Let $M$ be a von Neumann algebra, let $\alpha=(\alpha_t)_{t\in\mathbb R}$ be a
$\sigma$-weakly continuous one-parameter automorphism group on $M$, and fix $a>0$. Then the following two conditions are equivalent: 
\begin{itemize}
    \item[{\rm (1)}] $x\in M(\alpha,[-a,a])$. 
    \item[{\rm (2)}] $\alpha_f(x)=0$ for every $f\in L^1(\mathbb R)$ such that $\operatorname{supp}(\hat{f})\cap [-a,a]=\emptyset$. 
\end{itemize}
\end{lem}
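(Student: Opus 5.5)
The equivalence comes down to two standard facts about the Arveson spectrum: the spectrum of $x$ is the hull of the closed ideal $I_x:=\{f\in L^1(\mathbb R) : \alpha_f(x)=0\}$, and the real line is a set of spectral synthesis for singletons (Wiener's Tauberian theorem in its local form). I would prove the two directions separately.

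For (2)$\Rightarrow$(1), I show that any $\lambda\notin[-a,a]$ lies outside $\Sp_\alpha(x)$. Choose a small closed interval $J$ around $\lambda$ with $J\cap[-a,a]=\emptyset$. Then take $f\in L^1(\mathbb R)$ with $\hat f(\lambda)\neq 0$ and $\operatorname{supp}\hat f\subset J$; a translated and modulated Fej\'er or de la Vall\'ee Poussin kernel does the job. By (2), $\alpha_f(x)=0$. By the definition of $\Sp_\alpha(x)$, this forces $\lambda\notin\Sp_\alpha(x)$. So $\Sp_\alpha(x)\subset[-a,a]$.

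For (1)$\Rightarrow$(2), fix $f\in L^1$ with $\operatorname{supp}\hat f\cap[-a,a]=\emptyset$. Here $\operatorname{supp}\hat f$ is closed but possibly unbounded. I would proceed in three steps.

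\textbf{Step 1: reduce to compact support.} Write $f=\lim_n f*D_{n}$ in $L^1$, where $D_n$ is the de la Vall\'ee Poussin kernel, whose transform equals $1$ on $[-n,n]$. Since $f\mapsto\alpha_f(x)$ is norm-continuous from $L^1$ to $M$ (we have $\|\alpha_f(x)\|\le\|f\|_1\|x\|$), it suffices to treat $g=f*D_n$. Its transform $\hat g=\hat f\widehat{D_n}$ has compact support $K$ disjoint from $[-a,a]$.

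\textbf{Step 2: use a partition of unity.} Cover $K$ by finitely many small open intervals $J_k$ whose closures avoid $\Sp_\alpha(x)\subset[-a,a]$. Choose $h_k\in L^1$ with $\widehat{h_k}$ smooth, compactly supported in $J_k$, and $\sum_k\widehat{h_k}=1$ on $K$. Then $g=\sum_k g*h_k$, and it suffices to show $\alpha_{h}(x)=0$ whenever $\operatorname{supp}\hat h$ is a compact set disjoint from $\Sp_\alpha(x)$.

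\textbf{Step 3: the local statement (main obstacle).} This is the standard fact that $\alpha_h(x)=0$ when $\operatorname{supp}\hat h\cap\Sp_\alpha(x)=\emptyset$. I would prove it by duality with a Wiener-type argument. For $\omega\in M_*$, set $\phi(t)=\omega(\alpha_t(x))$, a bounded continuous function. Its distributional Fourier transform is tempered by Lemma \ref{lem Linfty is tempered}, and $\operatorname{supp}\hat\phi\subset\Sp_\alpha(x)$, because $f*\phi=0$ for every $f\in I_x$ while the hull of $I_x$ is $\Sp_\alpha(x)$. Then $\omega(\alpha_h(x))=\int h(t)\phi(t)\,dt$ is the pairing of $\hat\phi$ with the reflection of $\hat h$. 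That pairing vanishes because the supports are disjoint and $\hat h$ is smooth with compact support, so it is a legitimate test function. Since $\omega$ was arbitrary, $\alpha_h(x)=0$.

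The delicate point is justifying $\operatorname{supp}\hat\phi\subset\Sp_\alpha(x)$. For $\mu\notin\Sp_\alpha(x)$, pick $f\in I_x$ with $\hat f(\mu)\ne0$. By Wiener's lemma, for a small neighbourhood $V$ of $\mu$ there is $k\in L^1$ with $\hat k\hat f=1$ on $V$. Hence any test function $\psi$ with Fourier transform supported in $V$ can be written as $\psi=(k*f)*\psi$, and
\[
\langle\phi,\psi\rangle=\langle\phi,f*(k*\psi)\rangle=0,
\]
up to the reflection conventions.
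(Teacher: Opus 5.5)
Your proof is correct, and your (2)$\Rightarrow$(1) is the paper's argument essentially verbatim. For (1)$\Rightarrow$(2) you take a genuinely different and longer route.

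The paper notes that $\operatorname{supp}(\hat f)$, being closed and disjoint from $[-a,a]$, misses some $[-(a+r),a+r]$. It takes $g=D_{a+r,a}$, so $\hat g=1$ on $[-a,a]$ and $\operatorname{supp}\hat g\subset[-(a+r),a+r]$, and concludes in one line. Since $\Sp_\alpha(x)\subset[-a,a]$ one has $x=\alpha_g(x)$, while $\hat f\hat g=0$ forces $f*g=0$. Hence $\alpha_f(x)=\alpha_f(\alpha_g(x))=\alpha_{f*g}(x)=0$.

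This handles unbounded $\operatorname{supp}\hat f$ for free: no truncation and no partition of unity are needed. But it takes as known the standard Arveson-spectrum fact that $\alpha_g(x)=x$ when $\hat g\equiv 1$ on (a neighbourhood of) $\Sp_\alpha(x)$. Your Step 3 is exactly the Wiener local-inverse argument on which that standard fact rests. So your version is self-contained where the paper cites; the cost is the truncation and localization bookkeeping.

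Your route also has a side effect. Step 3 proves directly the content of the paper's next lemma, Lemma \ref{lem arvesoncptspec}, namely $\operatorname{supp}\widehat{f_\omega}\subset-\Sp_\alpha(x)$ in this Fourier convention. The paper instead derives that lemma from the present one. There is no circularity in your order, since you only use the definition of $\Sp_\alpha(x)$, Wiener's lemma, and Lemma \ref{lem Linfty is tempered}.

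One small point to tidy. In Step 3 you use that $\hat h$ is smooth, so that it is a legitimate test function. That holds for $h=h_k$, but not for $h=g*h_k$, because $\hat g=\hat f\,\widehat{D_n}$ is merely continuous ($\widehat{D_n}$ is itself only piecewise linear). So apply Step 3 to $h_k$ itself and finish with $\alpha_{g*h_k}(x)=\alpha_g(\alpha_{h_k}(x))=0$. In fact the partition of unity is unnecessary: a single $h$ with $\hat h\in C_c^\infty$, $\hat h=1$ on $K$, and support avoiding $[-a,a]$ gives $g=g*h$ and hence $\alpha_g(x)=\alpha_g(\alpha_h(x))=0$.
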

\begin{proof}
    We first show that (1) implies (2). Take $f\in L^1(\mathbb R)$ such that $\operatorname{supp}(\hat{f})\cap [-a,a]=\emptyset$. Then since $\pm a\notin \operatorname{supp}(\hat{f})$, there exists $r>0$ such that $\operatorname{supp}(\hat{f})\cap [-(a+r),a+r]=\emptyset$. Let $g=D_{r+a,a}\in L^1(\R)$, which has the property that $\hat{g}=1$ on $[-a,a]$ and $\operatorname{supp}(\hat{g})\subset [-(a+r),a+r]$. Since $\operatorname{Sp}_{\alpha}(x)\subset [-a,a]$, we have $x=\alpha_g(x)$, and thus 
    \[\alpha_f(x)=\alpha_f(\alpha_g(x))=\alpha_{f*g}(x).\]
    Since $\widehat{f*g}=\hat{f}\hat{g}=0$, we have $f*g=0$ and thus $\alpha_f(x)=0$.\\
    
    We now prove that (2) implies (1).  Take $t\in \mathbb{R}\setminus [-a,a]$; we show that $t\notin \operatorname{Sp}_{\alpha}(x)$. Take $r>0$ such that $[t-r,t+r]\cap [-a,a]=\emptyset$. Choose $f\in L^1(\mathbb{R})$ such that $\hat{f}(t)>0$ and $\operatorname{supp}(\hat{f})\subset [t-r,t+r]$. Then $\alpha_f(x)=0$ by the hypothesis in $(2)$, but $\hat{f}(t)\neq 0$, which by definition of the Arveson spectrum asserts $t\notin \operatorname{Sp}_{\alpha}(x)$.  
\end{proof}
\begin{lem}\label{lem arvesoncptspec}
Let $M$ be a von Neumann algebra and let $\alpha=(\alpha_t)_{t\in\mathbb R}$ be a
$\sigma$-weakly continuous one-parameter automorphism group on $M$.  Fix $a>0$ and
$x\in M(\alpha,[-a,a])$.  For $\omega\in M_*$, define
\[
f_\omega(t):=\omega(\alpha_t(x)).
\]
Then $f_\omega\in C_b(\mathbb R)$,
and the Fourier transform of $f_\omega$ as a tempered distribution satisfies
\[
\operatorname{supp}(\widehat{f_\omega})\subset [-a,a].
\]
\end{lem}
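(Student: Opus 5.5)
First, $f_\omega\in C_b(\mathbb R)$: continuity comes from the $\sigma$-weak continuity of $t\mapsto\alpha_t(x)$, and $|f_\omega(t)|\le\|\omega\|\|x\|$. By Lemma \ref{lem Linfty is tempered}, $f_\omega$ is then a tempered distribution, so $\widehat{f_\omega}$ makes sense in $\mathcal S'(\mathbb R)$. Its action on a test function is $\langle\widehat{f_\omega},\phi\rangle=\langle f_\omega,\hat\phi\rangle$, with the paper's convention $\hat\phi(\lambda)=\int e^{it\lambda}\phi(t)\,dt$; the reflection/sign convention for the distributional transform has to be matched to this, and I will fix it once at the start.

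To show $\operatorname{supp}(\widehat{f_\omega})\subset[-a,a]$, it suffices to prove $\langle\widehat{f_\omega},\phi\rangle=0$ for every $\phi\in C_c^\infty(\mathbb R)$ with $\operatorname{supp}\phi\cap[-a,a]=\emptyset$. Given such a $\phi$, let $g\in\mathcal S(\mathbb R)\subset L^1(\mathbb R)$ be the Schwartz function whose Fourier transform (paper's convention, possibly composed with $\lambda\mapsto-\lambda$) equals $\phi$. Then Fubini, justified because $f_\omega$ is bounded and $g\in L^1$, together with normality of $\omega$ gives
\[
\int_{\mathbb R} f_\omega(t)g(t)\,dt=\omega\Bigl(\int_{\mathbb R} g(t)\alpha_t(x)\,dt\Bigr)=\omega(\alpha_g(x)).
\]
The pairing $\langle\widehat{f_\omega},\phi\rangle$ is exactly such an integral, $\int f_\omega(t)g(t)\,dt$, possibly with $g$ replaced by $g(-\cdot)$.

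Since $\operatorname{supp}\hat g=\pm\operatorname{supp}\phi$ and both $\operatorname{supp}\phi$ and $-\operatorname{supp}\phi$ are disjoint from the symmetric interval $[-a,a]$, Lemma \ref{lem Arvesonsp} (the direction (1)$\Rightarrow$(2)) gives $\alpha_g(x)=0$. Hence the pairing vanishes and the support claim follows.

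The main obstacle is the bookkeeping of Fourier conventions and reflections: making sure the function $g$ produced from $\phi$ has $\hat g$ supported in $\pm\operatorname{supp}\phi$ under the paper's convention $\hat f(\lambda)=\int e^{it\lambda}f(t)\,dt$. The symmetry of $[-a,a]$ makes any sign mismatch harmless, so the only care needed is to state the convention once and check that Lemma \ref{lem Arvesonsp} is applied to a function whose transform really avoids $[-a,a]$.
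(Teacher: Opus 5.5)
Your proof is correct and follows the paper's argument. The pairing $\langle\widehat{f_\omega},\phi\rangle=\int f_\omega(t)\hat\phi(t)\,dt$ equals $\omega(\alpha_g(x))$ with $g=\hat\phi$, and $\hat g=2\pi\phi(-\cdot)$ has support $-\operatorname{supp}\phi$, which is disjoint from the symmetric interval $[-a,a]$, so Lemma \ref{lem Arvesonsp} gives $\alpha_g(x)=0$. The differences from the paper are cosmetic: you test against $C_c^\infty$ functions rather than Schwartz functions, your sketch drops a harmless factor $2\pi$, and no Fubini step is needed, since $\omega(\alpha_g(x))=\int g(t)\,\omega(\alpha_t(x))\,dt$ is the definition of the $\sigma$-weak integral $\alpha_g(x)$.
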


\begin{proof}
Since $t\mapsto \alpha_t(x)$ is $\sigma$-weakly continuous,
the scalar function $f_\omega$
indeed belongs to $C_b(\mathbb R)$.
Hence $f_\omega$ defines a tempered distribution by
\[
\langle f_\omega,\phi\rangle:=\int_{\mathbb R} f_\omega(t)\phi(t)\,dt,
\qquad \phi\in \mathcal S(\mathbb R).
\]

We must show that $\operatorname{supp}\widehat{f_\omega}\subset [-a,a]$.
By definition of the support of a tempered distribution, it is enough to prove
that
\[
\langle \widehat{f_\omega},\psi\rangle =0
\]
for every $\psi\in \mathcal S(\mathbb R)$ such that
\[
\operatorname{supp}(\psi)\cap [-a,a]=\emptyset.
\]

So let $\psi\in \mathcal S(\mathbb R)$ satisfy
$\operatorname{supp}\psi\cap [-a,a]=\emptyset$. Then, by definition of the
Fourier transform on tempered distributions, we have
\[
\langle \widehat{f_\omega},\psi\rangle
=
\langle f_\omega,\widehat{\psi}\rangle
=
\int_{\mathbb R} f_\omega(t)\widehat{\psi}(t)\,dt.
\]
Substituting $f_\omega(t)=\omega(\alpha_t(x))$, we obtain
\[
\langle \widehat{f_\omega},\psi\rangle
=
\int_{\mathbb R}\widehat{\psi}(t)\,\omega(\alpha_t(x))\,dt.
\]

Now set
\[
g:=\widehat{\psi}\in \mathcal S(\mathbb R)\subset L^1(\mathbb R).
\]
Since $x\in M$ and $t\mapsto \omega(\alpha_t(x))$ is bounded measurable, the
formula
\[
\eta\mapsto \int_{\mathbb R} g(t)\,\eta(\alpha_t(x))\,dt
\qquad \eta\in M_*
\]
defines a bounded linear functional on $M_*$. Hence, by the duality
$M=(M_*)^*$, there exists a unique element $\alpha_g(x)\in M$ such that
\[
\eta(\alpha_g(x))
=
\int_{\mathbb R} g(t)\,\eta(\alpha_t(x))\,dt
\qquad \eta\in M_*.
\]
Applying this with $\eta=\omega$, we get
\[
\langle \widehat{f_\omega},\psi\rangle
=
\omega(\alpha_g(x)).
\]

Since
$x\in M(\alpha,[-a,a])$, by Lemma \ref{lem Arvesonsp}, one has $\alpha_h(x)=0$
for every $h\in L^1(\mathbb R)$ such that $\operatorname{supp}(\widehat{h})\cap [-a,a]=\emptyset$.  
It thus suffices to check that $\operatorname{supp}(\widehat{g})\cap [-a,a]=\emptyset$.

For Schwartz functions $\psi$, we have
\[
\widehat{\widehat{\psi}}(\lambda)=2\pi\,\psi(-\lambda).
\]
Therefore
\[
\widehat{g}(\lambda)=\widehat{\widehat{\psi}}(\lambda)=2\pi\,\psi(-\lambda),
\]
 and so $\operatorname{supp}(\widehat{g})=-\operatorname{supp}(\psi)$.
Since the interval $[-a,a]$ is symmetric and
$\operatorname{supp}(\psi)\cap [-a,a]=\emptyset$, it follows that $\operatorname{supp}(\widehat{g})\cap [-a,a]=\emptyset$, as desired.  
\end{proof}
The next result is essentially a well-known characterization of the Bernstein space $B_a^{\infty}$ (see, for example, the $p=\infty$ case of \cite[Theorem 4]{Andersen14}). Since we need the growth estimate of the analytic extension of $f$ as in the statement to control the right boundedness constant, we include the proof for completeness.
\begin{lem}\label{lem FT cpt supp}
Let $f\in C_b(\mathbb R)$ and regard $f$ as a tempered distribution. Assume that
\[
\operatorname{supp}(\widehat{f})\subset [-a,a]
\]
for some $a\ge 0$. Then $f$ extends to an entire function on $\mathbb C$, still
denoted by $f$, and for every $s\in \mathbb{R}$, we have
\[
\sup_{t\in\mathbb R}|f(t-is)|\le e^{a|s|}\|f\|_\infty.
\]
\end{lem}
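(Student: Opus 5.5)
We plan to prove this classical Paley--Wiener--Schwartz / Bernstein-type statement in three steps: analytic extension with polynomial growth, removal of the polynomial factor by Phragm\'en--Lindel\"of, and a check that the constant is exactly $e^{a|s|}\|f\|_\infty$.

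First, we regularize. Fix $\epsilon>0$ and choose $\chi\in C_c^\infty(\mathbb R)$ with $\chi=1$ on a neighborhood of $[-a,a]$ and $\operatorname{supp}\chi\subset[-a-\epsilon,a+\epsilon]$. Since $\operatorname{supp}\widehat f\subset[-a,a]$, we have $\widehat f=\chi\widehat f$. The distribution $\widehat f$ has compact support, so the Paley--Wiener--Schwartz theorem applies: $f(t)=\frac{1}{2\pi}\langle \widehat f, e^{-it\cdot}\rangle$ (with the sign convention of the paper's Fourier transform) extends to the entire function
\[F(z)=\tfrac{1}{2\pi}\langle \widehat f,\chi(\cdot)e^{-iz\cdot}\rangle.\]
Because $\widehat f$ has finite order $N$, we obtain $|F(t-is)|\le C(1+|t|+|s|)^N e^{(a+\epsilon)|s|}$. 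This is the standard estimate from bounding finitely many derivatives of $\chi(\lambda)e^{-iz\lambda}$ on $\operatorname{supp}\chi$. Letting the cutoff shrink, the exponent can be taken to be $a+\epsilon$ for every $\epsilon$, and $F|_{\mathbb R}=f$ by Fourier inversion. So $F$ is entire of exponential type at most $a$, with polynomial-times-exponential growth, and it is bounded by $\|f\|_\infty$ on the real line.

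Next comes the main step: removing the polynomial factor. Work in the lower half-plane $\{\operatorname{Im}z<0\}$; the upper one is symmetric. Set $G(z):=F(z)e^{-iaz}$, choosing the sign so that $|e^{-iaz}|=e^{a\operatorname{Im}z}=e^{-a|s|}$ there. Then:
\begin{itemize}
\item $|G|\le\|f\|_\infty$ on $\mathbb R$;
\item $G$ grows at most polynomially times $e^{\epsilon|z|}$ in the closed half-plane.
\end{itemize}
Apply Phragm\'en--Lindel\"of on each quarter-plane, which is a sector of opening $\pi/2<\pi$. The bounded boundary values on $\mathbb R$ give control on one edge. On the imaginary axis we only have growth $O(|z|^Ne^{\epsilon|z|})$, so instead we apply the half-plane version of Phragm\'en--Lindel\"of directly: a function analytic in a half-plane, continuous up to the boundary, bounded on the boundary, and of order less than $2$ (here exponential type) in the interior is bounded by its boundary supremum. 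To verify this concretely, multiply by $e^{i\eta z}$ for small $\eta>0$ to create decay along the imaginary direction, which handles the $e^{\epsilon|z|}$ factor once $\epsilon<\eta$. Damp the polynomial with $(1+i\delta z)^{-N-1}$, apply the maximum principle on large half-discs, and then let $\delta\to0$ followed by $\eta\to0$. This yields $|G(z)|\le\|f\|_\infty$ on the half-plane, that is, $|F(t-is)|\le e^{a|s|}\|f\|_\infty$. The delicate bookkeeping is choosing the order in which $\delta$, $\eta$ and $\epsilon$ tend to $0$ so that every auxiliary factor has modulus at most $1$ on the boundary and tends to $1$ pointwise. We expect this to be the main obstacle.

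An alternative route for the final estimate avoids the distribution-order bookkeeping. Replace $f$ by $f_\epsilon:=f\cdot\widehat{\rho_\epsilon}$-type smoothing, or more simply convolve $\widehat f$ with an approximate identity so that the support is in $[-a-\epsilon,a+\epsilon]$ and $f_\epsilon(t)=f(t)\kappa(\epsilon t)$ with $\kappa$ a Schwartz function, $\kappa(0)=1$ and $\widehat\kappa$ compactly supported. Then $f_\epsilon\in L^1\cap L^\infty$, so $f_\epsilon$ is given by an honest Fourier integral over a compact set. This makes the bound $|f_\epsilon(t-is)|\le C_\epsilon e^{(a+\epsilon)|s|}$ immediate, and the Phragm\'en--Lindel\"of argument gives $|f_\epsilon(t-is)|\le e^{(a+\epsilon)|s|}\|f\|_\infty\sup|\kappa(\epsilon\,\cdot)|$ on horizontal lines. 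Here one should choose $\kappa$ with $|\kappa(\epsilon(t-is))|$ controlled, or simply use only the real-line bound $\|f_\epsilon\|_\infty\le\|f\|_\infty\|\kappa\|_\infty=\|f\|_\infty$ by taking $|\kappa|\le1$ on $\mathbb R$. Letting $\epsilon\to0$, $f_\epsilon\to F$ locally uniformly by Montel/Vitali, since the family is locally bounded, and the estimate passes to the limit. We would present this second route as the main proof and use the first paragraph only for the existence of the entire extension.
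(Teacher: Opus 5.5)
Your proposal is correct. The route you designate as the main proof differs from the paper's.

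\textbf{The paper's route.} The paper never regularizes. It modulates, $h(t)=e^{-iat}f(t)$, so that $\operatorname{supp}\widehat h\subset[0,2a]$. For one-sided spectrum, the Paley--Wiener--Schwartz extension $H(z)=\frac{1}{2\pi}\langle \widehat h,e^{-iz\lambda}\rangle$ obeys a purely polynomial bound $C(1+|z|)^N$ in the closed lower half-plane, since $|e^{-iz\lambda}|\le 1$ there for $\lambda\ge 0$. A half-plane Phragm\'en--Lindel\"of lemma for polynomial growth (damping by $\exp(-\varepsilon(1+iz)^{\alpha})$) then gives $|H|\le\|f\|_\infty$. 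Finally $e^{iaz}H$, glued by the identity theorem to the analogous upper half-plane function, yields the estimate. Your first paragraph together with $G=Fe^{-iaz}$ is essentially this argument.

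\textbf{Your route.} Your second route instead makes $f_\epsilon=f\,\kappa(\epsilon\,\cdot)$ integrable, at the cost of widening the spectrum to $[-a-\epsilon,a+\epsilon]$. Then $f_\epsilon$ is an honest Fourier integral over a compact interval, and $f_\epsilon(z)e^{\mp i(a+\epsilon)z}$ is bounded in the relevant half-plane. So only the bounded version of Phragm\'en--Lindel\"of is needed. Vitali's theorem then produces the entire extension and the sharp bound in the limit: the family is bounded by $e^{(a+1)|\operatorname{Im}z|}\|f\|_\infty$ for $\epsilon\le 1$ and converges to $f$ on $\mathbb{R}$.

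\textbf{What each buys.} Your route trades the finite-order structure of compactly supported distributions for three things: $\operatorname{supp}\widehat{fg}\subset\operatorname{supp}\widehat f+\operatorname{supp}\widehat g$, $L^1$ Fourier inversion, and a normal-families argument. It also makes your first paragraph superfluous, since Vitali already gives the extension. For $\kappa$, take the inverse Fourier transform of a nonnegative $\theta\in C_c^\infty([-1,1])$, normalized at $0$; then $|\kappa|\le\kappa(0)=1$ holds automatically.

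\textbf{A correction to the first route.} Do not keep it in the write-up as stated. The half-plane Phragm\'en--Lindel\"of principle does not hold for growth of order less than $2$; that threshold belongs to a quarter-plane. For example, $e^{iz}$ has modulus $1$ on $\mathbb{R}$ but modulus $e^{|\operatorname{Im}z|}$ in the lower half-plane. For a half-plane you need order less than $1$, or exponential type $0$.

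Relatedly, $e^{i\eta z}$ grows in the lower half-plane; the damping factor there is $e^{-i\eta z}$. With that sign, your concrete damping argument does go through, because the slack $\epsilon$ can be chosen below $\eta$.

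In fact, the sharp Paley--Wiener--Schwartz estimate $|F(z)|\le C(1+|z|)^N e^{a|\operatorname{Im}z|}$ holds with no $\epsilon$-loss. So $G$ has polynomial growth and the $\eta$-damping is unnecessary. None of this affects your second route, which only uses Phragm\'en--Lindel\"of for bounded functions.
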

The proof of Lemma \ref{lem FT cpt supp} is divided into steps. First, we need the following well-known Phragm\'en--Lindel\"of principle in half-plane version. There are several results collectively called Phragm\'en--Lindel\"of principle, and although only a slight  modification of any of the existing proofs works, we include a proof here in order not to bother the readers with modifying minor details. 
\begin{lem}\label{lem PhragmenLindelof half plane}
Let \(G\) be holomorphic in the lower half-plane
\[
        \Pi_-:=\{z\in\mathbb C\mid \operatorname{Im}z<0\}
\]
and continuous on \(\overline{\Pi_-}\). Assume that there is $M>0$ such that 
\[
        |G(t)|\le M \qquad (t\in\mathbb R)
\]
and that, for some constants \(C>0\) and \(N\in\mathbb N\),
\[
        |G(z)|\le C(1+|z|)^N,
        \qquad z\in\overline{\Pi_-}.
\]
Then
\[
        |G(z)|\le M,
        \qquad z\in\overline{\Pi_-}.
\]
The analogous statement holds in the upper half-plane.
\end{lem}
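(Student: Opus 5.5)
The plan is the standard Phragmén--Lindelöf argument: damp $G$ by an auxiliary holomorphic factor that tends to $1$ pointwise, so that the maximum principle on a bounded region applies. Concretely, for $\varepsilon>0$ and an integer $k>N$, consider on $\overline{\Pi_-}$
\[
G_\varepsilon(z):=\frac{G(z)}{(1+i\varepsilon z)^{k}}.
\]
For $z\in\overline{\Pi_-}$ write $z=x-iy$ with $y\ge 0$. Then $1+i\varepsilon z=(1+\varepsilon y)+i\varepsilon x$, so
\[
|1+i\varepsilon z|^2=(1+\varepsilon y)^2+\varepsilon^2x^2\ge 1.
\]
Hence $1+i\varepsilon z$ never vanishes on $\overline{\Pi_-}$, and $G_\varepsilon$ is holomorphic in $\Pi_-$ and continuous up to the boundary.

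The key properties of $G_\varepsilon$ are as follows.
\begin{enumerate}
\item On the real axis, $|1+i\varepsilon t|\ge 1$, so $|G_\varepsilon(t)|\le |G(t)|\le M$.
\item The denominator also satisfies $|1+i\varepsilon z|\ge \varepsilon|z|$. This holds because $(1+\varepsilon y)^2\ge \varepsilon^2y^2$ when $y\ge 0$.
\item Combining (2) with the polynomial growth hypothesis gives
\[
|G_\varepsilon(z)|\le \frac{C(1+|z|)^N}{\varepsilon^k|z|^k}\longrightarrow 0 \qquad (|z|\to\infty,\ z\in\overline{\Pi_-}),
\]
since $k>N$.
\end{enumerate}

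Now fix a point $z_0\in\Pi_-$ and $\eta>0$. Choose $R>|z_0|$ so large that $|G_\varepsilon|\le M+\eta$ on the arc $\{|z|=R\}\cap\overline{\Pi_-}$. Apply the maximum modulus principle to $G_\varepsilon$ on the closed half-disc $\{|z|\le R,\ \operatorname{Im}z\le 0\}$. Its boundary consists of the segment $[-R,R]$, where $|G_\varepsilon|\le M$, and the arc, where $|G_\varepsilon|\le M+\eta$. Therefore $|G_\varepsilon(z_0)|\le M+\eta$.

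Letting $\eta\to 0$ gives $|G(z_0)|\le M\,|1+i\varepsilon z_0|^{k}$. Letting $\varepsilon\to 0$, the factor $|1+i\varepsilon z_0|^{k}$ tends to $1$, so $|G(z_0)|\le M$. On the boundary itself the bound is the hypothesis. The upper half-plane statement follows by applying the result to $z\mapsto G(\bar z)^{*}$, or by replacing $1+i\varepsilon z$ with $1-i\varepsilon z$.

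The only point needing care is the choice of damping factor. It must be nonvanishing and of modulus at least $1$ on the closed lower half-plane, so as not to spoil the real-axis bound. It must also grow at least linearly in $|z|$, so as to beat the polynomial growth. The factor $1+i\varepsilon z$ satisfies both requirements, as verified in (1) and (2) above.
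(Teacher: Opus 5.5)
Your proof is correct. It follows the same skeleton as the paper's proof: multiply $G$ by a damping factor of modulus at most $1$ on the real axis, apply the maximum modulus principle on lower half-discs, let $R\to\infty$, and then remove the damping by letting $\varepsilon\to 0$.

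The difference is the choice of damping factor. The paper uses $\exp\bigl(-\varepsilon(1+iz)^\alpha\bigr)$ with $0<\alpha<1$ and the principal branch. This forces it to prove the angular estimate $\operatorname{Re}(1+iz)^\alpha\ge 2^{-\alpha}\cos(\alpha\pi/2)R^\alpha$ on the semicircle $|z|=R$. You use the rational factor $(1+i\varepsilon z)^{-k}$ with $k>N$. Both properties you need, $|1+i\varepsilon z|\ge 1$ and $|1+i\varepsilon z|\ge \varepsilon|z|$ on $\overline{\Pi_-}$, follow at once from $|1+i\varepsilon z|^2=(1+\varepsilon y)^2+\varepsilon^2x^2$.

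Your choice is more elementary and is tailored exactly to the polynomial growth hypothesis. That hypothesis is all the paper needs, since in Lemma \ref{lem one-sided spec estimate} the bound comes from the finite order of $\widehat h$. The paper's exponential damping decays faster than any polynomial on large semicircles. So its argument would also work under much weaker a priori growth, for instance $|G(z)|\le C\exp(c|z|^\beta)$ with $\beta<\alpha<1$, which is the classical Phragm\'en--Lindel\"of setting. The paper never uses that extra generality.
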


\begin{proof}
We prove the lower half-plane case. Fix \(0<\alpha<1\). For \(\varepsilon>0\), define
\[
        G_\varepsilon(z)
        :=
        G(z)\exp\bigl(-\varepsilon(1+iz)^\alpha\bigr),
        \qquad z\in\overline{\Pi_-},
\]
where \((1+iz)^\alpha\) is defined using the principal branch. This is valid
because
\[
        \operatorname{Re}(1+iz)=1-\operatorname{Im}z>0
\]
on \(\overline{\Pi_-}\).

For \(t\in\mathbb R\), we have
\[
        \operatorname{Re}(1+it)^\alpha\ge 0,
\]
and hence
\[
        |G_\varepsilon(t)|\le |G(t)|\le M.
\]
We show that on the lower semicircle \(\{z\in\overline{\Pi_-}\mid |z|=R\}\), there is a
constant \(c_\alpha>0\), independent of \(R\), such that for \(R\ge 2\), 
\[
        \operatorname{Re}(1+iz)^\alpha\ge c_\alpha R^\alpha .
\]

Let $z\in \overline{\Pi_-}$ with $|z|=R$ and $w=1+iz$. 
Then \(\operatorname{Re}(w)>0\) and we write 
\[
        w=\rho e^{i\theta},
        \qquad \rho=|w|,\qquad -\frac{\pi}{2}<\theta<\frac{\pi}{2}.
\]
Using the principal branch, we have
\[
        w^\alpha=\rho^\alpha e^{i\alpha\theta}.
\]
Therefore
\[
        \operatorname{Re}(w^\alpha)
        =
        \rho^\alpha\cos(\alpha\theta).
\]
Since \(0<\alpha<1\) and \(-\frac{\pi}{2}<\theta<\frac{\pi}{2}\),
we get \(-\frac{\alpha\pi}{2}
        <
        \alpha\theta
        <
        \frac{\alpha\pi}{2}.\)
Hence
\[
        \cos(\alpha\theta)
        \ge
        \cos\left(\frac{\alpha\pi}{2}\right)
        >
        0.
\]
It follows that
\[
        \operatorname{Re}(w^\alpha)
        \ge
        |w|^\alpha
        \cos\left(\frac{\alpha\pi}{2}\right).
\]
Since $|z|=R$, we have $|w|\ge |iz|-1=R-1\ge \frac{R}{2}$ by $R\ge 2$. 
Therefore
\[
        |w|^\alpha
        \ge
        \left(\frac{R}{2}\right)^\alpha
        =
        2^{-\alpha}R^\alpha.
\]
Combining the above estimates, we obtain
\[
       \operatorname{Re}(1+iz)^\alpha
        \geq c_\alpha R^\alpha,
        \qquad
        c_\alpha:=2^{-\alpha}\cos\left(\frac{\alpha\pi}{2}\right)>0.
\]

Therefore
\[
        |G_\varepsilon(z)|
        \le
        C(1+R)^N \exp(-\varepsilon c_\alpha R^\alpha)
        \longrightarrow 0
        \qquad (R\to\infty)
\]
uniformly on the lower semicircle.

Let
\[
        D_R^-:=\{z\in\mathbb C\mid |z|<R,\ \operatorname{Im}z<0\}.
\]
This is the lower half-disc of radius \(R\). Its boundary consists of two parts: $[-R,R]\subset \mathbb R
$ 
and
\[
        \Gamma_R^-:=\{z\in\mathbb C\mid |z|=R,\ \operatorname{Im}z\le 0\}.
\]

The function
\[
        G_\varepsilon(z)
        =
        G(z)\exp\bigl(-\varepsilon(1+iz)^\alpha\bigr)
\]
is holomorphic in \(D_R^-\) and continuous on \(\overline{D_R^-}\). Hence the
maximum modulus principle gives
\[
        \sup_{z\in D_R^-}|G_\varepsilon(z)|
        \le
        \sup_{z\in \partial D_R^-}|G_\varepsilon(z)|.
\]

On \([-R,R]\), we have already shown that \(|G_\varepsilon(t)|\le M.\) 
On \(\Gamma_R^-\), the preceding estimate gives
\[
        |G_\varepsilon(z)|
        \le
        C(1+R)^N\exp(-\varepsilon c_\alpha R^\alpha).
\]
Therefore

        \[\sup_{z\in D_R^-}|G_\varepsilon(z)|
        \le
        \max\left\{
            M,\,
            C(1+R)^N\exp(-\varepsilon c_\alpha R^\alpha)
        \right\}\xrightarrow{R\to \infty}M.
        \]
Consequently, for every fixed \(z_0\in \Pi_-\), choosing \(R>|z_0|\) and
letting \(R\to\infty\) gives
\[
        |G_\varepsilon(z_0)|
        \le
        \limsup_{R\to\infty}
        \max\left\{
            M,\,
            C(1+R)^N\exp(-\varepsilon c_\alpha R^\alpha)
        \right\}
        =
        M.
\]
Since \(z_0\in \Pi_-\) was arbitrary, we obtain
\[
        |G_\varepsilon(z)|\le M,
        \qquad z\in \Pi_-.
\]
By continuity of \(G_\varepsilon\) on \(\overline{\Pi_-}\), the same estimate
also holds on the boundary line \(\mathbb R\). Thus
\[
        |G_\varepsilon(z)|\le M,
        \qquad z\in \overline{\Pi_-}.
\]
Letting $\ep\to 0$, the lemma is proved. 
\end{proof}

\begin{lem}\label{lem one-sided spec estimate}
Let \(h\in C_b(\mathbb R)\), and regard \(h\) as a tempered distribution. Assume
that
\[
        \operatorname{supp}(\widehat h)\subset [0,b]
\]
for some \(b\ge 0\). Then \(h\) has an entire extension \(H\), and
\[
        |H(z)|\le \|h\|_\infty,
        \qquad \operatorname{Im}z\le 0.
\]
Similarly, if
\[
        \operatorname{supp}(\widehat h)\subset [-b,0],
\]
then \(h\) has an entire extension \(H\), and
\[
        |H(z)|\le \|h\|_\infty,
        \qquad \operatorname{Im}z\ge 0.
\]
\end{lem}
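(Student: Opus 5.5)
The plan is to first produce an entire extension $H$ of polynomial growth on the closed lower half-plane, and then apply Lemma \ref{lem PhragmenLindelof half plane} with $M=\|h\|_\infty$. For the extension, regard $\widehat h$ as a distribution with compact support in $[0,b]$. By the Paley--Wiener--Schwartz theorem for compactly supported distributions, the function
\[
H(z):=\frac{1}{2\pi}\langle \widehat h,\ \lambda\mapsto e^{-iz\lambda}\chi(\lambda)\rangle
\]
is entire. Here $\chi\in C_c^\infty(\mathbb R)$ equals $1$ near $[0,b]$, and the sign convention is the one fixed by $\hat f(\lambda)=\int e^{it\lambda}f(t)\,dt$, so that Fourier inversion reads $h(t)=\frac1{2\pi}\int e^{-it\lambda}\widehat h(\lambda)\,d\lambda$. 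The restriction of $H$ to $\mathbb R$ coincides with $h$ as a distribution. Since $h$ is continuous, the two agree pointwise.

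Next comes the growth bound. Because $\widehat h$ has finite order $N$, we get
\[
|H(z)|\le C\sum_{k\le N}\sup_{\lambda\in\operatorname{supp}\chi}\bigl|\partial_\lambda^k(e^{-iz\lambda}\chi(\lambda))\bigr| \le C'(1+|z|)^N\sup_{\lambda\in \operatorname{supp}\chi}e^{\operatorname{Im}(z)\lambda}.
\]
When $\operatorname{Im}z\le 0$, the exponential factor is at most $e^{|\operatorname{Im} z|\cdot r}$, where $r>0$ is the amount by which $\operatorname{supp}\chi$ extends to the left of $0$. This is the main obstacle: it is not polynomial. I would fix this by shrinking the cutoff. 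Take $\chi_\varepsilon$ equal to $1$ on $[-\varepsilon/2,b+\varepsilon/2]$ and supported in $[-\varepsilon,b+\varepsilon]$; the constants in the order-$N$ estimate then grow like $\varepsilon^{-N}$. Choosing $\varepsilon=1/(1+|\operatorname{Im}z|)$ gives
\[
|H(z)|\le C''(1+|z|)^{2N}
\]
uniformly on $\overline{\Pi_-}$. This is the standard H\"ormander trick.

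With this bound in hand, Lemma \ref{lem PhragmenLindelof half plane} applies to $G=H$, $M=\|h\|_\infty$, $C=C''$, and exponent $2N$. It yields $|H(z)|\le\|h\|_\infty$ on $\operatorname{Im}z\le 0$. The case $\operatorname{supp}\widehat h\subset[-b,0]$ is symmetric: the factor $e^{\operatorname{Im}(z)\lambda}$ is now harmless when $\operatorname{Im}z\ge 0$ and $\lambda\le 0$. Alternatively, apply the first case to $t\mapsto h(-t)$, whose Fourier transform is reflected, and use the upper half-plane version of the Phragm\'en--Lindel\"of lemma.
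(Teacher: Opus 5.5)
Your proposal is correct and follows the paper's proof: Paley--Wiener--Schwartz gives an entire $H$ agreeing pointwise with $h$ on $\mathbb R$, the finite order of $\widehat h$ gives polynomial growth of $H$ on $\operatorname{Im}z\le 0$, and Lemma \ref{lem PhragmenLindelof half plane} then removes the polynomial factor. The only difference is that the paper directly invokes an order-$N$ estimate with suprema over $[0,b]$ itself, where $|e^{-iz\lambda}|\le 1$. You instead derive the bound with the shrinking cutoff $\varepsilon=1/(1+|\operatorname{Im}z|)$, which is exactly how H\"ormander proves that sharp estimate, and this argument actually gives exponent $N$ rather than $2N$.
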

\begin{proof}
We prove the case
\[
        \operatorname{supp}(\widehat h)\subset [0,b].
\]
Since \(\widehat h\) is a compactly supported tempered distribution, the
Paley--Wiener theorem (see, for example, \cite[Theorem 7.3.1]{Hormander03}) gives an entire function \(H\) whose restriction
to \(\mathbb R\) agrees with \(h\) as a tempered distribution. More explicitly,
with the present Fourier transform convention,
\[
        \widehat u(\lambda)
        =
        \int_{\mathbb R}e^{it\lambda}u(t)\,dt,
\]
one may write 
\[
        H(z)
        =
        \frac{1}{2\pi}
        \left\langle \widehat h(\lambda), e^{-iz\lambda}\right\rangle .
\]
This formula is understood in the standard sense that a compactly supported
distribution may be paired with a smooth function on \(\mathbb R\).

Since \(H|_{\mathbb R}\) and \(h\) define the same tempered distribution, and
both are continuous functions on \(\mathbb R\), they agree pointwise:
\[
        H(t)=h(t),
        \qquad t\in\mathbb R.
\]
Thus
\[
        |H(t)|\le \|h\|_\infty,
        \qquad t\in\mathbb R.
\]

It remains to check the growth condition in the lower half-plane. Since
\(\widehat h\) is a distribution of finite order supported in \([0,b]\), there
exist constants \(C>0\) and \(N\in\mathbb N\) such that, for every \(\varphi\in C^{\infty}(\mathbb{R})\),
\[
        \left|\langle \widehat h,\varphi\rangle\right|
        \le
        C\sum_{j=0}^N
        \sup_{\lambda\in[0,b]}
        |\varphi^{(j)}(\lambda)|.
\]
Applying this to
\[
        \varphi_z(\lambda):=e^{-iz\lambda},
\]
we get, for \(\operatorname{Im}z\le 0\),
\[
        |\varphi_z^{(j)}(\lambda)|
        =
        |(-iz)^j e^{-iz\lambda}|
        =
        |z|^j e^{(\operatorname{Im}z)\lambda}
        \le
        |z|^j
        \qquad (0\le \lambda\le b).
\]
Therefore, there is some constant \(C'>0\) such that 
\[
        |H(z)|
        \le
        C'(1+|z|)^N,
        \qquad \operatorname{Im}z\le 0.
\]
By Lemma \ref{lem PhragmenLindelof half plane},
\[
        |H(z)|\le \|h\|_\infty,
        \qquad \operatorname{Im}z\le 0.
\]

The case
\[
        \operatorname{supp}(\widehat h)\subset [-b,0]
\]
is the same, using the upper half-plane instead. Indeed, if
\(\operatorname{Im}z\ge 0\) and \(\lambda\in[-b,0]\), then
\[
        |e^{-iz\lambda}|
        =
        e^{(\operatorname{Im}z)\lambda}
        \le 1.
\]
Thus the same argument gives
\[
        |H(z)|\le \|h\|_\infty,
        \qquad \operatorname{Im}z\ge 0.
\]
\end{proof}
We are ready to prove Lemma \ref{lem FT cpt supp}. 
\begin{proof}[Proof of Lemma \ref{lem FT cpt supp}]
First consider the lower half-plane direction. Define
\[
        h(t):=e^{-iat}f(t).
\]
Then
\[
        h\in C_b(\mathbb R),
        \qquad
        \|h\|_\infty=\|f\|_\infty.
\]
With the Fourier transform convention
\[
        \widehat u(\lambda)
        =
        \int_{\mathbb R}e^{it\lambda}u(t)\,dt,
\]
we see that in \(\mathcal S'(\mathbb R)\),
\[
        \widehat h(\lambda)
        =
        \widehat f(\lambda-a).
\]

By definition of equality of tempered distributions, this means that for every
\(\varphi\in\mathcal S(\mathbb R)\),
\[
        \langle \widehat h,\varphi\rangle
        =
        \langle \widehat f,\varphi(\cdot+a)\rangle .
\]

Indeed, recall that the Fourier transform of a tempered distribution is defined by
\[
        \langle \widehat T,\varphi\rangle
        =
        \langle T,\widehat\varphi\rangle,
        \qquad \varphi\in\mathcal S(\mathbb R).
\]
Therefore
\[
\begin{aligned}
        \langle \widehat h,\varphi\rangle
        &=
        \langle h,\widehat\varphi\rangle                                      \\
        &=
        \left\langle e^{-iat}f(t),
        \int_{\mathbb R}e^{it\lambda}\varphi(\lambda)\,d\lambda
        \right\rangle                                                         \\
        &=
        \left\langle f(t),
        \int_{\mathbb R}e^{it(\lambda-a)}\varphi(\lambda)\,d\lambda
        \right\rangle .
\end{aligned}
\]
Putting
\[
        \mu=\lambda-a,
        \qquad \lambda=\mu+a,
\]
we get
\[
        \int_{\mathbb R}e^{it(\lambda-a)}\varphi(\lambda)\,d\lambda
        =
        \int_{\mathbb R}e^{it\mu}\varphi(\mu+a)\,d\mu.
\]
Thus
\[
\begin{aligned}
        \langle \widehat h,\varphi\rangle
        &=
        \left\langle f(t),
        \int_{\mathbb R}e^{it\mu}\varphi(\mu+a)\,d\mu
        \right\rangle                                                         \\
        &=
        \langle \widehat f,\varphi(\cdot+a)\rangle .
\end{aligned}
\]
This proves
\[
        \widehat h(\lambda)=\widehat f(\lambda-a)
\]
in \(\mathcal S'(\mathbb R)\).
Therefore
\[
        \operatorname{supp}(\widehat h)
        \subset [0,2a].
\]
By Lemma \ref{lem one-sided spec estimate}, \(h\) has an entire extension \(H\) such
that
\[
        |H(z)|\le \|h\|_\infty=\|f\|_\infty,
        \qquad \operatorname{Im}z\le 0.
\]
Define
\[
        F_-(z):=e^{iaz}H(z).
\]
Then \(F_-\) is entire and, for real \(t\),
\[
        F_-(t)
        =
        e^{iat}H(t)
        =
        e^{iat}h(t)
        =
        f(t).
\]
Thus \(F_-\) is an entire extension of \(f\). For \(s\ge 0\), we have
\[
        |F_-(t-is)|
        =
        |e^{ia(t-is)}|\,|H(t-is)|
        =
        e^{as}|H(t-is)|
        \le
        e^{as}\|f\|_\infty .
\]
Hence
\[
        \sup_{t\in\mathbb R}|F_-(t-is)|
        \le
        e^{as}\|f\|_\infty,
        \qquad s\ge 0.
\]

Now consider the upper half-plane direction. Define
\[
        k(t):=e^{iat}f(t).
\]
Then
\[
        k\in C_b(\mathbb R),
        \qquad
        \|k\|_\infty=\|f\|_\infty,
\]
and
\[
        \widehat k(\lambda)
        =
        \widehat f(\lambda+a).
\]
Therefore
\[
        \operatorname{supp}(\widehat k)
        \subset [-2a,0].
\]
By Lemma \ref{lem one-sided spec estimate} again, \(k\) has an entire extension \(K\) such
that
\[
        |K(z)|\le \|k\|_\infty=\|f\|_\infty,
        \qquad \operatorname{Im}z\ge 0.
\]
Define
\[
        F_+(z):=e^{-iaz}K(z).
\]
Then \(F_+\) is entire and, for real \(t\),
\[
        F_+(t)
        =
        e^{-iat}K(t)
        =
        e^{-iat}k(t)
        =
        f(t).
\]
Thus \(F_+\) is also an entire extension of \(f\). Since \(F_-\) and \(F_+\)
agree on \(\mathbb R\), they agree on all of \(\mathbb C\) by the identity
theorem. Denote this common entire extension simply by \(f\).

If \(s<0\), put \(r=-s>0\). Then \(t-is=t+ir\), and
\[
        |f(t-is)|
        =
        |f(t+ir)|
        =
        |F_+(t+ir)|
        =
        |e^{-ia(t+ir)}|\,|K(t+ir)|
        =
        e^{ar}|K(t+ir)|
        \le
        e^{ar}\|f\|_\infty.
\]
Since \(r=|s|\), this gives
\[
        |f(t-is)|
        \le
        e^{a|s|}\|f\|_\infty
        \qquad (s<0).
\]

Combining the estimates for \(s\ge 0\) and \(s<0\), we obtain
\[
        \sup_{t\in\mathbb R}|f(t-is)|
        \le
        e^{a|s|}\|f\|_\infty
\]
for every \(s\in\mathbb R\).
\end{proof}

\begin{proof}[Proof of Proposition \ref{prop compactspec}] The implication (ii)$\implies$(i) follows from \cite[Lemma 2.5]{haagerup87} while the implication (ii)$\implies$(iii) follows from Lemma \ref{lem Kbounded}. It remains to show that (i)$\implies$(ii). 
    Assume $x\in M(\sigma^{\varphi},[-a,a])$.
Set $\alpha_t:=\sigma_t^\varphi$. For each $\omega\in M_*$, consider 
the bounded continuous scalar-valued function
\[
f_{\omega}(t):=\omega(\alpha_t(x)), \qquad t\in\mathbb R.
\]

By Lemma \ref{lem arvesoncptspec}, the Fourier transform of $f_{\omega}$ regarded as a tempered distribution is supported on $[-a,a]$. Thus, by Lemma \ref{lem FT cpt supp}, $f_{\omega}$ extends to an entire analytic function which satisfies:
\[
|f_\omega(t-is)|\le e^{a|s|}\|f_\omega\|_\infty
\le e^{a|s|}\|\omega\|\,\|x\|,
\qquad s,t\in\mathbb R.
\]
Moreover, for $\omega_1,\omega_2\in M_*$ and $\lambda \in \mathbb{C}$, the entire functions $f_{\omega_1+\omega_2}$ and $f_{\omega_1}+f_{\omega_2}$ agree on $\mathbb{R}$, whence they agree on $\mathbb{C}$. Similarly, $f_{\lambda \omega}=\lambda f_{\omega}$ on $\mathbb{C}$.  Thus, for each fixed $z\in \mathbb{C}$, the map $M_*\ni \omega\mapsto f_{\omega}(z)\in \C$ is a bounded linear functional of norm at most $e^{a|\operatorname{Im} z|}\|x\|$. Therefore, there exists a unique element $\alpha_z(x)\in M$ such that $f_{\omega}(z)=\omega(\alpha_z(x))$ for every $\omega\in M_*$. It is then clear that this map is an extension of the given flow $\alpha$. 

Since the function
\[
z\mapsto \omega(\alpha_z(x))
\]
is entire for every $\omega\in M_*$, the map $z\mapsto \alpha_z(x)$ is an $M$-valued entire function (see, for example, \cite[Appendix A.\,1]{takesakiII}) such that 
\[\|\alpha_{t-is}(x)\|\le e^{a|s|}\|x\|,\,s,t\in \mathbb{R}.\]
This proves (ii). 
\end{proof}

\section{Zerosets}\label{definabilitysection}

We summarize here the theory of definability in continuous logic as presented in the second author's article \cite{goldbringspectralgap}.  This will help explain the terminology ``zeroset'' used after Theorem \ref{prop zero iff trivialBC}.

Fix a theory $T$ in a language $L$.  (In this article, $L$ will be the language of W$^*$-probability spaces introduced in the previous subsection and $T$ a theory extending the theory of W$^*$-probability spaces, most often the theory T$_{\mathrm{III}_1}$ of III$_1$ factors).  We let $\operatorname{Mod}(T)$ denote the category whose objects are models of $T$ and whose morphisms are elementary embeddings.  We also let $\operatorname{Met}$ denote the category whose objects are bounded metric spaces and whose morphisms are isometric embeddings.

Fix a finite sequence of variables $\vec x$ (possibly ranging over different sorts).  By a \textbf{$T$-functor over $\vec x$} we mean a functor $X:\operatorname{Mod}(T)\to \operatorname{Met}$ such that, for every model $M\models T$, $X(M)$ is a closed subset of $M^{\vec x}$ and such that $X$ is given by restriction on morphisms.  The main example of a $T$-functor we will have in mind is the functor which assigns to each W$^*$-probability space its bicentralizer (restricted to the set of totally $1$-bounded elements).

A main source of $T$-functors come from $T$-formulae.  To explain this notion, one defines a pseudometric $d_{T,\vec x}$ on formulae $\varphi(\vec x)$ by $$d_{T,\vec x}(\varphi(\vec x),\psi(\vec x)):=\sup\{|\varphi(\vec a)^M-\psi(\vec a)^M| \ : \ M\models T, \vec a\in M\}.$$  Thus, two formulae $\varphi$ and $\psi$ are $d_{T,\vec x}$ close if they are uniformly close in all models of $T$; in particular, they are distance $0$ from each other if they are equivalent in all models of $T$.  One separates and completes this pseudometric space to obtain the complete metric space of \textbf{$T$-formulae}.  If $\theta(\vec x)$ is a $T$-formula, then to every model $M\models T$, one has a uniformly continuous function $\theta^M:M^{\vec x}\to \mathbb R$ whose range is contained in some bounded interval in $\bb R$ (both the modulus of uniform continuity and bound on the range of $\theta^M$ are uniform over all models of $T$); this function is called the \textbf{interpretation of $\theta$ in $M$}.  If $\theta(\vec x)$ is a $T$-formula, then one obtains a $T$-functor, the \textbf{zeroset of $\theta$}, denoted $Z(\theta)$, defined by $$Z(\theta)(M):=\{\vec a\in M^{\vec x} \ : \ \theta^M(\vec a)=0\}.$$

More generally, by a \textbf{$T$-function over $\vec x$} we mean a mapping whose domain is the set of all pairs $(M,\vec a)$ with $M\models T$ and $\vec a\in M^{\vec x}$ and whose co-domain is a bounded set in $\bb R$.  As we just saw, interpretations of $T$-formulae give examples of $T$-functions (the $T$-formula $\theta(\vec x)$ sends the pair $(M,\vec a)$ to the value $\theta^M(\vec a)$); $T$-functions of this form are called \textbf{realized}.

If $X$ is a $T$-functor, then one obtains the $T$-function $d(\vec x,X)$ which, upon input $(M,\vec a)$, returns the value $d(\vec a,X(M))$.

Given a $T$-function $\Phi(\vec x,\vec y)$ and a $T$-functor $X$, one may quantify over $X$ to obtain a new $T$-function.  More specifically, define $\sup_{\vec x\in X}\Phi(\vec x,\vec y)$ to be the $T$-function which, upon input $(M,\vec b)$, returns the value $\sup\{\Phi(M,\vec a,\vec b) \ : \ \vec a\in X(M)\}$.  One defines $\inf_{\vec x\in X}\Phi(\vec x,\vec y)$ similarly.

We say that a nonnegative $T$-function $\Phi(\vec x)$ is \textbf{almost-near} if:  for all $\epsilon>0$, there is $\delta>0$ such that, for all $M\models T$ and $\vec a\in M^{\vec x}$, if $\Phi(M,\vec a)<\delta$, then there is $\vec b\in M^{\vec x}$ with $d(\vec a,\vec b)\leq \epsilon$ for which $\Phi(M,\vec b)=0$.

Here is the main theorem about definability in continuous logic (see \cite[Theorem 2.13]{goldbringspectralgap}):

\begin{fact}\label{defsetchar}
Suppose that $X$ is a $T$-functor over $\vec x$.  The following are equivalent:
\begin{enumerate}
    \item For all $T$-formulae $\theta(\vec x,\vec y)$, the $T$-functions $\sup_{\vec x\in X}\theta(\vec x,\vec y)$ and $\inf_{\vec x\in X}\theta(\vec x,\vec y)$ are realized.
    \item The $T$-function $d(\vec x,X)$ is realized.
    \item $X$ is the zeroset of an almost-near $T$-formula.
    \item For all families $(M_i)_{i\in I}$ of models of $T$ and all ultrafilters $\u$ on $I$, we have $$X(\prod_\u M_i)=\prod_\u X(M_i).$$
\end{enumerate}
\end{fact}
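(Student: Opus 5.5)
The plan is to prove the cycle (1)$\Rightarrow$(2)$\Rightarrow$(3)$\Rightarrow$(4)$\Rightarrow$(1). The first two links are formal. For (1)$\Rightarrow$(2), apply (1) to the $T$-formula $\theta(\vec x,\vec y):=d(\vec x,\vec y)$. Then $\inf_{\vec x\in X}d(\vec x,\vec y)$ is exactly the $T$-function $d(\vec y,X)$, so it is realized. For (2)$\Rightarrow$(3), let $\theta(\vec x)$ be a $T$-formula realizing $d(\vec x,X)$. Since each $X(M)$ is closed, $Z(\theta)=X$. The formula $\theta$ is almost-near with $\delta=\epsilon$: if $\theta^M(\vec a)<\epsilon$, then some $\vec b\in X(M)$ satisfies $d(\vec a,\vec b)<\epsilon$.

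For (3)$\Rightarrow$(4), write $X=Z(\theta)$ with $\theta$ almost-near, and use Łoś for $T$-formulae, i.e.\ $\theta^{\prod_\u M_i}((a_i)_\u)=\lim_\u\theta^{M_i}(a_i)$. This passes from formulae to $T$-formulae by uniform approximation. It immediately gives $\prod_\u X(M_i)\subseteq X(\prod_\u M_i)$. For the reverse inclusion, take $\vec a=(\vec a_i)_\u$ with $\lim_\u\theta(\vec a_i)=0$, and choose $\delta_n\downarrow 0$ witnessing almost-nearness for $\epsilon=1/n$. For each $i$, let $n(i)$ be the largest $n\le i$-independent index with $\theta(\vec a_i)<\delta_n$, or set $\vec b_i:=\vec a_i$ if $\theta(\vec a_i)=0$. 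Where $n(i)$ is defined, pick $\vec b_i\in X(M_i)$ with $d(\vec a_i,\vec b_i)\le 1/n(i)$; on the remaining ($\u$-null) indices choose $\vec b_i$ arbitrarily. For each $n$, $\u$-almost all $i$ have $n(i)\ge n$, so $(\vec b_i)_\u=\vec a$, and hence $\vec a\in\prod_\u X(M_i)$. This diagonal choice is the only mildly delicate point here, and it avoids any countable-incompleteness assumption.

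For (4)$\Rightarrow$(1), first check that $\Psi(\vec y):=\sup_{\vec x\in X}\theta(\vec x,\vec y)$ commutes with ultraproducts: $\Psi(\prod_\u M_i,(\vec b_i)_\u)=\lim_\u\Psi(M_i,\vec b_i)$. The inequality $\ge$ follows by choosing near-maximizers $\vec a_i\in X(M_i)$ and noting $(\vec a_i)_\u\in X(\prod_\u M_i)$ by (4). The inequality $\le$ follows by writing any element of $X(\prod_\u M_i)$ as $(\vec a_i)_\u$ with $\vec a_i\in X(M_i)$, again by (4), and applying Łoś to $\theta$. The infimum is handled identically. The remaining, and main, step is the general criterion that a bounded $T$-function commuting with all ultraproducts is realized. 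I would prove it through the type space $S_{\vec y}(T)$. Taking ultraproducts of elementary extensions, together with the Keisler–Shelah-type fact that realizations of the same type embed elementarily into a common ultrapower, shows that $\Psi$ depends only on $\tp(\vec b)$. The ultraproduct condition then shows that the induced function on $S_{\vec y}(T)$ is continuous: a net of types converging to $p$ can be realized along an ultrafilter whose ultraproduct realizes $p$. Finally, continuous functions on $S_{\vec y}(T)$ are exactly the $T$-formulae, by Stone–Weierstrass applied to the lattice of formula-interpretations. I expect this last realizability criterion to be the principal obstacle; the rest is bookkeeping.
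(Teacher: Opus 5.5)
The paper does not prove this statement. It is imported as a Fact from \cite[Theorem 2.13]{goldbringspectralgap}, so there is no in-paper argument to compare against, and I am judging your cycle on its own. It is the standard argument and is correct in outline.

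The links (1)$\Rightarrow$(2)$\Rightarrow$(3) are fine. So is (3)$\Rightarrow$(4), once the garbled definition is replaced by $n(i):=\max\{n : \theta^{M_i}(\vec a_i)<\delta_n\}$. This maximum is finite when $\theta^{M_i}(\vec a_i)>0$ because $\delta_n\to 0$, and then $d(\vec a_i,\vec b_i)\le 1/n$ on the $\mathcal{U}$-large set where $\theta^{M_i}(\vec a_i)<\delta_n$.

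Your closing Stone--Weierstrass identification of continuous functions on $\mathbf{S}_{\vec y}(T)$ with $T$-formulae is the same tool the paper uses in the proof of Theorem \ref{zerosetcharacterization}. A common alternative applies the ultraproduct criterion only to $d(\vec x,X)$, giving (4)$\Rightarrow$(2). It then gets (2)$\Rightarrow$(1) from $\sup_{\vec x\in X}\theta=\sup_{\vec x}\bigl(\theta-\omega(d(\vec x,X))\bigr)$, with $\omega$ a continuous uniform modulus of $\theta$ in $\vec x$, and dually for $\inf$. Your direct (4)$\Rightarrow$(1) avoids moduli, at the cost of invoking the criterion for every $\theta$.

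The one step that is more than bookkeeping is the type-dependence of $\Psi$. This is exactly where the hypothesis that $X$ is a functor acting by restriction on elementary embeddings must enter, and your sketch never says where it is used. Commutation with ultraproducts only controls $\Psi$ along diagonal maps. For an arbitrary bounded $T$-function, nothing controls its behaviour along a general elementary embedding. So the fact that realizations of the same type embed elementarily into a common ultrapower does not by itself give type-dependence. There are two clean fixes.
\begin{enumerate}
\item Isomorphisms are morphisms of $\operatorname{Mod}(T)$, so $f(X(M))=X(N)$ for every isomorphism $f\colon M\to N$, and hence $\Psi$ is isomorphism invariant. The genuine Keisler--Shelah theorem says that $(M,\vec b)\equiv(N,\vec c)$ yields some $\mathcal{U}$ with $(M,\vec b)^{\mathcal{U}}\cong(N,\vec c)^{\mathcal{U}}$. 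This gives $\Psi(M,\vec b)=\Psi(M^{\mathcal{U}},\vec b)=\Psi(N^{\mathcal{U}},\vec c)=\Psi(N,\vec c)$. This is the form your general criterion needs, stated for isomorphism-invariant $T$-functions.
\item More elementarily, $j(X(M))\subseteq X(N)$ for every elementary $j\colon M\to N$. Hence $\sup_{\vec x\in X}\theta$ is nondecreasing and $\inf_{\vec x\in X}\theta$ is nonincreasing along elementary embeddings. Now use the saturation fact you actually cite: $(M,\vec b)$ embeds elementarily into a sufficiently good ultrapower $(N,\vec c)^{\mathcal{V}}$ with $\vec b\mapsto\vec c$. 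This gives $\Psi(M,\vec b)\le\Psi(N^{\mathcal{V}},\vec c)=\Psi(N,\vec c)$ for the supremum. The reverse inequality follows by swapping the roles of $M$ and $N$, and for the infimum the inequalities are reversed.
\end{enumerate}

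With either fix in place, the rest of your argument completes the proof. Your continuity argument works: take an ultrafilter on the net's index set that extends the tail filter and contains a cofinal set of bad indices. The Stone--Weierstrass step then finishes it.
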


Any $T$-functor satisfying the above equivalent conditions is called a \textbf{$T$-definable set}.

In order to explain the connection between Theorem \ref{prop zero iff trivialBC}, we will need a characterization of being a zeroset in a similar spirit to Fact \ref{defsetchar}.  This characterization was observed by Bradd Hart, Ward Henson, and the second author several years ago.  We thank them for their permission to include this observation here.

Once again suppose that $T$ is a theory and $X$ is a $T$-functor over $\vec x$.  We will list three properties a $T$-functor may or may  not have that will end up characterizing those $T$-functors that are zerosets.  

We recall that the space  $\mathbf{S}_{\vec x}(T)$ of complete types in $T$ over $\vec x$ is equipped with its logic topology, which is merely the induced topology from viewing $\mathbf{S}_{\vec x}(T)$ as a subset of the dual of the normed space of $T$-formulae over $\vec x$ (see \cite[ Section 2.4]{goldbringspectralgap}).  More concretely, if $(p_i)_{i\in I}$ is a family of complete types with realizations $\vec a_i$ in models $M_i$ of $T$ and $\u$ is an ultrafilter on $I$, then setting $\vec a:=(\vec a_i)_\u\in \prod_\u M_i$ and $p$  the complete type of $\vec a$ in $\prod_\u M_i$, we have that $\lim_\u p_i=p$.  

Set $\hat X:=\{p\in \mathbf{S}_{\vec x}(T) \ : \ p=\tp^M(\vec a) \text{ for some }M\models T \text{ with }\vec a\in X(M)\}$.  

The above paragraph thus yields:

\begin{lem}\label{closedlemma}
The following are equivalent:
\begin{enumerate}
    \item $\hat X$ is a closed subspace of $\mathbf{S}_{\vec x}(T)$.
    \item For any family $(M_i)_{i\in I}$ of models of $T$ and ultrafilter $\u$ on $I$, we have $$\prod_\u X(M_i)\subseteq X(\prod_\u M_i).$$
\end{enumerate}
\end{lem}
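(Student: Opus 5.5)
Both directions come down to unwinding what convergence in the logic topology means, using the ultraproduct description of limits recalled in the paragraph just before the statement. The one extra ingredient is that every point of $\mathbf{S}_{\vec x}(T)$ is a limit of types realized in actual models.

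For (2)$\Rightarrow$(1), I would take a type $p$ in the closure of $\hat X$. Since $\mathbf{S}_{\vec x}(T)$ is a compact Hausdorff space, there is a net, or equivalently a family indexed by a set $I$ together with an ultrafilter $\u$ on $I$, of types $p_i\in\hat X$ with $\lim_\u p_i=p$. Concretely, I would let $I$ be the set of open neighborhoods of $p$, choose $p_i\in \hat X\cap i$, and take $\u$ to be an ultrafilter extending the filter of sets $\{i : i\subseteq O\}$. For each $i$, pick $M_i\models T$ and $\vec a_i\in X(M_i)$ with $\tp^{M_i}(\vec a_i)=p_i$. Set $\vec a:=(\vec a_i)_\u\in \prod_\u M_i$. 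By Łoś's theorem, $\prod_\u M_i\models T$, and by the recalled fact, $\tp(\vec a)=\lim_\u p_i=p$. Hypothesis (2) gives $\vec a\in \prod_\u X(M_i)\subseteq X(\prod_\u M_i)$, so $p\in\hat X$ and $\hat X$ is closed.

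For (1)$\Rightarrow$(2), I would take $\vec a=(\vec a_i)_\u$ with each $\vec a_i\in X(M_i)$. Each $\tp(\vec a_i)$ lies in $\hat X$, so by closedness $p:=\tp(\vec a)=\lim_\u \tp(\vec a_i)\in \hat X$. Thus $p$ is realized by some $\vec b\in X(N)$ for some model $N\models T$, and it remains to conclude that $\vec a\in X(\prod_\u M_i)$. Here I would use that $X$ is a functor on $\operatorname{Mod}(T)$ with elementary embeddings as morphisms, acting by restriction. Since $\vec a$ and $\vec b$ have the same type, there is a common elementary extension $P$ of $N$ and $\prod_\u M_i$ in which the images of $\vec a$ and $\vec b$ coincide; this follows from the elementary amalgamation property, obtained via compactness or a suitably saturated model. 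Functoriality then gives $\vec b\in X(N)\subseteq X(P)$. Because morphisms act by restriction, $X(\prod_\u M_i)=X(P)\cap (\prod_\u M_i)^{\vec x}$, and hence $\vec a\in X(\prod_\u M_i)$.

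The main obstacle is this last step, because it requires the $T$-functor to reflect membership down along elementary embeddings, not merely to push it forward. I would read ``given by restriction on morphisms'' as saying exactly that $X(M)=X(P)\cap M^{\vec x}$ for every elementary embedding $M\preceq P$. If this reading is intended, the argument closes. If it is not, I would add it as an explicit standing convention, noting that it holds for the bicentralizer by Corollary \ref{BCpreceq}.
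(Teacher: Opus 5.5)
Your proof of (2)$\Rightarrow$(1) is correct. It is exactly the unwinding the paper has in mind; the paper gives nothing beyond the remark that $\lim_\u p_i$ is the type of $(\vec a_i)_\u$.

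For (1)$\Rightarrow$(2) you have found the real issue, but your first proposed resolution is not available. In the paper, ``given by restriction on morphisms'' only means $X(f)=f|_{X(M)}$, i.e.\ $f(X(M))\subseteq X(N)$ for an elementary $f\colon M\to N$. The reflection property $X(N)\cap M^{\vec x}=X(M)$ is introduced separately, in the very next lemma, as the definition of an \emph{elementary} $T$-functor. The paper also stresses that in general $X$ cannot be recovered from $\hat X$. Reading reflection into the definition of $T$-functor would make that notion vacuous.

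With the paper's definition, (1)$\Rightarrow$(2) is in fact false. So elementarity is a necessary hypothesis, not a convention of convenience, and the paper's one-line justification silently uses it. Here is a counterexample:
\begin{itemize}
\item Take $T=\operatorname{Th}(\mathbb N,s,0)$, viewed with the discrete metric.
\item Let $X(N)=N$ if $|N|>2^{\aleph_0}$, and $X(N)=\{s^n(0)^N : n\in\mathbb N\}$ otherwise.
\item This is a $T$-functor, since elementary embeddings send standard elements to standard elements and do not decrease cardinality.
\item By upward L\"owenheim--Skolem, every $1$-type is realized in a model of size $>2^{\aleph_0}$. Hence $\hat X=\mathbf{S}_x(T)$, which is closed, so (1) holds.
\item Now take $M_n=\mathbb N$, $a_n=n\in X(\mathbb N)$, and $\u$ nonprincipal on $\mathbb N$. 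The element $(a_n)_\u$ is nonstandard in $\mathbb N^\u$, and $\mathbb N^\u$ has cardinality $2^{\aleph_0}$. So $(a_n)_\u\notin X(\mathbb N^\u)$, and (2) fails.
\end{itemize}

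You should therefore state (1)$\Rightarrow$(2) for elementary $X$. Under that hypothesis your amalgamation argument is correct; it is the same argument as the paper's proof of the subsequent lemma. This also suffices for the paper's application, since the bicentralizer functor is elementary by Corollary~\ref{BCpreceq}.
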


In general, one always has $X(M)\subseteq \{\vec a\in M^{\vec x} \ \tp^M(\vec a)\in \hat X\}$.  In general, one cannot recover $X$ from $\hat X$.  In the case of zerosets, such a recovery is possible as shown by the following easy lemma:

\begin{lem}
For a $T$-functor $X$, the following are equivalent:
\begin{enumerate}
\item For any model $M\models T$, we have $X(M)=\{\vec a\in M \ : \ \tp^M(\vec a)\in \hat X\}$.
\item For any pair $M,N\models T$ of models of $T$ with $M\preceq N$, we have $$X(N)\cap M=X(M).$$
\end{enumerate}
\end{lem}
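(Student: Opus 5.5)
The plan is to prove the two implications separately. Both rest on the observation that $\hat X$ is defined via types: a tuple $\vec a\in M^{\vec x}$ has $\tp^M(\vec a)\in\hat X$ exactly when there exist some $N\models T$ and some $\vec b\in X(N)$ with $\tp^N(\vec b)=\tp^M(\vec a)$. Throughout I use the inclusion $X(M)\subseteq\{\vec a\in M^{\vec x} : \tp^M(\vec a)\in\hat X\}$ noted just before the lemma.

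For (1)$\Rightarrow$(2), let $M\preceq N$ be models of $T$. If $\vec a\in X(M)$, then $\tp^M(\vec a)\in\hat X$. Since the embedding is elementary, $\tp^N(\vec a)=\tp^M(\vec a)$, so applying (1) in $N$ gives $\vec a\in X(N)$. Hence $X(M)\subseteq X(N)\cap M$. This inclusion also follows directly from functoriality, because $X$ acts by restriction on elementary embeddings. Conversely, if $\vec a\in X(N)\cap M$, then $\tp^M(\vec a)=\tp^N(\vec a)\in\hat X$, and (1) applied in $M$ gives $\vec a\in X(M)$.

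For (2)$\Rightarrow$(1), the inclusion $\subseteq$ always holds, so it suffices to show that any $\vec a\in M^{\vec x}$ with $\tp^M(\vec a)\in\hat X$ lies in $X(M)$. Choose $N\models T$ and $\vec b\in X(N)$ with $\tp^N(\vec b)=\tp^M(\vec a)$. Since $(M,\vec a)\equiv(N,\vec b)$, I will use a joint elementary embedding: by Keisler–Shelah, or more simply by saturation, take a sufficiently saturated model $P$ of the common complete theory with parameters, together with elementary embeddings $f:M\to P$ and $g:N\to P$ such that $f(\vec a)=g(\vec b)$. Concretely, embed $(N,\vec b)$ elementarily into a $\kappa$-saturated elementary extension $(P,\vec c)$ of $(M,\vec a)$, where $\kappa$ exceeds the density character of $N$. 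This is possible because every structure of small density character elementarily equivalent (in the expanded language) to a $\kappa$-saturated one embeds elementarily into it.

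Now apply (2) to $N\preceq P$ via $g$: this gives $g(\vec b)\in X(P)$. Then apply (2) to $M\preceq P$ via $f$: since $f(\vec a)=g(\vec b)\in X(P)\cap f(M)$, we get $\vec a\in X(M)$.

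The only step requiring care is the amalgamation over the tuple. I expect this to be routine via saturated elementary extensions in continuous logic (as recalled in Subsection~\ref{good}), taking care that the embeddings are elementary in the language expanded by constants for $\vec x$.
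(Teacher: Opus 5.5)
Your proof is correct and follows the same route as the paper: (1)$\Rightarrow$(2) is immediate, and for (2)$\Rightarrow$(1) you embed $M$ and the witnessing model elementarily into a common model so that $\vec a$ and $\vec b$ have the same image, and then apply (2) twice, exactly as the paper does. The only difference is that you justify the existence of this joint elementary embedding via a sufficiently saturated elementary extension of $(M,\vec a)$, whereas the paper simply asserts it.
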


\begin{proof}
It is clear that (1) implies (2).  Now assume (2) and fix $M\models T$ and $\vec a\in M^{\vec x}$ for which $\tp^M(\vec a)\in \hat X$; we wish to show that $\vec a\in X(M)$.  By definition, we have that $\tp^M(\vec a)=\tp^P(\vec b)$ for some $P\models T$ and $\vec b\in X(P)$.  There are then elementary embeddings $i:M\hookrightarrow N$ and $j:P\hookrightarrow N$ such that $i(\vec a)=j(\vec b)$.  By applying (2) twice, we have that 
$$\vec b\in X(P)\Rightarrow j(\vec b)\in X(N)\Rightarrow \vec a\in X(M),$$ as desired.
\end{proof}

In light of the second item in the previous lemma, we call a $T$-functor satisfying the assumptions in the lemma an \textbf{elementary $T$-functor}.  By \L os' theorem, zerosets are indeed elementary $T$-functors.

Finally, we note the following:

\begin{lem}
If $X=Z(\theta)$ for some $T$-formula $\theta$, then $\hat X$ is a $G_\delta$ subset of $\mathbf{S}_{\vec x}(T)$.
\end{lem}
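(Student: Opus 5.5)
The plan is to identify the logic topology on $\mathbf{S}_{\vec x}(T)$ concretely and then write the zeroset as a countable intersection of open sets. Recall that the logic topology is generated by the sets $\{p \ : \ \psi(p)<r\}$ for $T$-formulae $\psi(\vec x)$ and reals $r$. Each $T$-formula $\psi$ induces a function $p\mapsto \psi(p)$ on types, where $\psi(p):=\psi^M(\vec a)$ for any realization $\vec a\in M$ of $p$; this value does not depend on the realization. By the definition of the logic topology, as the topology induced from the dual of the normed space of $T$-formulae, each such evaluation map is continuous.

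The key steps, in order, are as follows. First, observe that for $X=Z(\theta)$ we have
$$\hat X=\{p\in \mathbf{S}_{\vec x}(T) \ : \ \theta(p)=0\}.$$
Indeed, if $p=\tp^M(\vec a)$ with $\vec a\in Z(\theta)(M)$, then $\theta(p)=\theta^M(\vec a)=0$. Conversely, if $\theta(p)=0$, pick any realization $\vec a$ of $p$ in some $M\models T$. Then $\theta^M(\vec a)=0$, so $\vec a\in X(M)$ and hence $p\in\hat X$.

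Second, we may assume $\theta\geq 0$. If it is not, replace $\theta$ by $|\theta|$, which is again a $T$-formula with the same zeroset, since $|\cdot|$ is a continuous connective and passes to the completion.

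Third, write
$$\hat X=\bigcap_{n\geq 1}\{p \ : \ \theta(p)<1/n\},$$
which is a countable intersection of basic open sets. Hence $\hat X$ is $G_\delta$; in fact it is even closed, being the preimage of $\{0\}$ under a continuous map.

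The only point needing care is that $\theta$ is a $T$-formula in the completed sense, i.e.\ a $d_{T,\vec x}$-limit of genuine formulae. We still need the evaluation $p\mapsto\theta(p)$ to be well defined and continuous. Well-definedness holds because $\theta^M$ is a uniform limit of interpretations of actual formulae, and those values depend only on the type. Continuity holds because a uniform limit of continuous functions on $\mathbf{S}_{\vec x}(T)$ is continuous. I expect this to be the main, though minor, obstacle, and I would handle it exactly by this uniform-limit argument, using the uniformity of $d_{T,\vec x}$ over all models of $T$.
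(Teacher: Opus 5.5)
Your proposal is correct and follows essentially the same route as the paper. Both identify $\hat X$ with the set of types on which $\theta$ vanishes and write it as $\bigcap_{n\geq 1}\{p : |\theta|(p)<1/n\}$; your remarks on why evaluation at a completed $T$-formula is well defined and continuous on $\mathbf{S}_{\vec x}(T)$ spell out what the paper takes for granted.
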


\begin{proof}
For $n\geq 1$, let $U_n$ denote the basic open subset of $\mathbf{S}_{\vec x}(T)$ determined by the open condition $|\theta(\vec x)|<1/n$.  Then $\hat X=\bigcap_{n\geq 1} U_n$, whence $\hat X$ is $G_\delta$.  
\end{proof}

\begin{remark}\label{separableremark}
If the language is separable, then closed subsets of $\mathbf{S}_{\vec x}(T)$ are automatically $G_\delta$.
\end{remark}

We are ready for the promised characterization of zerosets:

\begin{thm}\label{zerosetcharacterization}
Suppose that $X$ is a $T$-functor over $\vec x$.  Then $X$ is a zeroset if and only if it is elementary and $\hat X$ is a closed, $G_\delta$-subset of $\mathbf{S}_{\vec x}(T)$.
\end{thm}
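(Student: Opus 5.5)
The forward direction follows from the lemmas just proved. If $X=Z(\theta)$, then $X$ is elementary by \L os' theorem, and $\hat X$ is $G_\delta$ by the preceding lemma. For closedness I would use Lemma \ref{closedlemma}. Given $\vec a_i\in X(M_i)$, we have $\theta^{M_i}(\vec a_i)=0$. By \L os, $\theta^{\prod_\u M_i}((\vec a_i)_\u)=\lim_\u \theta^{M_i}(\vec a_i)=0$. Hence $\prod_\u X(M_i)\subseteq X(\prod_\u M_i)$, and $\hat X$ is closed.

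For the converse, assume $X$ is elementary and $\hat X$ is closed and $G_\delta$. The plan is to produce a single $T$-formula whose zeroset in $\mathbf{S}_{\vec x}(T)$ is exactly $\hat X$. Elementarity then gives, for every $M\models T$,
$$X(M)=\{\vec a \ : \ \tp^M(\vec a)\in\hat X\}=Z(\theta)(M).$$
The key steps are as follows.

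First, I would use that $\mathbf{S}_{\vec x}(T)$ is a compact Hausdorff space and that $T$-formulae correspond exactly to continuous functions on it. Every $T$-formula induces a continuous function on the type space. Conversely, every continuous real function on $\mathbf{S}_{\vec x}(T)$ is realized by some $T$-formula; this follows by Stone--Weierstrass, since formulae form a lattice containing constants and separating types, and $T$-formulae are closed under uniform limits.

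Second, a closed $G_\delta$ subset of a compact Hausdorff space, hence of a normal space, is a zero set of a continuous function. Concretely, write $\hat X=\bigcap_n U_n$ with each $U_n$ open. By Urysohn, choose continuous $f_n:\mathbf{S}_{\vec x}(T)\to[0,1]$ with $f_n=0$ on $\hat X$ and $f_n=1$ off $U_n$. Then set $f=\sum_n 2^{-n}f_n$, which is continuous with $f^{-1}(0)=\hat X$. Let $\theta$ be the $T$-formula realizing $f$.

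Third, check $Z(\theta)=X$. For $M\models T$ and $\vec a\in M^{\vec x}$, we have $\theta^M(\vec a)=f(\tp^M(\vec a))$. This vanishes iff $\tp^M(\vec a)\in\hat X$, which by elementarity holds iff $\vec a\in X(M)$.

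The main obstacle is the first step: the identification of $T$-formulae with $C(\mathbf{S}_{\vec x}(T),\mathbb R)$. This depends on the completion of formulae under $d_{T,\vec x}$ matching the sup-norm on the type space. I would cite the standard continuous-logic fact from \cite{goldbringspectralgap} or \cite{yaacov2008model} rather than reprove it. The Urysohn step is routine.
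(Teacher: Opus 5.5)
Your proposal is correct and follows the paper's proof: the forward direction via \L os' theorem, Lemma \ref{closedlemma} and the $G_\delta$ lemma, and the converse by writing the closed $G_\delta$ set $\hat X$ as the zero set of a continuous function on the compact Hausdorff space $\mathbf{S}_{\vec x}(T)$. That function is then realized by a $T$-formula $\theta$ via Stone--Weierstrass, and elementarity of $X$ gives $X(M)=Z(\theta)(M)$; the only difference is that you write out the Urysohn construction $f=\sum_n 2^{-n}f_n$ and the closedness argument, which the paper leaves implicit.
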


\begin{proof}
We have already observed that zerosets satisfy the enumerated properties.  Conversely, suppose that $X$ is an elementary $T$-functor for which $\hat X$ is a closed, $G_\delta$ subset of $S_{\vec x}(T)$.  Since $\mathbf{S}_{\vec x}(T)$ is a compact Hausdorff space, there is a continuous function $f:\mathbf{S}_{\vec x}(T)\to \bb R$ such that $\hat X=Z(f)$, where $Z(f)$ denotes the zeroset of $f$.  It is a well-known and straightforward consequence of the Stone-Weierstrauss theorem that such a continuous function $f$ must be given by evaluation at a $T$-formula $\theta$, that is, $f(p)=p(\theta)$ for all $p\in \mathbf{S}_{\vec x}(T)$.  It follows that $X(M)=Z(\theta^M)$ for all models $M$ of $T$. 
\end{proof}

We now return to bicentralizers.  A particular consequence of Corollary \ref{BCpreceq} is that, letting $T_{W^*}$ denote the theory of W$^*$-probability spaces, we have that the $T_{W^*}$-functor which assigns to $(M,\varphi)$ the bicentralizer $\BC(M,\varphi)$ (or, technically, the set of totally 1-bounded elements of $\BC(M,\varphi)$) is an elementary $T_{W^*}$-functor and is thus also an elementary $T_{\mathrm{III}_1}$-functor.  Thus, we are entitled to use the following terminology: 

\begin{defn}
Call a type $p\in \mathbf{S}_x(T_{\mathrm{III}_1})$ a \textbf{bicentralizer type} if some (equiv. any) realization of $p$ lies in the bicentralizer.  
\end{defn}

To be clear, here we are treating the bicentralizer as a $T_{\mathrm{III}_1}$-functor over $x$, with $x$ a single variable of sort $S_1$.  To reiterate the definition:  to say that $p$ is a bicentralizer type is to say that, for any W$^*$-probability space $(M,\varphi)$ with $M$ a III$_1$ factor and any $a\in S_1(M,\varphi)$ with $p=\tp^{(M,\varphi)}(a)$, we have that $a\in \BC(M,\varphi)$.

Since the language of W$^*$-probability spaces is separable, Lemma \ref{closedlemma}, Remark \ref{separableremark}, Theorem \ref{zerosetcharacterization}, together with the previous observation, yield the following:

\begin{prop}
The following are equivalent:
\begin{enumerate}
        \item The bicentralizer is a $T_{\mathrm{III}_1}$-zeroset.
    \item For all families $(M_i,\varphi_i)_{i\in I}$ of models of $T_{\mathrm{III}_1}$ and all ultrafilters $\u$ on $I$, we have $\prod_\u \BC(M_i,\varphi_i)\subseteq \BC(\prod_\u (M_i,\varphi_i))$.
    \item The bicentralizer types in $\mathbf{S}_x(T_{\mathrm{III}_1})$ form a closed set in the logic topology.
\end{enumerate}
\end{prop}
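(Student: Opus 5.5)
The plan is to deduce the proposition directly from the general machinery of the appendix, applied to the $T_{\mathrm{III}_1}$-functor $X$ sending $(M,\varphi)$ to $S_1(\BC(M,\varphi))$. The key input is that $X$ is an elementary $T_{\mathrm{III}_1}$-functor. To see this, take $M\preceq N$ models of $T_{\mathrm{III}_1}$. Since elementary substructures are in particular existential, Corollary \ref{BCpreceq} gives $\BC(N)\cap M=\BC(M)$. Intersecting with $S_1$, which is just the sort and hence compatible with substructures, yields $X(N)\cap M=X(M)$. This is condition (2) of the recovery lemma.

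For (1)$\Rightarrow$(2), I would use \L os' theorem. A zeroset $Z(\theta)$ satisfies $Z(\theta)(\prod_\u M_i)=\{(a_i)_\u : \lim_\u\theta^{M_i}(a_i)=0\}$, and this set contains $\prod_\u Z(\theta)(M_i)$. Applying this with $X=\BC$ gives (2).

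The equivalence (2)$\Leftrightarrow$(3) is exactly Lemma \ref{closedlemma}, applied to $X$. One small point is needed: $\hat X$ is precisely the set of bicentralizer types. By elementarity and the recovery lemma, a type is realized in $X$ of some model if and only if every realization lies in the bicentralizer. This justifies the ``some (equiv.\ any)'' clause in the definition.

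For (3)$\Rightarrow$(1), I would invoke Theorem \ref{zerosetcharacterization}. We need $X$ to be elementary, which was shown above, and $\hat X$ to be closed, which is assumption (3). We also need $\hat X$ to be $G_\delta$. This follows from Remark \ref{separableremark}, since the language of W$^*$-probability spaces is separable, so $\mathbf{S}_x(T_{\mathrm{III}_1})$ is a compact metrizable space in which closed sets are $G_\delta$.

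The only step needing care is the separability remark. The point to verify is that the type space over a single variable of sort $S_1$, in a separable language, is metrizable: countably many formulas, dense in the formula space, separate types. I expect no real obstacle here. The rest is bookkeeping: the bicentralizer restricted to $S_1$ is a closed subset in the $\|\cdot\|^\#_\varphi$ metric, which holds because $\BC$ is a von Neumann subalgebra and is therefore $*$-strongly closed on bounded sets. This is what makes $X$ a legitimate $T$-functor in the first place.
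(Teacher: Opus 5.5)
Your proposal is correct and follows the paper's own argument. You establish elementarity of the bicentralizer functor via Corollary \ref{BCpreceq}, use Lemma \ref{closedlemma} for (2)$\Leftrightarrow$(3), and use Theorem \ref{zerosetcharacterization} together with Remark \ref{separableremark} for (3)$\Rightarrow$(1); your direct \L os argument for (1)$\Rightarrow$(2) just unpacks the forward direction of Theorem \ref{zerosetcharacterization} combined with Lemma \ref{closedlemma}.
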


The previous proposition now explains why Theorem \ref{prop zero iff trivialBC} can be summarized as saying:  the bicentralizer problem has a positive solution if and only if the bicentralizer is a $T_{\mathrm{III}_1}$-zeroset.

\bibliographystyle{siam}
\bibliography{reference}
\end{document}